\documentclass[12pt,a4paper,reqno]{amsart}

\usepackage[a4paper,textwidth=13.5cm,textheight=22.8cm,centering]{geometry}
\usepackage[T1]{fontenc}
\usepackage{lmodern}
\usepackage{microtype}
\usepackage{amsmath,amssymb,amsthm,mathtools}
\usepackage{mathrsfs}
\usepackage{enumitem}
\usepackage{aliascnt}

\theoremstyle{plain}
\newtheorem{theorem}{Theorem}[section]
\newaliascnt{corollary}{theorem}
\newtheorem{corollary}[corollary]{Corollary}
\aliascntresetthe{corollary}
\newaliascnt{proposition}{theorem}
\newtheorem{proposition}[proposition]{Proposition}
\aliascntresetthe{proposition}
\newaliascnt{lemma}{theorem}
\newtheorem{lemma}[lemma]{Lemma}
\aliascntresetthe{lemma}

\theoremstyle{definition}
\newaliascnt{definition}{theorem}
\newtheorem{definition}[definition]{Definition}
\aliascntresetthe{definition}
\newaliascnt{example}{theorem}
\newtheorem{example}[example]{Example}
\aliascntresetthe{example}

\theoremstyle{remark}
\newaliascnt{remark}{theorem}
\newtheorem{remark}[remark]{Remark}
\aliascntresetthe{remark}
\newaliascnt{question}{theorem}
\newtheorem{question}[question]{Question}
\aliascntresetthe{question}

\usepackage{xurl}
\usepackage{cite}
\usepackage[hidelinks,breaklinks=true,pdfencoding=auto,psdextra]{hyperref}
\hypersetup{
  pdftitle={Logarithmic source curves in polynomial fiber products},
  pdfauthor={Henry Shin},
  pdfsubject={Algebraic geometry; logarithmic curves; polynomial fiber products;
    arithmetic applications},
  pdfkeywords={polynomial fiber products, logarithmic source curves,
    integral points, boundary contact orders, Bilu--Tichy pairs}
}
\newcommand{\doilink}[1]{\href{https://doi.org/#1}{doi:#1}}
\newcommand{\arxivlink}[1]{\href{https://arxiv.org/abs/#1}{arXiv:#1}}
\usepackage[nameinlink,noabbrev]{cleveref}
\crefname{theorem}{Theorem}{Theorems}
\Crefname{theorem}{Theorem}{Theorems}
\crefname{corollary}{Corollary}{Corollaries}
\Crefname{corollary}{Corollary}{Corollaries}
\crefname{proposition}{Proposition}{Propositions}
\Crefname{proposition}{Proposition}{Propositions}
\crefname{lemma}{Lemma}{Lemmas}
\Crefname{lemma}{Lemma}{Lemmas}
\crefname{definition}{Definition}{Definitions}
\Crefname{definition}{Definition}{Definitions}
\crefname{remark}{Remark}{Remarks}
\Crefname{remark}{Remark}{Remarks}
\crefname{example}{Example}{Examples}
\Crefname{example}{Example}{Examples}
\crefname{question}{Question}{Questions}
\Crefname{question}{Question}{Questions}

\AddToHook{env/theorem/begin}{\crefalias{section}{theorem}}
\AddToHook{env/corollary/begin}{\crefalias{theorem}{corollary}}
\AddToHook{env/proposition/begin}{\crefalias{theorem}{proposition}}
\AddToHook{env/lemma/begin}{\crefalias{theorem}{lemma}}
\AddToHook{env/definition/begin}{\crefalias{theorem}{definition}}
\AddToHook{env/example/begin}{\crefalias{theorem}{example}}
\AddToHook{env/remark/begin}{\crefalias{theorem}{remark}}
\AddToHook{env/question/begin}{\crefalias{theorem}{question}}

\DeclareMathOperator{\Spec}{Spec}
\DeclareMathOperator{\Frac}{Frac}
\DeclareMathOperator{\ord}{ord}
\DeclareMathOperator{\genus}{genus}

\newcommand{\A}{\mathbb A}
\newcommand{\Pj}{\mathbb P}
\newcommand{\Q}{\mathbb Q}
\newcommand{\Z}{\mathbb Z}
\newcommand{\Gm}{\mathbb G_m}
\newcommand{\OO}{\mathcal O}
\newcommand{\HH}{\mathcal H}
\newcommand{\RR}{\mathcal R}
\newcommand{\red}{\mathrm{red}}
\newcommand{\new}{\mathrm{new}}
\newcommand{\act}{\mathrm{act}}
\newcommand{\adm}{\mathrm{adm}}

\title[Logarithmic source curves]
{Logarithmic source curves in polynomial fiber products}
\author[H. Shin]{Henry Shin}
\thanks{Corresponding author. Address: San Diego, CA, United States of America.
E-mail: \texttt{hkshin@gmail.com}.}
\address{San Diego, CA, United States of America}
\email{hkshin@gmail.com}
\date{}
\subjclass[2020]{Primary 14G05; Secondary 11D45, 14H50, 12E05, 11R27}
\keywords{polynomial fiber products, logarithmic source curves, integral points,
boundary contact orders, Bilu--Tichy pairs}

\begin{document}

\begin{abstract}
Let \(k\) be a field of characteristic zero and let \(f,g\in k[x]\) be
nonconstant. We study rational lifts of \(f(a)\) through \(g\) that do not arise
from a composition \(f=g\circ h\). To each non-graph component of \(f(X)=g(Y)\)
we attach its logarithmic source curve, namely the smooth compactification of
its normalization with reduced boundary. The main geometric result is a sharp
contact formula at infinity: if \(N=\deg g/\gcd(\deg f,\deg g)\), then every
one-infinity non-graph source has \(X\)-degree \(N\), and in general the
\(X\)-degree is \(N\) times the number of boundary points. Over number fields
this yields a finite symmetric-difference expansion of \(S\)-integral new lifts.
Active one-infinity sources give exactly the power terms in height counts;
positive-rank admissible two-infinity sources give logarithmic \(S\)-unit
families; and inactive one-infinity sources, rank-zero two-infinity sources, and
the remaining components contribute only finitely many inputs. Primitive
one-infinity source classes have only polylogarithmic overlap, and ordered
configuration covers introduce no new exponent. Over \(\mathbb Q\), the
\(B^{1/2}\) boundary is precisely the quadratic Bilu--Tichy source cell.
\end{abstract}

\maketitle

\section{Introduction}

Let \(k\) be a field of characteristic zero and let \(f,g\in k[x]\) be
nonconstant.  This paper studies the reduced polynomial fiber product
\[
        C_{f,g}=(f(X)-g(Y)=0)_{\red}
\]
after the graph components \(Y=h(X)\), \(f=g\circ h\), have been removed.  For
any field \(K\) over which \(f\) and \(g\) are defined, write
\[
        \HH_{f,g}(K)=\{h\in K[X]: f=g\circ h\}.
\]
A point above \(X=a\) records a rational lift of the value \(f(a)\) through
\(g\).  The graph components account for the compositional lifts.  The main
result is a componentwise geometric framework: after graph removal, sparse
lifting is controlled by boundary geometry rather than by a case-by-case
classification of separated-variable components.  The logarithmic type and
primitive contact at infinity determine the possible source degrees, and hence
the possible height exponents of the corresponding \(S\)-integral inputs.

For a geometrically integral component \(C\), let \(C^\nu\) be its affine
normalization, let \(\widetilde C\) be its smooth projective normalization, and
let
\[
        D_C=\widetilde C\setminus C^\nu
\]
be the reduced geometric boundary.  We use the logarithmic degree
\[
        \lambda(C)=\deg\Omega^1_{\widetilde C}(\log D_C)
        =2\genus(\widetilde C)-2+\#D_C.
\]
Thus log-negative components are polynomial \(\A^1\)-sources, log-zero
components are \(\Gm\)-type after splitting the two boundary points, and
log-positive components are hyperbolic for integral-point purposes.  Here and
throughout, ``logarithmic'' refers to the curve-level data of a smooth
projective curve with marked boundary, together with the pole orders of the
coordinate functions along that boundary.

We keep the following field-of-definition convention throughout.  Geometric
statements about components, boundary points, and log type are made after base
change to \(\bar k\).  Arithmetic source sets are instead indexed by
\(k\)-components which are geometrically integral and whose relevant normalized
\(\A^1\)- or \(\Gm\)-structure is defined over \(k\), possibly after the
explicit splitting extension used in the two-boundary case.  Irreducible
\(k\)-components which are not geometrically integral are treated separately;
over number fields they contribute only finitely many rational points.  Thus a
geometric one-boundary branch and a \(k\)-rational polynomial source are kept
distinct until descent is explicitly imposed.

The basic geometric theorem is an exact primitive-contact formula at infinity.
Write
\[
        m=\deg f,\qquad n=\deg g,\qquad e=\gcd(m,n),\qquad
        M=m/e,\qquad N=n/e.
\]
The two polynomial maps are totally ramified over \(\infty\), and the completed
local fiber product at \((\infty,\infty)\) is analytically
\[
        \bar k[[u,v]]/(u^m-v^n).
\]
It has exactly \(e\) reduced normalized branches over \((\infty,\infty)\), and
on each branch
\[
        -\ord_P X=N,
        \qquad
        -\ord_P Y=M.
\]
Consequently every geometrically integral component satisfies
\[
        d_X(C)=N\#D_C,
        \qquad
        d_Y(C)=M\#D_C.
\]
In particular, every non-graph one-infinity source has the single possible
\(X\)-degree \(N\).  Equivalently, after graph removal the one-infinity
\(X\)-degree spectrum is the singleton \(\{N\}\) when such sources occur, and is
empty otherwise.

Over a number field \(k\), with \(S\) containing the Archimedean places, define
\[
\begin{split}
 \mathscr N^{\new}_{f,g,k,S}
 =\Bigl\{a\in\OO_{k,S}:{}&
       \exists y\in k\text{ with }g(y)=f(a),\\
       &y\ne h(a)\text{ for every }h\in\HH_{f,g}(k)\Bigr\}.
\end{split}
\]
We prove a finite symmetric-difference source expansion for this set.  Up to
finitely many inputs,
\(\mathscr N^{\new}_{f,g,k,S}\) is a finite union of polynomial images from
active \(\A^1\)-sources and Laurent images of admissible unit-coset sets from
\(\Gm\)-sources.  Thus active \(\A^1\)-sources give exactly the power terms,
positive-rank admissible \(\Gm\)-sources give exactly logarithmic unit-rank terms,
and inactive one-infinity sources, rank-zero admissible \(\Gm\)-sources, and all
remaining components contribute finitely.

Throughout the arithmetic statements, we distinguish the \emph{source exponent}
\(\theta=1/d_X\) from the actual height-count exponent over a number field.  A
polynomial source of \(X\)-degree \(d\) contributes a power
\(B^{[k:\Q]/d}\), up to the indicated \(S\)-unit logarithmic factor; over
\(\Q\) with ordinary-integer counting this is \(B^{1/d}\).

For the global form of the result, source separation is essential.
All active one-infinity sources have the same forced \(X\)-degree \(N\), so one
must control possible power-sized overlaps among their value sets.  We prove that,
after quotienting active one-infinity sources by affine reparametrization of
their \(X\)-maps, distinct source classes have only polylogarithmic overlap.  Hence,
whenever active one-infinity sources exist,
\[
        N^{\new}_{f,g,k,S}(B)
        =
        \sum_{\xi}
        \#\{A_\xi(t)\in\OO_{k,S}:H(A_\xi(t))\le B\}
        +O((\log B)^\rho).
\]
The dominant power contribution is therefore canonical; the lower-order error
comes from two-infinity unit terms and polylogarithmic overlaps.

All arithmetic source expansions below are finite symmetric-difference
statements: the displayed source unions agree with the corresponding input sets
after a finite set of input values is removed.

The same logarithmic mechanism controls higher multiplicity.  Inputs with at
least \(r\) distinct new rational lifts are governed by the ordered
\(r\)-configuration cover of the non-graph part of \(C_{f,g}\).  The
primitive-contact formula is stable under this construction, so every active
multiplicity level has the same source exponent \(1/N\) and, over a number
field, height-count power \(B^{[k:\Q]/N}\); otherwise it has only logarithmic or
finite growth.

Finally, over \(\Q\) the square-root boundary is completely explicit.  A
non-graph one-infinity component of \(X\)-degree two exists exactly in the
quadratic-source case.  Equivalently, after linear changes,
\[
        g(Y)=G((Y-c)^2),
        \qquad
        f(X)=G(UE(U)^2),
\]
so the switched first-kind cell \((UE(U)^2,V^2)\) is the source-level
Bilu--Tichy core.  Ordinary integer activity is then decided by a single
quadratic congruence.

\subsection*{Relation with classification results}

The separated-variable components of \(f(X)-g(Y)\) have a long classification
history.  Ritt's decomposition theory and Engstrom's cancellation theorem are
part of the polynomial-decomposition background used here \cite{Ritt,Engstrom}.
Davenport, Lewis and Schinzel initiated the reducibility problem for separated
polynomials \cite{DLS61,DLS}; Fried--MacRae studied curves with separated
variables through polynomial decompositions \cite{FriedMacRae}; Bilu--Tichy
classified pairs \((f,g)\) for which \(f(x)=g(y)\) has infinitely many rational
solutions with bounded denominator \cite{BiluTichy}; and Dvornicich--Zannier
studied closely related value questions for algebraic functions
\cite{DvornicichZannierI,DvornicichZannierII}.  Avanzi--Zannier's work on
separated-variable genus-one curves and polynomial Pell equations is close to the
log-zero examples below \cite{AvanziZannier}.  Pakovich's work on Laurent
polynomial decompositions and genus bounds for fiber products gives another
geometric point of contact with low-complexity components
\cite{PakovichLaurent,PakovichFiberProducts}.  K\"onig--Neftin study reducible
fibers of polynomial maps from a quantitative Hilbert irreducibility viewpoint
\cite{KoenigNeftin}.  Behajaina--K\"onig--Neftin have posted a preprint on
the Davenport--Lewis--Schinzel reducibility problem over \(\mathbb C\)
\cite{BKN}.

The present paper develops a componentwise framework complementary to those
classifications.  Rather than classifying all separated-variable components, it
starts from a given component and uses its normalization and boundary contact to
determine its log type, its exact projection degree, and its arithmetic
contribution.  In particular, the source exponent is not an
auxiliary counting parameter but a primitive boundary-contact invariant.  For
the square-root boundary, a complete source-level classification is needed, and
in that case the classification reduces to the single Bilu--Tichy cell
\((UE(U)^2,V^2)\).

\subsection*{Main theorem map}

The main results are grouped as follows.

\emph{Theorem A: primitive contact.}  Log-negative components are primitive
polynomial source diagrams, and every geometric boundary point has
\[
        -\ord_P X=\frac{\deg g}{\gcd(\deg f,\deg g)},\qquad
        -\ord_P Y=\frac{\deg f}{\gcd(\deg f,\deg g)}.
\]
Hence
\[
        d_X(C)=\frac{\deg g}{\gcd(\deg f,\deg g)}\#D_C,
        \qquad
        d_Y(C)=\frac{\deg f}{\gcd(\deg f,\deg g)}\#D_C.
\]
In particular, a one-infinity component has the forced \(X\)-degree \(N\).

\emph{Theorem B: finite symmetric-difference source expansion.}  Over a number field, after deleting
finitely many input values, the new-lift set is exactly a finite union of
polynomial images from active \(\A^1\)-sources and Laurent images from
admissible unit cosets on \(\Gm\)-sources.  This structural statement yields
the height estimates.

\emph{Theorem C: primitive source separation.}  After quotienting active
one-infinity sources by affine reparametrization of the \(X\)-source, distinct
classes have only polylogarithmic overlap.  Consequently the power part of the
source expansion is canonical:
\[
        N^{\new}_{f,g,k,S}(B)
        =
        \sum_{\xi}
        \#\{A_\xi(t)\in\OO_{k,S}:H(A_\xi(t))\le B\}
        +O((\log B)^\rho).
\]
The lower-order error contains the admissible two-infinity unit terms and the
polylogarithmic overlaps among distinct primitive \(X\)-source classes.

\emph{Theorem D: sparse trichotomy.}  The finite symmetric-difference source expansion and
source separation give the arithmetic alternatives:
\[
        \text{active one-infinity sources exist}
        \quad\Longrightarrow\quad
        \theta_{f,g,k,S}
        =
        \frac{\gcd(\deg f,\deg g)}{\deg g},
\]
while absence of active one-infinity sources leaves only positive-rank admissible
unit-coset growth or boundedness.  When
\[
        N=\frac{\deg g}{\gcd(\deg f,\deg g)}
\]
equals \(1\), no non-graph one-infinity source remains after graph removal.

\emph{Theorem E: configuration stability.}  The same finite symmetric-difference source expansion and
the same primitive contact formula hold on ordered configuration covers.  Higher
multiplicity may introduce or eliminate source curves, but it introduces no new
source exponent: at every active level the source exponent is \(1/N\), and the
corresponding number-field height-count power is \(B^{[k:\Q]/N}\).

\emph{Theorem F: quadratic boundary.}  The square-root case \(N=2\) is
classified at source level.  It is exactly the source-even switched first-kind
Bilu--Tichy cell, with ordinary-integer activity decided by one quadratic
congruence.

The bound is sharp in the family
\[
        f(X)=X^{eM},\qquad g(Y)=Y^{eN},\qquad (M,N)=1,
\]
where the source curve \(X=t^N,\ Y=t^M\) realizes the source exponent \(1/N\)
for \(N\ge2\); over a number field the corresponding height-count exponent is
\([k:\Q]/N\).

\subsection*{Organization}

\Cref{sec:log-source-curves} develops the algebraic geometry of logarithmic
source curves: graph removal, polynomial source diagrams, boundary contact
orders, the degree spectrum, sharpness examples, and constructibility of source
strata.  \Cref{sec:component-count} proves the arithmetic log-trichotomy for
integral points on affine curves and strengthens the log-zero part to an
admissible unit-coset theorem.  \Cref{sec:number-field,sec:active-lower} prove
the finite symmetric-difference source expansion, the primitive source-separation theorem, and the
sparse lifting trichotomy.  \Cref{sec:configuration-multiplicity}
treats higher multiplicity by ordered configuration covers and proves the
analogous level-\(r\) source expansion.  \Cref{sec:algorithm} records an
effective ordinary-integer exponent criterion.  \Cref{sec:quadratic} proves the
quadratic-source classification of the square-root boundary and the geometry of
the source-even locus.  The concluding section records several consequences and
related directions, including the primitive source stratum and a finite-field
analogue.

\section{Logarithmic source curves and boundary contact orders}
\label{sec:log-source-curves}

This section contains the algebraic-geometric core of the paper.  The results
are stated over an arbitrary characteristic-zero field; arithmetic hypotheses
enter only later.  The logarithmic data used here consist of a smooth projective
curve together with a reduced boundary divisor, along with the pole orders of
the coordinate functions at that boundary.

Let \(k\) be a field of characteristic zero and let \(f,g\in k[x]\) be
nonconstant.  Put
\[
        C_{f,g}=(f(X)-g(Y)=0)_{\red}\subset\A^2_k.
\]

\begin{lemma}[Degree-one components are graph components]
\label{lem:degree-one-graph-components}
Let \(K\) be a field, and let \(f,g\in K[x]\) be nonconstant.  The irreducible
components of \(C_{f,g}\) of generic degree one over \(\A^1_X\) are exactly the
graph components \(Y=h(X)\), where \(h\in K[X]\) and \(f=g\circ h\).
\end{lemma}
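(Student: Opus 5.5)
The plan is to prove both inclusions directly, using the fact that a component of degree one over \(\A^1_X\) is birational to the \(X\)-line. Throughout, ``component'' means an irreducible component over \(K\) of the reduced curve \(C_{f,g}\subset\A^2_K\). Since \(g\) is nonconstant, the polynomial \(f(X)-g(Y)\) has positive degree in \(Y\). Hence it has no factor lying in \(K[X]\), and no vertical line \(X=c\) is a component. So every component \(C\) dominates \(\A^1_X\), and its generic degree \([K(C):K(X)]\) is defined.

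First the easy inclusion. Suppose \(h\in K[X]\) satisfies \(f=g\circ h\). Then \(Y-h(X)\) divides \(g(Y)-g(h(X))=g(Y)-f(X)\) in \(K[X][Y]\). The curve \(Y=h(X)\) is irreducible, so it is a component of \(C_{f,g}\). It is isomorphic to \(\A^1_X\) under projection, so its generic degree is one.

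Conversely, let \(C=V(p)\) with \(p\in K[X,Y]\) irreducible, \(p\mid f(X)-g(Y)\), and \([K(C):K(X)]=1\). The main step is to show that \(p\) is, up to a unit, of the form \(Y-h(X)\) with \(h\) a polynomial, not merely a rational function.

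\begin{itemize}
\item Since \(K(C)=K(X)\), the coordinate \(Y\) restricts to some \(h\in K(X)\) on \(C\), and \(f(X)=g(h(X))\) holds in \(K(X)\).
\item To see that \(h\) is a polynomial, note that \(Y|_C\) is integral over \(K[X]\). Indeed, it satisfies \(g(Y)-f(X)=0\), and after dividing by the leading coefficient of \(g\), which lies in \(K^\times\), this is a monic equation over \(K[X]\).
\item \(K[X]\) is integrally closed in \(K(X)\), so \(h\in K[X]\).
\item Alternatively, one can argue by poles: if \(h\) had a pole at a finite place, then \(g\circ h\) would too, since \(\deg g\ge1\), contradicting that \(f\) is a polynomial.
\end{itemize}

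Finally, \(p\) vanishes on \(C\) and so does \(Y-h(X)\). The polynomial \(Y-h(X)\) is irreducible, and it vanishes on the irreducible curve \(C=V(p)\). Both polynomials are irreducible of dimension-one zero set and share \(C\), so they are associates. Therefore \(C\) is exactly the graph component \(Y=h(X)\) with \(f=g\circ h\).

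The only real obstacle is the integrality step showing that \(h\) lies in \(K[X]\) rather than only \(K(X)\). The monic-equation argument disposes of it cleanly. No characteristic hypothesis is needed, which matches the lemma being stated for an arbitrary field \(K\).
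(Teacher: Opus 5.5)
Your proof is correct and follows the paper's argument. You write $Y|_C=h\in K(X)$, show $h\in K[X]$, and get the converse from $Y-h(X)\mid f(X)-g(Y)$; your integrality step (a monic equation over $K[X]$ plus integral closedness) is an equivalent substitute for the paper's pole-order comparison, which you also list. One small repair: positive $Y$-degree alone does not rule out a factor in $K[X]$ (consider $XY$); the correct reason is that the leading coefficient of $f(X)-g(Y)$ in $Y$ is a nonzero constant.
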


\begin{proof}
If \(D\) is a component of generic degree one over the \(X\)-line, then
\(K(D)=K(X)\).  Hence the image of \(Y\) in \(K(D)\) has the form \(h(X)\) for
some \(h\in K(X)\), and the identity on \(D\) gives
\[
        f(X)=g(h(X)).
\]
If \(h\) had a finite pole, then \(g(h)\) would have a finite pole: at that place
the leading term of \(g(h)\) has strictly larger pole order than every lower
term.  This contradicts \(f\in K[X]\).  Hence \(h\in K[X]\), and the component is
the graph \(Y=h(X)\).  Conversely, if \(h\in K[X]\) and \(f=g\circ h\), then
\(Y-h(X)\) divides \(f(X)-g(Y)\), so \(Y=h(X)\) is an irreducible component of
\(C_{f,g}\) of generic degree one over \(\A^1_X\).
\end{proof}

With the notation
\[
        \HH_{f,g}(K)=\{h\in K[X]: f=g\circ h\},
\]
the set \(\HH_{f,g}(K)\) is finite by the lemma, since \(C_{f,g}\) has only
finitely many irreducible components.

For a geometrically integral component \(C\) of \(C_{f,g}\), let \(C^\nu\) be
the affine normalization, let \(\widetilde C\) be the smooth projective
normalization, and write
\[
        D_C=\widetilde C\setminus C^\nu
\]
for the reduced geometric boundary.  The coordinate functions \(X,Y\) define
rational functions on \(\widetilde C\), regular on \(C^\nu\).  We write
\[
        d_X(C)=\deg(X:\widetilde C\to\Pj^1),
        \qquad
        d_Y(C)=\deg(Y:\widetilde C\to\Pj^1).
\]

\begin{definition}[Logarithmic source curve]
A geometrically integral component \(C\) of \(C_{f,g}\), together with the
boundary pair \((\widetilde C,D_C)\) and the two functions \(X,Y\), is called a
\emph{logarithmic source curve} for \((f,g)\).  Its logarithmic degree is
\[
        \lambda(C)=\deg\Omega^1_{\widetilde C}(\log D_C)
        =2\genus(\widetilde C)-2+\#D_C.
\]
We call \(C\) log-negative, log-zero, or log-positive according as
\(\lambda(C)<0\), \(=0\), or \(>0\).
\end{definition}

\begin{lemma}[Curve-level log trichotomy]
\label{lem:curve-log-trichotomy}
Let \(C\) be a geometrically integral affine curve over \(k\), and suppose
\(D_C\ne\varnothing\).  Over \(\bar k\), the sign of
\(\lambda(C)=2\genus(\widetilde C)-2+\#D_C\) has the following meaning.
\begin{enumerate}[label=\textup{(\roman*)}]
    \item \(\lambda(C)<0\) if and only if
    \((\genus(\widetilde C),\#D_C)=(0,1)\).  Then
    \(C^\nu_{\bar k}\cong\A^1_{\bar k}\).
    \item \(\lambda(C)=0\) if and only if
    \((\genus(\widetilde C),\#D_C)=(0,2)\).  Then, after splitting the two boundary
    points, \(C^\nu\) is \(\Gm\).
    \item \(\lambda(C)>0\) in all remaining cases.
\end{enumerate}
\end{lemma}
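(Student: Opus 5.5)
The plan is to separate the purely numerical part of the lemma from its geometric content. The numerical part is the sign of \(\lambda(C)=2\genus(\widetilde C)-2+\#D_C\) when \(\#D_C\ge 1\). Write \(\gamma=\genus(\widetilde C)\ge0\) and \(\delta=\#D_C\ge1\). If \(\gamma\ge1\), then \(\lambda\ge 2\gamma-2+1\ge1>0\). If \(\gamma=0\), then \(\lambda=\delta-2\), which is negative exactly when \(\delta=1\), zero exactly when \(\delta=2\), and positive when \(\delta\ge3\). This proves the ``if and only if'' assertions in (i) and (ii), and also gives (iii), since every remaining pair \((\gamma,\delta)\) has \(\lambda>0\). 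Every geometric quantity here (genus, boundary count) is computed after base change to \(\bar k\). Characteristic zero ensures that normalization commutes with this base change, because \(k\) is perfect and \(C\) is geometrically integral. So \(\genus(\widetilde C)\) and \(\#D_C\) are well defined as geometric invariants.

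For the structural claim in (i), I will work over \(\bar k\). Since \(\gamma=0\), we have \(\widetilde C_{\bar k}\cong\Pj^1_{\bar k}\). The affine normalization \(C^\nu_{\bar k}\) is the complement of the boundary in \(\widetilde C_{\bar k}\). The reason is that \(\widetilde C\) is the unique smooth projective model, and \(C^\nu\) embeds in it as an open subset, the complement of the points lying over the missing points at infinity. With a single boundary point, an automorphism of \(\Pj^1\) moves it to \(\infty\), which gives \(C^\nu_{\bar k}\cong\Pj^1\setminus\{\infty\}=\A^1_{\bar k}\).

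For (ii), let \(P_1,P_2\) be the two boundary points. The reduced boundary \(D_C\) is a \(k\)-rational divisor of degree two, so \(P_1,P_2\) are defined over a field \(k'\) with \([k':k]\le2\); this is the splitting extension. Over \(k'\), the curve \(\widetilde C_{k'}\) is a genus-zero curve with a \(k'\)-rational point, hence \(\cong\Pj^1_{k'}\). I then send \(P_1\mapsto0\) and \(P_2\mapsto\infty\) by a \(k'\)-automorphism. This identifies \(C^\nu_{k'}\cong\Pj^1\setminus\{0,\infty\}=\Gm\).

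The only point needing care is this descent step: checking that the conic \(\widetilde C\) acquires a rational point, and hence becomes \(\Pj^1\), over the field of definition of a single boundary point. Riemann--Roch settles it: the anticanonical divisor is defined over \(k\) and has degree \(2\), and a rational point of degree \(1\) over \(k'\) gives \(\OO(P_1)\) with two sections, hence an isomorphism to \(\Pj^1_{k'}\). The phrase ``after splitting the two boundary points'' in the statement permits exactly this extension, so nothing beyond this is required.
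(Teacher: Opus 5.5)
Your proposal is correct and follows essentially the same route as the paper: a sign analysis of \(2\gamma-2+r\) with \(\gamma\ge0\), \(r\ge1\), followed by identifying the two genus-zero cases with \(\Pj^1\setminus\{\infty\}\) and \(\Pj^1\setminus\{0,\infty\}\). The one difference is that in (ii) you work over the at-most-quadratic field splitting the boundary, using Riemann--Roch on \(\OO(P_1)\) to get \(\widetilde C_{k'}\cong\Pj^1_{k'}\); the paper simply makes this identification over \(\bar k\), so your version is slightly sharper.
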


\begin{proof}
Since \(D_C\ne\varnothing\), put \(r=\#D_C\ge1\).  The formula
\(\lambda=2\gamma-2+r\), with \(\gamma\ge0\), gives \(\lambda<0\) only for
\((\gamma,r)=(0,1)\), and \(\lambda=0\) only for \((\gamma,r)=(0,2)\).  The corresponding
affine curves over \(\bar k\) are \(\Pj^1\setminus\{\infty\}\cong\A^1\) and
\(\Pj^1\setminus\{0,\infty\}\cong\Gm\).  The remaining cases have
\(2\gamma-2+r>0\).
\end{proof}

\begin{remark}[Curve-level logarithmic data]
The logarithmic input in this paper is the curve-level pair consisting of the
smooth projective normalization and its reduced boundary divisor.  The boundary
divisor records the allowed poles, and \(\Omega^1(\log D)\) records the
log-canonical degree.  This is the one-dimensional setting of logarithmic
structures in the sense of Fontaine--Illusie--Kato \cite{KatoLog}.
\end{remark}

\begin{definition}[Primitive polynomial source]
\label{def:primitive-source}
A \emph{primitive polynomial source} for \((f,g)\) over \(k\) is a pair of
nonconstant polynomials \((A,B)\in k[t]^2\) such that
\[
        f(A(t))=g(B(t))
\]
and
\[
        k(A(t),B(t))=k(t).
\]
It is a graph source if \(B\in k[A]\), and a non-graph source otherwise.
Two primitive sources are identified if they differ by an affine change of
parameter \(t\mapsto at+b\), \(a\in k^*\), \(b\in k\).
\end{definition}

\begin{theorem}[Log-negative components as primitive polynomial sources]
\label{thm:one-infinity-source-diagrams}
Let \(k\) be a field of characteristic zero, and let \(f,g\in k[x]\) be
nonconstant.  There is a bijection between:
\begin{enumerate}[label=\textup{(\roman*)}]
    \item geometrically integral irreducible components \(C\) of
    \(C_{f,g}\), defined over \(k\), whose affine normalization is isomorphic to
    \(\A^1_k\);
    \item primitive polynomial sources \((A,B)\) for \((f,g)\), up to affine
    change of the parameter.
\end{enumerate}
Under this correspondence
\[
        X=A(t),\qquad Y=B(t),
\]
\[
        d_X(C)=\deg A,
        \qquad
        d_Y(C)=\deg B,
\]
and
\[
        \deg f\cdot\deg A=\deg g\cdot\deg B.
\]
Moreover, \(C\) is a graph component if and only if \(B\in k[A]\).
\end{theorem}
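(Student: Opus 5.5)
We construct the two maps explicitly and check that they are mutually inverse. The construction from components to sources is given first.

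\emph{From components to sources.} Let \(C\) be as in (i), and fix an isomorphism \(\A^1_k\cong C^\nu\) with parameter \(t\). The coordinate functions \(X,Y\) are regular on \(C^\nu\), so they pull back to polynomials \(A(t),B(t)\in k[t]\). The identity \(f(X)=g(Y)\) on \(C\) gives \(f(A)=g(B)\). Neither polynomial is constant, because \(f\) and \(g\) are nonconstant and \(C\) is a curve: if \(A\) were constant then \(g(B)\) would be constant, forcing \(B\) constant and \(C\) a point. Since \(C^\nu\to C\) is birational, \(k(C)=k(A,B)\) equals \(k(C^\nu)=k(t)\). Hence \((A,B)\) is a primitive source. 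A different choice of isomorphism changes \(t\) by an automorphism of \(\A^1_k\), that is, by \(t\mapsto at+b\), so the class of \((A,B)\) is well defined.

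\emph{From sources to components.} Given a primitive source \((A,B)\), consider the morphism \(\phi=(A,B)\colon\A^1_k\to\A^2_k\). Its image lands in \((f(X)=g(Y))\). The closure of the image is irreducible and one-dimensional, so it is an irreducible component \(C\) of \(C_{f,g}\). It is geometrically integral because it is the image of the geometrically integral \(\A^1\). The map \(\phi\) is birational onto \(C\) because \(k(A,B)=k(t)\).

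It is also finite. The coordinate ring \(k[A,B]\subset k[t]\) contains the nonconstant \(A\), and \(t\) is integral over \(k[A]\): it is a root of \(A(T)-A\), whose leading coefficient is a unit. Hence \(k[t]\) is integral over \(k[C]=k[A,B]\). Since \(k[t]\) is normal with the same fraction field, it is the integral closure of \(k[C]\). Therefore \(C^\nu\cong\A^1_k\).

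The two constructions are inverse up to affine reparametrization. This is the main point requiring care, and it amounts to the statement that the normalization is unique up to unique isomorphism over \(C\).

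\emph{Degree formulas.} Here \(\widetilde C=\Pj^1\), and \(X=A(t)\) as a map \(\Pj^1\to\Pj^1\) has degree \(\deg A\); likewise \(d_Y=\deg B\). Comparing degrees of \(f(A)=g(B)\) gives \(\deg f\cdot\deg A=\deg g\cdot\deg B\).

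\emph{Graph criterion.} If \(C\) is the graph \(Y=h(X)\), then \(B=h(A)\in k[A]\). Conversely, if \(B=h(A)\) for a polynomial \(h\), then \(k(t)=k(A,B)=k(A)\), so \(\deg A=1\) and \(C\) has generic degree one over \(\A^1_X\). By \cref{lem:degree-one-graph-components}, \(C\) is a graph component. Alternatively, \(C\) is the closure of \(\{Y=h(X)\}\) directly, and \(f(A)=g(h(A))\) with \(A\) nonconstant gives \(f=g\circ h\).

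The main obstacle is the integrality and normality step in the second construction, which identifies \(\A^1\) with the normalization rather than with some other birational model. It is handled by the monic-polynomial observation for \(t\) over \(k[A]\).
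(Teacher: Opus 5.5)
Your proposal is correct and follows essentially the same route as the paper's proof. In both, $X,Y$ are pulled back along a normalization parameter; for the converse, $k[t]$ is identified as the integral closure of $k[A,B]$ via the monic equation $A(T)-A(t)=0$; and the graph criterion reduces to $f=g\circ h$. The differences are minor: you obtain nonconstancy of $A,B$ directly rather than from finiteness of the coordinate projections, and you explicitly justify geometric integrality of the image closure, which the paper leaves implicit.
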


\begin{proof}
Let \(C\) be a component whose affine normalization is \(\A^1_k\), and choose a
coordinate \(t\) on this normalization.  We first record that the coordinate
projections from every irreducible component of \(f(X)-g(Y)=0\) are finite and
dominant.  If \(n=\deg g\) and \(b_n\) is the leading coefficient of \(g\), then
in the coordinate ring of any component the element \(Y\) satisfies the monic
relation
\[
        Y^n+\frac{b_{n-1}}{b_n}Y^{n-1}+\cdots+
        \frac{b_0-f(X)}{b_n}=0
\]
over \(k[X]\).  Thus the component coordinate ring is finite over \(k[X]\).  The
same argument, with \(f\) and \(g\) interchanged, gives finiteness over
\(k[Y]\).  Since the component has dimension one, neither finite image can be a
closed point of the corresponding affine line; hence both projections are
dominant and \(X,Y\) are nonconstant on \(C\).

The restrictions of the coordinate functions \(X,Y\) are regular functions on
\(\A^1_k\), hence
\[
        X=A(t),\qquad Y=B(t)
\]
for some nonconstant \(A,B\in k[t]\).  The equation of the component gives
\(f(A(t))=g(B(t))\).  Since \(t\) is a normalization
parameter, the function field of the image curve is
\(k(A(t),B(t))=k(t)\).  The projection degrees are the degrees of the rational
functions \(A\) and \(B\), hence \(\deg A\) and \(\deg B\).  Comparing degrees in
\(f(A(t))=g(B(t))\) gives
\[
        \deg f\cdot\deg A=\deg g\cdot\deg B.
\]
Changing the coordinate on \(\A^1_k\) changes \((A,B)\) by an affine
reparametrization.

Conversely, a primitive source defines a morphism
\[
        \A^1_t\longrightarrow \A^2_{X,Y},
        \qquad
        t\longmapsto (A(t),B(t)),
\]
whose image lies in \(C_{f,g}\).  Let
\[
        \varphi:k[X,Y]\longrightarrow k[t],
        \qquad X\longmapsto A(t),\quad Y\longmapsto B(t).
\]
The kernel of \(\varphi\) is a prime ideal containing \(f(X)-g(Y)\), and its
quotient is the image coordinate ring \(k[A(t),B(t)]\).  Since the image has
dimension one and is contained in the plane curve \(C_{f,g}\), its Zariski
closure is an irreducible component of the reduced curve \(C_{f,g}\), with
coordinate ring \(k[A(t),B(t)]\).  The condition \(k(A(t),B(t))=k(t)\) says that
\(\A^1_t\) is birational to this component.  The affine normalization is exactly
\(\A^1_k\): indeed, write
\[
        A(T)=a_dT^d+a_{d-1}T^{d-1}+\cdots+a_0,
        \qquad a_d\in k^*.
\]
Then the element \(t\) satisfies the monic equation
\[
        T^d+\frac{a_{d-1}}{a_d}T^{d-1}+\cdots+
        \frac{a_0-A(t)}{a_d}=0
\]
over \(k[A(t)]\subseteq k[A(t),B(t)]\), so \(t\) is integral over
\(k[A(t),B(t)]\).  Therefore \(k[t]\) is integral over \(k[A(t),B(t)]\).
Conversely, any element of \(k(t)\) integral over \(k[A(t),B(t)]\) is
integral over \(k[t]\), hence lies in \(k[t]\), since \(k[t]\) is integrally
closed.

Finally, if \(B\in k[A]\), say \(B=h(A)\), then
\(f(A)=g(h(A))\).  Since \(A\) is nonconstant, \(f=g\circ h\), and the component
is the graph \(Y=h(X)\).  Conversely, every graph component has
\(B=h(A)\).
\end{proof}

\begin{lemma}[Boundary branches and completed local normalization]
\label{lem:boundary-branches-completion}
Let \(\bar C\) be a reduced curve over an algebraically closed field \(K\), let
\(\nu:\bar C^\nu\to\bar C\) be its normalization, and let
\(p\in\bar C\) be a closed point.  Then the points of \(\nu^{-1}(p)\) are in
natural bijection with the factors of the normalization of the reduced completed
local ring \(\widehat{\OO_{\bar C,p}}_{\red}\).  Equivalently, they are the
normalized analytic branches of \(\bar C\) at \(p\).
\end{lemma}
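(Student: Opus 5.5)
The plan is to reduce the statement to standard commutative algebra: normalization commutes with completion for excellent rings, and points of a normalization over \(p\) correspond to maximal ideals of the semilocal normalization. Set \(R=\OO_{\bar C,p}\), a reduced one-dimensional local ring essentially of finite type over \(K\), hence excellent. Let \(R'\) be its normalization, its integral closure in its total ring of fractions. Since normalization is finite here, \(R'\) is a finite \(R\)-module and a semilocal ring. Because normalization is compatible with localization, its maximal ideals are exactly the points of \(\nu^{-1}(p)\). Each localization \(R'_{\mathfrak m}\) is a regular local ring of dimension one, i.e.\ a DVR.

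The key step is to complete. Since \(R'\) is finite over \(R\),
\[
  R'\otimes_R\widehat R\;\cong\;\widehat{R'}\;\cong\;\prod_{\mathfrak m\in\nu^{-1}(p)}\widehat{R'_{\mathfrak m}},
\]
the second isomorphism being the Chinese remainder decomposition of the completion of a semilocal ring along its Jacobson radical. Each factor \(\widehat{R'_{\mathfrak m}}\) is a complete DVR, and since \(K\) is algebraically closed it is isomorphic to \(K[[t]]\). It remains to identify \(\prod_{\mathfrak m}\widehat{R'_{\mathfrak m}}\) with the normalization of \(\widehat R\). Since \(R\) is excellent and reduced, \(\widehat R\) is reduced, so \(\widehat R_{\red}=\widehat R\). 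Moreover \(\widehat R\to\widehat{R'}\) is finite and injective, because \(\widehat R\) is flat over \(R\). It is birational: the conductor contains a nonzerodivisor of \(R\), and this element remains a nonzerodivisor in \(\widehat R\) by flatness. Finally, the target \(\widehat{R'}\) is normal, being a finite product of DVRs. So \(\widehat{R'}\) is the integral closure of \(\widehat R\) in its total ring of fractions. Its factors, which are the minimal primes of the normalization, i.e.\ its connected components, are therefore indexed by \(\nu^{-1}(p)\), which gives the bijection. These factors are by definition the normalized analytic branches, so the equivalent phrasing follows as well.

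The main obstacle is the exactness and normality input: that \(\widehat R\) is reduced, and that \(\widehat{R'}\) is normal and birational over \(\widehat R\). This is where excellence, specifically analytic unramifiedness, is used. If the reader prefers to avoid excellence, I would first try the direct route. Normality of each \(\widehat{R'_{\mathfrak m}}\) is immediate, since the completion of a DVR is a DVR. Birationality follows from the conductor argument above, which only needs flatness of completion. Reducedness of \(\widehat R\) then comes for free, because \(\widehat R\) injects into the reduced ring \(\prod K[[t]]\). So the one genuine input is that \(R'\) is finite over \(R\), which holds for curves of finite type over a field.
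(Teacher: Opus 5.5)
Your proposal is correct and follows essentially the same route as the paper: you base-change the finite normalization to the completion to get $\prod_{Q\in\nu^{-1}(p)}\widehat{\OO_{\bar C^\nu,Q}}$, a finite product of complete DVRs, and identify it with the normalization of $\widehat{\OO_{\bar C,p}}_{\red}$ because it is finite, has the same total quotient ring, and is normal. Your conductor-and-flatness argument for the equality of total quotient rings, and your observation that reducedness of the completion follows automatically, fill in details that the paper delegates to the cited excellence facts.
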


\begin{proof}
Curves over a field are excellent, so normalization is finite and behaves in the
usual way after completion at a closed point; see the standard excellence and
normalization facts in \cite[Tags 07QS, 032E and 035S]{StacksProject}.  Since
\(\nu\) is finite, base change to the completed local ring gives
\[
        \OO_{\bar C^\nu,\nu^{-1}(p)}
        \otimes_{\OO_{\bar C,p}}\widehat{\OO_{\bar C,p}}
        \cong
        \prod_{Q\in\nu^{-1}(p)}\widehat{\OO_{\bar C^\nu,Q}}.
\]
Each factor on the right is a complete discrete valuation ring.  The product is
finite and integral over \(\widehat{\OO_{\bar C,p}}_{\red}\), has the same total
quotient ring, and is normal.  It is therefore the normalization of the reduced
completed local ring.  Thus the factors, or equivalently the normalized analytic
branches, are indexed exactly by the points of the projective normalization over
\(p\).
\end{proof}

\begin{theorem}[Primitive contact at infinity]
\label{thm:primitive-contact-infinity}
Let \(k\) be a field of characteristic zero, let \(f,g\in k[x]\) be
nonconstant, and put
\[
        m=\deg f,
        \qquad
        n=\deg g,
        \qquad
        e=\gcd(m,n),
        \qquad
        M=\frac m e,
        \qquad
        N=\frac n e.
\]
Let \(C_1,\ldots,C_s\) be the geometrically irreducible components of
\(C_{f,g,\bar k}\).  For each \(i\), let \(D_i\) be the boundary of the affine
normalization of \(C_i\).  Then
\[
        \sum_{i=1}^s \#D_i=e.
\]
Moreover, for every \(i\) and every boundary point \(P\in D_i\),
\[
        -\ord_P X=N,
        \qquad
        -\ord_P Y=M.
\]
Consequently every geometrically integral component \(C_i\) satisfies
\[
        d_X(C_i)=N\#D_i,
        \qquad
        d_Y(C_i)=M\#D_i.
\]
\end{theorem}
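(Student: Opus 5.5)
We work over \(\bar k\) and compactify the plane curve \(C_{f,g}\) inside \(\Pj^1_X\times\Pj^1_Y\), taking its closure \(\bar C\). The plan is to locate all boundary points of the normalizations over the single point \((\infty,\infty)\), to compute the analytic branches there, and then to read off the degrees from the fact that \(X\) has poles only at the boundary.

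\textbf{Step 1: the boundary lies over \((\infty,\infty)\).} The finiteness argument in the proof of \Cref{thm:one-infinity-source-diagrams} shows that \(Y\) is integral over \(k[X]\) on every component, and \(X\) is integral over \(k[Y]\). Hence on \(\widetilde C_i\) the function \(X\) has a pole exactly where \(Y\) has a pole. The boundary \(D_i\) is the set of points where \(X\) or \(Y\) is not regular, so every \(P\in D_i\) maps to \((\infty,\infty)\). Moreover, by \Cref{lem:boundary-branches-completion} applied to \(\bar C\), the union of the \(D_i\) is in bijection with the normalized analytic branches of \(\bar C\) at \((\infty,\infty)\). Branches of distinct components are distinct, and the reduced closure \(\bar C\) has the same branches as the union of the \(\bar C_i\).

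\textbf{Step 2: local analysis at \((\infty,\infty)\).} Since \(\operatorname{char} k=0\) and \(f\) has degree \(m\), there is a formal coordinate \(u\) at \(\infty\in\Pj^1_X\) with \(1/f(X)=u^m\). Concretely, \(u=X^{-1}(a_m+a_{m-1}X^{-1}+\cdots)^{-1/m}\), which is a uniformizer. Likewise choose \(v\) at \(\infty\in\Pj^1_Y\) with \(1/g(Y)=v^n\). The local equation \(f(X)=g(Y)\) becomes \(u^m=v^n\), that is, the completed local ring is \(\bar k[[u,v]]/(u^m-v^n)\). This equation is reduced, so the completed local ring of the reduced curve is the same as that of \(f-g\) itself. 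Over \(\bar k\) we factor
\[
u^m-v^n=\prod_{\zeta^e=1}\bigl(u^M-\zeta v^N\bigr).
\]
Each factor \(u^M-\zeta v^N\) with \(\gcd(M,N)=1\) is analytically irreducible, with normalization \(u=c\,s^N\), \(v=c'\,s^M\). So there are exactly \(e\) normalized branches, and \(\sum_i\#D_i=e\).

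\textbf{Step 3: pole orders and degrees.} On each branch, \(\ord_s u=N\). Since \(X^{-1}\) is a unit multiple of \(u\), we get \(-\ord_P X=N\), and symmetrically \(-\ord_P Y=M\). The map \(X:\widetilde C_i\to\Pj^1\) has polar divisor supported on \(D_i\), each point with multiplicity \(N\). Therefore \(d_X(C_i)=\deg X^*(\infty)=N\#D_i\), and in the same way \(d_Y(C_i)=M\#D_i\).

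The main obstacle is the rigorous justification of Step 2. One must check that the formal coordinate change \((X^{-1},Y^{-1})\mapsto(u,v)\) is an automorphism of \(\bar k[[X^{-1},Y^{-1}]]\), and that the equation \(f=g\), after multiplying by \(X^{-m}Y^{-n}\), is a unit multiple of \(u^m-v^n\) in that ring. Concretely, \(f(X)^{-1}-g(Y)^{-1}\) vanishes on the curve and equals \(u^m-v^n\). One also has to confirm irreducibility of each factor \(u^M-\zeta v^N\), which follows from the parametrization \(s\mapsto(\zeta' s^N,s^M)\) being injective on formal points because \(\gcd(M,N)=1\).
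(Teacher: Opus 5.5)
Your proposal is correct and follows essentially the same route as the paper. Both arguments place every boundary point over \((\infty,\infty)\), identify the completed local ring there with \(\bar k[[u,v]]/(u^m-v^n)\) after unit changes of the uniformizers, split it into the \(e\) branches \(u^M=\zeta v^N\) parametrized by \(u\propto s^N\), \(v\propto s^M\), and read off the pole orders and hence the degrees. The differences are cosmetic. You work with the closure in \(\Pj^1_X\times\Pj^1_Y\), which near \((\infty,\infty)\) is cut out by the bihomogenized equation, as you note; the paper uses the fiber product \(\Pj^1_X\times_{\Pj^1_T}\Pj^1_Y\). You also show that \(X\) and \(Y\) have poles at the same points by integrality, where the paper compares poles on the two sides of \(f(X)=g(Y)\).
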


\begin{proof}
It is enough to work over \(\bar k\).  Compactify the two polynomial maps to
finite morphisms
\[
        f:\Pj^1_X\longrightarrow\Pj^1_T,
        \qquad
        g:\Pj^1_Y\longrightarrow\Pj^1_T.
\]
Let
\[
        \mathscr F=\Pj^1_X\times_{\Pj^1_T}\Pj^1_Y.
\]
Over \(\A^1_T\), the reduced scheme underlying \(\mathscr F\) is the affine
curve \(C_{f,g,\bar k}\).  Since \(f\) and \(g\) are polynomials, the only point
of \(\Pj^1_X\) above \(T=\infty\) is \(X=\infty\), and the only point of
\(\Pj^1_Y\) above \(T=\infty\) is \(Y=\infty\).  Hence every boundary branch of
every affine component lies over the single point \((\infty,\infty)\) of
\(\mathscr F\).

Let \(z=1/T\) be a uniformizer at \(\infty\) on the target, and let
\(u=1/X\), \(v=1/Y\) be uniformizers at \(\infty\) on the two source lines.  In
the completed local rings of the two source lines one has
\[
        z=u^m\varepsilon(u),
        \qquad
        z=v^n\delta(v),
\]
where \(\varepsilon(0),\delta(0)\ne0\).  Since \(\bar k\) is algebraically closed
of characteristic zero, the units \(\varepsilon\) and \(\delta\) admit \(m\)-th
and \(n\)-th roots in the corresponding complete local rings.  After replacing
\(u\) and \(v\) by unit multiples, we may therefore write
\[
        z=u^m,
        \qquad
        z=v^n.
\]
Thus the completed local ring of \(\mathscr F\) at \((\infty,\infty)\) is
analytically isomorphic to
\[
        \bar k[[u,v]]/(u^m-v^n).
\]
Writing \(m=eM\) and \(n=eN\), with \((M,N)=1\), gives
\[
        u^m-v^n
        =u^{eM}-v^{eN}
        =\prod_{\zeta^e=1}\bigl(u^M-\zeta v^N\bigr).
\]
For each \(e\)-th root of unity \(\zeta\), the factor
\(u^M-\zeta v^N\) is analytically irreducible.  Indeed, choose nonzero
\(\alpha,\beta\in\bar k\) with \(\alpha^M=\zeta\beta^N\); then
\[
        u=\alpha s^N,
        \qquad
        v=\beta s^M
\]
defines the normalization of that branch.  Since \((M,N)=1\), the kernel of
\(\bar k[[u,v]]\to\bar k[[s]]\) is the height-one prime generated by
\(u^M-\zeta v^N\).

Passing now to the reduced completed local ring and then to its normalization,
we obtain exactly \(e\) normalized branches above \((\infty,\infty)\).  More
explicitly, the formal discs displayed above are the normalization factors of
\(\widehat{\OO_{\mathscr F,(\infty,\infty)}}_{\red}\).  Nilpotent structure at
the completed fiber product does not change the reduced affine curve.  By
\Cref{lem:boundary-branches-completion}, these normalized reduced analytic
branches are exactly the points of the projective normalizations of the reduced
geometric components lying over \((\infty,\infty)\).  Conversely, every point of
\(D_i\) for any affine component \(C_i\) lies over \(X=\infty\) and
\(Y=\infty\).  Indeed, if neither coordinate had a pole, the point would belong
to the affine normalization, while if exactly one coordinate had a pole then the
identity \(f(X)=g(Y)\) would have a pole on exactly one side.  Hence the displayed
analytic branches are precisely the boundary points of the affine normalizations
of the geometric components \(C_1,\ldots,C_s\), and
\(\sum_i\#D_i=e\).  Along every such normalized branch,
\[
        \ord_s(u)=N,
        \qquad
        \ord_s(v)=M.
\]
The original parameters \(1/X\) and \(1/Y\) differ from the displayed \(u\) and
\(v\) only by units, so the pole orders of the coordinate functions are
\[
        -\ord_P X=N,
        \qquad
        -\ord_P Y=M.
\]
Finally, the degree of a rational function on a smooth projective curve is the
degree of its pole divisor.  Thus, for every geometric component \(C_i\),
\[
\begin{aligned}
        d_X(C_i)&=\sum_{P\in D_i}(-\ord_P X)=N\#D_i,\\
        d_Y(C_i)&=\sum_{P\in D_i}(-\ord_P Y)=M\#D_i.
\end{aligned}
\]
\end{proof}

\begin{corollary}[Primitive geometric degree spectrum]
\label{cor:geometric-degree-spectrum}
With notation as in \Cref{thm:primitive-contact-infinity}, every geometrically
integral component \(C\) of \(C_{f,g}\) satisfies
\[
        d_X(C)=N\#D_C,
        \qquad
        d_Y(C)=M\#D_C.
\]
In particular, every log-negative component has
\[
        d_X(C)=N,
        \qquad
        d_Y(C)=M.
\]
If such a component is non-graph, then \(N\ge2\).  Equivalently, after graph
removal the one-infinity power-producing \(X\)-degree set is either empty or
\(\{N\}\) when \(N\ge2\), and is empty when \(N=1\).
\end{corollary}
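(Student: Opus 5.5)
The first display is just \Cref{thm:primitive-contact-infinity} restated for a single component. A geometrically integral component \(C\) of \(C_{f,g}\) stays irreducible after base change to \(\bar k\). So \(C_{\bar k}\) is one of the geometric components \(C_i\), and its normalization and boundary are obtained from those of \(C\) by base change. Normalization commutes with separable base change, which applies here because the characteristic is zero. The theorem therefore gives \(d_X(C)=N\#D_C\) and \(d_Y(C)=M\#D_C\) directly. Degrees of the maps \(X,Y\colon\widetilde C\to\Pj^1\) are unchanged under extension of scalars, so it makes no difference whether we compute them over \(k\) or over \(\bar k\).

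For the log-negative statement I will use \Cref{lem:curve-log-trichotomy}(i). It says that \(\lambda(C)<0\) forces \((\genus(\widetilde C),\#D_C)=(0,1)\). The hypothesis \(D_C\neq\varnothing\) holds because, as shown in the proof of \Cref{thm:one-infinity-source-diagrams}, the projection to the \(X\)-line is finite and dominant. Hence \(X\) is a nonconstant function on \(\widetilde C\) and must have a pole, and that pole lies in \(D_C\). Substituting \(\#D_C=1\) into the first display gives \(d_X(C)=N\) and \(d_Y(C)=M\).

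For the non-graph claim, suppose \(N=1\) and \(C\) is log-negative. Then \(d_X(C)=1\), so \(C\) has generic degree one over \(\A^1_X\). By \Cref{lem:degree-one-graph-components}, applied over \(\bar k\) or over \(k\), \(C\) is a graph component. So a non-graph log-negative component forces \(N\ge2\).

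The reformulation about the degree set then follows. After graph removal, every one-infinity component has \(X\)-degree exactly \(N\). Thus the set of such degrees is contained in \(\{N\}\), equals \(\{N\}\) if some such component exists, and is empty when \(N=1\) by the previous step.

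The only point that needs care is the field-of-definition bookkeeping in the first step: identifying \(D_C\) and the degrees \(d_X,d_Y\) of a \(k\)-component with those of its geometric counterpart. I would handle it by noting that normalization and the boundary complement commute with the base change \(k\to\bar k\), and that "geometrically integral" means \(C_{\bar k}\) is a single \(C_i\). The counts \(\#D_C\) in the statement are geometric counts, consistent with the convention fixed in the introduction.
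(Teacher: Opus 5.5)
Your proof is correct and follows the paper's argument. The degree formulas are read off from \Cref{thm:primitive-contact-infinity}, log-negativity gives \(\#D_C=1\), and \(N=1\) would force \(d_X(C)=1\), which contradicts \Cref{lem:degree-one-graph-components} for a non-graph component. Your extra bookkeeping only makes explicit what the paper leaves implicit: the base change to \(\bar k\), and the check that \(D_C\neq\varnothing\), which is needed to invoke \Cref{lem:curve-log-trichotomy}.
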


\begin{proof}
The exact degree formulas are \Cref{thm:primitive-contact-infinity}.  A
log-negative component has \(\#D_C=1\), so \(d_X(C)=N\) and \(d_Y(C)=M\).  If it
is non-graph and \(N=1\), then \(d_X(C)=1\), but the degree-one components over
the \(X\)-line are exactly graph components by
\Cref{lem:degree-one-graph-components}.  This contradiction proves \(N\ge2\) in
the non-graph case.
\end{proof}

\begin{corollary}[Ground-field primitive degree spectrum]
\label{cor:ground-field-degree-spectrum}
With notation as in \Cref{cor:geometric-degree-spectrum}, let \(C\) be an
irreducible component of \(C_{f,g}\) over \(k\) which dominates the \(X\)-line.
After base change to \(\bar k\), write the reduced geometric decomposition as
\[
        C_{\bar k}=C_1\cup\cdots\cup C_s.
\]
Then
\[
        d_X(C)=N\sum_{i=1}^s\#D_{C_i},
        \qquad
        d_Y(C)=M\sum_{i=1}^s\#D_{C_i}.
\]
In particular
\[
        d_X(C)\in N\Z_{\ge1},
        \qquad
        d_Y(C)\in M\Z_{\ge1}.
\]
If \(C\) is geometrically integral and log-negative, then \(d_X(C)=N\); if it is
also non-graph, then \(N\ge2\).
\end{corollary}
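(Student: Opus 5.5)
The plan is to reduce to the geometric statement \Cref{thm:primitive-contact-infinity} by base change, using that projection degrees are additive over geometric components. First, \(C\) is an irreducible component of the reduced curve \(C_{f,g}\) over \(k\). Since \(\operatorname{char}k=0\), \(k\) is perfect, so \(C_{\bar k}\) is reduced. Its irreducible components \(C_1,\dots,C_s\) are then among the geometric components of \(C_{f,g,\bar k}\), and they form a single Galois orbit.

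The degree \(d_X(C)\) is the degree of the finite map \(C\to\A^1_X\) (finiteness was shown in the proof of \Cref{thm:one-infinity-source-diagrams}), that is, the generic rank of \(\OO(C)\) over \(k[X]\). Generic rank is preserved by the flat base change \(k\to\bar k\). For a reduced curve it equals the sum of the generic ranks over the irreducible components. Hence
\[
        d_X(C)=\sum_{i=1}^s d_X(C_i),
\]
and similarly for \(Y\). A more careful way to say this is \([k(C):k(X)]=\dim_{\bar k(X)}\bigl(k(C)\otimes_{k(X)}\bar k(X)\bigr)\). Since \(k(C)\otimes_k\bar k\) is reduced, this tensor product is a product of the function fields \(\bar k(C_i)\), and the dimension splits accordingly. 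Each \(d_X(C_i)\) is the degree of \(X\) on the smooth projective normalization \(\widetilde{C_i}\), which agrees with the generic degree of the affine component.

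Next, apply \Cref{thm:primitive-contact-infinity} to each \(C_i\). This gives \(d_X(C_i)=N\#D_{C_i}\) and \(d_Y(C_i)=M\#D_{C_i}\). Summing yields the two displayed formulas. Each \(\#D_{C_i}\ge1\), because \(X\) is nonconstant on \(C_i\) and so has a pole on \(\widetilde{C_i}\), and all such poles lie in the boundary. Also \(s\ge1\). Therefore \(\sum_i\#D_{C_i}\in\Z_{\ge1}\), which gives the membership statements.

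Finally, if \(C\) is geometrically integral, then \(s=1\). Log-negativity means \(\#D_C=1\) by \Cref{lem:curve-log-trichotomy}, so \(d_X(C)=N\). In the non-graph case, \(N\ge2\) follows exactly as in \Cref{cor:geometric-degree-spectrum}, via \Cref{lem:degree-one-graph-components} applied over \(\bar k\). The only step needing care is the additivity of degree under base change, which uses reducedness of \(C_{\bar k}\) (perfectness of \(k\)). Everything else is bookkeeping.
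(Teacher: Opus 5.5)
Your proposal is correct and follows the paper's proof essentially step for step: perfectness of \(k\) makes \(C_{\bar k}\) reduced, the generic rank of \(k[C]\) over \(k[X]\) is unchanged by base change and splits as \(\sum_i d_X(C_i)\), and then \Cref{cor:geometric-degree-spectrum} (that is, \Cref{thm:primitive-contact-infinity}) together with \Cref{lem:degree-one-graph-components} finishes the argument. Your explicit check that each \(\#D_{C_i}\ge1\) makes precise a point the paper leaves implicit. For the non-graph step, you can apply \Cref{lem:degree-one-graph-components} directly over \(k\), since \(d_X(C)=1\) gives \(k(C)=k(X)\); this avoids having to descend \(h\) from \(\bar k\).
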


\begin{proof}
Since \(k\) has characteristic zero, \(k\) is perfect and every irreducible
component is geometrically reduced after base change.  The coordinate ring
\(k[C]\) is finite and torsion-free over \(k[X]\), because the component
dominates the \(X\)-line and the projection is finite.  Hence \(d_X(C)\) is the
rank of the finite \(k[X]\)-module \(k[C]\).  After tensoring with
\(\bar k[X]\), this rank is unchanged, while the reduced geometric decomposition
gives the total quotient algebra as the product of the function fields of the
geometric components.  Therefore the rank, equivalently the degree of the finite
map, is the sum of the projection degrees of the geometric components:
\[
        d_X(C)=\sum_{i=1}^s d_X(C_i).
\]
By \Cref{cor:geometric-degree-spectrum}, each summand is \(N\#D_{C_i}\), giving
the formula for \(d_X(C)\).  The proof for \(d_Y(C)\) is identical.  The final
assertions are the geometrically integral case and
\Cref{lem:degree-one-graph-components}.
\end{proof}

\begin{example}[Sharpness of primitive contact]
\label{ex:gcd-spectrum-sharp}
Let
\[
        m=eM,
        \qquad
        n=eN,
        \qquad
        (M,N)=1,
        \qquad
        N\ge2.
\]
Take
\[
        f(X)=X^{eM},
        \qquad
        g(Y)=Y^{eN}.
\]
Then \(X^{eM}-Y^{eN}\) has the irreducible component
\[
        X^M-Y^N=0.
\]
Indeed, the binomial \(X^M-Y^N\) is irreducible when \((M,N)=1\).  It is
parametrized by
\[
        X=t^N,
        \qquad
        Y=t^M.
\]
It is log-negative, primitive, and non-graph, with
\[
        d_X=N=\frac{\deg g}{\gcd(\deg f,\deg g)}.
\]
Thus the primitive one-infinity degree \(d_X=N\) is realized for every \(N\ge2\).
\end{example}

\begin{lemma}[Primitivity and divided differences]
\label{lem:primitive-divided-differences}
Let \(k\) be an algebraically closed field of characteristic zero, and let
\(A,B\in k[t]\) be nonconstant.  Put
\[
        F_A(T,U)=\frac{A(T)-A(U)}{T-U},
        \qquad
        F_B(T,U)=\frac{B(T)-B(U)}{T-U}.
\]
Then
\[
        k(A(t),B(t))=k(t)
\]
if and only if \(F_A\) and \(F_B\) have no common nonconstant factor in
\(k[T,U]\).
\end{lemma}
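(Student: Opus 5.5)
We argue geometrically via the morphism \(\phi=(A,B):\A^1_t\to\A^2\) and the curve
\[
        Z=\{(T,U): A(T)=A(U),\ B(T)=B(U)\}\subset\A^2_{T,U},
\]
which is the fiber product of \(\phi\) with itself. It contains the diagonal, and
\[
        Z=\Delta\cup Z',\qquad Z'=V(F_A,F_B),
\]
since \(A(T)-A(U)=(T-U)F_A\) and similarly for \(B\).

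The claim then reduces to the equivalence
\[
        \dim Z'=1 \iff [k(t):k(A,B)]\ge2.
\]
Because \(k[T,U]\) is a UFD, \(F_A\) and \(F_B\) share a nonconstant factor exactly when \(V(F_A,F_B)\) contains a curve. Otherwise \(V(F_A,F_B)\) is finite, and indeed empty or a finite set of points.

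\emph{Degree \(d\ge2\) gives a common factor.} Put \(L=k(A,B)\) and suppose \(d=[k(t):L]\ge2\). Over a generic point \(p\) of the image curve, the fiber of \(\phi\) has \(d\) distinct points: characteristic zero gives generic separability and unramifiedness, and the field is algebraically closed. Hence for a generic \(t_0\) there exists \(u_0\ne t_0\) with \(\phi(u_0)=\phi(t_0)\). Thus \(Z'\) contains infinitely many off-diagonal points, a whole family parametrized by \(t_0\), and so \(\dim Z'=1\). To make this rigorous, take the minimal polynomial \(\mu(X)\in L[X]\) of \(t\), which has degree \(d\). The factor \(\mu(U)/(U-T)\), with \(T\) in place of \(t\), yields points \((T,U)\) with \(U\) a conjugate of \(T\) over \(L\). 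These lie in \(Z'\) off the diagonal over the generic point, so \(Z'\) has a one-dimensional component dominating the \(T\)-axis.

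\emph{\(L=k(t)\) forces finiteness.} Conversely, suppose \(L=k(t)\), so \(\phi\) is birational onto its image \(\bar C\). By the integrality argument in \Cref{thm:one-infinity-source-diagrams}, \(\phi\) is the normalization map and is finite. It is then injective away from the preimage of the finitely many singular points of \(\bar C\). Hence off-diagonal points \((t_1,t_2)\) with \(\phi(t_1)=\phi(t_2)\) lie in a finite set, so \(Z'\) minus the diagonal is finite.

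The remaining step, which I expect to be the main obstacle, is the diagonal itself: \(Z'\) could contain a curve meeting the diagonal, or \(F_A,F_B\) could share the factor \(T-U\). Since \(F_A(t,t)=A'(t)\), the factor \(T-U\) divides \(F_A\) only if \(A'\equiv0\), which is impossible because \(A\) is nonconstant in characteristic zero. Any one-dimensional component of \(V(F_A,F_B)\) meets the diagonal in only finitely many points, so its off-diagonal part is infinite. That contradicts the finiteness established above and completes the equivalence.
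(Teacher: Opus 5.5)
Your proof is correct and follows essentially the paper's route. Both arguments split the self-fibre product of \(\phi\) as the diagonal together with \(V(F_A,F_B)\), control the diagonal through \(F_A(t,t)=A'(t)\), and match \([k(t):k(A,B)]\ge 2\) with infinitude of the off-diagonal part. The one small difference is in the birational direction: you use that \(\phi\) is the normalization and hence an isomorphism over the smooth locus of the image, whereas the paper argues that a one-dimensional off-diagonal component would give a generic family of distinct parameters with the same image.
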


\begin{proof}
Let
\[
        \phi:\A^1_t\longrightarrow \A^2,
        \qquad t\longmapsto(A(t),B(t)).
\]
The extension \(k(t)/k(A,B)\) is finite.  Indeed, if
\[
        A(T)=a_dT^d+a_{d-1}T^{d-1}+\cdots+a_0,
        \qquad a_d\in k^*,
\]
then \(t\) satisfies
\[
        T^d+\frac{a_{d-1}}{a_d}T^{d-1}+\cdots+
        \frac{a_0-A(t)}{a_d}=0
\]
over \(k[A]\subseteq k[A,B]\).  Hence \(t\) is integral over \(k[A,B]\), and
\(k(t)\) is finite over \(k(A,B)\).  Since the characteristic is zero, this
finite extension is separable.  Thus \(k(A,B)=k(t)\) is equivalent to \(\phi\)
being generically one-to-one onto its scheme-theoretic image.

Let \(R=k[A(t),B(t)]\subset k[t]\), so the scheme-theoretic image of
\(\phi\) is \(\Spec R\).  The fiber product
\(\A^1\times_{\Spec R}\A^1\) is cut out in the \((T,U)\)-plane by
\[
        A(T)-A(U)=0,
        \qquad
        B(T)-B(U)=0.
\]
Both equations contain the diagonal factor \(T-U\).  After removing this
factor, the off-diagonal fiber product is contained in
\[
        V(F_A,F_B)\subset\A^2_{T,U},
\]
and every point of \(V(F_A,F_B)\) with \(T\ne U\) belongs to the off-diagonal
fiber product.  The only possible extra points on the diagonal satisfy
\(A'(T)=B'(T)=0\), and this is a finite set because \(A\) and \(B\) are
nonconstant in characteristic zero.  Therefore the off-diagonal fiber product
has a one-dimensional component if and only if \(V(F_A,F_B)\) has one, which is
equivalent to \(F_A\) and \(F_B\) having a common nonconstant factor.

If \(\phi\) has generic degree greater than one, then over a general point of
its image there are two distinct parameters, so the off-diagonal fiber product
has dimension one.  Conversely, a one-dimensional off-diagonal component gives a
generic family of distinct parameters with the same image, so \(\phi\) is not
generically one-to-one.  Hence \(\phi\) is birational onto its image if and only
if \(F_A\) and \(F_B\) have no common nonconstant factor, as claimed.
\end{proof}

\begin{proposition}[Constructibility of the primitive source stratum]
\label{prop:source-strata-constructible}
Let \(k\) be an algebraically closed field of characteristic zero.  Fix integers
\(m,n,d\ge1\), and let \(\mathcal P_m\) and \(\mathcal P_n\) be the coefficient
spaces of degree-exactly-\(m\) and degree-exactly-\(n\) polynomials.  Put
\[
        e=\gcd(m,n),
        \qquad
        M=\frac m e,
        \qquad
        N=\frac n e.
\]
The locus of pairs
\[
        (f,g)\in\mathcal P_m\times\mathcal P_n
\]
for which \(C_{f,g}\) has a log-negative component of \(X\)-degree \(d\) is a
constructible subset.  The non-graph sublocus is also constructible.  Both loci
are empty unless
\[
        d=N.
\]
When \(d=N\), any such source has \(Y\)-degree \(M\).
\end{proposition}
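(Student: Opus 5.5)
The plan is to dispose of the degree constraints first and then prove constructibility by parametrizing sources. For the constraints, suppose \(C_{f,g}\) has a log-negative component \(C\) of \(X\)-degree \(d\). Since \(k\) is algebraically closed, \(C\) is geometrically integral and \(\#D_C=1\). \Cref{cor:geometric-degree-spectrum} then gives \(d=d_X(C)=N\) and \(d_Y(C)=M\). So both loci are empty unless \(d=N\), and any such source has \(Y\)-degree \(M\). From now on we assume \(d=N\).

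For constructibility we use \Cref{thm:one-infinity-source-diagrams}. Over \(k=\bar k\), log-negative components correspond to primitive polynomial sources \((A,B)\) with \(\deg A=N\) and \(\deg B=M\). Consider the incidence variety
\[
        Z\subset\mathcal P_m\times\mathcal P_n\times\mathcal P_N\times\mathcal P_M
\]
of quadruples \((f,g,A,B)\) satisfying the following two conditions.
\begin{enumerate}[label=\textup{(\alph*)}]
    \item \(f(A(t))=g(B(t))\) identically. This is a finite system of polynomial equations in the coefficients, so it defines a closed condition.
    \item \(k(A,B)=k(t)\). By \Cref{lem:primitive-divided-differences}, this holds exactly when \(F_A\) and \(F_B\) have no common nonconstant factor in \(k[T,U]\).
\end{enumerate}
The spaces \(\mathcal P_N\) and \(\mathcal P_M\) are open in affine spaces, so \(Z\) is a constructible subset of a variety of finite type. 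The locus in the statement is the image of \(Z\) under projection to \((f,g)\). By Chevalley's theorem this image is constructible.

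The main obstacle is showing that condition (b) is constructible. The bidegrees of \(F_A\) and \(F_B\) are bounded by \(N-1\) and \(M-1\). Having a common factor is therefore equivalent to the existence of a factorization \(F_A=PQ\), \(F_B=PR\) with \(P\) nonconstant and all factors of bounded degree. This is again the projection of a closed condition in a finite-dimensional coefficient space, so its complement is constructible. Alternatively, one can view the set of pairs with a common factor as a vanishing condition on a generalized resultant and its subresultants; this gives the same conclusion. I would write out the factorization version, stratifying by the degree of \(P\) to avoid degenerate leading terms.

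For the non-graph sublocus, intersect \(Z\) with the extra condition \(B\notin k[A]\). The condition \(B\in k[A]\) means \(B=h(A)\) for some \(h\) of degree \(M/N\). It can hold only when \(N\mid M\), hence only when \(N=1\) since \(\gcd(M,N)=1\). In that case it is the image of a closed incidence \(\{(A,B,h):B=h\circ A\}\), so it is constructible, and so is its complement. Alternatively, when \(N\ge2\) the graph condition is automatically void by \Cref{cor:geometric-degree-spectrum}, and when \(N=1\) the non-graph locus is empty. Either way, projecting by Chevalley's theorem again gives a constructible set, which completes the plan.
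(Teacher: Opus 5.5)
Your proposal is correct and follows essentially the same route as the paper: the degree constraints come from \Cref{cor:geometric-degree-spectrum}, the locus is the projection of the source incidence \(f(A)=g(B)\), primitivity is imposed via \Cref{lem:primitive-divided-differences} as the complement of a projected factorization incidence, the graph condition can only occur when \(N=1\), and Chevalley's theorem finishes. One wording point: requiring \(P\) to be nonconstant is an open condition, so the factorization incidence is locally closed rather than closed, as the paper states; this is harmless, since Chevalley's theorem still applies.
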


\begin{proof}
By \Cref{cor:geometric-degree-spectrum}, every log-negative component has
\(X\)-degree \(N\).  Hence the required loci are empty unless \(d=N\).  Assume
from now on that \(d=N\).  Then degree comparison in
\[
        f(A(t))=g(B(t))
\]
forces \(\deg B=M\).

By \Cref{thm:one-infinity-source-diagrams}, a log-negative component of
\(X\)-degree \(N\) is equivalent to a primitive polynomial source
\[
        f(A(t))=g(B(t)),
        \qquad
        \deg A=N,
        \qquad
        \deg B=M,
        \qquad
        k(A,B)=k(t),
\]
up to affine reparametrization.  Introduce coefficient variables for
\(f\in\mathcal P_m\), \(g\in\mathcal P_n\), \(A\in k[t]\) of degree exactly
\(N\), and \(B\in k[t]\) of degree exactly \(M\).  The identity
\[
        f(A(t))-g(B(t))=0
\]
is equivalent to finitely many polynomial equations in these coefficients.  It
therefore defines a locally closed incidence scheme \(I_{m,n}\).

It remains to impose primitivity.  By
\Cref{lem:primitive-divided-differences}, this is equivalent to the condition
that
\[
        F_A(T,U)=\frac{A(T)-A(U)}{T-U},
        \qquad
        F_B(T,U)=\frac{B(T)-B(U)}{T-U}
\]
have no common nonconstant factor in \(k[T,U]\).  The condition that two
polynomials of bounded bidegree in \(k[T,U]\) have a common factor of positive
degree is constructible.  Indeed, one takes the finite union over all possible
positive bidegrees, or equivalently over all possible total degrees together
with bounded supports, of the locally closed incidence conditions
\[
        F_A=HK,
        \qquad
        F_B=HL,
        \qquad
        H\ne0,
        \qquad
        \deg H\ge1.
\]
Exact-degree or exact-bidegree conditions are imposed by the corresponding open
leading-coefficient conditions inside the finite coefficient spaces for
\(H,K,L\).  The images of these locally closed incidences under projection are
constructible by Chevalley's theorem.  Therefore the common-factor locus is
constructible, and its complement, the no-common-factor condition, is
constructible inside \(I_{m,n}\).

The graph condition is constructible as well.  Inside the primitive incidence,
if \(B\in k[A]\), then
\[
        k(t)=k(A,B)=k(A),
\]
so \(\deg A=N=1\).  Thus, for \(N>1\), the graph locus in the primitive stratum
is empty.  For \(N=1\), the condition is that \(B=h(A)\) for some polynomial
\(h\) of degree \(M\); for this fixed degree the equation \(B=h(A)\) is a closed
incidence condition in the coefficients of \(h,A,B\).  Hence the non-graph
condition is constructible.

The desired source strata in \(\mathcal P_m\times\mathcal P_n\) are projections
of constructible subsets of this finite-dimensional incidence space.  Chevalley's
theorem gives constructibility of their images; see Hartshorne
\cite[Ch. II, Ex. 3.19]{Hartshorne}.
\end{proof}

\section{The logarithmic component trichotomy}
\label{sec:component-count}

Let \(k\) be a number field, \(S\) a finite set of places containing the
Archimedean places, and \(\OO_{k,S}\) the ring of \(S\)-integers.  We write \(H\)
for the absolute multiplicative height on \(\Pj^1(k)\), put \(n_k=[k:\Q]\), and
put \(q_{k,S}=\operatorname{rk}\OO_{k,S}^{*}=|S|-1\).

\begin{lemma}[Counting \(S\)-integers of bounded height]
\label{lem:S-height-count}
Let \(k\) be a number field, let \(S\) be a finite set of places containing the
Archimedean places, and put
\[
        n_k=[k:\Q],
        \qquad
        q_{k,S}=\operatorname{rk}\OO_{k,S}^{*}=|S|-1.
\]
There are constants \(c_1,c_2>0\) such that, for \(R\ge2\),
\[
      c_1R^{n_k}(\log R)^{q_{k,S}}
      \le
      \#\{a\in\OO_{k,S}:H(a)\le R\}
      \le
      c_2R^{n_k}(\log R)^{q_{k,S}}.
\]
For every nonempty affine coset \(a_0+\lambda\OO_{k,S}\), \(\lambda\ne0\),
the count
\[
      \#\{a\in a_0+\lambda\OO_{k,S}:H(a)\le R\}
\]
satisfies the same upper and lower bounds, with constants depending on the
coset.  The same is true for
\[
      \#\{a\in I:H(a)\le R\}
\]
for every nonzero fractional \(\OO_{k,S}\)-ideal \(I\), with constants depending
on \(I\).
\end{lemma}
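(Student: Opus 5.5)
The plan is to reduce to a lattice-point count in the Minkowski embedding, using a fundamental domain for the \(S\)-unit group. Embed \(k\hookrightarrow k_S=\prod_{v\in S}k_v\); then \(\OO_{k,S}\) is discrete but not a lattice when \(S\) contains finite places. Instead, write each nonzero \(a\in\OO_{k,S}\) as \(a=\eta\cdot b\), where \(b\) lies in a fixed finite set of ideal-class representatives times a fundamental domain, and \(\eta\in\OO_{k,S}^*\). Concretely, \(a\OO_{k,S}\) is an integral \(\OO_{k,S}\)-ideal, and \(a\) is determined by this ideal up to a unit. Since \(H(a)=\prod_v\max(1,|a|_v)^{1/n_k}\) (adjust normalization so that \(H(a)^{n_k}\) is the relative height), a bound \(H(a)\le R\) means \(\prod_{v\in S}\max(1,|a|_v)\le R^{n_k}\), because \(|a|_v\le1\) off \(S\).

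For the upper bound, I take logarithms: \(\ell(a)=(\log|a|_v)_{v\in S}\in\mathbb R^{|S|}\). For \(a\) with \(H(a)\le R\), the sum of positive parts of \(\ell(a)\) is at most \(n_k\log R\), and the product formula gives \(\sum_{v\in S}\log|a|_v=\log N(a\OO_{k,S})\ge0\), so the negative parts are also bounded by \(n_k\log R\). Thus \(\ell(a)\) lies in a box of side \(O(\log R)\) in the hyperplane-thickened region. I split \(a=\eta b\) with \(b\) in a fixed fundamental domain for the log-unit lattice \(\Lambda\) of rank \(q_{k,S}\) (which spans the trace-zero hyperplane). The number of unit classes \(\eta\) whose log-shift lands in a box of size \(O(\log R)\) is \(O((\log R)^{q_{k,S}})\). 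For each fixed unit class, the number of \(b\) with bounded logarithmic coordinates (all \(|b|_v\ll R^{c}\) with \(\prod\) controlled) is a count of elements of a fixed fractional ideal of \(\OO_k\) in a box of volume \(O(R^{n_k})\) in \(k_\infty\), after clearing the finitely many \(S\)-finite denominators, which are bounded by \(R^{n_k}\) in norm. Summing over ideals of norm \(\le R^{n_k}\) at the finite places of \(S\) loses only a further logarithm per finite place, and these are exactly the remaining directions of \(\Lambda\). I would organize this as an \(S\)-adic box count to get \(\ll R^{n_k}(\log R)^{q_{k,S}}\). The bookkeeping here, namely matching the logarithmic factors from finite places with those from unit rank so the exponent is exactly \(q_{k,S}\) rather than larger, is the main obstacle. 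The cleanest route is to cite the standard asymptotic for \(S\)-integers of bounded height (e.g.\ via Schanuel-type counting, or directly via the lattice \(\OO_{k,S}\) counted in \(k_S\) with Haar measure, where the region \(\{x\in k_S:\prod_v\max(1,|x_v|)\le R^{n_k}\}\) has volume \(\asymp R^{n_k}(\log R)^{|S|-1}\)). Then \(\OO_{k,S}\) is a cocompact discrete subgroup of \(k_S\) and a standard geometry-of-numbers argument gives both bounds.

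For the lower bound, I use the same volume computation. The region above contains \(\asymp(\log R)^{q_{k,S}}\) disjoint translates of a fixed compact neighbourhood scaled to volume \(\gg R^{n_k}\), each of which contains \(\gg R^{n_k}\) points of the cocompact subgroup. Alternatively, I exhibit explicitly \(a=\eta b\) with \(\eta\) ranging over \(\gg(\log R)^{q_{k,S}}\) units of height \(\le R^{1/2}\) and \(b\) ranging over \(\gg R^{n_k}\) integers suitably balanced at infinity, then check injectivity up to bounded multiplicity.

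For the coset and ideal versions, a nonzero fractional ideal \(I\) is commensurable with \(\OO_{k,S}\): we have \(cI\subseteq\OO_{k,S}\subseteq c'I\) for nonzero \(c,c'\). Since \(H(cx)\asymp_c H(x)\) up to multiplicative constants, the bounds transfer with \(R\) replaced by a constant multiple. For \(a_0+\lambda\OO_{k,S}\), the map \(x\mapsto a_0+\lambda x\) satisfies \(H(a_0+\lambda x)\asymp H(x)\) for \(H(x)\ge1\), by \(H(xy)\le H(x)H(y)\) and \(H(x+y)\le2H(x)H(y)\). So the count is sandwiched between the basic counts at \(R/C\) and \(CR\), which gives the claim with coset-dependent constants.
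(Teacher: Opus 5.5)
Your proposal, in the form you ultimately rely on, is the paper's proof. The main estimate is quoted from the standard $S$-integer count (the paper cites Barroero, and Schanuel when $S$ is Archimedean), and the ideal and coset versions follow exactly as you describe, from $\alpha\OO_{k,S}\subseteq I\subseteq\beta\OO_{k,S}$ and the bounded height distortion of $u\mapsto a_0+\gamma u$. Your self-contained sketches are unnecessary and need repair as written ($\OO_{k,S}$ \emph{is} a lattice in $k_S=\prod_{v\in S}k_v$; the $S$-finite logarithms are already among the $q_{k,S}$ unit directions, so summing over $S$-finite denominators on top of the unit count overcounts; and units of height up to $R^{1/2}$ times balanced elements of height $\asymp R$ can leave the ball $H\le R$ once $n_k\ge3$), but the $k_S$ packing/covering route you mention works directly, since for a bounded fundamental domain $F$ of $\OO_{k,S}$ the region $\Omega_R=\{x\in k_S:\prod_{v\in S}\max(1,|x_v|_v)\le R^{n_k}\}$ satisfies $\Omega_R\pm F\subseteq\Omega_{CR}$, so both bounds follow from $\operatorname{vol}(\Omega_R)\asymp R^{n_k}(\log R)^{q_{k,S}}$.
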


\begin{proof}
The displayed estimate is the standard bounded-height asymptotic for
\(S\)-integers.  We use it only in this degree-one \(S\)-integer form; it follows
directly from Barroero's \(S\)-integer counting theorem \cite{Barroero}.  When
\(S\) consists only of the Archimedean places, the same estimate is the usual
lattice/unit count for algebraic integers, in the same normalization as
Schanuel's height theorem \cite{Schanuel}.  The logarithmic exponent is the full
\(S\)-unit rank, not only the number of finite places in \(S\):
\[
        q_{k,S}=r_1+r_2-1+|S_f|=|S|-1,
\]
where \(S_f\) is the set of finite places in \(S\).  For fractional ideals
and affine cosets, the standard reduction is as follows.  If \(I\) is a
nonzero fractional \(\OO_{k,S}\)-ideal, choose nonzero
\(\alpha,\beta\in k\) with
\[
        \alpha\OO_{k,S}\subseteq I\subseteq \beta\OO_{k,S}.
\]
The existence of \(\beta\) is the definition of a fractional ideal, and any
nonzero \(\alpha\in I\) gives the left inclusion.  Multiplication by a fixed
nonzero element distorts absolute height by a bounded factor, so the same upper
and lower estimates hold for \(I\).  If \(a_0+I\) is a nonempty affine coset,
the inclusions
\[
        a_0+\alpha\OO_{k,S}\subseteq a_0+I\subseteq a_0+\beta\OO_{k,S}
\]
reduce the matter to cosets of principal fractional ideals.  For fixed
\(a_0\in k\) and \(\gamma\in k^*\), the elementary height inequalities
\[
        H(a_0+\gamma u)\le C_{a_0,\gamma}H(u),
        \qquad
        H(u)\le C'_{a_0,\gamma}H(a_0+\gamma u)
\]
hold for all \(u\in k\).  The first inequality follows from the standard
height bounds for addition and multiplication, and the second follows by
writing \(u=\gamma^{-1}((a_0+\gamma u)-a_0)\).  Hence replacing
\(u\) by \(a_0+\gamma u\) distorts height balls only by fixed multiplicative
constants, and the same upper and lower estimates hold for the coset.
\end{proof}

\begin{lemma}[Height growth under rational functions]
\label{lem:height-growth}
Let \(A:\Pj^1_k\to\Pj^1_k\) be the morphism associated to a nonconstant rational
function \(A\in k(t)\), and let \(d=\deg A\).  There are positive constants
\(c_1,c_2\), depending on \(A\), such that
\[
      c_1H(t)^d\le H(A(t))\le c_2H(t)^d
\]
for all \(t\in\Pj^1(k)\).
\end{lemma}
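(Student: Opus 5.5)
The proof will follow the standard argument for heights under morphisms of projective space, applied to \(\Pj^1\). Write \(A=P/Q\) with \(P,Q\in k[t]\) coprime and \(d=\max(\deg P,\deg Q)\). Homogenize to coprime binary forms \(F_0(t_0,t_1)=t_1^dP(t_0/t_1)\) and \(F_1(t_0,t_1)=t_1^dQ(t_0/t_1)\), both of degree \(d\). The morphism is then \([t_0:t_1]\mapsto[F_0:F_1]\), and these forms have no common zero on \(\Pj^1_{\bar k}\), because \(P,Q\) are coprime and at most one of \(\deg P,\deg Q\) is less than \(d\).

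\emph{Upper bound.} For each place \(v\) of \(k\), the triangle or ultrametric inequality gives
\[
\max(|F_0(x)|_v,|F_1(x)|_v)\le C_v\max(|x_0|_v,|x_1|_v)^d,
\]
where \(C_v\) depends only on the coefficients, and \(C_v=1\) for all but finitely many \(v\). Taking the product over \(v\) with the normalized local degrees, and using the product formula to see that the height is independent of the choice of projective representative, gives \(H(A(t))\le c_2H(t)^d\).

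\emph{Lower bound.} This is the main step, and it uses the Nullstellensatz. Since \(F_0,F_1\) have no common zero, there exist an integer \(r\ge d\) and forms \(G_{ij}\in k[t_0,t_1]\) of degree \(r-d\) with
\[
t_j^{\,r}=G_{0j}F_0+G_{1j}F_1,\qquad j=0,1.
\]
Concretely, one can take \(r=2d-1\) and obtain the \(G_{ij}\) from the resultant (Sylvester matrix) identity, using that \(\operatorname{Res}(F_0,F_1)\neq0\). Evaluating at \(x\) and taking \(v\)-adic absolute values gives
\[
\max_j|x_j|_v^{\,r}\le C'_v\max(|x_0|_v,|x_1|_v)^{r-d}\max(|F_0(x)|_v,|F_1(x)|_v),
\]
again with \(C'_v=1\) for almost all \(v\). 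Dividing by \(\max|x_j|_v^{\,r-d}\) and taking the product over all places yields \(H(t)^d\le c_1^{-1}H(A(t))\).

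The expected difficulty is modest. The delicate points are to check that the constants \(C_v\) and \(C'_v\) equal \(1\) at every place where all coefficients, including those of the \(G_{ij}\), are \(v\)-integral and the resultant is a \(v\)-unit, so that the product of local constants is finite. One must also ensure that the point \(t=\infty\) is included by working projectively throughout. Alternatively, the lemma may be cited directly as the standard functoriality of Weil heights for morphisms of degree \(d\), for instance from Hindry--Silverman, Theorem B.2.5.
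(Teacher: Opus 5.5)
Your proposal is correct and takes essentially the same approach as the paper. The paper proves this lemma only by citing the standard functoriality of Weil heights for morphisms \(\Pj^1\to\Pj^1\) (Lang, Ch.~3), and your argument is exactly the standard proof of that cited fact: homogenize to coprime forms, get the upper bound place by place, get the lower bound from the resultant/Nullstellensatz identity, and note the local constants equal \(1\) at almost all places.
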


\begin{proof}
This is the standard functoriality of Weil heights for morphisms
\(\Pj^1\to\Pj^1\); the value \(A(t)=\infty\) is interpreted in \(\Pj^1\).  See,
for example, \cite[Ch. 3]{Lang}.
\end{proof}

\begin{lemma}[Integer-valued cosets]
\label{lem:integer-valued-coset}
Let \(A\in k[t]\), and suppose \(A(t_0)\in\OO_{k,S}\) for some \(t_0\in k\).
Then there exists \(\lambda\in k^*\) such that
\[
        A(t_0+\lambda u)\in\OO_{k,S}
        \qquad(u\in\OO_{k,S}).
\]
\end{lemma}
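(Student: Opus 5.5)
The plan is to reduce the lemma to a Taylor expansion of \(A\) at \(t_0\), with a common denominator absorbed into \(\lambda\). If \(A\) is constant, any \(\lambda\in k^*\) works, since \(A(t_0+\lambda u)=A(t_0)\in\OO_{k,S}\). In general, write
\[
        A(t_0+s)=A(t_0)+c_1s+c_2s^2+\cdots+c_ds^d,
        \qquad c_j=\frac{A^{(j)}(t_0)}{j!}\in k,
\]
which is the Taylor expansion of \(A\) about \(t_0\). It holds over any field of characteristic zero, and more simply it just says that \(A(t_0+s)\) re-expanded in powers of \(s\) has coefficients in \(k\).

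Next, choose a nonzero \(\lambda\in\OO_{k,S}\), for instance a suitable nonzero rational integer, such that \(\lambda c_j\in\OO_{k,S}\) for every \(j=1,\ldots,d\). Such a \(\lambda\) exists because each \(c_j\in k=\Frac(\OO_{k,S})\) has a nonzero denominator in \(\OO_{k,S}\), and in fact in \(\Z\). Take \(\lambda\) to be the product of these denominators.

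Then for \(u\in\OO_{k,S}\),
\[
        A(t_0+\lambda u)=A(t_0)+\sum_{j=1}^d c_j\lambda^j u^j.
\]
Each term \(c_j\lambda^j u^j=(\lambda c_j)\lambda^{j-1}u^j\) lies in \(\OO_{k,S}\), since \(\lambda\in\OO_{k,S}\) and \(u\in\OO_{k,S}\). The constant term \(A(t_0)\) lies in \(\OO_{k,S}\) by hypothesis, so the whole sum does.

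No real obstacle is expected. The only point needing care is that \(\lambda\) must be chosen integral, not merely in \(k^*\), so that the higher powers \(\lambda^{j-1}\) do not reintroduce denominators. Choosing \(\lambda\in\Z\setminus\{0\}\) clearing all denominators of the \(c_j\) handles this.
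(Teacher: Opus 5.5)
Your proof is correct and follows the paper's argument: Taylor-expand \(A\) at \(t_0\) and choose \(\lambda\) with \(c_j\lambda^j\in\OO_{k,S}\) for every \(j\). The one addition is that you show such a \(\lambda\) exists, by taking it to be a nonzero rational integer with \(\lambda c_j\in\OO_{k,S}\); the paper asserts the choice without spelling this out.
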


\begin{proof}
Write the Taylor expansion
\[
        A(t_0+Z)=A(t_0)+c_1Z+\cdots+c_dZ^d,
        \qquad c_i\in k.
\]
Choose \(\lambda\in k^*\) so that \(c_i\lambda^i\in\OO_{k,S}\) for all \(i\).
Then \(A(t_0+\lambda u)\in\OO_{k,S}\) for all \(u\in\OO_{k,S}\).
\end{proof}

\begin{lemma}[Polynomial \(S\)-integer value sets]
\label{lem:polynomial-S-value-set}
Let \(A\in k[t]\) have degree \(d\ge1\).  Put
\[
        V_A(B)=
        \#\{A(t):t\in k,
             \ A(t)\in\OO_{k,S},\ H(A(t))\le B\}.
\]
If \(A(k)\cap\OO_{k,S}=\varnothing\), then \(V_A(B)=0\).  Otherwise
\[
        V_A(B)\asymp_{A,k,S}
        B^{n_k/d}(\log(2B))^{q_{k,S}}.
\]
\end{lemma}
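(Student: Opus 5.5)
The plan is to reduce the count of values \(A(t)\) to a count of parameters \(t\) lying in finitely many affine cosets of fractional \(\OO_{k,S}\)-ideals. Then \Cref{lem:S-height-count} and \Cref{lem:height-growth} convert that parameter count into the stated value count. The statement has two parts. The empty case is immediate: if no \(t\in k\) gives \(A(t)\in\OO_{k,S}\), the set being counted is empty and \(V_A(B)=0\).

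\emph{Lower bound.} Suppose \(A(t_0)\in\OO_{k,S}\). By \Cref{lem:integer-valued-coset} there is a \(\lambda\in k^*\) with \(A(t_0+\lambda u)\in\OO_{k,S}\) for every \(u\in\OO_{k,S}\). By \Cref{lem:height-growth} we have \(H(A(t))\le c_2H(t)^d\). Hence every \(t\) in the coset \(t_0+\lambda\OO_{k,S}\) with \(H(t)\le (B/c_2)^{1/d}\) gives an admissible value. \Cref{lem:S-height-count} supplies \(\gg B^{n_k/d}(\log 2B)^{q_{k,S}}\) such \(t\). The map \(t\mapsto A(t)\) is at most \(d\)-to-one, so the number of distinct values is at least this count divided by \(d\).

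\emph{Upper bound.} The goal is a finite collection of fractional ideals, or cosets of them, containing every \(t\in k\) with \(A(t)\in\OO_{k,S}\). Write \(A=\sum c_it^i\) with \(c_d\ne0\). Fix a prime \(\mathfrak p\notin S\) at which all \(c_i\) are integral and \(c_d\) is a unit. If \(v_{\mathfrak p}(t)<0\), the leading term strictly dominates the others, so \(v_{\mathfrak p}(A(t))=dv_{\mathfrak p}(t)<0\). At the finitely many remaining primes \(\mathfrak p\notin S\), if \(v_{\mathfrak p}(t)<-r_{\mathfrak p}\) for a suitable constant \(r_{\mathfrak p}\), the leading term again dominates, and \(A(t)\) is not \(\mathfrak p\)-integral.

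Consequently, any \(t\) with \(A(t)\in\OO_{k,S}\) satisfies \(v_{\mathfrak p}(t)\ge -r_{\mathfrak p}\) for all \(\mathfrak p\notin S\). This puts \(t\) in the single fractional ideal \(I=\prod\mathfrak p^{-r_{\mathfrak p}}\OO_{k,S}\). Combining with \(H(t)\le (H(A(t))/c_1)^{1/d}\) from \Cref{lem:height-growth}, the number of admissible \(t\) is at most \(\#\{t\in I:H(t)\le (B/c_1)^{1/d}\}\). \Cref{lem:S-height-count} bounds this by \(\ll B^{n_k/d}(\log 2B)^{q_{k,S}}\). Since the number of distinct values is at most the number of parameters, the upper bound follows.

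The step I expect to need the most care is the valuation estimate giving the uniform denominator bound on \(t\). It comes down to choosing \(r_{\mathfrak p}\) large compared with \(\max_i\bigl(v_{\mathfrak p}(c_d)-v_{\mathfrak p}(c_i)\bigr)\), so that \(v_{\mathfrak p}(c_dt^d)<v_{\mathfrak p}(c_it^i)\) for all \(i<d\) and also \(v_{\mathfrak p}(c_dt^d)<0\). This is elementary but must be written out explicitly. A secondary point is that the two applications of \Cref{lem:S-height-count} at radius \(R=(B/c)^{1/d}\) give \((\log R)^{q_{k,S}}\asymp(\log 2B)^{q_{k,S}}\) once \(B\) is large. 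Small \(B\) can be absorbed into the constants, because \(V_A(B)\ge1\) as soon as \(B\ge H(A(t_0))\).
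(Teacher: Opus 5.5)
Your proposal is correct and follows the paper's proof essentially step for step. For the upper bound you use the same good-place/bad-place valuation argument to place all admissible parameters in a fixed fractional \(\OO_{k,S}\)-ideal, followed by \Cref{lem:height-growth} and \Cref{lem:S-height-count}. For the lower bound you use the same integer-valued coset from \Cref{lem:integer-valued-coset}, losing at most a factor \(d\) in passing to distinct values. The differences are minor: you count \(t\) directly in the affine coset rather than the coset parameter \(u\), and you explicitly handle the range of small \(B\), which the paper leaves implicit.
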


\begin{proof}
The inactive case is immediate.  Assume \(A(k)\cap\OO_{k,S}\ne\varnothing\).

For the upper bound, there exists a nonzero \(\delta\in k\), depending only on
\(A\) and \(S\), such that
\[
        A(t)\in\OO_{k,S}\quad\Longrightarrow\quad \delta t\in\OO_{k,S}.
\]
The local denominator bound is as follows.  Write
\[
        A(T)=a_dT^d+a_{d-1}T^{d-1}+\cdots+a_0,
        \qquad a_d\ne0.
\]
Let \(v\notin S\) be finite.  For all but finitely many such \(v\), every
coefficient \(a_i\) is \(v\)-integral and \(a_d\) is a \(v\)-unit.  At such a
good place, if \(v(t)<0\), then the leading term has strictly smaller valuation
than every lower term, and therefore
\[
        v(A(t))=d v(t)<0.
\]
Thus \(A(t)\in\OO_{k,S}\) forces \(v(t)\ge0\) at every good place.

It remains to treat the finite set \(E\) of bad finite places outside \(S\).
For each \(v\in E\), choose an integer \(N_v\le0\) so small that, whenever
\(v(t)<N_v\), the leading term strictly dominates every nonzero lower term and
also \(v(a_d)+d v(t)<0\).  For example, take \(N_v\) strictly smaller than all
finite numbers
\[
        \frac{v(a_i)-v(a_d)}{d-i}\quad(a_i\ne0,\ i<d),
        \qquad\text{and}\qquad
        -\frac{v(a_d)}d.
\]
Then \(v(t)<N_v\) gives
\[
        v(A(t))=v(a_d)+d v(t)<0,
\]
so \(A(t)\in\OO_{k,S}\) forces \(v(t)\ge N_v\) at this bad place.  Choose
\(\delta\ne0\) in the integral ideal \(\prod_{v\in E}\mathfrak p_v^{-N_v}\).
Then \(v(\delta)\ge -N_v\) for all \(v\in E\), and
\(v(\delta)\ge0\) for all other finite \(v\notin S\).  Hence
\(A(t)\in\OO_{k,S}\) implies \(v(\delta t)\ge0\) for every finite
\(v\notin S\), hence \(\delta t\in\OO_{k,S}\).  Therefore the relevant
parameters lie in a fixed fractional \(\OO_{k,S}\)-ideal.  If \(H(A(t))\le B\),
then \Cref{lem:height-growth} gives
\(H(t)\ll_A B^{1/d}\).  By \Cref{lem:S-height-count}, the number of such
parameters is \(O_{A,k,S}(B^{n_k/d}(\log(2B))^{q_{k,S}})\), and the number of
values is no larger.

For the lower bound, choose \(t_0\in k\) with \(A(t_0)\in\OO_{k,S}\).  By
\Cref{lem:integer-valued-coset}, there is \(\lambda\in k^*\) such that
\(A(t_0+\lambda u)\in\OO_{k,S}\) for all \(u\in\OO_{k,S}\).  For a sufficiently
small constant \(c>0\), the height estimate gives
\[
        H(A(t_0+\lambda u))\le B
        \qquad\text{whenever}\qquad H(u)\le cB^{1/d}.
\]
The number of such \(u\) is
\(\gg_{A,k,S}B^{n_k/d}(\log(2B))^{q_{k,S}}\) by
\Cref{lem:S-height-count}.  Each value of \(A\) has at most \(d\) preimages in
\(\Pj^1\), so the same lower order holds for distinct values.  This proves the
lemma.
\end{proof}

\begin{lemma}[Geometrically reducible curves have finitely many rational points]
\label{lem:ngeo-finite}
Let \(C/k\) be an integral affine curve over a number field.  If \(C\) is not
geometrically integral, then \(C(k)\) is finite.
\end{lemma}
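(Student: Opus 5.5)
The plan is the standard Galois-conjugate argument: every \(k\)-point of \(C\) lies on the intersection of two distinct geometric components, and that intersection is finite.

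\emph{Decomposition.} Since \(k\) has characteristic zero it is perfect, so \(C_{\bar k}\) is reduced. Because \(C\) is integral but not geometrically integral, \(C_{\bar k}\) has at least two irreducible components \(C_1,\dots,C_s\), \(s\ge2\). These components are defined over some finite Galois extension \(L/k\), and \(\operatorname{Gal}(L/k)\) permutes them transitively. Transitivity holds because the union of a single Galois orbit descends to a closed subset of \(C\) of full dimension, and \(C\) is irreducible. In particular each \(C_i\) is one-dimensional and no two coincide.

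\emph{Pinning down rational points.} Let \(x\in C(k)\), viewed as a \(\bar k\)-point of \(C_{\bar k}\). Then \(x\) lies on some component \(C_i\). Since \(x\) is fixed by every \(\sigma\in\operatorname{Gal}(L/k)\), it also lies on \(\sigma(C_i)\) for every \(\sigma\). By transitivity and \(s\ge2\), there is some \(\sigma\) with \(\sigma(C_i)\ne C_i\). Hence
\[
        C(k)\subseteq\bigcup_{i\ne j}(C_i\cap C_j).
\]

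\emph{Finiteness.} Each \(C_i\cap C_j\) with \(i\ne j\) is a proper closed subset of the irreducible curve \(C_i\), so it is finite. The union is over finitely many pairs, so \(C(k)\) is finite.

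\emph{Main point to check.} The only step needing care is the transitivity of the Galois action. I would argue it as follows: if an orbit \(O\) were proper, then the union of the components in \(O\) would be Galois-stable. Its ideal is then defined over \(k\) by Galois descent on coordinate rings, which gives a proper closed subset of \(C\) of dimension one. The union of the components in the complementary orbits gives a second such subset, and \(C\) would be the union of the two, contradicting irreducibility. Notably, no arithmetic input from the number field beyond the existence of the Galois action is needed.
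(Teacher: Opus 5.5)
Your argument is correct, but it takes a different route from the paper.

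The paper never passes to geometric components. It works with the affine normalization \(C^\nu=\Spec R\) and the algebraic closure \(L\) of \(k\) in \(k(C)\):
\begin{enumerate}[label=\textup{(\arabic*)}]
    \item Since \(k\) is perfect, failure of geometric integrality is equivalent to \(L\ne k\).
    \item Every element of \(L\) is integral over \(R\), so \(L\subset R\) by normality.
    \item A \(k\)-point of \(C^\nu\) would restrict to a \(k\)-embedding \(L\hookrightarrow k\), which is impossible. Hence \(C^\nu(k)=\varnothing\).
    \item Every \(k\)-point of \(C\) therefore lies in the finite locus where \(\nu\) is not an isomorphism.
\end{enumerate}
Your Galois argument instead places \(C(k)\) inside the finite set \(\bigcup_{i\ne j}(C_i\cap C_j)\) of points where distinct conjugate components meet.

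The paper's version is shorter and purely algebraic over \(k\). It needs only the criterion that, for perfect \(k\), \(C\) is geometrically integral if and only if \(k\) is algebraically closed in \(k(C)\). It also yields directly that the normalization has no rational points at all. Your version needs transitivity of the Galois action on the geometric components, which you justify correctly by descending the union of an orbit. In exchange it is more geometric, and it extends transparently to higher-dimensional varieties, where it confines rational points to a proper closed subset.

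As you observe, neither argument uses anything specific to number fields.
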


\begin{proof}
Let \(\nu:C^\nu\to C\) be the normalization.  Since \(C\) is an affine curve over
a number field, \(C^\nu=\Spec R\) is affine, finite over \(C\), and normal.  The
nonnormal locus of \(C\) is finite, and \(\nu\) is an isomorphism over its
complement.

Let \(L\) be the algebraic closure of \(k\) in \(k(C)=\Frac R\).  Since \(k\) is
perfect, non-geometric integrality is equivalent to \(L\ne k\).  Every element of
\(L\) is algebraic over \(k\), hence satisfies a monic polynomial with
coefficients in \(k\subset R\).  Thus every element of \(L\) is integral over
\(R\).  Because \(R\) is normal, it is integrally closed in \(\Frac R\), and
therefore \(L\subset R\).

If \(C^\nu\) had a \(k\)-rational point, evaluation \(R\to k\) would restrict to
a \(k\)-embedding \(L\hookrightarrow k\), which is impossible when \(L\ne k\).
Hence \(C^\nu(k)=\varnothing\).  Any \(k\)-point of \(C\) outside the finite
nonnormal locus would lift uniquely to \(C^\nu(k)\), so all \(k\)-points of
\(C\) lie in that finite locus.
\end{proof}

Let \(C/k\) be an integral affine curve, let \(C^\nu\) be its affine
normalization, and let \(\widetilde C\) be the smooth projective normalization.
When \(C\) is geometrically integral, write
\[
        D_C=\widetilde C\setminus C^\nu
\]
for the reduced geometric boundary; throughout, \(\#D_C\) means the number of
geometric boundary points.

\begin{definition}[Logarithmic degree]
\label{def:log-degree}
Let \(C/k\) be a geometrically integral affine curve with affine normalization
\(C^\nu=\widetilde C\setminus D_C\).  Its logarithmic degree is
\[
        \lambda(C)=\deg\Omega^1_{\widetilde C}(\log D_C)
        =2\genus(\widetilde C)-2+\#D_C.
\]
For curves, the sign of \(\lambda(C)\) is exactly the trichotomy used below:
\(\lambda=-1\) gives \(\A^1\), \(\lambda=0\) gives \(\Gm\)-type after
splitting the two boundary points, and \(\lambda>0\) is the log-hyperbolic case.
This is a statement over \(\bar k\); the arithmetic estimates keep track of the
field of definition separately through activity and through the finite extension
used to split a log-zero boundary.
\end{definition}

\begin{lemma}[Unit parameter on a split two-boundary normalization]
\label{lem:log-zero-unit-parameter}
Let \(C/k\) be a geometrically integral affine curve over a number field, and
let \(\mathscr C\) be a fixed affine \(\OO_{k,T}\)-model of \(C\).  Suppose that,
after a finite extension \(L/k\), the two boundary points of the affine
normalization split and the affine normalization over \(L\) is identified with
\(\Gm=\Spec L[u,u^{-1}]\).  Then, after enlarging to a finite set of places
\(U\) of \(L\), depending only on \(C\), \(T\), the model, and the chosen
coordinate \(u\), every \(T\)-integral point of \(C\) outside the finite locus
where the normalization is not an isomorphism satisfies
\[
        u(P),u(P)^{-1}\in\OO_{L,U}
\]
after base change to \(L\).  Equivalently, \(u(P)\in\OO_{L,U}^{*}\).
\end{lemma}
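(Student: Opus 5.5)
The argument is a standard integrality-of-units argument. The key point is that \(u\) and \(u^{-1}\) are both regular functions on the normalization, and regular functions take integral values at integral points after finitely many bad places are discarded.

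\emph{Step 1: express \(u^{\pm1}\) in terms of the coordinates.} The affine normalization over \(L\) has coordinate ring \(L[u,u^{-1}]\). This ring is finite over \(L[C]=L[\mathscr C]\otimes_{\OO_{k,T}}L\). So \(u\) and \(u^{-1}\) are integral over \(L[C]\), and \(L[C]\) is generated by the finitely many coordinate functions \(x_1,\dots,x_r\) of the model \(\mathscr C\). Fix monic equations
\[
        u^{a}+\alpha_{1}u^{a-1}+\cdots+\alpha_a=0,
        \qquad
        u^{-b}+\beta_{1}u^{-b+1}+\cdots+\beta_b=0,
\]
with \(\alpha_i,\beta_j\in L[x_1,\dots,x_r]\). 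Only finitely many coefficients of \(L\) occur in the \(\alpha_i,\beta_j\). Let \(U\) be the set of places of \(L\) above \(T\), together with the finitely many places at which some such coefficient fails to be integral. This \(U\) depends only on \(C\), \(T\), the model and the coordinate \(u\).

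\emph{Step 2: evaluate at integral points.} Let \(P\) be a \(T\)-integral point of \(C\) outside the finite locus where \(\nu\) is not an isomorphism. Then \(P\) lifts uniquely to a point \(\tilde P\) of the normalization, which is defined over \(k\) and hence over \(L\) after base change. At \(\tilde P\) the value \(u(\tilde P)\in L^*\) is defined, since \(\tilde P\) lies in \(\Gm\). The values \(x_i(P)\) lie in \(\OO_{k,T}\subseteq\OO_{L,U}\). Therefore \(\alpha_i(P),\beta_j(P)\in\OO_{L,U}\). Evaluating the monic relations at \(\tilde P\) shows that \(u(P)\) and \(u(P)^{-1}\) are integral over \(\OO_{L,U}\). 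Since \(\OO_{L,U}\) is integrally closed in \(L\), both lie in \(\OO_{L,U}\), hence \(u(P)\in\OO_{L,U}^*\).

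\emph{Main obstacle.} The only delicate point is the bookkeeping: evaluating the integral relations from the function field at the lifted point must be legitimate. This is why the statement excludes the nonnormal locus, over which \(\nu\) is an isomorphism, so that \(u\) is a genuine regular function there and the relations specialize. One must also check that the choice of \(U\) is uniform in \(P\). It is, because \(U\) is fixed once the monic relations are fixed. If finiteness of normalization is not wanted as an input, a direct fallback is available. Write \(u\) as a fraction \(p/q\) with \(p,q\in L[C]\), \(q\) nonvanishing off the finite locus, and use Nullstellensatz-type identities \(\sum c_i q\cdot\) over \(\OO_{L,U}\). The integrality argument above is cleaner, and it is the one I would use.
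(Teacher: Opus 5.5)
Your proposal is correct and follows essentially the same route as the paper. Both arguments take monic equations for \(u\) and \(u^{-1}\) over \(k[C]\otimes_k L\), enlarge the place set so that the finitely many coefficients become \(U\)-integral expressions in the model's coordinate functions, evaluate at the lifted point, and conclude using that \(\OO_{L,U}\) is integrally closed. The unfinished ``fallback'' remark at the end is not needed and can simply be dropped.
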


\begin{proof}
Write \(\mathscr C=\Spec R_T\), and put
\[
        R_L=k[C]\otimes_k L\subset L(u)
\]
for the generic-fiber coordinate ring after base change to \(L\).  Since the
generic-fiber affine normalization is
\(\Spec L[u,u^{-1}]\), the integral closure of the image of \(R_L\) in
\(L(u)\) is \(L[u,u^{-1}]\).  Thus \(u\) and \(u^{-1}\) are integral over the
generic ring \(R_L\).  Choose monic equations over \(R_L\) satisfied by \(u\) and
by \(u^{-1}\), with coefficients \(r_i,s_i\in R_L\).

The coefficients \(r_i,s_i\) are finitely many elements of
\(R_T\otimes_{\OO_{k,T}}L\).  Enlarge the finite set of places of \(L\) to a set
\(U\), containing all places above \(T\), so that all these coefficients lie in
the image of
\[
        R_T\otimes_{\OO_{k,T}}\OO_{L,U}
        \longrightarrow R_L.
\]
If \(P\) is a \(T\)-integral point outside the finite nonnormal locus, then its
base change is a \(U\)-integral point on the base-changed model, and evaluation
therefore sends all \(r_i\) and \(s_i\) to \(\OO_{L,U}\).  Evaluating the two
monic equations shows that \(u(P)\) and \(u(P)^{-1}\) are integral over
\(\OO_{L,U}\).  Since \(\OO_{L,U}\) is integrally closed, both lie in
\(\OO_{L,U}\), and therefore \(u(P)\in\OO_{L,U}^{*}\).
\end{proof}

\begin{proposition}[Logarithmic component trichotomy for sparse lifting]
\label{prop:component-count}
Let \(S\) and \(T\) be finite sets of places of \(k\) containing the Archimedean
places.  Let \(C/k\) be an integral affine curve, let \(x\in k[C]\) be
nonconstant, and let \(\mathcal A\subseteq C(k)\) be contained in the set of
\(T\)-integral points for a fixed affine model of \(C\).  Define
\[
        M_C(B;S,T)=\#\{a\in\OO_{k,S}:H(a)\le B,
        \ a=x(P)\text{ for some }P\in\mathcal A\}.
\]
Then:
\begin{enumerate}[label=\textup{(\alph*)}]
    \item If \(C\) is not geometrically integral, then \(M_C(B;S,T)=O_C(1)\).
    \item If \(C\) is geometrically integral and either \(\genus(\widetilde C)>0\) or
    \(\#D_C\ge3\), then \(M_C(B;S,T)=O_{C,k,T}(1)\).
    \item If \(C\) is geometrically integral, \(\genus(\widetilde C)=0\), and
    \(\widetilde C\not\cong\Pj^1_k\), then \(M_C(B;S,T)=O_C(1)\).
    \item If \(C\) is geometrically integral, \(\widetilde C\cong\Pj^1_k\), and
    \(\#D_C=1\), then there is a coordinate \(t\) on
    \(\widetilde C\setminus D_C\cong\A^1\) such that
    \[
          x=A(t),\qquad A\in k[t].
    \]
    Let \(d=\deg A\).  If \(A(k)\cap\OO_{k,S}=\varnothing\), then
    \(M_C(B;S,T)=O_C(1)\).  Otherwise
    \[
          M_C(B;S,T)\ll_{C,k,S,T} B^{n_k/d}(\log(2B))^{q_{k,S}}.
    \]
    \item If \(C\) is geometrically integral, \(\widetilde C\cong\Pj^1_k\), and
    \(\#D_C=2\), then there is a finite extension \(L/k\) splitting the two
    boundary points and a finite set of places \(U\) of \(L\), depending only on
    \(C,S,T\) and the affine model, such that
    \[
          M_C(B;S,T)\ll_{C,k,S,T} (\log(2B))^{\rho_C},
          \qquad
          \rho_C=\operatorname{rk}\OO_{L,U}^{*}.
    \]
\end{enumerate}
\end{proposition}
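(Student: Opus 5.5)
The proof goes case by case. In every case the count \(M_C(B;S,T)\) is bounded by the number of values \(x(P)\) with \(P\in\mathcal A\), so it suffices to control the relevant points of \(C\), or their images in a parameter space.

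For (a), apply \Cref{lem:ngeo-finite}. The set \(C(k)\) is finite, so \(\mathcal A\) is finite and \(M_C=O_C(1)\).

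For (b), we are in the log-hyperbolic case \(\lambda(C)>0\), and we invoke Siegel's theorem over number fields, in the form: a geometrically integral affine curve whose normalization has genus \(\ge1\), or genus \(0\) with at least three geometric points at infinity, has only finitely many \(T\)-integral points on any fixed affine model. The normalization map is finite and an isomorphism off a finite set. Hence \(T\)-integral points of \(C\) outside that set lift to \(T'\)-integral points of \(C^\nu\) for an enlarged finite \(T'\), by the same integrality argument as in \Cref{lem:log-zero-unit-parameter}. Siegel then gives finiteness, and \(M_C=O_{C,k,T}(1)\).

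For (c), a genus-zero curve \(\widetilde C\) that is not isomorphic to \(\Pj^1_k\) is a pointless conic, so \(\widetilde C(k)=\varnothing\). Every \(k\)-point of \(C\) off the finite nonnormal locus would lift to \(\widetilde C(k)\). Thus \(C(k)\) is finite and \(M_C=O_C(1)\).

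For (d), \(\#D_C=1\) and \(\widetilde C\cong\Pj^1_k\). The boundary point is then a single geometric point stable under Galois, hence \(k\)-rational. So \(C^\nu\cong\A^1_k\), and with a coordinate \(t\) the regular function \(x\) becomes a polynomial \(A(t)\) of degree \(d=d_X\). Points of \(\mathcal A\) off the finite nonnormal locus give values \(A(t)\) with \(t\in k\). Those values lying in \(\OO_{k,S}\) with height \(\le B\) are counted by \(V_A(B)\), and the remaining points add \(O(1)\). If \(A(k)\cap\OO_{k,S}=\varnothing\), only the \(O(1)\) term survives. Otherwise \Cref{lem:polynomial-S-value-set} gives the stated upper bound.

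For (e), choose \(L\) splitting the two boundary points, so that \(C^\nu_L\cong\Gm=\Spec L[u,u^{-1}]\). By \Cref{lem:log-zero-unit-parameter}, every relevant point \(P\) off the finite nonnormal locus has \(u(P)\in\OO_{L,U}^*\), and \(x\) is a Laurent polynomial \(\Lambda(u)\) with degree \(d_X\ge1\) as a map \(\Pj^1\to\Pj^1\). The condition \(H(x(P))\le B\) and \Cref{lem:height-growth} (over \(L\)) give \(H(u(P))\ll B^{c}\), so \(\log H(u(P))\ll\log(2B)\). A unit of \(\OO_{L,U}^*\) is determined up to finite torsion by its logarithmic embedding into \(\mathbb R^{U}\). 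That embedding is a lattice of rank \(\rho_C\), and the sup norm of the embedding is \(\ll \log H(u)+O(1)\). Hence the number of such units is \(\ll(\log(2B))^{\rho_C}\). Each value of \(x\) has boundedly many preimages, which gives the bound.

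I expect the main obstacle to be the bookkeeping in (b): transferring the hypothesis of \(T\)-integrality on the fixed affine model of \(C\) to integrality on the normalization, so that Siegel's theorem applies cleanly. Step (e) is also delicate, since the height comparison for units must be made uniform. The lattice-point count itself is standard.
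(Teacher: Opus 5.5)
Your proposal is correct and follows the paper's proof essentially step by step: (a) by the finiteness lemma for non-geometrically-integral curves, (b) by Siegel--Mahler after lifting integral points to the normalization, (c) because a pointless conic has no \(k\)-points, (d) by the polynomial value-set lemma, and (e) by the unit-parameter lemma, height functoriality, and Dirichlet's unit theorem. The only organizational difference is that the paper performs the integrality transfer to the normalization once, as a preliminary reduction for all cases (also noting \(D_C\neq\varnothing\) since \(x\) is nonconstant), whereas you do it inside (b) and let the unit-parameter lemma handle it in (e).
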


\begin{proof}
The cases in the statement are the arithmetic form of the sign decomposition in
\Cref{def:log-degree}.  Part (a) is \Cref{lem:ngeo-finite}.  Passing from \(C\) to its affine
normalization changes the count by at most a constant, since the normalization is
an isomorphism away from finitely many affine points.  In all cases below we
therefore discard this finite nonnormal affine locus once and for all, and count
points on the affine normalization.  We also record the integrality transfer used
below.  Let \(\mathscr C\) be the fixed affine
\(\OO_{k,T}\)-model of \(C\), and let \(\mathscr C^\nu\) be the normalization of
this model in \(k(C)\), after removing finitely many additional primes from the
base.  Equivalently, after replacing \(T\) by a finite set \(T^\nu\supseteq T\),
the affine normalization is finite and integral over \(\mathscr C\).  Hence every
\(T\)-integral point of \(C\) lying outside the finite nonnormal locus lifts to a
\(T^\nu\)-integral point of the affine normalization: the lifted coordinate
values are integral over \(\OO_{k,T^\nu}\), and \(\OO_{k,T^\nu}\) is integrally
closed.  Enlarging \(T\) in this fixed way only changes the constants in the
following estimates.

For the remaining geometrically integral cases, note first that \(D_C\) is
nonempty.  Indeed, if \(D_C=\varnothing\), then \(C^\nu=\widetilde C\) is
projective, so \(x\) extends to a regular function on \(\widetilde C\), hence is
constant, contradicting the hypothesis.

Part (b) is Siegel--Mahler: an affine curve over a number field has finitely many
\(T\)-integral points unless its normalization has genus zero and at most two
points at infinity; see Lang \cite[Ch. 7]{Lang} or the subspace-theorem proof of
Corvaja--Zannier \cite{CorvajaZannierSiegel}.

For (c), a smooth projective genus-zero curve over \(k\) which is not isomorphic
to \(\Pj^1_k\) has no \(k\)-rational point.  Thus rational points of the affine
curve can occur only in the finite exceptional locus where the normalization map
is not an isomorphism.

For (d), the unique geometric point of \(D_C\) is Galois invariant, hence defined
over \(k\).  Its complement is \(\A^1_k\); choose a coordinate \(t\).  Since
\(x\) is regular away from the unique point at infinity, \(x=A(t)\) with
\(A\in k[t]\).  The normalized part contributes only values in
\(A(k)\cap\OO_{k,S}\), so if this set is empty the only possible contribution is
from the finite normalization-exceptional locus.

Assume now that \(A(k)\cap\OO_{k,S}\ne\varnothing\).  The normalized part of the
component contributes only values belonging to
\[
        \{A(t):t\in k,\ A(t)\in\OO_{k,S},\ H(A(t))\le B\}.
\]
By \Cref{lem:polynomial-S-value-set}, this value set has size
\(O_{A,k,S}(B^{n_k/d}(\log(2B))^{q_{k,S}})\).  This gives the asserted upper
bound, with the dependence on \(T\) entering only through the fixed integral
model and the finite normalization-exceptional locus.

For (e), the preliminary reduction has already discarded the finite affine
locus where the normalization map is not an isomorphism; it is therefore enough
to bound the lifted points on the affine normalization.  Pass to a finite extension
\(L/k\) over which the two boundary points are rational.  Choose a coordinate
\(u\) sending them to \(0\) and \(\infty\).  The affine normalization becomes
\(\Gm\), and \(x=A(u)\) for a Laurent polynomial \(A\in L[u,u^{-1}]\).
By \Cref{lem:log-zero-unit-parameter}, after enlarging to a fixed finite set of
places \(U\) of \(L\), every lifted integral point satisfies
\[
        u(P)\in\OO_{L,U}^{*}.
\]

The absolute multiplicative Weil height is unchanged by passing from \(k\) to
the finite extension \(L\), since it is normalized with respect to the product
formula.  Therefore the condition \(H(x(P))\le B\), with \(x(P)\in k\), is the
same height condition after viewing \(x(P)\) as an element of \(L\).  The Laurent
polynomial \(A(u)\) defines a nonconstant rational map
\(\Pj^1_L\to\Pj^1_L\).  By the height-growth estimate for rational maps,
\Cref{lem:height-growth}, the condition \(H(x(P))=H(A(u(P)))\le B\) gives
\(H(u(P))\ll_A B^{C_A}\) for some constant \(C_A>0\).  Dirichlet's unit theorem
then gives
\[
        \#\{u\in\OO_{L,U}^{*}:H(u)\le B^{C_A}\}=O((\log(2B))^{\rho_C}),
\]
where \(\rho_C=\operatorname{rk}\OO_{L,U}^{*}\).  This proves (e).
\end{proof}

\begin{lemma}[Counting Laurent images of finitely generated unit cosets]
\label{lem:unit-coset-laurent-count}
Let \(L\) be a number field, let \(\Gamma\subset L^*\) be a finitely generated
subgroup of rank \(r\), let \(\eta\in L^*\), and let
\(A\in L(u)\) be a nonconstant rational function.  Then
\[
        \#\{A(u):u\in \eta\Gamma,
              \ H(A(u))\le B\}
        \ll_{A,L,\eta,\Gamma} (\log(2B))^r.
\]
If \(r\ge1\), then the reverse bound also holds:
\[
        \#\{A(u):u\in \eta\Gamma,
              \ H(A(u))\le B\}
        \gg_{A,L,\eta,\Gamma} (\log(2B))^r.
\]
For \(r=0\) the set is finite.  In both displays, the finitely many
parameters with \(A(u)=\infty\) are omitted.
\end{lemma}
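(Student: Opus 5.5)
The plan is to transfer the count from values \(A(u)\) to parameters \(u\in\eta\Gamma\), using the height comparison of \Cref{lem:height-growth} in both directions, and then count points of bounded height in a finitely generated group.

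Write \(d=\deg A\ge1\) and \(\Gamma=\mu\times\langle\gamma_1,\ldots,\gamma_r\rangle\) with \(\mu\) finite and \(\gamma_i\) multiplicatively independent. For the upper bound, \Cref{lem:height-growth} gives \(H(u)\le c_1^{-1/d}H(A(u))^{1/d}\). Since \(H(\eta w)\le H(\eta)H(w)\) and \(H(w)\le H(\eta)H(\eta w)\), the condition \(H(A(u))\le B\) with \(u=\eta w\), \(w\in\Gamma\), forces \(H(w)\ll B^{1/d}\). It therefore suffices to show
\[
\#\{w\in\Gamma:H(w)\le R\}\ll(\log(2R))^r .
\]
For this I will use the logarithmic embedding \(\ell:\Gamma\to\mathbb R^{V}\), \(w\mapsto(n_v\log|w|_v)_{v\in V}\), where \(V\) is the finite set of places at which some generator is not a unit. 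Its kernel \(\ell^{-1}(0)\cap\Gamma\) consists of roots of unity, hence is finite. The image \(\ell(\Gamma)\) is a discrete subgroup of rank \(r\): discreteness follows from Northcott, since only finitely many \(w\) have bounded height. The height satisfies \(\log H(w)=\frac{1}{[L:\Q]}\sum_v\max(0,n_v\log|w|_v)=\frac{1}{2[L:\Q]}\|\ell(w)\|_1\) by the product formula. So \(H(w)\le R\) means \(\ell(w)\) lies in an \(\ell^1\)-ball of radius \(\asymp\log R\) in an \(r\)-dimensional lattice, which contains \(\ll(\log 2R)^r\) lattice points. Each value \(A(u)\) is counted at most once, so the count of values is bounded by the count of parameters. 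The finite poles are excluded automatically.

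For the lower bound with \(r\ge1\), I will apply the same lattice-point count in reverse. Since the \(\ell^1\)-ball of radius \(c\log R\) in a rank-\(r\) lattice contains \(\gg(\log R)^r\) points, \(\#\{w:H(w)\le R\}\gg(\log 2R)^r\). If \(H(w)\le cB^{1/d}\) with \(c\) small, then \(H(u)\le H(\eta)cB^{1/d}\), and the upper half of \Cref{lem:height-growth} gives \(H(A(u))\le B\). Each value of \(A\) has at most \(d\) preimages in \(\Pj^1\), so the number of distinct values is at least \(1/d\) times the number of admissible parameters, minus the finitely many poles. This gives \(\gg(\log 2B)^r\). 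For \(r=0\), \(\Gamma\) is finite, hence so is the set.

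The main point needing care is the lattice-point step: identifying the height with a norm on \(\ell(\Gamma)\otimes\mathbb R\). The cleanest route is to note that \(\|\ell(\cdot)\|_1\), restricted to the real span \(W\) of \(\ell(\Gamma)\), is a genuine norm on \(W\). Any lattice of full rank in \(W\) then has \(\asymp\rho^r\) points in a ball of radius \(\rho\ge1\), and the equivalence of norms on \(W\) absorbs the constants. These constants depend on \(A,L,\eta,\Gamma\), as the statement allows.
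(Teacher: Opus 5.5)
Your proposal is correct and follows the same route as the paper. Like the paper, you embed $\Gamma$ modulo torsion as a rank-$r$ lattice via logarithmic absolute values, use the fact that the height restricts to a norm on its real span so that height balls contain $\asymp(\log R)^r$ lattice points, and transfer to values of $A$ via \Cref{lem:height-growth} and the bound of $\deg A$ preimages. Your identification $\log H(w)=\|\ell(w)\|_1/(2[L:\Q])$ via the product formula, and your reduction from $\eta\Gamma$ to $\Gamma$ using $H(\eta w)\le H(\eta)H(w)$, simply spell out two steps the paper states briefly.
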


\begin{proof}
Choose generators of \(\Gamma\), and choose a finite set \(U\) of places of
\(L\), containing the Archimedean places, all finite places at which one of the
generators has nonzero valuation, and all finite places at which \(\eta\) has
nonzero valuation.  Then
\[
        \Gamma\subset \OO_{L,U}^*,
        \qquad
        \eta\in\OO_{L,U}^*.
\]
By Dirichlet's unit theorem, \(\OO_{L,U}^*\) modulo torsion is a lattice under
the logarithmic absolute-value map; see, for example, \cite{Neukirch}.  The
subgroup \(\Gamma/\Gamma_{\mathrm{tors}}\) is therefore a lattice of rank
\(r\) in its real span.  The restriction of the logarithmic height is a proper
norm on this lattice: if all logarithmic absolute values are bounded, then only
finitely many elements of \(\Gamma\) occur modulo torsion.  The inequalities
\(H(u)\le R\) cut out a bounded symmetric convex body in this rank-\(r\) real
space whose volume is \(\asymp (\log R)^r\).  Standard lattice-point counting
gives
\[
        \#\{u\in\eta\Gamma:H(u)\le R\}\asymp_{L,\eta,\Gamma}
        (\log(2R))^r
\]
for \(r\ge1\), and the set is finite for \(r=0\).

The finitely many poles of \(A\) in \(\eta\Gamma\) are discarded and do not
affect the estimates.  Let \(D=\deg(A:\Pj^1_L\to\Pj^1_L)\).  By height
functoriality,
\[
        c_1H(u)^D\le H(A(u))\le c_2H(u)^D
\]
uniformly on \(\Pj^1(L)\), since this is an inequality on \(\Pj^1\).  Therefore
\(H(A(u))\le B\) implies \(H(u)\ll_A B^{1/D}\), giving the upper bound.  For the
lower bound, \(H(u)\le cB^{1/D}\) implies \(H(A(u))\le B\).  The rational map
\(A\) has degree \(D\), so every value has at most \(D\) preimages in \(\Pj^1\).
Passing from parameters to distinct \(A\)-values therefore loses only a bounded
factor.  This proves the lemma.
\end{proof}

\begin{lemma}[Two-sided Laurent integrality on a unit lattice]
\label{lem:two-sided-laurent-integrality}
Let \(L\) be a number field, let \(\Gamma\subset L^*\) be a finitely generated
subgroup, and let \(\eta\in L^*\).  Let
\[
        A_1,\ldots,A_q\in L[u,u^{-1}]
\]
be Laurent polynomials such that every \(A_i\) has both a positive and a negative
extreme exponent.  For each \(i\), let \(S_i\) be a finite set of places of
\(L\) containing the Archimedean places.  Then
\[
        \{u\in\eta\Gamma: A_i(u)\in\OO_{L,S_i}
          \text{ for }1\le i\le q\}
\]
is a finite union of cosets of subgroups of \(\Gamma\).  This finite union is
effective once generators for \(\Gamma\), the coefficients of the \(A_i\), and
the sets \(S_i\) are given.
\end{lemma}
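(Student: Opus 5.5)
Choose a finite set \(U\) of places of \(L\) containing the Archimedean places, every \(S_i\), all places where a generator of \(\Gamma\) or \(\eta\) has nonzero valuation, and all finite places where some coefficient of some \(A_i\) is non-integral or the two extreme coefficients of some \(A_i\) fail to be units.  The proof reduces to a finite computation at the finitely many places of \(U\setminus\bigcup_i S_i\).  For a finite place \(v\notin U\), every \(u\in\eta\Gamma\) is a \(v\)-unit, so \(A_i(u)\) is automatically \(v\)-integral.  Hence the condition \(A_i(u)\in\OO_{L,S_i}\) is equivalent to \(v(A_i(u))\ge0\) for the finitely many finite places \(v\in U\setminus S_i\).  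It therefore suffices to show that, for a single finite place \(v\) and a single Laurent polynomial \(A=\sum_{j=-a}^{b}c_ju^j\) with \(a,b\ge1\) and \(c_{-a}c_b\ne0\), the set \(\{u\in\eta\Gamma: v(A(u))\ge0\}\) is a finite union of cosets of subgroups of \(\Gamma\).  Finite intersections of finite unions of cosets are again such unions, since the intersection of two cosets is empty or a coset of the intersected subgroup.

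Fix such \(v\) and \(A\).  Consider the homomorphism \(\phi:\Gamma\to\Z\) given by \(\gamma\mapsto v(\gamma)\), so that \(v(\eta\gamma)=v(\eta)+\phi(\gamma)\).  The key step is a domination argument.  If \(v(u)\) is larger than an explicit bound \(N_+\) (computed from the \(v(c_j)\), exactly as in the proof of \Cref{lem:polynomial-S-value-set}), the term \(c_{-a}u^{-a}\) strictly dominates.  Then \(v(A(u))=v(c_{-a})-a\,v(u)<0\), so \(u\) is excluded.  Symmetrically, if \(v(u)<N_-\), the term \(c_bu^b\) dominates, \(v(A(u))<0\), and \(u\) is again excluded.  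This is exactly where the hypothesis that both extreme exponents are present, one positive and one negative, is used.  Thus the admissible \(u\) satisfy \(N_-\le v(u)\le N_+\), which is a finite set of values of \(v(u)\).

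It remains to handle each fixed value \(v(u)=\ell\) in \([N_-,N_+]\).  The set of \(u\in\eta\Gamma\) with \(v(u)=\ell\) is either empty or a coset \(\eta\gamma_\ell\Gamma_0\), where \(\Gamma_0=\ker\phi\).  On this coset, write \(u=u_\ell w\) with \(w\in\Gamma_0\), so \(w\) is a \(v\)-unit.  The condition \(v(A(u_\ell w))\ge0\) depends only on the image of \(w\) in \(\OO_v^*/(1+\mathfrak p_v^{R})\) for \(R\) large enough.  One may take \(R\) exceeding \(\max_j\bigl(-v(c_ju_\ell^j)\bigr)\): changing \(w\) by a factor in \(1+\mathfrak p_v^R\) changes each term \(c_ju_\ell^jw^j\) by an element of valuation \(\ge0\).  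Since \(\OO_v^*/(1+\mathfrak p_v^R)\) is finite, the kernel \(\Gamma_1\) of \(\Gamma_0\to\OO_v^*/(1+\mathfrak p_v^R)\) has finite index in \(\Gamma_0\).  The admissible \(w\) therefore form a union of finitely many cosets of \(\Gamma_1\).  Combining over the finitely many \(\ell\) and finitely many places gives the claim.

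Effectivity follows because every step is computable from the given data.  The set \(U\), the bounds \(N_\pm\), the kernel \(\Gamma_0\) (Smith normal form for \(\phi\) on the given generators), and the finite-index subgroup \(\Gamma_1\) (reduction modulo \(\mathfrak p_v^R\)) are all explicit.  One then checks finitely many coset representatives.  The main obstacle is the bookkeeping in the middle step, keeping the thresholds \(N_\pm\) correct when intermediate coefficients vanish.  I would mirror the estimate from \Cref{lem:polynomial-S-value-set} applied separately to \(u\) and \(u^{-1}\).
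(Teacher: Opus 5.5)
Your proposal is correct and follows essentially the paper's route: discard the places where every element of \(\eta\Gamma\) is a unit, use domination of the two extreme terms to confine \(v(u)\) to a finite interval at each remaining place, restrict to a coset of the kernel of the valuation map, and observe that the residual condition factors through the finite group \(\OO_{L,v}^*/(1+\mathfrak p_v^R)\). The differences are cosmetic: you treat one place at a time and intersect, where the paper fixes the whole valuation vector on \(R_0\) at once, and you choose the congruence level directly from \(\max_j(-v(c_ju_\ell^j))\) instead of through the normalized Laurent polynomial \(B_{i,w}\). You should also take \(R\ge1\), so that \(1+\mathfrak p_v^R\) is a subgroup of the units.
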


\begin{proof}
Choose generators \(\gamma_1,\ldots,\gamma_t\) of \(\Gamma\).  Enlarge a finite
set \(R\) of finite places of \(L\) so that, outside \(R\), all coefficients of
all \(A_i\) are integral, the two extreme coefficients of every \(A_i\) are
units, and
\[
        v_w(\eta)=v_w(\gamma_1)=\cdots=v_w(\gamma_t)=0.
\]
At a place \(w\notin R\), every \(u\in\eta\Gamma\) is a unit.  Hence every
monomial \(u^\ell\), positive or negative, is a unit, and all integrality
conditions are automatic at \(w\).  It remains to treat
\[
        R_0=\bigcup_i(R\setminus S_i).
\]

Fix \(i\) and a place \(w\in R_0\setminus S_i\).  Write
\[
        A_i(u)=\sum_{\ell=-r_i}^{s_i}a_{i,\ell}u^\ell,
        \qquad r_i,s_i>0,
        \qquad a_{i,-r_i}a_{i,s_i}\ne0.
\]
If \(v_w(u)\) is sufficiently large, the negative extreme term
\(a_{i,-r_i}u^{-r_i}\) has strictly smaller valuation than every other term and
has negative valuation.  Then \(A_i(u)\notin\OO_{L,w}\).  If \(v_w(u)\) is
sufficiently negative, the positive extreme term \(a_{i,s_i}u^{s_i}\) gives the
same conclusion.  Thus the condition \(A_i(u)\in\OO_{L,w}\) forces
\(v_w(u)\) to lie in an explicitly computable finite interval.  Intersecting
these intervals over the finitely many pairs \((i,w)\) gives a finite set of
allowed valuation vectors
\[
        \mathbf c=(c_w)_{w\in R_0}\in\Z^{R_0}.
\]

The valuation map
\[
        \Gamma\longrightarrow \Z^{R_0},
        \qquad \gamma\longmapsto (v_w(\gamma))_{w\in R_0}
\]
is a homomorphism.  For a fixed allowed vector \(\mathbf c\), the set of
\(u\in\eta\Gamma\) with \(v_w(u)=c_w\) for all \(w\in R_0\) is either empty or a
coset
\[
        \eta_{\mathbf c}\Gamma_0,
        \qquad
        \Gamma_0=\{\gamma\in\Gamma:v_w(\gamma)=0\text{ for all }w\in R_0\}.
\]
It remains to impose residual congruence conditions on this fixed coset.

Fix such a coset and a place \(w\in R_0\).  Choose a uniformizer \(\pi_w\).  For
\(u\in\eta_{\mathbf c}\Gamma_0\), write
\[
        u=\pi_w^{c_w}\varepsilon_w(u),
        \qquad \varepsilon_w(u)\in\OO_{L,w}^{*}.
\]
For each relevant pair \((i,w)\), set
\[
        b_{i,w}=\min_{\ell}\bigl(v_w(a_{i,\ell})+\ell c_w\bigr).
\]
Then
\[
        B_{i,w}(Z)=\pi_w^{-b_{i,w}}A_i(\pi_w^{c_w}Z)
        \in \OO_{L,w}[Z,Z^{-1}]
\]
and at least one coefficient of \(B_{i,w}\) is a \(w\)-adic unit.  The condition
\(A_i(u)\in\OO_{L,w}\) is equivalent to
\[
        B_{i,w}(\varepsilon_w(u))\in \pi_w^{-b_{i,w}}\OO_{L,w}.
\]
If \(b_{i,w}\ge0\), this condition is automatic.  If \(b_{i,w}<0\), it is the
congruence
\[
        B_{i,w}(\varepsilon_w(u))\equiv0\pmod{\mathfrak p_w^{-b_{i,w}}}.
\]
Because \(B_{i,w}\) is a Laurent polynomial with integral coefficients, and the
inverse map is continuous on \(\OO_{L,w}^*\), this congruence is determined by
the class of \(\varepsilon_w(u)\) in the finite group
\[
        \OO_{L,w}^{*}/(1+\mathfrak p_w^{N_{i,w}}),
        \qquad N_{i,w}=\max\{1,-b_{i,w}\}.
\]
Equivalently, on \(\eta_{\mathbf c}\Gamma_0\) every remaining local integrality
condition is the inverse image of a subset of a finite quotient of \(\Gamma_0\).
The intersection of finitely many such inverse images is a finite union of cosets
of a finite-index subgroup of \(\Gamma_0\).

Taking the finite union over the allowed valuation vectors \(\mathbf c\) proves
that the original set is a finite union of cosets of subgroups of \(\Gamma\).  The
same construction is effective: the dominance inequalities give the allowed
valuation vectors, and the finite groups
\(\OO_{L,w}^{*}/(1+\mathfrak p_w^{N_{i,w}})\) give the residual congruence
conditions.
\end{proof}

\begin{remark}[Two-sidedness is part of the model]
\label{rem:two-sidedness-model-hypothesis}
The two-sided pole condition is a hypothesis on the chosen affine model and
generating set.  It is not automatic for an arbitrary affine model of a
two-boundary rational curve.  Without both extreme exponents, the valuation
bounding step in \Cref{lem:two-sided-laurent-integrality} can fail; for instance,
one-sided monomials on a rank-one unit group can impose a half-line of valuations,
not a finite union of cosets.  In the fiber-product applications, the standard
generators \(X,Y\), and later \(X,Y_1,\ldots,Y_r\), satisfy the two-sided
condition by primitive contact at infinity, because each has a pole at each
geometric boundary point.
\end{remark}

\begin{lemma}[Descent cosets for split two-boundary parameters]
\label{lem:split-two-boundary-descent-coset}
Let \(L/k\) be a finite Galois extension, and let a split two-boundary
normalization be identified with \(\Gm=\Spec L[u,u^{-1}]\).  Suppose the
semilinear Galois action on the split coordinate has the form
\[
        {}^\sigma u=\alpha_\sigma u^{\varepsilon_\sigma},
        \qquad
        \alpha_\sigma\in L^*,\quad
        \varepsilon_\sigma\in\{\pm1\}
        \qquad(\sigma\in\operatorname{Gal}(L/k)).
\]
Then the set of split parameters descending to \(k\)-rational points is either
empty or a coset
\[
        z_0\Gamma^0,
        \qquad
        \Gamma^0=
        \{\gamma\in L^*: \sigma(\gamma)=\gamma^{\varepsilon_\sigma}
          \text{ for every }\sigma\}.
\]
For every finite set of places \(U\) of \(L\), its intersection with
\(\OO_{L,U}^*\) is either empty or a coset of
\(\Gamma^0\cap\OO_{L,U}^*\), hence a coset of a finitely generated group.
\end{lemma}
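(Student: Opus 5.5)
The plan is to turn descent into a twisted-fixed-point condition for the semilinear Galois action, and then check that the solution set of such a condition is a coset of \(\Gamma^0\). A split parameter \(z\in L^*\) corresponds to an \(L\)-point of \(\Gm\). By Galois descent for the normalization, this point comes from a \(k\)-rational point exactly when it is fixed by the twisted action. Concretely, the \(\sigma\)-transform of the point with coordinate \(z\) has coordinate \(\alpha_\sigma\sigma(z)^{\varepsilon_\sigma}\) (or its inverse form, depending on how the convention for \({}^\sigma u\) is unwound). So the descent set is
\[
        Z=\{z\in L^*:\ \sigma(z)=\alpha_\sigma' z^{\varepsilon_\sigma}\ \text{for all }\sigma\},
\]
where \(\alpha_\sigma'\) is determined explicitly by \(\alpha_\sigma\) and \(\varepsilon_\sigma\). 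The first step is to fix this convention carefully, using that the action on \(L[u,u^{-1}]\) is a semilinear automorphism and that \({}^\sigma u\) is a unit of \(L[u,u^{-1}]\). I expect this bookkeeping to be the only real obstacle. In particular I must make sure the constants \(\alpha'_\sigma\) do not depend on \(z\), which holds because the maps are monomial.

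Next, I take \(z_0,z\in Z\) and set \(\gamma=z/z_0\). Dividing the two identities gives
\[
        \sigma(\gamma)=\sigma(z)/\sigma(z_0)=z^{\varepsilon_\sigma}/z_0^{\varepsilon_\sigma}=\gamma^{\varepsilon_\sigma},
\]
since the constants \(\alpha'_\sigma\) cancel. Hence \(\gamma\in\Gamma^0\). Conversely, if \(\gamma\in\Gamma^0\), then \(z_0\gamma\) satisfies the same identities, so \(z_0\gamma\in Z\). Therefore \(Z\) is empty or equal to \(z_0\Gamma^0\). A short check also shows that \(\Gamma^0\) is a subgroup: it is closed under products and inverses because \(\gamma\mapsto\gamma^{\pm1}\) is a homomorphism on the abelian group \(L^*\).

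For the integral statement, suppose \(Z\cap\OO_{L,U}^*\) is nonempty, and choose \(z_1\) in it. Running the same argument with base point \(z_1\) shows that \(Z=z_1\Gamma^0\). Since \(z_1\) is a \(U\)-unit, an element \(z_1\gamma\) is a \(U\)-unit exactly when \(\gamma\) is a \(U\)-unit. Hence
\[
        Z\cap\OO_{L,U}^*=z_1(\Gamma^0\cap\OO_{L,U}^*).
\]
Finally, \(\Gamma^0\cap\OO_{L,U}^*\) is a subgroup of \(\OO_{L,U}^*\), and \(\OO_{L,U}^*\) is finitely generated by Dirichlet's \(S\)-unit theorem. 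Subgroups of finitely generated abelian groups are finitely generated, which completes the argument.
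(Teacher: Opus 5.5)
Your proposal is correct and follows the paper's proof step for step. Descent is written as the system \(\sigma(z)=\alpha_\sigma z^{\varepsilon_\sigma}\); quotients of two solutions lie in \(\Gamma^0\); and rebasing at a \(U\)-unit \(z_1\) gives \(z_1(\Gamma^0\cap\OO_{L,U}^*)\), which is finitely generated as a subgroup of \(\OO_{L,U}^*\).

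The convention you leave open is settled in the paper in one line. It evaluates \({}^\sigma u\) at a \(k\)-point \(P\) using \(({}^\sigma h)(P)=\sigma(h(\sigma^{-1}P))\). In any convention the exponent stays \(\varepsilon_\sigma\), because \(\varepsilon_\sigma^{-1}=\varepsilon_\sigma=\varepsilon_{\sigma^{-1}}\), so \(\Gamma^0\) is the same.
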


\begin{proof}
A split point with parameter \(z\in L^*\) is fixed by descent precisely when
\[
        \sigma(z)=({}^\sigma u)(P)=\alpha_\sigma z^{\varepsilon_\sigma}
        \qquad(\sigma\in\operatorname{Gal}(L/k)).
\]
Conversely, a parameter satisfying these equations defines an \(L\)-point of the
split normalization fixed by the Galois descent datum, hence descends to a
\(k\)-point of the original normalization.  If there is no solution, the descent
set is empty.  If \(z_0\) is one solution and \(z\) is another, then
\(z/z_0\in\Gamma^0\).  Conversely, multiplying \(z_0\) by any element of
\(\Gamma^0\) gives another solution.  This proves the coset statement.
Intersecting the coset with \(\OO_{L,U}^*\) is either empty or, after choosing
one element \(z_1\) in the intersection, equal to
\[
        z_1(\Gamma^0\cap\OO_{L,U}^*).
\]
The subgroup \(\Gamma^0\cap\OO_{L,U}^*\) is finitely generated because it is a
subgroup of the finitely generated group \(\OO_{L,U}^*\).
\end{proof}

\begin{theorem}[Admissible unit lattice for two-infinity components]
\label{thm:admissible-unit-lattice}
Let \(k\) be a number field, let \(S\subseteq T\) be finite sets of places
containing the Archimedean places, and let \(C\) be a geometrically integral
affine curve.  Assume
\[
        \widetilde C\cong\Pj^1_k,
        \qquad
        \#D_C=2.
\]
Fix nonconstant regular functions
\[
        F_0=X,\ F_1,\ldots,F_q\in k[C]
\]
which, together with \(k\), generate \(k[C]\) as a \(k\)-algebra.  Assume that \(T\) has been chosen
large enough, and denominators have been cleared, so that these generators define
a fixed affine \(\OO_{k,T}\)-model \(\mathscr C\).  Assume that every \(F_i\) has
a pole at both geometric boundary points.

Define
\[
\begin{aligned}
        M_C^{\adm}(B;S,T)=\#\{a\in\OO_{k,S}:{}& H(a)\le B,\\
        & a=X(P)\text{ for some }P\in \mathscr C(\OO_{k,T})\cap C(k)\}.
\end{aligned}
\]
Then there are a finite extension \(L/k\), a coordinate \(u\) identifying the
split affine normalization \(C^\nu_L\) with \(\Gm\), a finite set of places
\(U\) of \(L\), and a finite union of cosets
\[
        \mathcal U_C=\bigcup_{j=1}^{s_C}\eta_j\Gamma_j
        \subset \OO_{L,U}^*,
\]
where each \(\Gamma_j\) is a finitely generated subgroup of \(\OO_{L,U}^*\),
with the following property.  The set \(\mathcal U_C\) is the set of split
parameters satisfying descent, the \(S\)-integrality condition for \(X\), and the
\(T\)-integrality conditions for the remaining generators.  In particular,
\(A_0(u)\in\OO_{k,S}\) for every \(u\in\mathcal U_C\).  Outside finitely many
values of \(X\), the inputs counted by \(M_C^{\adm}(B;S,T)\) are exactly
\[
        \{A_0(u):u\in\mathcal U_C\},
\]
where \(A_0\in L[u,u^{-1}]\) is the Laurent expression for \(X\) on
\(C^\nu_L\).

For the fixed affine model and generating set above, put
\[
        \rho_C^{\adm}(S,T)
        =\max_{1\le j\le s_C}\operatorname{rk}\Gamma_j,
\]
with \(\rho_C^{\adm}(S,T)=-1\) if \(\mathcal U_C=\varnothing\).  The notation
suppresses the fixed model and generators and does not define an
intrinsic invariant of the abstract curve.  Then
\[
        M_C^{\adm}(B;S,T)=O_{C,S,T}(1)
        \qquad\text{if }\rho_C^{\adm}(S,T)\le0,
\]
and, if \(\rho_C^{\adm}(S,T)\ge1\),
\[
        M_C^{\adm}(B;S,T)
        \asymp_{C,S,T} (\log(2B))^{\rho_C^{\adm}(S,T)}.
\]
For the fixed integral model and generators above, the positive logarithmic
exponent \(\max\{0,\rho_C^{\adm}(S,T)\}\) is independent of the auxiliary
splitting field, of the chosen split coordinate, and of enlarging the auxiliary
place set \(U\), up to the finite exceptional set above.  Equivalently, the
actual admissible parameter set is determined by descent together with the fixed
\(S\)- and \(T\)-integrality conditions for the chosen generators; the auxiliary
\(U\)-unit group is only an ambient lattice used to describe it.  No independence from changing the integral model is asserted, and the
distinction between \(\rho_C^{\adm}=-1\) and \(\rho_C^{\adm}=0\) has no
intrinsic significance once finite exceptional sets are allowed to vary.
Moreover, the finite union of cosets, and hence the displayed exponent, is
effectively computable relative to the following standard computations: the
normalization of the curve, the two boundary points and a splitting field for
them, a split coordinate \(u\), the Laurent expressions \(A_i(u)\), the
semilinear Galois descent action on \(u\), the resulting descent equations
\(\sigma(z)=\alpha_\sigma z^{\varepsilon_\sigma}\), including deciding
whether they have a solution and finding a base solution when they do, unit
groups, and congruence images in number fields.
\end{theorem}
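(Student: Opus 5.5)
The plan is to assemble the theorem from \Cref{lem:log-zero-unit-parameter}, \Cref{lem:split-two-boundary-descent-coset}, \Cref{lem:two-sided-laurent-integrality} and \Cref{lem:unit-coset-laurent-count}.

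\emph{Step 1: splitting and units.} First discard the finite nonnormal locus, and enlarge \(T\) as in the proof of \Cref{prop:component-count}, so that admissible integral points lift to integral points of the normalized model. Choose a finite Galois extension \(L/k\) splitting the two boundary points, and a coordinate \(u\) sending them to \(0,\infty\). Then \(C^\nu_L\cong\Gm\), and each generator is a Laurent polynomial \(F_i=A_i(u)\). By \Cref{lem:log-zero-unit-parameter}, after enlarging to a finite set \(U\) of places of \(L\), every lifted point has \(u(P)\in\OO_{L,U}^*\).

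\emph{Step 2: descent.} Any \(\sigma\in\operatorname{Gal}(L/k)\) either fixes or swaps the two boundary points. Hence \({}^\sigma u\) is an automorphism of \(\Gm\) preserving or swapping \(\{0,\infty\}\), so it has the form \(\alpha_\sigma u^{\pm1}\). By \Cref{lem:split-two-boundary-descent-coset}, the parameters in \(\OO_{L,U}^*\) that descend to \(k\)-points form either the empty set or a coset \(z_1\Gamma^1\), with \(\Gamma^1=\Gamma^0\cap\OO_{L,U}^*\) finitely generated.

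\emph{Step 3: integrality.} The hypothesis that each \(F_i\) has a pole at both boundary points says that each \(A_i\) has a positive and a negative extreme exponent. Impose \(A_0(u)\in\OO_{L,S_L}\) and \(A_i(u)\in\OO_{L,T_L}\), where \(S_L,T_L\) are the places above \(S,T\). By \Cref{lem:two-sided-laurent-integrality}, this cuts \(z_1\Gamma^1\) down to a finite union of cosets \(\mathcal U_C=\bigcup_j\eta_j\Gamma_j\).

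For a descended \(k\)-point, integrality in \(L\) at places above \(v\) is equivalent to integrality at \(v\), because \(\OO_{k,S}=k\cap\OO_{L,S_L}\). Therefore \(\mathcal U_C\) is exactly the set of split parameters satisfying descent and the required integrality conditions. Since the generators generate \(k[C]\), this means \(\mathcal U_C\) corresponds bijectively, outside the discarded finite locus, to \(\mathscr C(\OO_{k,T})\cap C(k)\) with \(X\in\OO_{k,S}\). This gives the description of the inputs as \(\{A_0(u):u\in\mathcal U_C\}\) up to finitely many values.

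\emph{Step 4: counting.} Apply \Cref{lem:unit-coset-laurent-count} to each coset. Cosets of rank \(\le0\) give finitely many values. Cosets of positive rank give values \(\asymp(\log 2B)^{\operatorname{rk}\Gamma_j}\). The union is dominated by the maximal rank: the upper bound comes from summing over cosets, and the lower bound from a single coset of maximal rank.

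\emph{Step 5: independence.} This is the main obstacle. Independence of the splitting field, of the coordinate and of \(U\) should follow from intrinsic characterizations.

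\begin{itemize}
\item The admissible parameter set equals the image of the intrinsic set \(\mathscr C(\OO_{k,T})\cap C(k)\) with \(X\in\OO_{k,S}\), which does not involve \(L\), \(u\) or \(U\).
\item Changing \(u\) to \(cu^{\pm1}\) is a bijection of cosets that preserves ranks.
\item Enlarging \(U\) adds no new admissible points, by Step 1.
\item Enlarging \(L\) to \(L'\) gives the same set of \(k\)-points.
\end{itemize}

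Combining these, the asymptotic count \(M^{\adm}_C\) is independent of the auxiliary choices. Hence so is the positive exponent: it is recovered from the growth rate as \(\limsup\log M/\log\log B\) when that growth is unbounded.

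Effectivity follows step by step. \Cref{lem:two-sided-laurent-integrality} is effective. The descent base solution and the group \(\Gamma^0\cap\OO_{L,U}^*\) are computable from unit-group data via the equations \(\sigma(\gamma)=\gamma^{\varepsilon_\sigma}\), which are linear conditions on exponent vectors. These are exactly the standard computations listed in the statement.
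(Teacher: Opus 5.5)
Your proposal is correct and is essentially the paper's proof: the same splitting field and split coordinate, the descent coset of \Cref{lem:split-two-boundary-descent-coset} intersected with the unit group from \Cref{lem:log-zero-unit-parameter}, the cut-down by \Cref{lem:two-sided-laurent-integrality}, the comparison through \(k\cap\OO_{L,S_L}=\OO_{k,S}\), the count via \Cref{lem:unit-coset-laurent-count}, and the same three comparisons (coordinate change, larger splitting field, larger \(U\)) for independence. Your extra observation that \(\max\{0,\rho_C^{\adm}(S,T)\}\) can be read off from the growth of \(M_C^{\adm}\), which is defined without reference to \(L\), \(u\) or \(U\), is a quicker route to the exponent's independence than the paper's coset-by-coset comparison; just keep the initial ``enlargement of \(T\)'' purely auxiliary and do not use the enlarged \(T\) in the integrality conditions, because \(T\) itself enters the definition of \(M_C^{\adm}\) and enlarging it there could change the admissible rank (the paper avoids this entirely, since \Cref{lem:log-zero-unit-parameter} only enlarges \(U\)).
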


\begin{proof}
Choose a finite Galois extension \(L/k\) splitting the two boundary points, and
choose a coordinate \(u\) sending them to \(0\) and \(\infty\).  Then
\[
        C^\nu_L\cong\Gm=\Spec L[u,u^{-1}],
        \qquad
        F_i=A_i(u)\quad(0\le i\le q)
\]
with \(A_i\in L[u,u^{-1}]\).  By the two-sided hypothesis on the generators,
every Laurent polynomial \(A_i\) has both a positive and a negative extreme
exponent.

\emph{Step 1: descent.}  We record the descent calculation in this fixed
coordinate; it is the situation isolated in
\Cref{lem:split-two-boundary-descent-coset}.  For \(\sigma\in G=\operatorname{Gal}(L/k)\), use the
semilinear action on functions
\[
        ({}^\sigma h)(P)=\sigma\bigl(h(\sigma^{-1}P)\bigr).
\]
It fixes \(k(C)\) and sends the coordinate \(u\) on the split pair
\((\Pj^1,\{0,\infty\})\) to another coordinate with the same unordered boundary
pair.  Hence
\[
        {}^\sigma u=\alpha_\sigma u^{\varepsilon_\sigma},
        \qquad
        \alpha_\sigma\in L^*,\quad \varepsilon_\sigma\in\{\pm1\}.
\]
The cocycle identities are
\[
        \varepsilon_{\sigma\tau}=\varepsilon_\sigma\varepsilon_\tau,
        \qquad
        \alpha_{\sigma\tau}=\sigma(\alpha_\tau)\alpha_\sigma^{\varepsilon_\tau}.
\]
If \(P\in C^\nu(k)\) and \(z=u(P)\), then
\[
        \sigma(z)=({}^\sigma u)(P)=\alpha_\sigma z^{\varepsilon_\sigma}
        \qquad(\sigma\in G).
\]
Conversely, a point \(z\in L^*\) satisfying these equations is fixed by descent
and therefore gives a \(k\)-rational point of the normalization.  Thus the set
of split parameters descending to \(k\) is exactly the solution set of these
relations.  If it is nonempty and \(z_0\) is one solution, then it is the coset
\[
        z_0\Gamma^0,
        \qquad
        \Gamma^0=
        \{\gamma\in L^*: \sigma(\gamma)=\gamma^{\varepsilon_\sigma}
          \text{ for every }\sigma\in G\}.
\]
Indeed, the quotient of two solutions belongs to \(\Gamma^0\), and multiplying
one solution by an element of \(\Gamma^0\) gives another solution.  For the
effectivity assertion, this step requires computing the semilinear action, hence
the data \(\alpha_\sigma,\varepsilon_\sigma\), and solving the resulting
multiplicative descent torsor, or proving it has no solution.  This is included
among the effective inputs in the statement; once a base solution is known, the
remaining admissible set is obtained by imposing unit and congruence conditions
on the subgroup \(\Gamma^0\).

\emph{Step 2: restriction to units.}  By
\Cref{lem:log-zero-unit-parameter}, after enlarging a finite set of places
\(U\) of \(L\), every \(\OO_{k,T}\)-integral point counted by
\(M_C^{\adm}\) has \(u(P)\in\OO_{L,U}^*\).  We enlarge \(U\) once more, if
necessary, so that it contains all places above \(S\) and \(T\) and all places
at which the coefficients appearing below are not integral.  Intersecting the
descent coset with \(\OO_{L,U}^*\) gives either the empty set or a coset
\[
        z_1\Gamma_U,
        \qquad
        \Gamma_U=\Gamma^0\cap\OO_{L,U}^*,
\]
of a finitely generated subgroup of \(\OO_{L,U}^*\).  The finite generation is
Dirichlet's unit theorem applied to the subgroup \(\Gamma_U\).

\emph{Step 3: the local integrality conditions.}  On the descended unit coset
we impose
\[
        A_0(z)\in\OO_{L,S_L},
        \qquad
        A_i(z)\in\OO_{L,T_L}\quad(1\le i\le q),
\]
where \(S_L\) and \(T_L\) are the sets of places of \(L\) above \(S\) and
\(T\).  Apply \Cref{lem:two-sided-laurent-integrality} to the coset
\(z_1\Gamma_U\), with the place set \(S_L\) for \(A_0\) and the place set
\(T_L\) for \(A_i\), \(i\ge1\).  Since all \(A_i\) are two-sided, the result is a
finite union of cosets
\[
        \mathcal U_C=\bigcup_j\eta_j\Gamma_j\subseteq\OO_{L,U}^*,
\]
which is exactly the subset of split parameters satisfying the descent equations
and all displayed integrality conditions.  The construction in
\Cref{lem:two-sided-laurent-integrality} shows explicitly what these cosets are:
first finitely many bad-place valuation vectors are allowed, and then finitely
many residue conditions in finite quotients of unit groups are imposed.

\emph{Step 4: comparison with the original model.}  If \(z\in\mathcal U_C\),
descent gives a point \(P_z\in C^\nu(k)\).  Since the functions \(F_i\) are
defined over \(k\), the values \(A_i(z)=F_i(P_z)\) lie in \(k\).  Hence
\[
        A_0(z)\in k\cap\OO_{L,S_L}=\OO_{k,S},
        \qquad
        A_i(z)\in k\cap\OO_{L,T_L}=\OO_{k,T}\quad(1\le i\le q).
\]
Because the generators \(F_0=X,F_1,\ldots,F_q\) define the fixed affine
\(\OO_{k,T}\)-model \(\mathscr C\), these conditions are exactly the assertion
that \(P_z\in\mathscr C(\OO_{k,T})\) and \(X(P_z)\in\OO_{k,S}\).  Conversely,
any counted \(\OO_{k,T}\)-point outside the finite affine locus where the
normalization is not an isomorphism gives, after base change to \(L\), a split
parameter satisfying the same descent and integrality conditions.  Therefore,
outside finitely many affine points and hence finitely many \(X\)-values, the
inputs counted by this component are precisely
\[
        \{A_0(u):u\in\mathcal U_C\}\subseteq\OO_{k,S}.
\]

\emph{Step 5: counting, independence, and effectivity.}  The counting statement
follows from \Cref{lem:unit-coset-laurent-count}, applied to the nonconstant
Laurent map \(A_0(u)=X\).  If the maximal subgroup rank among the cosets is
zero, the admissible parameter set is finite.  If the maximal rank is positive,
a coset of that rank gives the matching lower bound, and the finite degree of
\(A_0:\Pj^1\to\Pj^1\) loses only a bounded factor when parameters are replaced
by distinct \(X\)-values.

It remains to justify the independence claim.  First fix one splitting field
\(L\) and compare two split coordinates after passing to a common finite Galois
extension containing both.  Any two coordinates on the split pair
\((\Pj^1,\{0,\infty\})\) have the form
\[
        u'=a u \quad\text{or}\quad u'=a/u
        \qquad(a\in L^*).
\]
After enlarging the finite set of places to make \(a\) a unit, the map
\(u\mapsto u'\) is a bijection between the two descended integral parameter
sets.  It sends a coset \(\eta\Gamma\) to either \((a\eta)\Gamma\) or
\((a/\eta)\Gamma^{-1}\), so subgroup ranks are preserved.

Now compare a splitting field \(L\) with a larger finite Galois splitting field
\(L'/k\), using the same coordinate \(u\) viewed over \(L'\).  For every
\(\tau\in\operatorname{Gal}(L'/L)\), the semilinear action fixes \(u\), so the
\(L'\)-descent equation includes
\[
        \tau(z)=z.
\]
Thus every \(L'\)-admissible split parameter actually lies in \(L^*\).  After
choosing the place set of \(L'\) to contain exactly the places above the chosen
place set of \(L\), an element \(z\in L^*\) is an \(L'\)-unit outside those places
if and only if it is an \(L\)-unit outside the original places.  The local
integrality and descent conditions are the same conditions on the same descended
\(k\)-point.  Hence the admissible parameter set over \(L'\) is identified with
the admissible parameter set over \(L\).  More concretely, the extra valuation
and residue conditions obtained in the larger field restrict to inverse images
of subsets of finite quotients of the same finitely generated subgroup of
\(L^*\).  Thus every nonempty refined coset is a finite union of cosets of
finite-index subgroups of the corresponding original subgroup.  These subgroups
have the same rank, so the maximal positive rank is unchanged.

It is also independent of the auxiliary place set \(U\) once \(U\) contains the
finite set required in \Cref{lem:log-zero-unit-parameter} and the finitely many
coefficient and descent denominators used above.  For the fixed affine model and
the fixed generators \(F_0,\ldots,F_q\), the actual admissible parameter set is
defined by descent together with the displayed \(S\)- and \(T\)-integrality
conditions on those generators; \(U\) is only an ambient unit group in which this
set is presented.  If \(U'\supseteq U\) is a larger such set, then any parameter
counted using \(U'\) and satisfying the same descent and generator-integrality
conditions gives a \(\OO_{k,T}\)-integral point of the fixed model.  Applying \Cref{lem:log-zero-unit-parameter} to that
point forces the parameter already to be a \(U\)-unit.  Thus the actual
admissible parameter set is the same; only its presentation as a finite union of
cosets may be refined by finite-index congruence conditions, and the maximal
positive rank is unchanged.

Changing the finite normalization-exceptional set changes only finitely many
\(X\)-values.  Thus \(\max\{0,\rho_C^{\adm}(S,T)\}\) is independent of the
splitting field, split coordinate, and auxiliary place set for the fixed model
and generators.  The integral model itself is part of the data, and changing it
may change the admissible cosets.

The construction is effective in the stated relative sense: compute the affine and
projective normalizations, the two boundary points, a splitting field for those
points, a coordinate \(u\) on the split pair, the Laurent expressions \(A_i(u)\),
and the semilinear descent data
\(\alpha_\sigma,\varepsilon_\sigma\).  Solve the descent equations
\(\sigma(z)=\alpha_\sigma z^{\varepsilon_\sigma}\), or prove that they have
no solution; if a solution \(z_0\) exists, the descent set is
\(z_0\Gamma^0\).  Then compute the relevant \(U\)-unit group, enumerate the
bad-place valuation vectors from \Cref{lem:two-sided-laurent-integrality}, and
compute the finite congruence images of the resulting finitely generated
abelian groups.
\end{proof}

\section{Arithmetic corollaries over number fields}
\label{sec:number-field}

Let \(k\) be a number field, let \(S\) be a finite set of places containing the
Archimedean places, and let \(f,g\in k[x]\) be nonconstant.  Define
\[
        C_{f,g}=(f(X)-g(Y)=0)_{\red}\subset\A^2_k.
\]
A component \(C\) of \(C_{f,g}\) is a \emph{graph component} if
\[
        C=(Y=h(X))
\]
for some \(h\in\HH_{f,g}(k)\).  By
\Cref{lem:degree-one-graph-components}, these are exactly the components of
generic degree one over \(\A^1_X\).  For a non-graph
component \(C\), put
\[
        d_X(C)=[k(C):k(X)].
\]
Equivalently, \(d_X(C)\) is the degree of the projection \(C\to\A^1_X\).  Since
graph components have been removed, \(d_X(C)\ge2\).

Let \(\RR_1(f,g;k)\) be the set of non-graph irreducible components \(C\) of
\(C_{f,g}\) such that \(C\) is geometrically integral,
\(\widetilde C\cong\Pj^1_k\), and \(\#D_C=1\).  For \(C\in\RR_1(f,g;k)\), choose
\[
        X=A_C(t),\qquad Y=B_C(t),\qquad A_C,B_C\in k[t].
\]
We call \(C\) \emph{\(S\)-active} if \(A_C(k)\cap\OO_{k,S}\ne\varnothing\), and
write
\[
        \RR_1^{\act}(f,g;k,S)
        =\{C\in\RR_1(f,g;k):A_C(k)\cap\OO_{k,S}\ne\varnothing\}.
\]
This definition is independent of the chosen affine coordinate \(t\), because
another coordinate has the form \(at+b\) with \(a\in k^*\), \(b\in k\).  Let
\(\RR_2(f,g;k)\) be the set of non-graph irreducible components \(C\) such
that \(C\) is geometrically integral, \(\widetilde C\cong\Pj^1_k\), and
\(D_C\) consists of two geometric points.  These two points may be conjugate
over \(k\).

\begin{lemma}[Uniform denominator control]
\label{lem:denominator-number-field}
There is a finite set of places \(S'\supseteq S\), depending only on
\(f,g,k,S\), such that for every \(a\in\OO_{k,S}\) and every \(y\in k\),
\[
        g(y)=f(a)
        \quad\Longrightarrow\quad
        y\in\OO_{k,S'}.
\]
\end{lemma}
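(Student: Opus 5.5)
The argument is a place-by-place valuation estimate, in the same spirit as the local denominator bound in the proof of \Cref{lem:polynomial-S-value-set}. Write
\[
        g(Y)=b_nY^n+b_{n-1}Y^{n-1}+\cdots+b_0,
        \qquad b_n\ne0,\quad n=\deg g\ge1.
\]
Since \(\OO_{k,S'}\) is cut out by the conditions \(v(y)\ge0\) at finite \(v\notin S'\), it suffices to produce a finite set \(S'\supseteq S\) and, at each remaining finite place, a lower bound on \(v(y)\) depending only on \(f,g,k,S\). At the finitely many extra places added to \(S'\) nothing needs to be checked.

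\emph{Good places.} Let \(S'\) consist of \(S\) together with the finitely many finite places \(v\) at which some coefficient of \(f\) or \(g\) is non-integral, or at which \(b_n\) is not a unit. Take \(v\notin S'\), and suppose \(a\in\OO_{k,S}\) and \(g(y)=f(a)\).
\begin{enumerate}[label=\textup{(\roman*)}]
\item Since \(v\notin S\), we have \(v(a)\ge0\), and since the coefficients of \(f\) are \(v\)-integral, \(v(f(a))\ge0\).
\item Suppose \(v(y)<0\). Then \(v(b_ny^n)=n\,v(y)<0\), while every lower term satisfies \(v(b_iy^i)\ge i\,v(y)>n\,v(y)\).
\item By the strict domination in (ii), \(v(g(y))=n\,v(y)<0\), contradicting (i).
\end{enumerate}
Hence \(v(y)\ge0\) for every finite \(v\notin S'\), so \(y\in\OO_{k,S'}\).

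\emph{Alternative via integrality.} One can instead write \(y\) as a root of the monic polynomial
\[
        Y^n+\frac{b_{n-1}}{b_n}Y^{n-1}+\cdots+\frac{b_0-f(a)}{b_n}
\]
over \(\OO_{k,S'}\). Its coefficients lie in \(\OO_{k,S'}\) once \(S'\) contains the places where \(b_n^{-1}\) or the coefficients of \(f,g\) are non-integral. Integral closedness of \(\OO_{k,S'}\) then gives \(y\in\OO_{k,S'}\). This is the same argument as the finiteness relation used in the proof of \Cref{thm:one-infinity-source-diagrams}, and I would present this version since it is shortest.

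\emph{Dependence of \(S'\).} No step is a real obstacle. The only point to check is that \(S'\) depends only on \(f,g,k,S\), and this holds because it is built solely from \(S\) and the coefficients of \(f\) and \(g\). An alternative route enlarges only by the bad places and uses bounds \(N_v\) as in \Cref{lem:polynomial-S-value-set}, but absorbing those finitely many places into \(S'\) is simpler and is all the statement requires.
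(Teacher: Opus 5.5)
Your proof is correct and is essentially the paper's. The paper takes the same \(S'\), namely \(S\) together with the places where a coefficient of \(f\) or \(g\) is non-integral or the leading coefficient of \(g\) is not a unit. It then uses the same leading-term dominance at every finite \(v\notin S'\). Your monic-equation variant over \(\OO_{k,S'}\), using integral closedness, is an equivalent repackaging of that valuation argument and is equally valid.
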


\begin{proof}
Let \(S'\) be obtained from \(S\) by adjoining the finite places at which some
coefficient of \(f\) or \(g\) is not integral, or at which the leading coefficient
of \(g\) is not a unit.  Let \(v\notin S'\) be non-Archimedean and suppose
\(a\in\OO_{k,S}\).  Since \(S\subseteq S'\), the element \(a\) is \(v\)-integral,
and hence \(f(a)\) is \(v\)-integral.  If \(v(y)<0\), the leading term of
\(g(y)\) has strictly smaller \(v\)-adic valuation than all lower terms, because
the leading coefficient is a \(v\)-adic unit and the other coefficients are
\(v\)-integral.  Thus \(v(g(y))<0\), contradicting \(g(y)=f(a)\).  Hence
\(v(y)\ge0\) for all \(v\notin S'\), which is precisely \(y\in\OO_{k,S'}\).
\end{proof}

Define the new lifting set
\[
\begin{aligned}
 \mathscr N^{\new}_{f,g,k,S}
 =\Bigl\{a\in\OO_{k,S}:{}&
       \exists y\in k\text{ with }g(y)=f(a),\\
       &y\ne h(a)\text{ for every }h\in\HH_{f,g}(k)\Bigr\}.
\end{aligned}
\]
Its height truncation is
\[
        N^{\new}_{f,g,k,S}(B)
        =
        \#\{a\in\mathscr N^{\new}_{f,g,k,S}:H(a)\le B\}.
\]

\begin{lemma}[New lifts lie on non-graph components]
\label{lem:new-lifts-nongraph}
Let \(a\in k\) and \(y\in k\) satisfy \(g(y)=f(a)\).  If
\(y\ne h(a)\) for every \(h\in\HH_{f,g}(k)\), then \((a,y)\) lies on at least
one non-graph component of \(C_{f,g}\), and on no graph component.  Consequently,
the inputs counted by \(N^{\new}_{f,g,k,S}(B)\) are accounted for by non-graph
components; component intersections affect only finitely many affine points and
affect the stated upper bounds only by a constant.
\end{lemma}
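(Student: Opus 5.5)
The argument is elementary and uses only the decomposition of \(C_{f,g}\) into irreducible components over \(k\), together with \Cref{lem:degree-one-graph-components}.

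\emph{The point lies on some component.} The point \((a,y)\in k^2\) satisfies \(f(a)-g(y)=0\), so it is a \(k\)-point of \(C_{f,g}\). The reduced curve \(C_{f,g}\) is the finite union of its irreducible \(k\)-components, so \((a,y)\) lies on at least one of them.

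\emph{It lies on no graph component.} By the convention of this section, a graph component is \((Y=h(X))\) with \(h\in\HH_{f,g}(k)\); by \Cref{lem:degree-one-graph-components} these are exactly the components of generic degree one over \(\A^1_X\). If \((a,y)\) lay on such a component, then \(y=h(a)\) for some \(h\in\HH_{f,g}(k)\), contrary to hypothesis. Every component through \((a,y)\) is therefore non-graph, which proves the first assertion.

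\emph{Consequence for the count.} Each \(a\) counted by \(N^{\new}_{f,g,k,S}(B)\) comes with a witness \(y\) with \((a,y)\) on some non-graph component. Hence \(\mathscr N^{\new}_{f,g,k,S}\) is contained in the union over non-graph components \(C\) of the sets \(\{X(P)\in\OO_{k,S}: P\in C(k)\}\). By \Cref{lem:denominator-number-field}, \(y\in\OO_{k,S'}\), so \(P\) is \(S'\)-integral on the fixed affine model given by \(X,Y\). Thus \(N^{\new}_{f,g,k,S}(B)\le\sum_C M_C(B;S,S')\) over the finitely many non-graph components, in the notation of \Cref{prop:component-count}.

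\emph{Intersections.} Distinct irreducible components of a reduced plane curve meet in finitely many points, since two distinct irreducible curves in \(\A^2\) have no common one-dimensional piece. So a point may be counted on several components, but overcounting is confined to a finite set of affine points. There is also a subtler issue: an \(a\) may admit both graph lifts \(h(a)\) and a new lift \(y\). This does not matter, because membership in \(\mathscr N^{\new}_{f,g,k,S}\) only requires one new lift, and that lift's point is on a non-graph component. A non-graph point \((a,y)\) with \(y\ne h(a)\) could coincide with a graph-component point only if \(y=h(a)\), which is excluded, so the only finiteness input needed is for the intersections among non-graph components. These finitely many points change the upper bounds by at most a constant, as claimed.
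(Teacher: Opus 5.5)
Your proof is correct and follows the paper's argument. A point with \(y\ne h(a)\) for all \(h\in\HH_{f,g}(k)\) cannot lie on any graph component \(Y=h(X)\), so every component through it is non-graph, and intersections of distinct components are finite. Your extra step, which uses the denominator-control lemma to bound \(N^{\new}_{f,g,k,S}(B)\) by \(\sum_C M_C(B;S,S')\), is not in the paper's proof of this lemma, but it matches how the paper later uses the lemma in the global source expansion.
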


\begin{proof}
If \((a,y)\) lies on a graph component \(Y=h(X)\), then \(y=h(a)\), contrary to
the defining condition for a new lift.  Since \((a,y)\in C_{f,g}\), it lies on
some irreducible component; if no graph component can contain it, at least one
component containing it is non-graph.  Intersections of distinct components form
a finite subset of the affine curve and therefore affect the input count by at
most a constant.
\end{proof}

\begin{theorem}[Global source expansion]
\label{thm:global-source-expansion}
Let \(k\) be a number field, let \(S\) be a finite set of places containing the
Archimedean places, and let \(f,g\in k[x]\) be nonconstant.  Choose a
denominator-control set \(S'\supseteq S\) as in
\Cref{lem:denominator-number-field}.  For each
\(C\in\RR_1(f,g;k)\), choose a polynomial source parametrization
\[
        X=A_C(t),\qquad Y=B_C(t),
        \qquad A_C,B_C\in k[t].
\]
For each \(C\in\RR_2(f,g;k)\), choose the splitting data supplied by
\Cref{thm:admissible-unit-lattice}, applied to the standard affine
\(\OO_{k,S'}\)-model generated by \(X,Y\); the required two-sidedness follows
from \Cref{thm:primitive-contact-infinity}:
\[
        X=A_C(u),\qquad Y=B_C(u),\qquad
        \mathcal U_C=\bigcup_j\eta_{C,j}\Gamma_{C,j}.
\]
Then there is a finite set \(E_{f,g,k,S}\subset\OO_{k,S}\) such that
\[
\begin{aligned}
 \mathscr N^{\new}_{f,g,k,S}\ \triangle\
 \biggl(
     &\bigcup_{C\in\RR_1^{\act}(f,g;k,S)}
       \{A_C(t):t\in k,\ A_C(t)\in\OO_{k,S}\}   \\
     &\cup
       \bigcup_{C\in\RR_2(f,g;k)}
       \{A_C(u):u\in\mathcal U_C,\ A_C(u)\in\OO_{k,S}\}
 \biggr)
 \subseteq E_{f,g,k,S}.
\end{aligned}
\]
Here \(\triangle\) denotes symmetric difference.  Equivalently, after finitely
many input values are discarded, the new-lift set is a finite union of
polynomial images from active one-infinity sources and Laurent images from
admissible unit cosets on two-infinity sources.  Every active one-infinity source
appearing in this expansion has
\[
        \deg A_C=d_X(C)
        =
        \frac{\deg g}{\gcd(\deg f,\deg g)}.
\]
\end{theorem}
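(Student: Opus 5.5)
We prove the two inclusions modulo a finite set, one component at a time, and then take a finite union of the exceptional sets. First reduce to the non-graph components. By \Cref{lem:new-lifts-nongraph}, every \(a\in\mathscr N^{\new}_{f,g,k,S}\) has a witness \((a,y)\) on a non-graph component \(C\) of \(C_{f,g}\), and by \Cref{lem:denominator-number-field} this witness is an \(\OO_{k,S'}\)-integral point of the standard model generated by \(X,Y\). Partition the finitely many non-graph components over \(k\) by type:
\begin{itemize}
\item not geometrically integral;
\item geometrically integral with \(\genus(\widetilde C)>0\) or \(\#D_C\ge3\);
\item genus zero with \(\widetilde C\not\cong\Pj^1_k\);
\item members of \(\RR_1(f,g;k)\);
\item members of \(\RR_2(f,g;k)\).
\end{itemize}
This list is exhaustive by \Cref{lem:curve-log-trichotomy}, since \(D_C\ne\varnothing\). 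For the first three types, \Cref{prop:component-count}(a)--(c), applied with \(T=S'\) and \(\mathcal A\) the \(S'\)-integral points, shows that each contributes only finitely many \(a\), with no height truncation needed; we put all of these into \(E\). We also add to \(E\) the \(X\)-coordinates of the finitely many affine points lying on two components or in a nonnormal locus.

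\emph{Forward inclusion.} Take a new input \(a\) whose witness lies on \(C\in\RR_1\), away from the exceptional points. The witness lifts to the normalization \(\A^1\), so \(a=A_C(t)\) for some \(t\in k\). Then \(a\in\OO_{k,S}\) forces \(C\) to be active, and \(a\) lies in the first union. If instead \(C\in\RR_2\), then \Cref{thm:admissible-unit-lattice} applies with \(T=S'\) and generators \(F_0=X\), \(F_1=Y\). These generate \(k[C]\) and have poles at both boundary points by \Cref{thm:primitive-contact-infinity}, since the pole orders are \(N,M>0\). That theorem says that, outside finitely many \(X\)-values, such \(a\) equals \(A_C(u)\) for some \(u\in\mathcal U_C\). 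Its finitely many exceptional values go into \(E\).

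\emph{Reverse inclusion.} Take \(a=A_C(t)\in\OO_{k,S}\) with \(C\in\RR_1^{\act}\). Then \(y=B_C(t)\in k\) satisfies \(g(y)=f(a)\). We must show \(y\ne h(a)\) for every \(h\in\HH_{f,g}(k)\). If \(y=h(a)\), the point \((a,y)\) lies on both \(C\) and the graph \(Y=h(X)\). Since \(C\) is non-graph, these are distinct components, so their intersection is finite and \(a\) lies in a finite set, which we add to \(E\). The same argument handles \(u\in\mathcal U_C\) with \(C\in\RR_2\): descent gives a \(k\)-point \(P_u\), we have \(Y(P_u)=B_C(u)\in k\), and the coincidences with graph components are again finite. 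The exceptional set \(E\) is the union of all these finite sets, so it is finite.

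\emph{Degree statement and main difficulty.} For \(C\in\RR_1\), \Cref{thm:one-infinity-source-diagrams} gives \(\deg A_C=d_X(C)\), and \Cref{cor:ground-field-degree-spectrum} with \(\#D_C=1\) gives \(d_X(C)=N\). The step that needs the most care is the \(\RR_2\) case. There one has to check that the integral model and generating set match the hypotheses of \Cref{thm:admissible-unit-lattice}: \(S\subseteq S'\), the \(\OO_{k,S'}\)-model is given by \(X,Y\) with denominators cleared, and the two-sidedness is verified over \(\bar k\) even when the two boundary points are conjugate. The proof will spell out this verification.
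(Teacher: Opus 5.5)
Your proposal is correct and follows essentially the same route as the paper. In both, one reduces to non-graph components via \Cref{lem:new-lifts-nongraph} and \Cref{lem:denominator-number-field}, disposes of the remaining types by \Cref{prop:component-count}, and reads off the \(\RR_1\) contributions from the \(\A^1\)-normalization and the \(\RR_2\) contributions from \Cref{thm:admissible-unit-lattice}, with two-sidedness supplied by primitive contact; the degree formula then comes from the primitive degree spectrum. The only cosmetic difference is how graph coincidences are discarded: you use finiteness of intersections of distinct components, while the paper notes that \(B_C(t)-h(A_C(t))\), or its Laurent analogue, is a nonzero (Laurent) polynomial with finitely many zeros, and the two arguments are equivalent.
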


\begin{proof}
Let \(\mathcal S\) be the union of source images displayed in the statement.

First consider the inclusion \(\mathcal S\subseteq\mathscr N^{\new}_{f,g,k,S}\)
up to finitely many values.  If \(C\in\RR_1^{\act}(f,g;k,S)\), then
\[
        (A_C(t),B_C(t))\in C(k)
\]
and \(g(B_C(t))=f(A_C(t))\).  Since \(C\) is non-graph, for each
\(h\in\HH_{f,g}(k)\) the polynomial
\[
        B_C(t)-h(A_C(t))
\]
is not identically zero.  Thus graph intersections remove only finitely many
parameters, hence finitely many \(X\)-values.  All remaining values in the
polynomial image give new lifts.

If \(C\in\RR_2(f,g;k)\), \Cref{thm:admissible-unit-lattice} constructs
\(\mathcal U_C\) precisely so that, outside the finite normalization-exceptional
locus, \(u\in\mathcal U_C\) gives a \(k\)-rational point on the standard
\(S'\)-integral model of \(C\) with \(X=A_C(u)\in\OO_{k,S}\).  Write the Laurent
expression for \(Y\) as \(B_C(u)\).  For each \(h\in\HH_{f,g}(k)\), the Laurent
polynomial
\[
        B_C(u)-h(A_C(u))
\]
is not identically zero, because \(C\) is non-graph.  After multiplying by a
suitable power of \(u\), it becomes a nonzero ordinary polynomial, and therefore
has only finitely many zeros on \(\bar k^*\).  Hence graph intersections remove
only finitely many parameters and therefore finitely many \(X\)-values.  Thus the
two-infinity part of \(\mathcal S\) is also contained in
\(\mathscr N^{\new}_{f,g,k,S}\), up to finitely many inputs.

Conversely, let \(a\in\mathscr N^{\new}_{f,g,k,S}\), and choose
\(y\in k\) with \(g(y)=f(a)\) and with \(y\) not equal to any graph value
\(h(a)\).  By \Cref{lem:new-lifts-nongraph}, the point \((a,y)\) lies on a
non-graph component of \(C_{f,g}\), after excluding only finitely many component
intersection values.  By \Cref{lem:denominator-number-field}, it is an
\(S'\)-integral point on the standard affine model.

Apply \Cref{prop:component-count} component by component.  Geometrically
nonintegral components, positive-genus components, components with at least three
boundary points, and genus-zero components whose smooth compactification is not
\(\Pj^1_k\), contribute only finitely many input values.  A one-infinity
component contributes infinitely only when it is \(S\)-active, and then the input
has the form \(A_C(t)\) with \(t\in k\) and \(A_C(t)\in\OO_{k,S}\).  A
two-infinity component is exhausted, outside finitely many values, by the
admissible unit-coset expansion of \Cref{thm:admissible-unit-lattice}.  This
proves the finite symmetric-difference statement.

Concretely, the finite exceptional set may be chosen to contain the
\(X\)-coordinates of the affine nonnormal loci of the relevant components, the
component intersections, the intersections with graph components, the finitely
many points discarded when passing to normalizations, and the finite integral
contributions of the geometrically nonintegral, log-positive, and nonsplit
rational genus-zero cases.  For the stated symmetric-difference assertion, only
the finiteness of this set is used.

Finally, the degree formula for active one-infinity sources is
\Cref{cor:geometric-degree-spectrum}.
\end{proof}

\begin{definition}[\(X\)-equivalence of active one-infinity sources]
\label{def:X-equivalence-sources}
Let \(C,C'\in \RR_1^{\act}(f,g;k,S)\), and choose polynomial source
parametrizations
\[
        X=A_C(t),\qquad Y=B_C(t),
\]
\[
        X=A_{C'}(u),\qquad Y=B_{C'}(u).
\]
We say that \(C\) and \(C'\) are \emph{\(X\)-equivalent}, and write
\[
        C\sim_X C',
\]
if there is an affine linear polynomial
\[
        \ell(t)=at+b,
        \qquad a\in k^*,\ b\in k,
\]
such that
\[
        A_C=A_{C'}\circ\ell.
\]
This relation is independent of the chosen source coordinates.  For an
\(X\)-equivalence class \(\xi\), choose a representative \(C_\xi\), write
\[
        A_\xi=A_{C_\xi},
\]
and define
\[
        \mathscr P_\xi(B)
        =
        \{A_\xi(t):t\in k,\ A_\xi(t)\in\OO_{k,S},\ H(A_\xi(t))\le B\}.
\]
\end{definition}

\begin{theorem}[Primitive source separation and canonical power expansion]
\label{thm:primitive-source-separation}
Let \(k\) be a number field, let \(S\) be a finite set of places containing the
Archimedean places, and let \(f,g\in k[x]\) be nonconstant.  Put
\[
        n_k=[k:\Q],
        \qquad
        q_{k,S}=\operatorname{rk}\OO_{k,S}^{*},
\]
and
\[
        N=\frac{\deg g}{\gcd(\deg f,\deg g)}.
\]
Assume that \(\RR_1^{\act}(f,g;k,S)\ne\varnothing\), and let
\[
        \Xi_{f,g,k,S}=\RR_1^{\act}(f,g;k,S)/{\sim_X}
\]
be the finite set of active one-infinity \(X\)-source classes.  Choose a
denominator-control set \(S'\supseteq S\) as in
\Cref{lem:denominator-number-field}.  For each \(C\in\RR_2(f,g;k)\), let
\[
        \rho_C^{\adm}=\rho_C^{\adm}(S,S')
\]
be the admissible rank from \Cref{thm:admissible-unit-lattice}, computed for the
standard affine \(\OO_{k,S'}\)-model generated by \(X,Y\), and put
\[
        \rho_{f,g,k,S}^{\adm}
        =\max_{C\in\RR_2(f,g;k)}\rho_C^{\adm},
\]
with value \(-1\) if \(\RR_2(f,g;k)=\varnothing\).

Then the following hold.

\begin{enumerate}[label=\textup{(\alph*)}]
    \item For every \(\xi\in\Xi_{f,g,k,S}\),
    \[
        \#\mathscr P_\xi(B)
        \asymp_{\xi,k,S}
        B^{n_k/N}(\log(2B))^{q_{k,S}}.
    \]

    \item If \(\xi\ne\eta\), let
    \[
        W_{\xi,\eta}=(A_\xi(T)-A_\eta(U)=0)_{\red}.
    \]
    Using the denominator-control argument of \Cref{lem:polynomial-S-value-set},
    fix nonzero elements \(\delta_\xi,\delta_\eta\in k^*\) and a finite set of
    places \(T_{\xi,\eta}\supseteq S\) such that
    \[
        A_\xi(t)\in\OO_{k,S}
        \quad\Longrightarrow\quad
        \delta_\xi t\in\OO_{k,T_{\xi,\eta}},
    \]
    and
    \[
        A_\eta(u)\in\OO_{k,S}
        \quad\Longrightarrow\quad
        \delta_\eta u\in\OO_{k,T_{\xi,\eta}}.
    \]
    Enlarge \(T_{\xi,\eta}\), without changing notation, so that
    \(\delta_\xi,\delta_\eta\) are \(T_{\xi,\eta}\)-units and all coefficients and
    relations needed for the affine model generated below are integral over
    \(\OO_{k,T_{\xi,\eta}}\).  Put
    \[
        T_0=\delta_\xi T,
        \qquad
        U_0=\delta_\eta U.
    \]
    In particular, after writing \(T=T_0/\delta_\xi\) and
    \(U=U_0/\delta_\eta\), the coefficients in
    \[
        A_\xi(T_0/\delta_\xi),\qquad A_\eta(U_0/\delta_\eta)
    \]
    and in the relations
    \(x=A_\xi(T_0/\delta_\xi)=A_\eta(U_0/\delta_\eta)\) have cleared
    denominators over \(\OO_{k,T_{\xi,\eta}}\).
    For every such two-boundary component \(D\), primitive contact for the
    polynomial pair \((A_\xi,A_\eta)\) shows that \(T\) and \(U\), and therefore
    also \(x=A_\xi(T)=A_\eta(U)\), have poles at both boundary points; hence the
    fixed model generated by \(x,T_0,U_0\) satisfies the two-sided hypothesis of
    \Cref{thm:admissible-unit-lattice}.  Define \(\rho_{\xi,\eta}\) to be the
    maximum of the admissible ranks attached, via
    \Cref{thm:admissible-unit-lattice}, to these geometrically integral
    \(k\)-components \(D\) of \(W_{\xi,\eta}\) with
    \(\widetilde D\cong\Pj^1_k\) and with two geometric boundary points, for this
    fixed affine \(\OO_{k,T_{\xi,\eta}}\)-model.  If there are no such rational
    two-infinity components, set \(\rho_{\xi,\eta}=-1\).  Then
    \[
        \#\bigl(\mathscr P_\xi(B)\cap\mathscr P_\eta(B)\bigr)
        \ll_{\xi,\eta,k,S}
        (\log(2B))^{\max\{0,\rho_{\xi,\eta}\}}.
    \]
    In particular, distinct \(X\)-source classes have no overlap of positive
    \(B\)-power order.

    \item Put
    \[
        \rho_{f,g,k,S}^{\mathrm{ov}}
        =
        \max_{\xi\ne\eta}\rho_{\xi,\eta},
    \]
    with value \(-1\) if there is at most one \(X\)-source class.  Then
    \[
        N^{\new}_{f,g,k,S}(B)
        =
        \sum_{\xi\in\Xi_{f,g,k,S}}\#\mathscr P_\xi(B)
        +
        O_{f,g,k,S}\!\left(
            (\log(2B))^{\max\{0,\rho_{f,g,k,S}^{\adm},
                                \rho_{f,g,k,S}^{\mathrm{ov}}\}}
        \right).
    \]
\end{enumerate}
Thus the power part of the new-lifting count is canonically decomposed by active
primitive \(X\)-source classes.  Two-infinity components and overlaps among
distinct one-infinity source classes contribute only logarithmic lower-order
terms.
\end{theorem}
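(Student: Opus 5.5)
Part (a) comes from \Cref{lem:polynomial-S-value-set}. The representative \(C_\xi\) is active, so \(A_\xi(k)\cap\OO_{k,S}\ne\varnothing\). By \Cref{thm:global-source-expansion} (equivalently \Cref{cor:geometric-degree-spectrum}), \(\deg A_\xi=N\). The lemma then gives \(\#\mathscr P_\xi(B)\asymp B^{n_k/N}(\log(2B))^{q_{k,S}}\). Independence of the representative within the class is clear, since an affine reparametrization \(t\mapsto at+b\) does not change the value set \(A_\xi(k)\).

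For part (b), I would reduce overlaps to integral points on the fiber product \(W_{\xi,\eta}\) of the two polynomial maps \(A_\xi,A_\eta\). Take a common value \(x\in\mathscr P_\xi(B)\cap\mathscr P_\eta(B)\) with \(x=A_\xi(t)=A_\eta(u)\). The denominator control gives \(T_0=\delta_\xi t\) and \(U_0=\delta_\eta u\) in \(\OO_{k,T_{\xi,\eta}}\). So \((t,u)\) is a \(T_{\xi,\eta}\)-integral point of the fixed model generated by \(x,T_0,U_0\), with \(x\in\OO_{k,S}\) and \(H(x)\le B\).

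I would first discard the graph components of \(W_{\xi,\eta}\), namely those of degree one over the \(T\)-line or over the \(U\)-line. If a component has degree one over both lines, then \(u=\ell(t)\) for a rational \(\ell\in k(t)\). The argument of \Cref{lem:degree-one-graph-components} makes \(\ell\) a polynomial, and degree comparison (\(\deg A_\xi=\deg A_\eta=N\)) makes it affine. Then \(A_\xi=A_\eta\circ\ell\), so \(\xi=\eta\), a contradiction.

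The main obstacle is a component of degree one over only one line, say \(T=h(U)\) with \(A_\eta=A_\xi\circ h\) and \(\deg h\ge1\). Since the degrees agree, \(\deg h=1\), which is again \(X\)-equivalence. So \(\xi\ne\eta\) excludes every component of degree one over either factor.

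Every remaining component \(D\) can now be handled by \Cref{prop:component-count} applied with coordinate \(x\). Geometrically nonintegral, log-positive, and nonsplit genus-zero components contribute \(O(1)\). Two-infinity components are handled by \Cref{thm:admissible-unit-lattice}. Its two-sided hypothesis holds by \Cref{thm:primitive-contact-infinity} applied to the pair \((A_\xi,A_\eta)\), as stated. These give \(O((\log 2B)^{\max(0,\rho_{\xi,\eta})})\).

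The remaining danger is a one-infinity component \(D\) of \(W_{\xi,\eta}\), a polynomial source \((P,Q)\) with \(A_\xi\circ P=A_\eta\circ Q\). Primitive contact for the pair of equal degrees \(N,N\) gives \(e=N\) and \(M'=N'=1\). So every boundary branch has \(-\ord T=-\ord U=1\), and a one-infinity component has degree \(1\) over both lines. That makes it a graph component in both directions, which was excluded above. Hence no power-sized overlap arises, which is the heart of (b).

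For part (c), I would start from the symmetric-difference expansion of \Cref{thm:global-source-expansion} and apply inclusion–exclusion. First, the active sources in one class \(\xi\) all have the same image set, so their union is just \(\mathscr P_\xi\). Second, the two-infinity pieces are bounded by \(O((\log 2B)^{\max(0,\rho^{\adm})})\) using \Cref{thm:admissible-unit-lattice}. Third, the pairwise overlaps between distinct classes are bounded by part (b). The finite exceptional set contributes \(O(1)\). Then
\[\Bigl|N^{\new}(B)-\sum_\xi\#\mathscr P_\xi(B)\Bigr|\le \sum_{\xi\ne\eta}\#(\mathscr P_\xi\cap\mathscr P_\eta)+\sum_{C\in\RR_2}M_C^{\adm}(B)+O(1),\]
which is the stated error term.
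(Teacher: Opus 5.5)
Your proposal is correct and follows the paper's proof essentially step by step. Part (a) comes from primitive contact together with the polynomial value-set lemma. Part (b) pulls common values back to integral points on the overlap curve \(W_{\xi,\eta}\) and uses primitive contact for the equal-degree pair \((A_\xi,A_\eta)\) to force any one-infinity component to be an affine graph \(U=h(T)\), which contradicts \(\xi\ne\eta\). Part (c) is the global source expansion plus pairwise inclusion--exclusion. Your separate case of a component of degree one over only one line is harmless but unnecessary: since \(x=A_\xi(T)=A_\eta(U)\) with \(\deg A_\xi=\deg A_\eta=N\), every component satisfies \(d_T(D)=d_U(D)\).
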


\begin{proof}
The relation \(\sim_X\) is well-defined.  Changing the affine coordinate on a
one-infinity source replaces \(A_C(t)\) by \(A_C(\ell(t))\), with
\(\ell\in\operatorname{Aff}_1(k)\).  Hence the condition
\(A_C=A_{C'}\circ\ell\) is independent of coordinates.  If \(C\sim_X C'\), then
\(A_C(k)=A_{C'}(k)\), because \(\ell:k\to k\) is bijective.  Therefore
\(\mathscr P_\xi(B)\) is independent of the chosen representative of \(\xi\).

By primitive contact at infinity, every active one-infinity non-graph component
has
\[
        \deg A_C=d_X(C)=N.
\]
Since the component is active, \(A_C(k)\cap\OO_{k,S}\ne\varnothing\).  The
polynomial \(S\)-integer value-set estimate, \Cref{lem:polynomial-S-value-set},
gives
\[
        \#\mathscr P_\xi(B)
        \asymp_{\xi,k,S}
        B^{n_k/N}(\log(2B))^{q_{k,S}},
\]
which proves \textup{(a)}.

We prove \textup{(b)}.  Let \(\xi\ne\eta\), choose representatives
\(A_\xi,A_\eta\in k[t]\), and consider the reduced affine overlap curve
\[
        W_{\xi,\eta}
        =
        \bigl(A_\xi(T)-A_\eta(U)=0\bigr)_{\red}
        \subset \A^2_{T,U}.
\]
Every value in \(\mathscr P_\xi(B)\cap\mathscr P_\eta(B)\) comes from a
\(k\)-rational point \((t,u)\in W_{\xi,\eta}(k)\) with
\[
        A_\xi(t)=A_\eta(u)\in\OO_{k,S},
        \qquad
        H(A_\xi(t))\le B.
\]
By the denominator-clearing choices made in the statement, after the fixed
affine change of variables
\[
        T_0=\delta_\xi T,
        \qquad
        U_0=\delta_\eta U,
\]
the rational points relevant to the overlap are therefore
\(T_{\xi,\eta}\)-integral points on the fixed affine model of
\(W_{\xi,\eta}\) generated by \(x=A_\xi(T)=A_\eta(U)\), \(T_0\), and \(U_0\).
The enlargement of \(T_{\xi,\eta}\) in the statement clears the denominators of
this model and of its defining relations, so the model satisfies the hypotheses
needed below, including the hypotheses of \Cref{thm:admissible-unit-lattice} on
its rational two-boundary components.  We apply \Cref{prop:component-count} to
the components of this fixed model with the regular function
\[
        x=A_\xi(T)=A_\eta(U).
\]

The only components that could contribute a positive power of \(B\) are
geometrically integral one-infinity components with projective normalization
\(\Pj^1_k\).  We claim that no such component exists.  Indeed, apply the
primitive-contact theorem to the polynomial pair \(A_\xi,A_\eta\).  Both
polynomials have degree \(N\).  Hence, for this auxiliary fiber product,
\[
        \gcd(\deg A_\xi,\deg A_\eta)=N,
\]
and the primitive \(T\)-projection degree of a one-infinity component is
\[
        \frac{\deg A_\eta}{\gcd(\deg A_\xi,\deg A_\eta)}=1.
\]
Thus any one-infinity \(k\)-component capable of contributing a power term has
degree one over the \(T\)-line.  By the degree-one graph lemma,
\Cref{lem:degree-one-graph-components}, it must be of the form
\[
        U=h(T),
        \qquad h\in k[T],
\]
with
\[
        A_\xi(T)=A_\eta(h(T)).
\]
Since \(\deg A_\xi=\deg A_\eta=N\), the polynomial \(h\) has degree one.  This
is exactly the condition that \(\xi=\eta\), contrary to assumption.

Therefore the auxiliary overlap curve has no \(k\)-rational one-infinity
component capable of producing a power term.  Geometrically nonintegral
components contribute finitely many \(k\)-points, log-positive components
contribute finitely many integral points by Siegel--Mahler, and genus-zero
components whose compactification is not \(\Pj^1_k\) again contribute finitely
many \(k\)-points.  Genus-positive components with two boundary points are
included in the log-positive case, so the only possible infinite contributions
come from geometrically integral \(k\)-components \(D\) with
\(\widetilde D\cong\Pj^1_k\) and with two geometric boundary points.

For such a rational two-infinity component \(D\) of \(W_{\xi,\eta}\), primitive
contact applied to the pair \((A_\xi,A_\eta)\) shows that both \(T\) and \(U\),
hence also \(x=A_\xi(T)=A_\eta(U)\), have poles at both boundary points of
\(D\).  Thus
the fixed model generated by \(x,T_0,U_0\) satisfies the two-sided hypothesis of
\Cref{thm:admissible-unit-lattice}.  By the definition of \(\rho_{\xi,\eta}\) in
the statement, the union of all rational two-infinity overlap contributions is therefore
\[
        O_{\xi,\eta,k,S}
        \bigl((\log(2B))^{\max\{0,\rho_{\xi,\eta}\}}\bigr).
\]
This proves \textup{(b)}.

Finally we prove \textup{(c)}.  By the global source expansion,
\Cref{thm:global-source-expansion}, outside finitely many input values the
new-lift set is the union of the active one-infinity source images and the
admissible two-infinity Laurent images.  After quotienting active one-infinity
sources by \(\sim_X\), the one-infinity part is
\[
        \bigcup_{\xi\in\Xi_{f,g,k,S}}\mathscr P_\xi.
\]
The two-infinity part contributes
\[
        O_{f,g,k,S}
        \bigl((\log(2B))^{\max\{0,\rho_{f,g,k,S}^{\adm}\}}\bigr)
\]
by \Cref{thm:admissible-unit-lattice}.  Since \(\Xi_{f,g,k,S}\) is finite,
\[
\left|
\#\bigcup_{\xi}\mathscr P_\xi(B)
-
\sum_{\xi}\#\mathscr P_\xi(B)
\right|
\le
\sum_{\xi\ne\eta}
\#\bigl(\mathscr P_\xi(B)\cap\mathscr P_\eta(B)\bigr).
\]
Part \textup{(b)} bounds every pairwise intersection by a logarithmic term.
Combining this with the two-infinity contribution and the finite exceptional set
gives the stated expansion.
\end{proof}

\begin{remark}[Sharpness of the logarithmic overlap error]
\label{rem:log-overlap-sharp}
The overlap term in \Cref{thm:primitive-source-separation} cannot in general be
replaced by \(O(1)\).  For example, the two degree-two maps
\[
        A_1(T)=T^2,
        \qquad
        A_2(U)=2U^2+1
\]
are not affinely equivalent over \(\Q\), but their overlap curve
\[
        T^2=2U^2+1
\]
is a Pell conic with two points at infinity.  Its integer points give
logarithmically many common values.  The separation theorem gives the sharp
form: distinct primitive \(X\)-source classes have no power-sized overlap,
but logarithmic overlap is genuinely possible.
\end{remark}

\begin{theorem}[Sparse exceptional lifting over number fields]
\label{thm:number-field-sparse}
Let \(k\) be a number field, let \(S\) be a finite set of places containing the
Archimedean places, and let \(f,g\in k[x]\) be nonconstant.  Put
\[
        q_{k,S}=\operatorname{rk}\OO_{k,S}^{*}=|S|-1.
\]
Fix a denominator-control set \(S'\supseteq S\) as in
\Cref{lem:denominator-number-field}, and for each
\(C\in\RR_2(f,g;k)\) let
\[
        \rho_C^{\adm}=\rho_C^{\adm}(S,S')
\]
be the admissible rank from \Cref{thm:admissible-unit-lattice}, computed for the
closure of \(C\) in the standard affine \(\OO_{k,S'}\)-model of \(C_{f,g}\).
Then
\[
\begin{aligned}
      N^{\new}_{f,g,k,S}(B)
      &\ll_{k,S,f,g}
      1+
      \sum_{C\in\RR_1^{\act}(f,g;k,S)}
          B^{n_k/d_X(C)}(\log(2B))^{q_{k,S}} \\
      &\qquad
      +\sum_{C\in\RR_2(f,g;k)}
          (\log(2B))^{\max\{0,\rho_C^{\adm}\}}.
\end{aligned}
\]
Moreover, if \(C\in\RR_2(f,g;k)\) has \(\rho_C^{\adm}\ge1\), then the inputs
arising from \(C\), after removing graph intersections, satisfy the lower bound
\[
        \gg_{k,S,f,g,C} (\log(2B))^{\rho_C^{\adm}}.
\]
\end{theorem}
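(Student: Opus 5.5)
The plan is to read the upper bound off the global source expansion, \Cref{thm:global-source-expansion}, and then estimate each piece separately. That theorem gives a finite set \(E_{f,g,k,S}\) such that, outside \(E_{f,g,k,S}\), every element of \(\mathscr N^{\new}_{f,g,k,S}\) lies in one of two kinds of sets: the polynomial image \(\{A_C(t):t\in k,\ A_C(t)\in\OO_{k,S}\}\) for some \(C\in\RR_1^{\act}(f,g;k,S)\), or the Laurent image \(\{A_C(u):u\in\mathcal U_C\}\) for some \(C\in\RR_2(f,g;k)\). The sets \(\RR_1^{\act}\) and \(\RR_2\) are finite, since \(C_{f,g}\) has finitely many components. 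So \(N^{\new}_{f,g,k,S}(B)\) is at most \(\#E_{f,g,k,S}=O(1)\) plus the sum, over these finitely many components, of the number of height-\(\le B\) elements in each image.

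For \(C\in\RR_1^{\act}\), the component is active, so \Cref{lem:polynomial-S-value-set} applies with \(d=\deg A_C\). By \Cref{thm:one-infinity-source-diagrams} we have \(\deg A_C=d_X(C)\), and the lemma bounds the count by \(B^{n_k/d_X(C)}(\log(2B))^{q_{k,S}}\). For \(C\in\RR_2\), the standard affine \(\OO_{k,S'}\)-model generated by \(X,Y\) satisfies the two-sided pole hypothesis by \Cref{thm:primitive-contact-infinity}, since both \(X\) and \(Y\) have poles at every boundary point. Hence \Cref{thm:admissible-unit-lattice}, with \(T=S'\), bounds the contribution by \(O(1)\) when \(\rho_C^{\adm}\le0\) and by \((\log(2B))^{\rho_C^{\adm}}\) otherwise. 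Both cases are covered by \((\log(2B))^{\max\{0,\rho_C^{\adm}\}}\). Summing these bounds gives the displayed upper estimate.

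For the lower bound, fix \(C\in\RR_2\) with \(\rho_C^{\adm}\ge1\). Take a coset \(\eta_j\Gamma_j\subseteq\mathcal U_C\) of maximal rank \(\rho_C^{\adm}\). \Cref{lem:unit-coset-laurent-count} gives \(\gg(\log(2B))^{\rho_C^{\adm}}\) distinct values \(A_C(u)\) with \(u\in\eta_j\Gamma_j\) and height at most \(B\). Each such value lies in \(\OO_{k,S}\) by \Cref{thm:admissible-unit-lattice}. It then remains to discard the graph intersections. For each \(h\in\HH_{f,g}(k)\), the Laurent polynomial \(B_C(u)-h(A_C(u))\) is nonzero because \(C\) is non-graph, so it vanishes at only finitely many \(u\). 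This removes only \(O(1)\) values, and the lower bound survives.

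The step needing the most care is the compatibility of the model: \(\rho_C^{\adm}\) is defined for the closure of \(C\) inside the model of \(C_{f,g}\), while \Cref{thm:global-source-expansion} uses the model of \(C\) generated by \(X,Y\). I will check that these two models agree away from finitely many points, namely the intersections with other components. Those points change only finitely many \(X\)-values, so the constants absorb the difference.
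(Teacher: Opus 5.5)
Your proposal is correct and follows the paper's own proof. The upper bound comes from the global source expansion: the one-infinity images are bounded by the polynomial value-set lemma and the two-infinity images by the admissible unit-lattice theorem. The lower bound comes from a maximal-rank coset after deleting the finitely many zeros of $Y-h(X)$ on $C$.

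The model-compatibility check you flag is vacuous. The closure of $C$ in the standard $\OO_{k,S'}$-model of $C_{f,g}$ is exactly the model generated by $X,Y$, namely the image of $\OO_{k,S'}[X,Y]$ in $k[C]$, so the two sets of integral points coincide exactly.
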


\begin{proof}
By \Cref{thm:global-source-expansion}, the new-lift input set is, up to finitely
many values, the union of the polynomial source images from
\(\RR_1^{\act}(f,g;k,S)\) and the admissible Laurent images from
\(\RR_2(f,g;k)\).  For \(C\in\RR_1^{\act}(f,g;k,S)\), the count of values
\[
        A_C(t)\in\OO_{k,S},\qquad H(A_C(t))\le B,
\]
is
\[
        O_{C,k,S}\bigl(B^{n_k/d_X(C)}(\log(2B))^{q_{k,S}}\bigr)
\]
by \Cref{lem:polynomial-S-value-set}.  For \(C\in\RR_2(f,g;k)\),
\Cref{thm:admissible-unit-lattice} gives the displayed logarithmic upper bound
with the chosen admissible rank \(\rho_C^{\adm}\), and also gives the stated
lower bound whenever that rank is positive.  In the lower-bound case, removing
graph intersections does not change the order of growth: for each
\(h\in\HH_{f,g}(k)\), the regular function \(Y-h(X)\) is not identically zero on
the non-graph component \(C\), so its zero locus on \(C\) is finite, and deleting
these finitely many \(X\)-values preserves the positive logarithmic lower bound.
Summing over the finitely many source components and absorbing the finite
symmetric-difference set proves the theorem.
\end{proof}

\begin{corollary}[Componentwise source exponent over number fields]
\label{cor:number-field-exponent}
If \(\RR_1^{\act}(f,g;k,S)\ne\varnothing\), define
\[
        \theta_{f,g,k,S}
        =\max_{C\in\RR_1^{\act}(f,g;k,S)}\frac1{d_X(C)}.
\]
Then
\[
\begin{aligned}
        N^{\new}_{f,g,k,S}(B)
        &\ll_{k,S,f,g}
        B^{n_k\theta_{f,g,k,S}}(\log(2B))^{q_{k,S}} \\
        &\qquad +(\log(2B))^c,
\end{aligned}
\]
where \(c\ge0\) comes only from two-infinity components.  If
\(\RR_1^{\act}(f,g;k,S)=\varnothing\), then
\[
        N^{\new}_{f,g,k,S}(B)\ll_{k,S,f,g}(\log(2B))^c.
\]
\end{corollary}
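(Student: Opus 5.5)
The corollary should follow by collapsing the sum in \Cref{thm:number-field-sparse}. I would start from the upper bound there:
\[
N^{\new}_{f,g,k,S}(B)\ll 1+\sum_{C\in\RR_1^{\act}}B^{n_k/d_X(C)}(\log(2B))^{q_{k,S}}+\sum_{C\in\RR_2}(\log(2B))^{\max\{0,\rho_C^{\adm}\}}.
\]

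\textbf{One-infinity terms.} Each summand indexed by \(C\in\RR_1^{\act}(f,g;k,S)\) satisfies
\[
B^{n_k/d_X(C)}\le B^{n_k\theta_{f,g,k,S}}\qquad\text{for }B\ge1,
\]
directly from the definition of \(\theta_{f,g,k,S}\) as a maximum. The set \(\RR_1^{\act}\) is finite, being contained in the finitely many components of \(C_{f,g}\). So the sum is at most \(\#\RR_1^{\act}\cdot B^{n_k\theta}(\log(2B))^{q_{k,S}}\). By \Cref{cor:geometric-degree-spectrum} every \(d_X(C)\) equals \(N\), so in fact \(\theta=1/N\); this refinement is not needed for the bound.

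\textbf{Two-infinity terms.} Set
\[
c=\max\Bigl(\{0\}\cup\bigl\{\max\{0,\rho_C^{\adm}\}:C\in\RR_2(f,g;k)\bigr\}\Bigr),
\]
with \(c=0\) when \(\RR_2\) is empty. Then \(c\ge0\), and \(c\) depends only on the two-infinity components. Since \(\RR_2\) is finite, the second sum is \(\ll(\log(2B))^c\). The constant \(1\) is absorbed into \((\log(2B))^c\), because \((\log(2B))^c\ge(\log 2)^c>0\) for \(B\ge1\).

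\textbf{Empty case.} If \(\RR_1^{\act}=\varnothing\), the first sum is empty and the same absorption gives \(N^{\new}_{f,g,k,S}(B)\ll(\log(2B))^c\).

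No step is a real obstacle. The only points needing care are stating that both index sets are finite and handling the \(B<2\) range, where all quantities are bounded, by enlarging the implied constant.
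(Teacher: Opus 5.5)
Your proposal is correct and follows the same route as the paper. The paper's proof also reads the corollary off the upper bound in \Cref{thm:number-field-sparse}: each one-infinity term is bounded using the maximal exponent, and the finitely many two-infinity terms, together with the constant, are absorbed into a single power \((\log(2B))^c\) with \(c\ge0\).
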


\begin{proof}
This is the power part of \Cref{thm:number-field-sparse}, with the finitely many
two-infinity contributions absorbed into a single logarithmic exponent \(c\).
\end{proof}

\begin{theorem}[Primitive-contact power bound]
\label{thm:gcd-exponent-bound}
Let \(k\) be a number field, let \(S\) contain the Archimedean places, and let
\(f,g\in k[x]\) be nonconstant.  Put
\[
        m=\deg f,
        \qquad n=\deg g,
        \qquad e=\gcd(m,n),
        \qquad N=\frac ne.
\]
If \(N=1\), then there are no active one-infinity non-graph components.  If
\(N\ge2\), then every active one-infinity non-graph component \(C\) has
\[
        d_X(C)=N.
\]
Consequently, for some \(c\ge0\),
\[
        N^{\new}_{f,g,k,S}(B)
        \ll_{k,S,f,g}
        B^{[k:\Q]/N}(\log(2B))^{q_{k,S}}
        +(\log(2B))^c
        \qquad (N\ge2),
\]
with the convention that, if there is no active one-infinity component, only the
polylogarithmic term remains.  For \(N=1\) only the polylogarithmic term remains.
\end{theorem}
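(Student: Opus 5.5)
The plan is to deduce the statement directly from \Cref{cor:geometric-degree-spectrum}, \Cref{cor:ground-field-degree-spectrum} and \Cref{cor:number-field-exponent}; no new geometry should be needed.

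First, the degree assertions. Let \(C\in\RR_1(f,g;k)\) be a one-infinity non-graph component. By definition it is geometrically integral with \(\widetilde C\cong\Pj^1_k\) and \(\#D_C=1\), so it is log-negative. \Cref{cor:geometric-degree-spectrum}, or equivalently \Cref{thm:primitive-contact-infinity} applied to \(C_{\bar k}\), then gives \(d_X(C)=N\#D_C=N\).

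If \(N=1\), this says \(d_X(C)=1\). \Cref{lem:degree-one-graph-components}, applied over \(k\), shows that \(C\) is a graph component \(Y=h(X)\) with \(h\in\HH_{f,g}(k)\), which contradicts non-graph. Hence \(\RR_1(f,g;k)\), and a fortiori \(\RR_1^{\act}(f,g;k,S)\), is empty when \(N=1\). When \(N\ge2\), the same computation gives \(d_X(C)=N\) for every active \(C\), and activity plays no role here.

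Second, the counting bound. If \(\RR_1^{\act}(f,g;k,S)\neq\varnothing\), then \(\theta_{f,g,k,S}=\max 1/d_X(C)=1/N\). Inserting \(n_k\theta=[k:\Q]/N\) into \Cref{cor:number-field-exponent} gives
\[
B^{[k:\Q]/N}(\log(2B))^{q_{k,S}}+(\log(2B))^c .
\]
Here \(c=\max\{0,\rho_C^{\adm}\}\) over \(C\in\RR_2(f,g;k)\), with \(c=0\) if there are none. If \(\RR_1^{\act}=\varnothing\), which is forced when \(N=1\), the second half of \Cref{cor:number-field-exponent} leaves only \((\log(2B))^c\). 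Since \(\RR_2\) is finite, the constant \(1\) from \Cref{thm:number-field-sparse} is absorbed into the logarithmic term.

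The only point needing care is the descent in the \(N=1\) case. \Cref{lem:degree-one-graph-components} must produce \(h\) with coefficients in \(k\) rather than \(\bar k\). This holds because the lemma is stated over an arbitrary field \(K\), and \(d_X(C)=[k(C):k(X)]\) for a geometrically integral \(k\)-component agrees with the geometric degree, as shown in \Cref{cor:ground-field-degree-spectrum}. Once that is settled, the rest is bookkeeping.
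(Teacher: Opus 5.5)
Your proposal is correct and follows the paper's own proof: \Cref{cor:geometric-degree-spectrum} gives \(d_X(C)=N\), \Cref{lem:degree-one-graph-components} rules out \(N=1\), and the bound is then read off from \Cref{cor:number-field-exponent}. Your additional bookkeeping (identifying \(c\), absorbing the constant term, and checking that \(h\) has coefficients in \(k\)) fills in details the paper leaves implicit without changing the argument.
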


\begin{proof}
Let \(C\) be an active one-infinity non-graph component.  By
\Cref{cor:geometric-degree-spectrum}, every one-infinity component has
\(d_X(C)=N\).  If \(N=1\), then \(d_X(C)=1\), and
\Cref{lem:degree-one-graph-components} says that \(C\) is a graph component, a
contradiction.  Thus active non-graph one-infinity components can occur only when
\(N\ge2\), and then all of them have degree \(N\).  The displayed estimate follows
from \Cref{cor:number-field-exponent}.
\end{proof}

\begin{corollary}[One-infinity source degree spectrum]
\label{cor:one-infinity-degree-spectrum}
With notation as in \Cref{thm:gcd-exponent-bound}, after graph removal the
one-infinity source degree set is empty when \(N=1\), and is either empty or
\(\{N\}\) when \(N\ge2\).  Equivalently, \(N\) is the only possible
one-infinity source degree after graph removal, but such a source need not exist.
Hence, if there is no active one-infinity non-graph component of degree \(N\),
then there is no active one-infinity non-graph component at all, and the sparse
new-lifting count is purely polylogarithmic or bounded.
\end{corollary}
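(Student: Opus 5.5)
The corollary is a repackaging of \Cref{cor:geometric-degree-spectrum} and \Cref{thm:gcd-exponent-bound}, and the proof simply chains them. No new geometry is needed.

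Take a non-graph one-infinity source after graph removal, that is, some \(C\in\RR_1(f,g;k)\). It is geometrically integral with \(\#D_C=1\), so it is log-negative. \Cref{cor:geometric-degree-spectrum} then gives \(d_X(C)=N\#D_C=N\). Via \Cref{thm:one-infinity-source-diagrams}, this is the same as \(\deg A_C=N\) for any polynomial source parametrization.

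For \(N=1\) the source would have degree one over the \(X\)-line. By \Cref{lem:degree-one-graph-components} it would then be a graph component, which is a contradiction. So the degree set is empty. For \(N\ge2\), every element of the degree set equals \(N\), so the set is contained in \(\{N\}\). It equals \(\{N\}\) exactly when \(\RR_1(f,g;k)\ne\varnothing\).

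That such sources need not exist is shown by any pair with no non-graph one-infinity component. An example is \(f=g\) with \(f\) generic of degree at least \(2\), where the non-graph part of \(f(X)-f(Y)\) is a single log-positive curve. I would cite only the fact that the statement asserts no existence; no example is strictly required. \Cref{ex:gcd-spectrum-sharp} shows that the value \(N\) is attained when \(N\ge2\).

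The final sentence follows directly. If there is no active one-infinity non-graph component of degree \(N\), then there is none at all, because every such component has degree \(N\). Hence \(\RR_1^{\act}(f,g;k,S)=\varnothing\). The second display of \Cref{cor:number-field-exponent}, equivalently the \(N=1\) or no-active convention in \Cref{thm:gcd-exponent-bound}, then gives \(N^{\new}_{f,g,k,S}(B)\ll(\log(2B))^c\). This is bounded when every admissible rank is at most zero, by \Cref{thm:number-field-sparse}.

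The only point needing care is the distinction between geometric and \(k\)-rational one-infinity sources. The degree statement is geometric, and it applies to every \(k\)-source because \(d_X\) is unchanged under base change.
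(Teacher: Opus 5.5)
Your argument is correct and is essentially the paper's own proof: the degree set is read off from \Cref{cor:geometric-degree-spectrum} together with \Cref{lem:degree-one-graph-components}, and the counting conclusion comes from \Cref{thm:number-field-sparse}, which you reach through \Cref{cor:number-field-exponent}, a direct consequence of it. You should drop or fix the illustrative example, though. The pair \(f=g\) has \(N=1\), so it only exhibits the case the corollary already settles. Its description is also wrong in low degree: for \(\deg f=2\) the non-graph part is empty, and for \(\deg f=3\) it is a conic with two points at infinity, hence log-zero rather than log-positive. A genuine \(N\ge2\) instance with no one-infinity source is \(f=X^2\), \(g=Y^4+1\), whose fiber product is a genus-one curve with two boundary points.
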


\begin{proof}
The degree statement is \Cref{cor:geometric-degree-spectrum} together with
\Cref{lem:degree-one-graph-components}.  The final counting assertion follows
from \Cref{thm:number-field-sparse}.
\end{proof}

\section{Active components and lower bounds}
\label{sec:active-lower}

The upper bound in \Cref{thm:number-field-sparse} distinguishes geometric
possibility from arithmetic activity.  We now show that activity gives the
expected lower power.

\begin{theorem}[Lower bound from an active component]
\label{thm:lower-bound}
Let \(k\) be a number field, let \(S\) contain the Archimedean places, and let
\(f,g\in k[x]\) be nonconstant.  Suppose
\(C\in\RR_1^{\act}(f,g;k,S)\) is a non-graph component with \(d=d_X(C)\).  Then
\[
        N^{\new}_{f,g,k,S}(B)
        \gg_{k,S,f,g,C} B^{n_k/d}(\log(2B))^{q_{k,S}}.
\]
In particular, the power-log order attached to \(C\) is sharp.
\end{theorem}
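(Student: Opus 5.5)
The plan is to produce many new lifts directly from the polynomial source parametrization of \(C\) and then discard the finitely many parameters that fall on graph components. Since \(C\in\RR_1^{\act}(f,g;k,S)\), we have \(X=A_C(t)\), \(Y=B_C(t)\) with \(A_C,B_C\in k[t]\), \(\deg A_C=d\), and \(f(A_C(t))=g(B_C(t))\) identically. Activity gives some \(t_0\in k\) with \(A_C(t_0)\in\OO_{k,S}\). By \Cref{lem:polynomial-S-value-set}, the set of values
\[
        \mathscr V(B)=\{A_C(t):t\in k,\ A_C(t)\in\OO_{k,S},\ H(A_C(t))\le B\}
\]
has size \(\gg_{A_C,k,S}B^{n_k/d}(\log(2B))^{q_{k,S}}\). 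Concretely, \Cref{lem:integer-valued-coset} supplies a coset \(t_0+\lambda\OO_{k,S}\) on which \(A_C\) is \(S\)-integral, \Cref{lem:height-growth} keeps heights below \(B\) once \(H(u)\le cB^{1/d}\), and \Cref{lem:S-height-count} counts the \(u\).

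The key step is to show that each such value \(a=A_C(t)\), with finitely many exceptions, lies in \(\mathscr N^{\new}_{f,g,k,S}\). The witness is \(y=B_C(t)\in k\), which satisfies \(g(y)=f(a)\). We must also check that \(y\ne h(a)\) for every \(h\in\HH_{f,g}(k)\). Since \(C\) is non-graph, \Cref{thm:one-infinity-source-diagrams} gives \(B_C\notin k[A_C]\). Hence for each of the finitely many \(h\in\HH_{f,g}(k)\) (finite by \Cref{lem:degree-one-graph-components}), the polynomial \(B_C(t)-h(A_C(t))\) is not identically zero and has at most finitely many roots \(t\). Removing those finitely many parameters removes at most finitely many values \(a\).

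A subtlety is that a value \(a\) can have several preimages \(t\), and only one of them needs to give a new lift. This only helps us: \(a\) is new as soon as some preimage \(t\) avoids the bad root set. The values we lose are therefore contained in \(A_C(\text{bad roots})\), which is a finite set. So
\[
        N^{\new}_{f,g,k,S}(B)\ \ge\ \#\mathscr V(B)-O_C(1)\ \gg\ B^{n_k/d}(\log(2B))^{q_{k,S}}.
\]

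Sharpness then follows by comparison with the matching upper bound in \Cref{thm:number-field-sparse}, where \(C\) contributes \(B^{n_k/d_X(C)}(\log(2B))^{q_{k,S}}\). I expect no real obstacle. The only point needing care is to count distinct values rather than parameters, and \Cref{lem:polynomial-S-value-set} already handles this through the bound of at most \(d\) preimages per value.
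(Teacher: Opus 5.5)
Your proposal is correct and essentially matches the paper's proof. Both obtain the value count from \Cref{lem:polynomial-S-value-set} and then discard the finitely many parameters with $B_C(t)=h(A_C(t))$ for one of the finitely many $h\in\HH_{f,g}(k)$. Your appeal to $B_C\notin k[A_C]$ replaces the paper's equivalent remark that infinitely many such coincidences would force $C$ into the graph $Y=h(X)$, and your justification of the sharpness clause via the upper bound of \Cref{thm:number-field-sparse} makes explicit what the paper leaves implicit.
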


\begin{proof}
Choose a parametrization
\[
        X=A(t),\qquad Y=B(t),\qquad A,B\in k[t],
\]
with \(\deg A=d\).  Since \(C\) is active, \Cref{lem:polynomial-S-value-set}
gives
\[
        \#\{A(t):t\in k,
             \ A(t)\in\OO_{k,S},\ H(A(t))\le B\}
        \gg_{A,k,S}B^{n_k/d}(\log(2B))^{q_{k,S}}.
\]
For each such value \(a=A(t)\), the point
\((a,B(t))\) lies on \(C\) and gives a rational lift \(g(B(t))=f(a)\).

It remains only to remove graph intersections.  For a fixed
\(h\in\HH_{f,g}(k)\), the identity
\[
        B(t)=h(A(t))
\]
can hold for infinitely many \(t\) only if the component \(C\) is contained in
the graph \(Y=h(X)\), contrary to the hypothesis that \(C\) is non-graph.  Since
there are finitely many graph components, excluding all graph intersections
removes only finitely many parameters and therefore only finitely many
\(X\)-values.  The remaining values give new lifts counted by
\(N^{\new}_{f,g,k,S}(B)\).  This proves the lower bound.
\end{proof}

\begin{theorem}[Sparse trichotomy over number fields]
\label{thm:number-field-exact}
Let \(k\) be a number field, let \(S\) be a finite set of places containing the
Archimedean places, and let \(f,g\in k[x]\) be nonconstant.  Put
\[
        n_k=[k:\Q],
        \qquad
        q_{k,S}=\operatorname{rk}\OO_{k,S}^{*}.
\]
With \(\rho_C^{\adm}\) as in \Cref{thm:number-field-sparse}, let
\[
        \rho_{f,g,k,S}^{\adm}
        =\max_{C\in\RR_2(f,g;k)}\rho_C^{\adm},
\]
with value \(-1\) if \(\RR_2(f,g;k)=\varnothing\).  Put
\[
        N=\frac{\deg g}{\gcd(\deg f,\deg g)}.
\]
If \(\RR_1^{\act}(f,g;k,S)\ne\varnothing\), then \(N\ge2\), every active
one-infinity component has \(d_X=N\), and
\[
        N^{\new}_{f,g,k,S}(B)
        \asymp_{k,S,f,g}
        B^{n_k/N}(\log(2B))^{q_{k,S}}.
\]
Equivalently, in this case
\[
        \theta_{f,g,k,S}
        =\max_{C\in\RR_1^{\act}(f,g;k,S)}\frac1{d_X(C)}
        =\frac1N.
\]
If \(\RR_1^{\act}(f,g;k,S)=\varnothing\) and
\(\rho_{f,g,k,S}^{\adm}\ge1\), then
\[
        N^{\new}_{f,g,k,S}(B)
        \asymp_{k,S,f,g}
        (\log(2B))^{\rho_{f,g,k,S}^{\adm}}.
\]
If \(\RR_1^{\act}(f,g;k,S)=\varnothing\) and
\(\rho_{f,g,k,S}^{\adm}\le0\), then
\[
        N^{\new}_{f,g,k,S}(B)=O_{k,S,f,g}(1).
\]
Thus, after graph components are removed, active \(\A^1\)-sources give exactly
the power terms, positive-rank admissible \(\Gm\)-sources give exactly the
logarithmic-rank terms, and inactive one-infinity sources, rank-zero admissible
\(\Gm\)-sources, and all remaining components are finite for the integral lifting
problem.
\end{theorem}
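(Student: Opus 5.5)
The proof assembles results already established, split into the three cases of the statement. In the first case, assume \(\RR_1^{\act}(f,g;k,S)\ne\varnothing\). By \Cref{thm:gcd-exponent-bound}, \(N\ge2\), and every active one-infinity non-graph component has \(d_X(C)=N\).

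For the upper bound I use \Cref{thm:number-field-sparse}. Each summand from \(\RR_1^{\act}\) is \(B^{n_k/N}(\log(2B))^{q_{k,S}}\). Each \(\RR_2\)-summand is at most \((\log(2B))^{c}\). Since \(N\ge2\) and \(n_k\ge1\), the power \(B^{n_k/N}\) dominates every polylogarithm, so the total is \(\ll B^{n_k/N}(\log(2B))^{q_{k,S}}\).

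For the lower bound, pick any \(C\in\RR_1^{\act}\). \Cref{thm:lower-bound} then gives \(\gg B^{n_k/N}(\log(2B))^{q_{k,S}}\). The two bounds together give \(\asymp\), and the identity \(\theta_{f,g,k,S}=1/N\) is immediate from \(d_X(C)=N\) for every active \(C\). Alternatively, \Cref{thm:primitive-source-separation}(a),(c) gives the same conclusion directly.

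In the second case, assume \(\RR_1^{\act}=\varnothing\) and \(\rho^{\adm}_{f,g,k,S}\ge1\). The upper bound in \Cref{thm:number-field-sparse} reduces to \(1+\sum_{C\in\RR_2}(\log(2B))^{\max\{0,\rho_C^{\adm}\}}\), which is \(\ll(\log(2B))^{\rho^{\adm}_{f,g,k,S}}\) because \(\RR_2\) is finite. For the matching lower bound, choose \(C\in\RR_2\) attaining the maximal rank. The lower-bound clause of \Cref{thm:number-field-sparse} says that the inputs from \(C\), after removing graph intersections, number \(\gg(\log(2B))^{\rho_C^{\adm}}\). These inputs are new lifts in \(\OO_{k,S}\) by \Cref{thm:global-source-expansion}, up to the finite exceptional set \(E_{f,g,k,S}\). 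Removing finitely many values does not affect a lower bound that tends to infinity, so the \(\asymp\) statement follows.

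In the third case, assume \(\RR_1^{\act}=\varnothing\) and \(\rho^{\adm}\le0\). Every exponent \(\max\{0,\rho_C^{\adm}\}\) is then \(0\), so the bound in \Cref{thm:number-field-sparse} is \(O(1)\). For the closing summary sentence, note that inactive one-infinity components are absent from the expansion in \Cref{thm:global-source-expansion}. Rank-zero admissible sets are finite by \Cref{lem:unit-coset-laurent-count}, and the remaining components are handled by \Cref{prop:component-count}.

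The main obstacle is bookkeeping rather than mathematics. One must check that the lower bound from an \(\RR_2\) component really counts elements of \(\mathscr N^{\new}_{f,g,k,S}\): the admissible values \(A_C(u)\) must lie in \(\OO_{k,S}\) and come from \(k\)-rational lifts \(y=B_C(u)\in k\) that avoid the graph values. This follows from the construction of \(\mathcal U_C\) in \Cref{thm:admissible-unit-lattice}, applied with the model over \(\OO_{k,S'}\) where \(S'\) is supplied by \Cref{lem:denominator-number-field}, together with the fact that \(Y-h(X)\) has only finitely many zeros on the non-graph component \(C\).
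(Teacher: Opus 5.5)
Your proposal is correct and follows essentially the same route as the paper. It uses the same three-case split, takes the power upper bound from \Cref{thm:number-field-sparse} (the paper cites \Cref{cor:number-field-exponent}, which is only the power part of that theorem), gets the lower bounds from \Cref{thm:lower-bound} and from the positive-rank clause of \Cref{thm:number-field-sparse}, and reads off the bounded case from the finiteness of rank-zero admissible sets. Your extra check that the \(\RR_2\) lower bound counts elements of \(\mathscr N^{\new}_{f,g,k,S}\) is already built into those cited results, so it is harmless but adds no new ingredient.
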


\begin{proof}
If an active one-infinity component exists, \Cref{cor:geometric-degree-spectrum}
gives \(d_X(C)=N\) for every such component.  The component is non-graph, so
\Cref{lem:degree-one-graph-components} gives \(N\ge2\).  The upper bound is then
\Cref{cor:number-field-exponent}, and the lower bound follows by choosing any
active one-infinity component and applying \Cref{thm:lower-bound}.  The
two-infinity terms are polylogarithmic and are therefore lower order than any
positive power of \(B\).

Assume now that no active one-infinity component exists.  The upper bound in
\Cref{thm:number-field-sparse} leaves only the two-infinity admissible-rank terms
and the finite contributions.  If \(\rho_{f,g,k,S}^{\adm}\ge1\), choose a
component \(C\in\RR_2(f,g;k)\) of maximal admissible rank and use the lower bound
in \Cref{thm:number-field-sparse}.  If the maximal admissible rank is at most
zero, each two-infinity admissible unit set is finite by
\Cref{thm:admissible-unit-lattice}, and every other component has already been
shown to contribute \(O(1)\).  This proves all cases.
\end{proof}

For the following ordinary-integer corollaries, write
\[
\begin{aligned}
 N^{\new}_{f,g}(B)
 =\#\{n\in\Z: &\ |n|\le B,
       \ \exists y\in\Q\text{ with }g(y)=f(n), \\
       &\ y\ne h(n)\text{ for every }h\in\HH_{f,g}(\Q)\}.
\end{aligned}
\]
For \(B\ge1\), equivalently,
\(N^{\new}_{f,g}(B)=N^{\new}_{f,g,\Q,\{\infty\}}(B)\), since
\(H(n)=\max\{1,|n|\}\) for \(n\in\Z\).

\begin{corollary}[Sharp integer exponents over \(\Q\)]
\label{cor:Q-sparse}
Let \(f,g\in\Q[x]\) be nonconstant.  Put
\[
        N=\frac{\deg g}{\gcd(\deg f,\deg g)}.
\]
With \(\rho_C^{\adm}\) as in \Cref{thm:number-field-sparse} for \(k=\Q\) and
\(S=\{\infty\}\), put
\[
        \rho_{f,g}^{\adm}
        =\max_{C\in\RR_2(f,g;\Q)}\rho_C^{\adm},
\]
with value \(-1\) if there are no two-infinity components.  If
\(\RR_1^{\act}(f,g;\Q,\{\infty\})\ne\varnothing\), then \(N\ge2\) and
\[
      N^{\new}_{f,g}(B)\asymp_{f,g}B^{1/N}.
\]
If there is no active one-infinity component but \(\rho_{f,g}^{\adm}\ge1\), then
\[
      N^{\new}_{f,g}(B)\asymp_{f,g}(\log(2B))^{\rho_{f,g}^{\adm}}.
\]
If there is no active one-infinity component and \(\rho_{f,g}^{\adm}\le0\), then
\[
      N^{\new}_{f,g}(B)=O_{f,g}(1).
\]
In particular, square-root growth
\[
      N^{\new}_{f,g}(B)\asymp_{f,g}B^{1/2}
\]
occurs if and only if \(N=2\) and there is an active non-graph
\(\Q\)-rational one-infinity component.
\end{corollary}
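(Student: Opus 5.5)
The plan is to specialize \Cref{thm:number-field-exact} to \(k=\Q\), \(S=\{\infty\}\), and to check that every normalization constant collapses. Here \(n_\Q=1\), and \(q_{\Q,\{\infty\}}=|S|-1=0\), since \(\Z^*=\{\pm1\}\) has rank zero. As recorded just before the statement, \(N^{\new}_{f,g}(B)=N^{\new}_{f,g,\Q,\{\infty\}}(B)\) for \(B\ge1\), because \(H(n)=\max\{1,|n|\}\) for \(n\in\Z\). Together with \(\OO_{\Q,\{\infty\}}=\Z\), this lets us transfer every assertion directly.

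In the first case, where an active one-infinity component exists, \Cref{thm:number-field-exact} gives \(N\ge2\) and
\[
N^{\new}\asymp B^{n_k/N}(\log 2B)^{q_{k,S}}=B^{1/N}.
\]
In the second case there is no active one-infinity component and \(\rho^{\adm}\ge1\). Here \(\rho_{f,g}^{\adm}\) is by definition the quantity \(\rho^{\adm}_{f,g,\Q,\{\infty\}}\) computed with the same denominator-control set \(S'\). So \Cref{thm:number-field-exact} gives the \((\log 2B)^{\rho}\) asymptotic directly. The third case, boundedness when \(\rho^{\adm}\le0\), is also a direct transcription.

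For the final equivalence I would argue the two directions separately. If \(N=2\) and an active non-graph \(\Q\)-rational one-infinity component exists, the first case gives \(B^{1/2}\). Conversely, suppose \(N^{\new}_{f,g}(B)\asymp B^{1/2}\). This is incompatible with the second and third cases, since \((\log 2B)^\rho=o(B^{1/2})\) and \(O(1)\) cannot be \(\gg B^{1/2}\). Hence an active one-infinity component exists, and the first case gives \(N^{\new}\asymp B^{1/N}\). Comparing \(B^{1/N}\asymp B^{1/2}\) as \(B\to\infty\) forces \(N=2\).

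Two points in the definitions need a brief remark. First, elements of \(\RR_1^{\act}(f,g;\Q,\{\infty\})\) are by definition non-graph, geometrically integral, with \(\widetilde C\cong\Pj^1_\Q\), so they are exactly the active non-graph \(\Q\)-rational one-infinity components. Second, activity means \(A_C(\Q)\cap\Z\ne\varnothing\). I do not expect any real obstacle. The only step needing care is the converse of the square-root criterion, that is, the exclusion of the logarithmic and bounded regimes and the exponent comparison, and that is elementary.
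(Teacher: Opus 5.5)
Your proposal is correct and is essentially the paper's argument: specialize \Cref{thm:number-field-exact} to \(k=\Q\), \(S=\{\infty\}\). There \(n_k=1\) and \(q_{k,S}=0\), and \(N^{\new}_{f,g}(B)\) is identified with \(N^{\new}_{f,g,\Q,\{\infty\}}(B)\) for \(B\ge1\). The only difference is cosmetic: the paper reads off the square-root criterion as \(d_X=N=2\) via \Cref{cor:geometric-degree-spectrum}, whereas you spell out the converse by excluding the logarithmic and bounded regimes and comparing \(B^{1/N}\) with \(B^{1/2}\).
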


\begin{proof}
This is \Cref{thm:number-field-exact} with \(k=\Q\) and \(S=\{\infty\}\),
combined with \Cref{cor:geometric-degree-spectrum}.  For this \(S\), there is no
logarithmic factor in the one-infinity power term.  A square-root power term is
therefore the same as \(d_X=N=2\) for an active non-graph \(\Q\)-rational
one-infinity component.
\end{proof}

\begin{corollary}[Primitive-contact integer dichotomy over \(\Q\)]
\label{cor:Q-gcd-bound}
Let \(f,g\in\Q[x]\) be nonconstant, and put
\[
        m=\deg f,
        \qquad n=\deg g,
        \qquad e=\gcd(m,n),
        \qquad N=\frac ne.
\]
If an active one-infinity non-graph component exists, then \(N\ge2\) and
\[
        N^{\new}_{f,g}(B)\asymp_{f,g}B^{1/N}.
\]
If no active one-infinity non-graph component exists, then
\[
        N^{\new}_{f,g}(B)
        \asymp_{f,g}(\log(2B))^{\rho_{f,g}^{\adm}}
\]
when the maximal admissible two-infinity rank is positive, and
\(N^{\new}_{f,g}(B)=O_{f,g}(1)\) otherwise.  In particular, when \(N=1\) there is
no one-infinity power term after graph removal.
\end{corollary}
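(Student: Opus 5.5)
This corollary is essentially a specialization of \Cref{thm:number-field-exact} to \(k=\Q\), \(S=\{\infty\}\), rewritten in primitive-contact language. I would set \(n_k=1\) and \(q_{\Q,\{\infty\}}=|S|-1=0\). Then the logarithmic factor \((\log(2B))^{q_{k,S}}\) in the power term disappears. I would also identify \(N^{\new}_{f,g}(B)\) with \(N^{\new}_{f,g,\Q,\{\infty\}}(B)\) for \(B\ge1\), using \(H(n)=\max\{1,|n|\}\), as noted before \Cref{cor:Q-sparse}. Small \(B\) are absorbed into the implied constants.

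\emph{Case 1: an active one-infinity non-graph component exists.} Such a component lies in \(\RR_1^{\act}(f,g;\Q,\{\infty\})\). By \Cref{cor:geometric-degree-spectrum} it has \(d_X=N\), and by \Cref{lem:degree-one-graph-components} it is not a graph component, so \(N\ge2\). \Cref{thm:number-field-exact} then gives \(N^{\new}_{f,g}(B)\asymp B^{1/N}\). The only point to check is that ``one-infinity non-graph component'' in the statement means a \(\Q\)-rational geometrically integral component with \(\widetilde C\cong\Pj^1_\Q\) and \(\#D_C=1\), which is exactly the class \(\RR_1\). Activity is the hypothesis \(A_C(\Q)\cap\Z\ne\varnothing\). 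This agrees with the paper's convention that arithmetic source sets are indexed by \(k\)-rational sources, and it is the step needing the most care. Genus-zero one-boundary components with \(\widetilde C\not\cong\Pj^1_\Q\) have no relevant rational points by \Cref{prop:component-count}(c), so excluding them changes nothing.

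\emph{Case 2: no active one-infinity non-graph component exists.} Then \(\RR_1^{\act}=\varnothing\), and the remaining two alternatives of \Cref{thm:number-field-exact} give the stated conclusions. If \(\rho^{\adm}_{f,g}\ge1\), we get \(N^{\new}_{f,g}(B)\asymp(\log(2B))^{\rho^{\adm}_{f,g}}\). Otherwise \(N^{\new}_{f,g}(B)=O(1)\).

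\emph{The case \(N=1\).} Here \Cref{thm:gcd-exponent-bound} (or \Cref{cor:one-infinity-degree-spectrum}) shows that no non-graph one-infinity component can exist. So we are necessarily in Case 2 and no power term appears. There is no real obstacle in the argument; it is bookkeeping of hypotheses.
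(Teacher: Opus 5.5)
Your proposal is correct and follows essentially the same route as the paper, which proves this corollary by citing \Cref{cor:Q-sparse}, itself \Cref{thm:number-field-exact} with \(k=\Q\), \(S=\{\infty\}\) and \(q_{\Q,\{\infty\}}=0\). For \(N=1\) the paper uses the same observation as you: a one-infinity component would have \(d_X=1\) and hence be a graph component by \Cref{lem:degree-one-graph-components}.
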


\begin{proof}
This is \Cref{cor:Q-sparse}.  The assertion for \(N=1\) is
\Cref{cor:geometric-degree-spectrum}: a one-infinity component would have
\(d_X=1\), hence would be a graph component by
\Cref{lem:degree-one-graph-components}.
\end{proof}

\begin{corollary}[No-composition form]
\label{cor:no-composition}
If \(f,g\in\Q[x]\) are nonconstant and there is no \(h\in\Q[x]\) with
\(f=g\circ h\), then, for \(B\ge1\),
\[
      \#\{n\in\Z:|n|\le B,\ f(n)\in g(\Q)\}=N^{\new}_{f,g}(B).
\]
Consequently the primitive-contact dichotomy of \Cref{cor:Q-gcd-bound} applies
directly to this value set.  In particular it is always
\[
      O_{f,g}(B^{1/2})+O_{f,g}((\log(2B))^c)
\]
for some \(c\ge0\), and if a power term occurs then its exponent is exactly
\(1/N\), where \(N=\deg g/\gcd(\deg f,\deg g)\).
\end{corollary}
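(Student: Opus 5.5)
The plan has three steps: identify the value set with the new-lift count, apply the integer dichotomy, and bound the exponent by one half.

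\emph{Step 1: identification.} The hypothesis says there is no \(h\in\Q[x]\) with \(f=g\circ h\), so \(\HH_{f,g}(\Q)=\varnothing\). In the definition of \(N^{\new}_{f,g}(B)\), the clause ``\(y\ne h(n)\) for every \(h\in\HH_{f,g}(\Q)\)'' is then vacuous. The counted set therefore becomes exactly the set of integers \(n\) with \(|n|\le B\) for which some \(y\in\Q\) satisfies \(g(y)=f(n)\), that is, \(f(n)\in g(\Q)\). This gives the displayed equality for \(B\ge1\), and it holds term by term with no exceptional set. It follows that \Cref{cor:Q-gcd-bound} applies verbatim to the value set.

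\emph{Step 2: the dichotomy.} Apply \Cref{cor:Q-gcd-bound}. If an active one-infinity non-graph component exists, then \(N\ge2\) and the count is \(\asymp B^{1/N}\). Otherwise the count is \(O((\log(2B))^{c})\), with \(c=\max\{0,\rho^{\adm}_{f,g}\}\). In both cases any power term that occurs has exponent exactly \(1/N\), by \Cref{cor:geometric-degree-spectrum}; over \(\Q\) with \(S=\{\infty\}\) there is no logarithmic factor, since \(q_{\Q,\{\infty\}}=0\).

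\emph{Step 3: the uniform bound \(O(B^{1/2})+O((\log(2B))^c)\).} In the power case we have \(N\ge2\), so \(B^{1/N}\le B^{1/2}\) for \(B\ge1\). In the other case only the polylogarithmic term remains. Taking \(c\) to be the two-infinity exponent from \Cref{cor:number-field-exponent} (or \(0\)) gives the stated form in both cases.

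I expect no step to be a genuine obstacle. The only point needing care is the case \(N=1\): there \Cref{cor:Q-gcd-bound} rules out any power term, because a one-infinity component would have \(d_X=1\) and would then be a graph component, and none exist here. With that case handled, the exponent \(1/N\) is well defined whenever a power term occurs.
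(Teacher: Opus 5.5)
Your proposal is correct and is essentially the paper's argument. The hypothesis gives \(\HH_{f,g}(\Q)=\varnothing\), so the new-lift condition is vacuous and the count is exactly the value set; everything else is read off from \Cref{cor:Q-gcd-bound}, with \(N\ge2\) giving \(B^{1/N}\le B^{1/2}\). The paper additionally records that \(H(n)=\max\{1,|n|\}\) for \(n\in\Z\), which identifies this count with \(N^{\new}_{f,g,\Q,\{\infty\}}(B)\) for \(B\ge1\); you use this implicitly through the convention stated before \Cref{cor:Q-sparse}.
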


\begin{proof}
Under the no-composition hypothesis \(\HH_{f,g}(\Q)=\varnothing\).  For
\(B\ge1\), the height condition defining
\(N^{\new}_{f,g,\Q,\{\infty\}}(B)\) is the same as \(|n|\le B\), because
\(H(n)=\max\{1,|n|\}\) for integer \(n\).  Thus \(N^{\new}_{f,g}(B)\) is exactly the
displayed count.  The remaining assertions are \Cref{cor:Q-gcd-bound}.
\end{proof}

\begin{example}[Sharpness of every exponent]
\label{ex:powers}
For \(d\ge2\), take
\[
        f(X)=X,
        \qquad
        g(Y)=Y^d.
\]
There is no \(h\in\Q[X]\) with \(X=h(X)^d\).  The curve \(X=Y^d\) is rational,
has one geometric point at infinity, and is parametrized by
\[
        X=t^d,
        \qquad
        Y=t.
\]
Thus \(d_X=d\), and the integer inputs that lift are precisely the integral
\(d\)-th powers, nonnegative when \(d\) is even and with both signs when
\(d\) is odd.  The count is \(\asymp B^{1/d}\).  Over a number field, the same
example gives the sharp order
\[
        B^{[k:\Q]/d}(\log B)^{q_{k,S}}.
\]
For instance, over a real quadratic field with \(S=S_\infty\), the logarithmic
factor is \(\log B\), because \(q_{k,S}=1\).
\end{example}

\begin{example}[A square-root singular component]
For
\[
        f(X)=X^3,
        \qquad
        g(Y)=Y^2,
\]
the curve \(X^3=Y^2\) is parametrized by
\[
        X=t^2,
        \qquad
        Y=t^3.
\]
It is rational with one geometric point at infinity and \(d_X=2\), so it gives square-root
growth.  The square-root source need not be a smooth conic; singular rational
components also occur.
\end{example}

\begin{example}[A local obstruction]
For
\[
        f(X)=X,
        \qquad
        g(Y)=Y^2+\frac12,
\]
the curve is rational and has the parametrization
\[
        X=t^2+\frac12,
        \qquad
        Y=t.
\]
However it produces no integer inputs.  If \(n=y^2+1/2\) with \(n\in\Z\) and
\(y=a/b\in\Q\) in lowest terms, then
\[
        2nb^2=2a^2+b^2.
\]
If \(b\) is odd, the right side is odd and the left side is even.  If \(b\) is
even, then \(a\) is odd, and after writing \(b=2c\) the equation becomes
\[
        4nc^2=a^2+2c^2,
\]
whose two sides have opposite parity.  Thus no such rational \(y\) exists.  This
explains why the upper-bound theorem must distinguish component geometry from
arithmetic activity.

\end{example}

\begin{example}[A reducible example with graph removal]
Let
\[
        f(X)=X^4,
        \qquad
        g(Y)=Y^4.
\]
Then
\[
        f(X)-g(Y)=(X-Y)(X+Y)(X^2+Y^2).
\]
The first two factors are graph components, corresponding to \(Y=X\) and
\(Y=-X\).  The remaining factor is irreducible over \(\Q\) but not geometrically
integral; over \(\Q(i)\) it splits into the two graph lines \(Y=\pm iX\).  Hence
it contributes only finitely many rational points, in fact only \((0,0)\) over
\(\Q\).  Thus after graph removal one has \(N^{\new}_{f,g}(B)=O(1)\).  This
example illustrates why graph components and geometrically reducible components
are separated before the sparse exponent is read off.
\end{example}

\begin{example}[A Chebyshev standard-pair component]
Let \(T_2(z)=2z^2-1\) and \(T_3(z)=4z^3-3z\).  Take
\[
        f(X)=T_2(X),
        \qquad
        g(Y)=T_3(Y).
\]
The identity \(T_2(T_3(t))=T_3(T_2(t))\) gives a rational component of
\(T_2(X)-T_3(Y)=0\) parametrized by
\[
        X=T_3(t),\qquad Y=T_2(t).
\]
It is birational because \(X=t(2Y-1)\), so \(t=X/(2Y-1)\) in the function field.
The component is not a graph component: \(T_2(t)\notin\Q[T_3(t)]\), since a
nonconstant polynomial in \(T_3(t)\) has degree divisible by \(3\).  Thus it is
an active one-infinity component with \(d_X=3\), and it gives
\[
        N^{\new}_{f,g}(B)\gg B^{1/3}.
\]
This illustrates how the theorem extracts a sparse integer exponent from a
Bilu--Tichy-type component without invoking a full reclassification.
\end{example}

\begin{example}[A two-infinity Pell component]
Let \(D>1\) be a squarefree integer, and take
\[
        f(X)=X^2,
        \qquad
        g(Y)=DY^2+1.
\]
The curve \(f(X)=g(Y)\) is
\[
        X^2-DY^2=1.
\]
It is rational over \(\Q\), has two geometric points at infinity, and is not a
graph component.  Its integer points are Pell solutions.  For every nonsquare
\(D>1\), the norm-one unit group in \(\Q(\sqrt D)\) has rank one; even when a
chosen fundamental unit has norm \(-1\), its square has norm \(1\).  Hence the
integer solutions give \(\asymp_D \log B\) inputs \(|X|\le B\), in agreement with
the two-infinity polylogarithmic term in \Cref{cor:Q-sparse}.  This example also
shows why the one-infinity power terms and the two-infinity unit terms must be
separated.
\end{example}

\section{Higher multiplicity via configuration covers}
\label{sec:configuration-multiplicity}

The main theorem counts inputs admitting at least one new rational lift.  The
same logarithmic trichotomy controls inputs admitting several distinct new
lifts.  The correct geometric object is the ordered configuration cover of the
non-graph part of \(C_{f,g}\).

Let
\[
        C^{\new}_{f,g}
\]
denote the reduced union of the non-graph irreducible components of
\(C_{f,g}\).  It is finite over the \(X\)-line.  For \(r\ge1\), form the
fiber product
\[
        (C^{\new}_{f,g})^r_{\A^1_X}
        =C^{\new}_{f,g}\times_{\A^1_X}\cdots\times_{\A^1_X}C^{\new}_{f,g}.
\]
Write \(Y_1,\ldots,Y_r\) for the pulled-back \(Y\)-coordinates.  Define the
ordered \(r\)-configuration cover
\[
        C^{[r]}_{f,g}
\]
to be the reduced union of those irreducible components of
\((C^{\new}_{f,g})^r_{\A^1_X}\) on which
\[
        Y_i\ne Y_j\qquad (i\ne j)
\]
holds at the generic point.  Equivalently, \(C^{[r]}_{f,g}\) is the finite
closure of the locus of ordered \(r\)-tuples of generically distinct non-graph
lifts.  Components supported over finitely many \(X\)-values, if any arise in
the scheme-theoretic fiber product, are not included in the sets
\(\RR^{[r]}_i\) below; their contribution to input counts is absorbed in the
constant terms.

For \(r\ge1\), define the level-\(r\) new-lift input set
\[
\begin{aligned}
 \mathscr N^{\new}_{\ge r,f,g,k,S}
 =\Bigl\{a\in\OO_{k,S}:{}&
   \#\{y\in k:g(y)=f(a),\\
   &\qquad y\ne h(a)\text{ for all }h\in\HH_{f,g}(k)\}\ge r
        \Bigr\}.
\end{aligned}
\]
Its height truncation is
\[
        N^{\new}_{\ge r,f,g,k,S}(B)
        =
        \#\{a\in\mathscr N^{\new}_{\ge r,f,g,k,S}:H(a)\le B\}.
\]
Thus \(N^{\new}_{\ge1,f,g,k,S}(B)=N^{\new}_{f,g,k,S}(B)\).
For an irreducible component \(Z\) of \(C^{[r]}_{f,g}\) dominating the
\(X\)-line, let
\[
        d_X(Z)=[k(Z):k(X)].
\]
Let \(\RR^{[r]}_1(f,g;k)\) be the set of irreducible components \(Z\) of
\(C^{[r]}_{f,g}\) which dominate the \(X\)-line and such that \(Z\) is
geometrically integral,
\[
        \widetilde Z\cong\Pj^1_k,
        \qquad
        \#D_Z=1.
\]
For such a component choose a parametrization
\[
        X=A_Z(t),
        \qquad
        A_Z\in k[t],
\]
and call \(Z\) \(S\)-active if
\[
        A_Z(k)\cap\OO_{k,S}\ne\varnothing.
\]
Let \(\RR^{[r],\act}_1(f,g;k,S)\) denote the active subset.  Let
\(\RR^{[r]}_2(f,g;k)\) be the set of geometrically integral irreducible
components \(Z\) of \(C^{[r]}_{f,g}\) which dominate the \(X\)-line and have
\[
        \widetilde Z\cong\Pj^1_k,
        \qquad
        \#D_Z=2.
\]

\begin{lemma}[No degree-one ground-field components in the configuration cover]
\label{lem:configuration-no-degree-one}
Every irreducible \(k\)-component of \(C^{[r]}_{f,g}\) which dominates the
\(X\)-line has \(X\)-projection degree at least \(2\).
\end{lemma}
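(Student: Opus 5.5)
Let \(Z\) be an irreducible \(k\)-component of \(C^{[r]}_{f,g}\) dominating the \(X\)-line, and suppose for contradiction that \([k(Z):k(X)]=1\). The plan is to project \(Z\) to one factor and reduce to \Cref{lem:degree-one-graph-components}. For each \(i\), the projection \(\pi_i:(C^{\new}_{f,g})^r_{\A^1_X}\to C^{\new}_{f,g}\) onto the \(i\)-th factor is a morphism over \(\A^1_X\). The closure of \(\pi_i(Z)\) is irreducible. Since \(Z\) dominates the \(X\)-line, this closure is not a point, so it is an irreducible \(k\)-component \(C_i\) of \(C^{\new}_{f,g}\); here \(C^{\new}_{f,g}\) is a curve, and every component dominates \(\A^1_X\) by the finiteness argument in \Cref{thm:one-infinity-source-diagrams}. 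The map \(\pi_i\) then gives a tower of function fields
\[
        k(X)\subseteq k(C_i)\subseteq k(Z).
\]
If \(k(Z)=k(X)\), it follows that \(k(C_i)=k(X)\). So \(C_i\) has generic degree one over \(\A^1_X\), and \Cref{lem:degree-one-graph-components} identifies it as a graph component \(Y=h(X)\) with \(h\in\HH_{f,g}(k)\). This contradicts the fact that \(C^{\new}_{f,g}\) is the union of the non-graph components only. Therefore \(d_X(Z)\ge2\).

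Taking \(i=1\) already suffices. The distinctness condition \(Y_i\ne Y_j\) is not needed for this inequality, although for \(r\ge2\) it gives an alternative argument. The key technical point is that \(\pi_1(Z)\) is dense in a single component of \(C^{\new}_{f,g}\) and not in a finite set. This holds because \(Z\) is one-dimensional and dominates the \(X\)-line, and \(\pi_1\) commutes with the projections to \(X\).

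The main obstacle I expect is a bookkeeping issue: \(Z\) is a component of a reduced scheme defined over \(k\), not necessarily geometrically integral. I will therefore work with \(k\)-irreducible components and \(k\)-function fields throughout, which is exactly the setting of \Cref{lem:degree-one-graph-components} with \(K=k\). The identification of the graph component in that lemma is then made with \(h\in k[X]\), so it really lies in \(\HH_{f,g}(k)\), and the contradiction with graph removal over \(k\) goes through.
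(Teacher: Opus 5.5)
Your proof is correct and follows essentially the same route as the paper's: project \(Z\) to one factor, use \(k(Z)=k(X)\) to force degree one on the image, and conclude via the degree-one graph lemma that the image would be a graph component, contradicting the construction of \(C^{\new}_{f,g}\). The difference is cosmetic. The paper re-runs the pole argument on \(Y_i\in k(X)\) and then uses that distinct components meet in only finitely many points, whereas you identify the closure of \(\pi_1(Z)\) as a component of \(C^{\new}_{f,g}\) and apply \Cref{lem:degree-one-graph-components} to it directly.
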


\begin{proof}
Suppose that \(Z\) is an irreducible \(k\)-component of \(C^{[r]}_{f,g}\) of
degree one over
\(\A^1_X\).  Then \(k(Z)=k(X)\).  For each coordinate projection to the
non-graph cover, the corresponding function \(Y_i\) is therefore an element of
\(k(X)\), say \(Y_i=\eta_i(X)\), and it satisfies
\[
        g(\eta_i(X))=f(X).
\]
The same pole argument used for \Cref{lem:degree-one-graph-components} shows
that \(\eta_i\) has no finite pole; hence \(\eta_i\in k[X]\).  Thus
\(Y=\eta_i(X)\) is a graph component of \(C_{f,g}\).  But the \(i\)-th
coordinate of \(C^{[r]}_{f,g}\) has generic image in \(C^{\new}_{f,g}\), the
union of non-graph components.  A non-graph component cannot contain the graph
generically, since distinct irreducible components of a plane curve meet only in
finitely many affine points.  This contradiction proves \(d_X(Z)\ge2\).
\end{proof}

\begin{remark}
The lemma is a ground-field assertion.  A geometrically reducible \(k\)-component
may split after base change into geometric degree-one branches.  Such components
are not members of the geometrically integral sets \(\RR^{[r]}_1\) and
\(\RR^{[r]}_2\), and their \(k\)-rational contribution is finite by
\Cref{lem:ngeo-finite}.
\end{remark}

\begin{lemma}[Normalized local configuration fiber products]
\label{lem:local-configuration-fiber-product}
Let \(K\) be an algebraically closed field of characteristic zero.  Let
\(N\ge1\) and \(r\ge1\).  For \(1\le i\le r\), suppose
\[
        K[[w]]\longrightarrow K[[s_i]],
        \qquad
        w\longmapsto s_i^N\varepsilon_i(s_i),
        \qquad \varepsilon_i(0)\ne0.
\]
Then, after replacing each \(s_i\) by a unit multiple, these maps have the form
\(w=s_i^N\).  The reduced normalization of the completed tensor product
\[
        K[[s_1]]\widehat\otimes_{K[[w]]}\cdots
        \widehat\otimes_{K[[w]]}K[[s_r]]
\]
is a finite disjoint union of formal discs.  More explicitly, after the above
change of parameters, it is the product over all
\((\zeta_2,\ldots,\zeta_r)\) with \(\zeta_i^N=1\) of copies of \(K[[s]]\), on
which
\[
        s_1=s,
        \qquad
        s_i=\zeta_i s\quad(2\le i\le r),
        \qquad
        w=s^N.
\]
In particular, on every normalized branch the order of \(w\) is \(N\), and the
order of each \(s_i\) is one.
\end{lemma}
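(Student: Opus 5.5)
First I normalize each map separately, exactly as in the proof of \Cref{thm:primitive-contact-infinity}.  Since \(K\) is algebraically closed of characteristic zero and \(\varepsilon_i(0)\ne0\), Hensel's lemma gives a unit \(\mu_i\in K[[s_i]]^{*}\) with \(\mu_i^N=\varepsilon_i\).  Put \(s_i'=s_i\mu_i(s_i)\).  This is again a uniformizer of \(K[[s_i]]\), because its linear term is \(\mu_i(0)s_i\ne0\), and \(w=s_i'^N\).  From now on I write \(s_i\) for \(s_i'\).

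Next I identify the completed tensor product.  With \(w=s_i^N\), each \(K[[s_i]]\) is free over \(K[[w]]\) with basis \(1,s_i,\ldots,s_i^{N-1}\).  The completed tensor product is therefore
\[
        R=K[[s_1,\ldots,s_r]]/(s_1^N-s_2^N,\ldots,s_1^N-s_r^N),
\]
where I eliminate \(w=s_1^N\).  The ring \(R\) is finite and free over \(K[[s_1]]\), because each \(s_i\) with \(i\ge2\) satisfies the monic equation \(s_i^N=s_1^N\).  Over \(K\) we have \(s_1^N-s_i^N=\prod_{\zeta^N=1}(s_i-\zeta s_1)\).  Hence every minimal prime of \(R\) contains a linear form \(s_i-\zeta_i s_1\) for each \(i\ge2\).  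So the minimal primes are exactly the \(N^{r-1}\) ideals
\[
        \mathfrak p_{\zeta}=(s_2-\zeta_2s_1,\ldots,s_r-\zeta_rs_1),
        \qquad \zeta=(\zeta_2,\ldots,\zeta_r),\ \zeta_i^N=1.
\]
Each quotient \(R/\mathfrak p_\zeta\) is \(K[[s_1]]\), which is already normal.  The primes are pairwise distinct, since two different tuples \(\zeta\ne\zeta'\) give ideals whose sum contains \((\zeta_i-\zeta_i')s_1\) and hence is \(\mathfrak m\)-primary.

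To finish I argue via total quotient rings, as in the proof of \Cref{lem:boundary-branches-completion}.  The ring \(R_{\red}\) embeds into \(\prod_\zeta R/\mathfrak p_\zeta\), and this product has the same total quotient ring as \(R_{\red}\).  The product is finite over \(R_{\red}\) and normal, so it is the normalization.  I name the parameter on the branch indexed by \(\zeta\) as \(s=s_1\); then \(s_i=\zeta_i s\) and \(w=s^N\).  The orders follow immediately: \(\ord_s w=N\) and \(\ord_s s_i=1\).

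I expect the main obstacle to be the verification that the embedding into the product has the same total quotient ring, that is, that the primes \(\mathfrak p_\zeta\) are all the minimal primes.  I would show this by a rank count: \(R\otimes K((s_1))\) has dimension \(N^{r-1}\) over \(K((s_1))\).  It maps onto the product of the \(N^{r-1}\) distinct factors \(K((s_1))\) by the Chinese remainder theorem, since the distinct points \(s_i=\zeta_is_1\) are pairwise comaximal after inverting \(s_1\).  Thus after inverting \(s_1\) the map is an isomorphism.  In particular \(R\) is reduced generically, and \(s_1\) is a nonzerodivisor on \(R\) because \(R\) is free over \(K[[s_1]]\).
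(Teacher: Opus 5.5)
Your proposal is correct and follows essentially the same route as the paper: take an \(N\)-th root of each unit \(\varepsilon_i\) to get \(w=s_i^N\), identify the completed tensor product with \(K[[s_1,\ldots,s_r]]/(s_1^N-s_2^N,\ldots,s_1^N-s_r^N)\), read off the minimal primes \(s_i=\zeta_i s_1\) from the factorization of \(s_1^N-s_i^N\), and take the product of the quotients \(K[[s_1]]\) as the normalization. Your distinctness check and generic rank/CRT count spell out the total-quotient-ring identification that the paper leaves implicit. The minimal primes were in fact already determined by your factorization argument, so this is not a real obstacle, and together with freeness over \(K[[s_1]]\) the rank count shows that \(R\) is already reduced.
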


\begin{proof}
Because \(K\) is algebraically closed of characteristic zero, every unit
\(\varepsilon_i(s_i)\in K[[s_i]]^*\) has an \(N\)-th root.  Replacing \(s_i\) by
\(s_i\varepsilon_i(s_i)^{1/N}\) gives \(w=s_i^N\).  The completed tensor product
is then
\[
        R=K[[s_1,\ldots,s_r]]/(s_1^N-s_2^N,\ldots,s_1^N-s_r^N).
\]
Its reduced irreducible components are cut out by
\[
        s_i=\zeta_i s_1\quad(2\le i\le r),
        \qquad \zeta_i^N=1.
\]
Indeed the radical of the defining ideal is the intersection of these prime
ideals, and each quotient is \(K[[s_1]]\).  The normalization of \(R_{\red}\) is
therefore the product of these regular one-dimensional complete local rings,
with the displayed parametrization.  The assertions about orders follow
immediately.
\end{proof}

\begin{lemma}[Normalizing the local factors does not change configuration branches]
\label{lem:normalizing-local-factors}
Let \(K\) be an algebraically closed field of characteristic zero and put
\(A=K[[w]]\).  For \(1\le i\le r\), let \(R_i\) be a complete one-dimensional
local \(K\)-domain, finite and torsion-free over \(A\), whose normalization is
\(\bar R_i=K[[s_i]]\).  Put
\[
        R=R_1\widehat\otimes_A\cdots\widehat\otimes_A R_r,
        \qquad
        \bar R=\bar R_1\widehat\otimes_A\cdots\widehat\otimes_A\bar R_r.
\]
Then \(\bar R\) is finite integral over \(R\), the map \(R\to\bar R\) becomes an
isomorphism after inverting \(w\), and \(R_{\red}\) and \(\bar R_{\red}\) have
the same normalization.  Consequently every normalized branch of
\(R_{\red}\) is one of the normalized branches obtained from \(\bar R_{\red}\).
\end{lemma}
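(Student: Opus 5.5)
The plan is to compare \(R\) and \(\bar R\) through the inclusions \(R_i\subseteq\bar R_i\). Each \(\bar R_i\) is a finite \(R_i\)-module, because \(R_i\) is a complete local domain and hence excellent (or simply Japanese). Each \(R_i\) is finite over \(A\), so \(\bar R_i\) is finite and torsion-free over \(A\) as well. The completed tensor products over the complete local ring \(A\) of finite \(A\)-modules coincide with the ordinary tensor products, so \(R=R_1\otimes_A\cdots\otimes_A R_r\) and similarly for \(\bar R\). The natural map \(\phi:R\to\bar R\) is the tensor product of the inclusions. The image of \(\phi\) is generated as an \(R\)-algebra by the images of the finitely many module generators of the \(\bar R_i\), so \(\bar R\) is finite over \(R\), hence integral over \(R\).

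Next I would show that \(\phi\) becomes an isomorphism after inverting \(w\). Since each \(R_i\) is a domain, torsion-free and finite over \(A\), and since \(\bar R_i\) lies in the fraction field of \(R_i\), the cokernel \(\bar R_i/R_i\) is a finite \(A\)-module killed by a nonzero element. That element has the form \(w^{c_i}\) times a unit, so \(R_i[w^{-1}]=\bar R_i[w^{-1}]\). Localization commutes with tensor products, which gives
\[
        R[w^{-1}]=\bigotimes_{A[w^{-1}]}R_i[w^{-1}]
        =\bigotimes_{A[w^{-1}]}\bar R_i[w^{-1}]=\bar R[w^{-1}].
\]
Here \(A[w^{-1}]=K((w))\) is a field.

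Then I would pass to reductions. The main obstacle is that \(R\) and \(\bar R\) may have \(w\)-torsion (nilpotent or embedded structure at the closed point), so \(\phi\) need not be injective. This is where reduction plus normalization is needed, and I would handle it as follows. By \Cref{lem:local-configuration-fiber-product}, \(\bar R_{\red}\) is one-dimensional, and its minimal primes do not contain \(w\), since each branch has \(w=s^N\ne0\). The same holds for \(R_{\red}\): \(R\) is finite over \(A\) and every minimal prime lies over the zero ideal of \(A\), because a prime containing \(w\) is the maximal ideal, and the maximal ideal cannot be minimal once \(R[w^{-1}]\ne0\) has dimension one. The components supported only at the closed point have no \(w\)-free part; they are exactly the pieces the paper excludes, and I would note this explicitly. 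Killing the \(w\)-power torsion in both rings, \(R'=R/(w\text{-torsion})\) and \(\bar R'\) likewise, gives reduced-up-to-nilpotents rings that embed into \(R[w^{-1}]=\bar R[w^{-1}]\). Reducing, \(R'_{\red}\subseteq\bar R'_{\red}\) is finite and birational, with the same total ring of fractions \((R[w^{-1}])_{\red}\).

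Finally, the normalization of a reduced one-dimensional ring is the integral closure in its total ring of fractions. Since the two reduced rings are nested, finite over one another, and share that total ring of fractions, they have the same integral closure. This is the product of formal discs described in \Cref{lem:local-configuration-fiber-product}. Hence every normalized branch of \(R_{\red}\) (ignoring closed-point-supported pieces, which contribute no branches) is one of the branches of \(\bar R_{\red}\).
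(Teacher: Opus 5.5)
Your proposal is correct and follows essentially the same route as the paper. The steps match: finiteness of $R\to\bar R$ by base change of the finite maps $R_i\to\bar R_i$, the identity $R_i[w^{-1}]=\bar R_i[w^{-1}]$ tensored up, the fact that no minimal prime contains $w$, and then equality of total quotient rings and hence of integral closures.

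The one thing to correct is the obstacle you flag, which does not exist. Each $R_i$ and $\bar R_i$ is torsion-free, hence flat, over the DVR $A$, so $R$ and $\bar R$ are flat over $A$ and $w$ is a nonzerodivisor on both. Consequently $R\to\bar R$ is injective, and your passage to $R'$ and the caveat about closed-point-supported pieces are vacuous; this flatness is exactly what the paper uses.

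Separately, citing \Cref{lem:local-configuration-fiber-product} for $\bar R_{\red}$ imports a common-exponent hypothesis $w=s_i^N\varepsilon_i$ that this lemma does not assume. Your locality argument for $R$ applies verbatim to $\bar R$, so it should be used there instead.
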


\begin{proof}
The rings \(R_i\) are complete Noetherian local \(K\)-domains of dimension one;
they are excellent, hence Nagata, and their normalizations in their fraction
fields are finite.  We use only these standard consequences of excellence and
finite normalization; see \cite[Tags 07QS, 032E and 035S]{StacksProject}.  Hence
\(\bar R_i\) is finite over \(R_i\).  Since all rings involved are finite
\(A\)-modules and \(A\) is complete Noetherian, the completed tensor products
above agree with the corresponding finite ordinary tensor products over \(A\).
Integrality is preserved under base change and composition, so the iterated map
\(R\to\bar R\) is finite integral.

Since \(R_i\) is a one-dimensional domain finite over the DVR \(A\), the
localization \(R_i[1/w]\) is a zero-dimensional domain finite over \(K((w))\),
hence a field; it is the fraction field of \(R_i\).  The normalization is
birational, so
\[
        R_i[1/w]=\bar R_i[1/w].
\]
Tensoring these identities over \(K((w))\) gives
\[
        R[1/w]=\bar R[1/w].
\]
Since the extensions are finite over \(K((w))\) and the characteristic is zero,
these generic algebras are finite separable after passing to their component
fields.  Thus the equality above is an equality of the generic branch algebras;
normalizing the local factors has not changed anything over the punctured
formal disc.  Because the \(R_i\) are torsion-free over the DVR \(A\), they are
flat over \(A\); their completed tensor product is finite and flat over \(A\), so no
minimal prime of \(R_{\red}\) contains \(w\).  The same is true for \(\bar R\).
After inverting \(w\), the reduced rings \(R_{\red}[1/w]\) and
\(\bar R_{\red}[1/w]\) are finite reduced \(K((w))\)-algebras, hence products of
fields.  Since every minimal prime misses \(w\), these products are exactly the
products of the fraction fields of the irreducible components, i.e.\ the total
quotient rings of \(R_{\red}\) and \(\bar R_{\red}\).  The displayed equality
therefore identifies the two total quotient rings.  After this identification,
we view the image of \(\bar R_{\red}\) inside the common total quotient ring.  It
is integral over \(R_{\red}\).
An element of the common total quotient ring is integral over \(R_{\red}\) if
and only if it is integral over this image of \(\bar R_{\red}\): one implication
uses that \(R_{\red}\subseteq\bar R_{\red}\), and the other uses transitivity of
integrality.  Thus the two reduced rings have the same integral closure in their
common total quotient ring, which is exactly the claimed equality of
normalizations.
\end{proof}

\begin{proposition}[Analytic branch passage for ordered configuration products]
\label{prop:configuration-branch-passage}
Let \(K\) be an algebraically closed field of characteristic zero and put
\(A=K[[w]]\).  For \(1\le i\le r\), let \(R_i\) be a complete
one-dimensional local \(K\)-domain, finite and torsion-free over \(A\), whose
normalization is \(K[[s_i]]\), and suppose that the structural map sends
\[
        w\longmapsto s_i^N\varepsilon_i(s_i),
        \qquad \varepsilon_i(0)\ne0.
\]
Put
\[
        R=R_1\widehat\otimes_A\cdots\widehat\otimes_A R_r.
\]
Then every normalized analytic branch of \(R_{\red}\) is, after replacing the
parameters \(s_i\) by unit multiples, one of the formal discs
\[
        w=s^N,
        \qquad
        s_1=s,
        \qquad
        s_i=\zeta_i s\quad(2\le i\le r),
        \qquad \zeta_i^N=1.
\]
In particular, on every branch that remains after deleting any prescribed union
of minimal primes, the order of \(w\) is \(N\).
\end{proposition}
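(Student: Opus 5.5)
The plan is to combine the two preceding lemmas directly. Put \(\bar R_i=K[[s_i]]\) and \(\bar R=\bar R_1\widehat\otimes_A\cdots\widehat\otimes_A\bar R_r\). The argument has three steps.

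First, apply \Cref{lem:normalizing-local-factors} to the inclusions \(R_i\subseteq\bar R_i\). It gives that \(R\to\bar R\) is finite and becomes an isomorphism after inverting \(w\). It also gives that \(R_{\red}\) and \(\bar R_{\red}\) have the same normalization inside their common total quotient ring. So the normalized analytic branches of \(R_{\red}\), which are the factors of that normalization, coincide with the normalized branches of \(\bar R_{\red}\). This step carries the real content: a branch of the unnormalized product is never created or destroyed by normalizing the factors, because everything agrees over the punctured disc \(K((w))\) and the minimal primes all miss \(w\) by flatness.

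Second, apply \Cref{lem:local-configuration-fiber-product} to \(\bar R\), using the structural maps \(w\mapsto s_i^N\varepsilon_i(s_i)\). After replacing each \(s_i\) by \(s_i\varepsilon_i(s_i)^{1/N}\) we have \(w=s_i^N\). The reduced normalization of \(\bar R\) is then the product, over tuples \((\zeta_2,\ldots,\zeta_r)\) with \(\zeta_i^N=1\), of discs \(K[[s]]\) with \(s_1=s\), \(s_i=\zeta_is\), and \(w=s^N\). Combined with the first step, every normalized branch of \(R_{\red}\) is one of these discs. The parameter change is a unit multiple in \(\bar R_i\), which is exactly what the statement allows.

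Third, deduce the final assertion. Deleting a union of minimal primes of \(R_{\red}\) amounts to discarding some of the factors in the product decomposition of the normalization of \(R_{\red}\). This is the normalization of a reduced ring: it splits as the product of the normalizations of the quotients by the minimal primes, so each branch lies over exactly one minimal prime. Every surviving factor is still one of the displayed discs, on which \(\ord_s w=N\).

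I expect the only delicate point to be the bookkeeping in the first step: showing that the branch sets are identified rather than merely related by a finite map. That is supplied by the equality of normalizations in \Cref{lem:normalizing-local-factors}, which I would cite verbatim. I would also remark briefly that the completed tensor product equals the ordinary one, since all modules are finite over the complete ring \(A\).
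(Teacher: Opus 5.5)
Your proposal is correct and follows the paper's proof essentially step for step. Like the paper, you first get equality of normalizations from the lemma on normalizing the local factors, and then read off the explicit disc decomposition from the local configuration lemma.

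The only divergence is in the final assertion. You argue directly that the normalization of a reduced ring splits as a product over its minimal primes. The paper instead phrases deletion through closures of open sets \(D(F)\), with the diagonal function \(1/Y_i-1/Y_j\) in mind. Your version is cleaner and suffices for the statement as written.
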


\begin{proof}
By \Cref{lem:normalizing-local-factors}, \(R_{\red}\) has the same
normalization as the reduced completed tensor product formed after replacing
each factor \(R_i\) by its normalization \(K[[s_i]]\).  By
\Cref{lem:local-configuration-fiber-product}, that normalized tensor product is
the finite product of the displayed formal discs.  It remains only to justify the
last assertion about deleting components.  If \(R'\) is a reduced Noetherian ring
and \(F\in R'\), then the closure of \(D(F)\subset\Spec R'\) is the union of the
irreducible components whose minimal primes do not contain \(F\).  Thus imposing
a generic open condition retains exactly the minimal primes on which a regular
representative of that condition is not identically zero.  For a diagonal
exclusion in the boundary local calculation, one may use
\(1/Y_i-1/Y_j\), equivalently clear denominators in the rational function
\(Y_i-Y_j\).  In the boundary applications each \(Y_i\) has a pole on every
selected branch, so \(1/Y_i\) is regular in the completed boundary local rings;
the denominators used in passing between these two expressions are nonzero in the
component function fields.  Generic vanishing is therefore unchanged.  The local
rings and their normalizations at the generic points of the retained components
are unchanged, so the orders on the remaining branches are unchanged.
\end{proof}

\begin{lemma}[Completed local model for configuration branches]
\label{lem:configuration-completed-local-model}
Let \(K\) be an algebraically closed field of characteristic zero, let \(r\ge1\),
and work on the compactified fiber product
\[
        \Pj^1_X\times_{\Pj^1_T}\Pj^1_Y
\]
near \((X,Y)=(\infty,\infty)\).  Put \(w=1/X\).  For \(1\le i\le r\), let
\(C_i\) be a geometric irreducible component of the non-graph fiber product, and
write \(Y_i\) for the \(Y\)-coordinate in the \(i\)-th factor of the ordered
product.  Choose one analytic boundary branch \(Q_i\) of the closure of \(C_i\)
above
\((\infty,\infty)\).  Let \(R_i\) be the quotient of the completed local ring of
that closure at \((\infty,\infty)\) by the minimal prime corresponding to
\(Q_i\).  Then \(R_i\) is a complete one-dimensional local \(K\)-domain, finite
and torsion-free over \(K[[w]]\), and its normalization is \(K[[s_i]]\) for a
parameter \(s_i\) on the normalized branch.  Moreover, with
\[
        M=\frac{\deg f}{\gcd(\deg f,\deg g)},
        \qquad
        N=\frac{\deg g}{\gcd(\deg f,\deg g)},
\]
the structural maps have the form
\[
        w\longmapsto s_i^N\varepsilon_i(s_i),
        \qquad
        1/Y_i\longmapsto s_i^M\delta_i(s_i),
        \qquad
        \varepsilon_i(0)\delta_i(0)\ne0.
\]
For the ambient ordered fiber product before diagonal removal, the completed
local ring along the selected tuple of analytic branches is
\[
        R_1\widehat\otimes_{K[[w]]}\cdots
        \widehat\otimes_{K[[w]]}R_r.
\]
The analytic branches of the ordered configuration cover lying over this tuple
are obtained from the minimal primes of the reduction of this completed tensor
product which survive the diagonal exclusions \(Y_i\ne Y_j\), equivalently near
this boundary tuple the conditions \(1/Y_i\ne1/Y_j\).
\end{lemma}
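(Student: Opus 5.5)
The plan is to assemble the statement from three ingredients already available: the boundary-branch description of \Cref{lem:boundary-branches-completion}, the local computation in the proof of \Cref{thm:primitive-contact-infinity}, and the compatibility of completion with finite fiber products.

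\emph{Step 1: the factor rings \(R_i\).} The closure \(\bar C_i\) of \(C_i\) in \(\mathscr F=\Pj^1_X\times_{\Pj^1_T}\Pj^1_Y\) is a reduced curve. Its completed local ring at \((\infty,\infty)\) is reduced, because curves are excellent. The minimal primes of this ring correspond to the analytic branches, and by \Cref{lem:boundary-branches-completion} these are in bijection with the points of \(D_{C_i}\) above \((\infty,\infty)\). Quotienting by the minimal prime of \(Q_i\) gives \(R_i\), a complete one-dimensional local domain. Its normalization is a complete DVR with residue field \(K\), hence \(K[[s_i]]\).

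\emph{Step 2: finiteness and the orders of \(w\) and \(1/Y_i\).} The compactified projection \(\mathscr F\to\Pj^1_X\) is finite. Indeed, \(\Pj^1_Y\to\Pj^1_T\) is finite, and finiteness is stable under base change. So the completed local ring of \(\bar C_i\) is finite over \(K[[w]]\), and so is its quotient \(R_i\). The ring \(R_i\) is torsion-free over \(K[[w]]\) because \(R_i\) is a domain and \(w\neq0\) in it: the branch lies on a component dominating the \(X\)-line. On the normalized branch, \Cref{thm:primitive-contact-infinity} gives \(-\ord X=N\) and \(-\ord Y_i=M\). Hence \(w=s_i^N\varepsilon_i\) and \(1/Y_i=s_i^M\delta_i\) with unit factors.

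\emph{Step 3: the ordered product.} Write \(\mathscr C_i\) for the closure of \(C_i\) in \(\mathscr F\), viewed over \(\Pj^1_X\) near \(X=\infty\). The ordered fiber product of these closures over \(\Pj^1_X\) is finite over \(\Pj^1_X\), because each factor is. Its local ring at the point \(((\infty,\infty),\dots)\) is the tensor product of the factor local rings over \(\OO_{\Pj^1_X,\infty}\), since the fibers above \(\infty\) are single points. For a finite algebra over the DVR \(\OO_{\Pj^1_X,\infty}\), completion at the maximal ideal equals \(w\)-adic completion, which commutes with finite tensor products. This gives
\[
\widehat\OO_{\bar C_1,(\infty,\infty)}\otimes_{K[[w]]}\cdots\otimes_{K[[w]]}\widehat\OO_{\bar C_r,(\infty,\infty)}.
\]
Selecting the branches \(Q_i\) corresponds to passing to the quotient \(R_1\widehat\otimes\cdots\widehat\otimes R_r\). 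This quotient is the union of the minimal primes lying over the chosen minimal primes in each factor.

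\emph{Step 4: diagonal exclusion.} The configuration cover is the union of the components on which \(Y_i-Y_j\) is generically nonzero. By the deletion argument at the end of \Cref{prop:configuration-branch-passage}, its analytic branches are exactly the minimal primes of the reduced tensor product that do not contain \(1/Y_i-1/Y_j\). Here \(1/Y_i\) is regular at the boundary tuple by Step 2.

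\textbf{Main obstacle.} The delicate point is Step 3. One must check that taking global irreducible components and then completing agrees with completing and then taking minimal primes. This needs excellence, so that completion preserves reducedness and the flatness of completion makes generic points correspond. One must also check that the branch selection commutes with the tensor product.
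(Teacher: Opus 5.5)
Your proposal is correct and follows essentially the same route as the paper's proof. Both arguments use the same ingredients:
\begin{enumerate}
\item finiteness of the closure over the \(X\)-line, with \(R_i\) torsion-free because it is a domain in which \(w\ne0\), and normalization a formal disc;
\item the orders of \(w\) and \(1/Y_i\) read off from \Cref{thm:primitive-contact-infinity};
\item the identification of the completion of the finite local tensor product over \(K[[w]]\) with the completed tensor product;
\item diagonal removal by testing \(\prod_{i<j}(1/Y_i-1/Y_j)\) against minimal primes.
\end{enumerate}
The paper handles the global-versus-analytic compatibility you flag only by appealing to excellence, so your level of detail matches its proof.
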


\begin{proof}
The closure of each \(C_i\) is finite over the \(X\)-line and dominates it.  After
completion at the chosen boundary point and quotienting by the minimal prime of
the chosen analytic branch, the resulting ring is therefore finite over the DVR
\(K[[w]]\).  It is a complete one-dimensional local domain; it is torsion-free
over \(K[[w]]\) because it is a domain and \(w\) is not zero on a component that
dominates the \(X\)-line.  The normalization of a complete analytic branch of a
curve over the algebraically closed field \(K\) is a formal disc \(K[[s_i]]\).
The displayed orders of \(w=1/X\) and \(1/Y_i\) are exactly the primitive-contact
calculation of \Cref{thm:primitive-contact-infinity} on the original fiber
product branch.

For finite algebras over the complete local base \(K[[w]]\), completion of the
local tensor product is the completed tensor product.  Hence the completed local
ring of the ambient ordered fiber product along the selected tuple is the
completed tensor product displayed above.  The rings involved are excellent
complete curve rings; their minimal primes after completion are precisely the
analytic branches over the selected algebraic branch.  Finally, near this boundary tuple the diagonal condition may be represented by
\[
        F=\prod_{i<j}(1/Y_i-1/Y_j),
\]
which is regular in the completed boundary local rings.  Equivalently, it is
obtained from \(\prod_{i<j}(Y_i-Y_j)\) by multiplying by a nonzero rational
denominator in the total quotient ring, so the minimal primes on which it
vanishes generically are the same diagonal minimal primes.  The closure of the
open locus \(F\ne0\) in a reduced Noetherian local ring is the union of the
irreducible components whose minimal primes do not contain \(F\).  Thus diagonal
removal discards exactly the branches on which some pair is generically equal
and creates no new branches.
\end{proof}

\begin{theorem}[Primitive contact for ordered configurations]
\label{thm:configuration-primitive-contact}
Let \(r\ge1\), and let \(Z\) be a geometrically integral component of the
ordered configuration cover \(C^{[r]}_{f,g}\) which dominates the \(X\)-line.
Put
\[
        m=\deg f,
        \qquad n=\deg g,
        \qquad e=\gcd(m,n),
        \qquad M=\frac m e,
        \qquad N=\frac n e.
\]
Then, at every geometric boundary point \(P\in D_Z\),
\[
        -\ord_P X=N,
        \qquad
        -\ord_P Y_i=M\quad(1\le i\le r).
\]
Consequently
\[
        d_X(Z)=N\#D_Z.
\]
In particular, every geometrically integral one-infinity component of
\(C^{[r]}_{f,g}\) dominating the \(X\)-line has \(d_X(Z)=N\).
\end{theorem}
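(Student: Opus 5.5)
The plan is to mirror the proof of \Cref{thm:primitive-contact-infinity}, now using the local configuration results just established. We work over \(\bar k\) and take \(Z\) a geometrically integral component of \(C^{[r]}_{f,g}\) dominating the \(X\)-line.

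\emph{Step 1: boundary points lie over \(X=\infty\) with every \(Y_i=\infty\).} The map \(Z\to\A^1_X\) is finite, since \(C^{\new}_{f,g}\) is finite over the \(X\)-line and fiber products of finite maps are finite. Hence \(X\) has a pole at every \(P\in D_Z\), because a point where \(X\) is regular lies in the affine normalization. Each coordinate projection \(Z\to C_i\) carries \(P\) to a boundary branch of some geometric non-graph component \(C_i\). By the proof of \Cref{thm:primitive-contact-infinity}, that branch lies over \((\infty,\infty)\), so every \(Y_i\) also has a pole at \(P\).

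\emph{Step 2: local branch identification.} Let \(Q_i\) be the analytic boundary branch of \(C_i\) through which \(P\) maps, and let \(R_i\) be the corresponding complete local domain. By \Cref{lem:configuration-completed-local-model}, we have \(w=1/X\mapsto s_i^N\varepsilon_i\) and \(1/Y_i\mapsto s_i^M\delta_i\). By \Cref{lem:boundary-branches-completion}, \(P\) corresponds to a normalized analytic branch of the completed local ring of the closure of \(Z\) at the image tuple. By \Cref{lem:configuration-completed-local-model}, this branch is a surviving minimal prime of the reduced completed tensor product \(R_1\widehat\otimes\cdots\widehat\otimes R_r\). \Cref{prop:configuration-branch-passage} then identifies its normalization with a disc \(w=s^N\), \(s_1=s\), \(s_i=\zeta_i s\). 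Hence \(\ord_P(1/X)=N\), and \(\ord_P(1/Y_i)=M\ord_P(s_i)=M\) since \(\ord_P(s_i)=1\) and \(\delta_i\) is a unit.

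\emph{Main obstacle.} The hardest step is justifying that the branches of the global component \(Z\) at \(P\) are exactly among the surviving minimal primes of the completed tensor product. Two issues need care here:
\begin{itemize}
\item we must choose one analytic branch of each factor \(C_i\), and the ambient fiber product's completion at the tuple of points is the product over all branch choices;
\item the reduction and diagonal removal must commute with completion.
\end{itemize}
We would handle both by first splitting the completed local ring of each \(C_i\) at \((\infty,\infty)\) into its minimal primes and noting that the tensor product distributes. Excellence, as cited via the Stacks Project tags, gives that the minimal primes of the completion of \(Z\) at the image point are unions of those of the ambient ring. \Cref{prop:configuration-branch-passage} already guarantees that deleting minimal primes does not alter the orders on the remaining branches.

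\emph{Step 3: degree.} Sum the pole orders of \(X\) over \(D_Z\) to get \(d_X(Z)=N\#D_Z\). The one-infinity case \(\#D_Z=1\) gives \(d_X(Z)=N\).
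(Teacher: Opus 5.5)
Your proposal is correct and follows the paper's proof essentially step for step. You place every boundary point over \(X=\infty\) and \(Y_i=\infty\), use \Cref{lem:configuration-completed-local-model} and \Cref{prop:configuration-branch-passage} to identify each normalized boundary branch with a disc \(w=s^N\), \(s_i=\zeta_i s\), read off the orders, and sum pole orders. The only variation is in Step 1: you get the pole of \(X\) from finiteness of \(Z\to\A^1_X\), whereas the paper shows that some coordinate has a pole (valuation rings are integrally closed) and then propagates it via \(f(X)=g(Y_i)\).
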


\begin{proof}
We work over \(\bar k\).  Let \(P\in D_Z\).  The affine coordinate ring of the
corresponding configuration component is a quotient of
\(\bar k[X,Y_1,\ldots,Y_r]\).  If all affine coordinate functions
\(X,Y_1,\ldots,Y_r\) were regular at \(P\), then every element of this coordinate
ring would map into the valuation ring \(\OO_{\widetilde Z,P}\).  The affine
normalization is the integral closure of that coordinate ring in \(\bar k(Z)\).  Since valuation rings are integrally
closed, every element of this integral closure would also lie in
\(\OO_{\widetilde Z,P}\).  Thus \(\Spec\OO_{\widetilde Z,P}\) would map to the
affine normalization with generic point equal to the generic point of \(Z\), so
\(P\) would belong to the affine normalization rather than to the boundary.
Hence at least one of these functions has a pole.  From the equations
\[
        f(X)=g(Y_i)\qquad(1\le i\le r)
\]
it follows that \(X\) has a pole if and only if each \(Y_i\) has a pole.  Thus
every boundary point of \(Z\) lies over \(X=\infty\) and over \(Y_i=\infty\) for
all \(i\).

Set \(w=1/X\).  The \(i\)-th projection of the boundary branch of \(Z\)
selects a normalized boundary branch of the original fiber product above
\((X,Y)=(\infty,\infty)\).  By \Cref{thm:primitive-contact-infinity}, on that
branch there is a complete local parameter \(s_i\) such that
\[
        w=s_i^N\varepsilon_i(s_i),
        \qquad
        1/Y_i=s_i^M\delta_i(s_i),
\]
with \(\varepsilon_i(0)\delta_i(0)\ne0\).  By
\Cref{lem:configuration-completed-local-model}, the completed local branch rings
\(R_i\) of these original fiber-product branches satisfy the hypotheses of
\Cref{prop:configuration-branch-passage}, the completed local ring of the ambient
ordered fiber product along the chosen tuple is
\[
        R_1\widehat\otimes_{\bar k[[w]]}\cdots
        \widehat\otimes_{\bar k[[w]]}R_r,
\]
and the completed local branch of \(Z\) at \(P\) is obtained from one of the
minimal primes of the reduced completed tensor product which survives the
diagonal exclusions \(Y_i\ne Y_j\).  Applying
\Cref{prop:configuration-branch-passage} to this tensor product, after auxiliary
unit changes of the branch parameters, every such normalized branch is a formal
disc with
\[
        w=s^N,
        \qquad
        s_i=\zeta_i s\quad(1\le i\le r)
\]
for suitable \(N\)-th roots of unity \(\zeta_i\) with \(\zeta_1=1\).  Therefore
\(w\) has order \(N\) at \(P\), and each \(s_i\) has order one.  Since
\(1/Y_i=s_i^M\delta_i(s_i)\) with \(\delta_i\) a unit, we get
\[
        -\ord_P X=N,
        \qquad
        -\ord_P Y_i=M\quad(1\le i\le r).
\]
Summing the pole orders of \(X\) over all boundary
points gives \(d_X(Z)=N\#D_Z\).
\end{proof}

\begin{corollary}[Primitive contact spectrum for higher multiplicity]
\label{cor:multiplicity-degree-spectrum}
Let \(r\ge1\), and let \(Z\in\RR^{[r]}_1(f,g;k)\).  Equivalently, \(Z\) is a
geometrically integral \(k\)-component of the ordered configuration cover with
one geometric boundary point and dominating the \(X\)-line.  Put
\[
        N=\frac{\deg g}{\gcd(\deg f,\deg g)}.
\]
Then
\[
        d_X(Z)=N.
\]
Since \(Z\) is a geometrically integral \(k\)-component of the non-graph
configuration cover, such a component can exist only when \(N\ge2\).  This does
not exclude geometric degree-one branches inside geometrically reducible
\(k\)-components; those are outside \(\RR^{[r]}_1\) and have only finitely many
\(k\)-points.
\end{corollary}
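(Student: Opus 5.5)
The degree formula is immediate from \Cref{thm:configuration-primitive-contact}. Since \(Z\in\RR^{[r]}_1(f,g;k)\), \(Z\) is geometrically integral, dominates the \(X\)-line, and has \(\#D_Z=1\). The theorem gives \(d_X(Z)=N\#D_Z\), so \(d_X(Z)=N\). One small point needs checking: the geometric degree of \(X\) on \(\widetilde Z_{\bar k}\) equals the ground-field degree \([k(Z):k(X)]\) used in the definition of \(d_X(Z)\). This holds because \(Z\) is geometrically integral, so \(\bar k(Z)=k(Z)\otimes_k\bar k\) and the field degree over \(\bar k(X)\) is unchanged. This is the same rank argument used in \Cref{cor:ground-field-degree-spectrum}.

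For the assertion \(N\ge2\), I would argue by contradiction. Suppose \(N=1\). Then \(d_X(Z)=1\), so \(Z\) is an irreducible \(k\)-component of \(C^{[r]}_{f,g}\) of degree one over \(\A^1_X\). This contradicts \Cref{lem:configuration-no-degree-one}, which says every irreducible \(k\)-component dominating the \(X\)-line has \(X\)-projection degree at least \(2\). Hence \(N\ge2\) whenever \(\RR^{[r]}_1(f,g;k)\ne\varnothing\).

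For the final sentence, I would point to the remark after \Cref{lem:configuration-no-degree-one}. A geometrically reducible \(k\)-component may split over \(\bar k\) into degree-one geometric branches. Such components fail the geometric integrality hypothesis, so they are not in \(\RR^{[r]}_1\), and by \Cref{lem:ngeo-finite} (applied to an affine open of the component) they have only finitely many \(k\)-points. The only step needing any care is the comparison of ground-field and geometric degrees in the first paragraph, and geometric integrality makes it routine.
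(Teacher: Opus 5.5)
Your proposal is correct and follows essentially the paper's own proof: the degree $d_X(Z)=N$ is the one-infinity case ($\#D_Z=1$) of \Cref{thm:configuration-primitive-contact}, and $N\ge2$ follows because $N=1$ would give a degree-one $k$-component, contradicting \Cref{lem:configuration-no-degree-one}. Your extra check that the geometric degree equals $[k(Z):k(X)]$ under geometric integrality, and your appeal to the remark and \Cref{lem:ngeo-finite} for the final sentence, are harmless additions that the paper leaves implicit.
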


\begin{proof}
The equality \(d_X(Z)=N\) is the one-infinity case of
\Cref{thm:configuration-primitive-contact}.  If \(N=1\), then \(d_X(Z)=1\),
contradicting \Cref{lem:configuration-no-degree-one}.
\end{proof}

\begin{theorem}[Global source expansion for ordered configurations]
\label{thm:configuration-global-source-expansion}
Let \(k\) be a number field, let \(S\) contain the Archimedean places, and let
\(f,g\in k[x]\) be nonconstant.  Fix \(r\ge1\), and choose a
denominator-control set \(S'\supseteq S\) as in
\Cref{lem:denominator-number-field}.  For each
\(Z\in\RR^{[r]}_1(f,g;k)\), choose a polynomial parametrization
\[
        X=A_Z(t),\qquad Y_i=B_{Z,i}(t)\quad(1\le i\le r).
\]
For each \(Z\in\RR^{[r]}_2(f,g;k)\), choose the splitting data supplied by
\Cref{thm:admissible-unit-lattice}, applied to the standard affine
\(\OO_{k,S'}\)-model generated by \(X,Y_1,\ldots,Y_r\); the required
two-sidedness follows from \Cref{thm:configuration-primitive-contact}:
\[
        X=A_Z(u),\qquad Y_i=B_{Z,i}(u)\quad(1\le i\le r),\qquad
        \mathcal U_Z=\bigcup_j\eta_{Z,j}\Gamma_{Z,j}.
\]
Then there is a finite set
\(E^{[r]}_{f,g,k,S}\subset\OO_{k,S}\) such that
\[
\begin{aligned}
 \mathscr N^{\new}_{\ge r,f,g,k,S}\ \triangle\
 \biggl(
     &\bigcup_{Z\in\RR^{[r],\act}_1(f,g;k,S)}
       \{A_Z(t):t\in k,\ A_Z(t)\in\OO_{k,S}\}   \\
     &\cup
       \bigcup_{Z\in\RR^{[r]}_2(f,g;k)}
       \{A_Z(u):u\in\mathcal U_Z,\ A_Z(u)\in\OO_{k,S}\}
 \biggr)
 \subseteq E^{[r]}_{f,g,k,S}.
\end{aligned}
\]
Thus the level-\(r\) input set is, up to finitely many values, a finite union of
polynomial configuration-source images and admissible unit-coset Laurent images.
Every active one-infinity configuration source appearing in this expansion has
\[
        \deg A_Z=d_X(Z)
        =
        \frac{\deg g}{\gcd(\deg f,\deg g)}.
\]
\end{theorem}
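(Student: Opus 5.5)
The plan is to copy the proof of \Cref{thm:global-source-expansion} with \(C^{[r]}_{f,g}\) in place of \(C^{\new}_{f,g}\). The bridge between the two is the following observation. An input \(a\in\OO_{k,S}\) lies in \(\mathscr N^{\new}_{\ge r,f,g,k,S}\) if and only if there are pairwise distinct \(y_1,\ldots,y_r\in k\) with \(g(y_i)=f(a)\) and no \(y_i\) a graph value. By \Cref{lem:new-lifts-nongraph}, each \((a,y_i)\) lies on \(C^{\new}_{f,g}\). Hence \((a,y_1,\ldots,y_r)\) is a \(k\)-point of \((C^{\new}_{f,g})^r_{\A^1_X}\) off all diagonals \(Y_i=Y_j\).

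\textbf{Containment of new inputs in \(C^{[r]}_{f,g}\), up to finitely many.} Such a point lies on some irreducible component of the fiber product. If that component is generically contained in a diagonal, then being off the diagonal puts the point in a finite set, since diagonal components meet nondiagonal ones only in finitely many points. Components lying over finitely many \(X\)-values contribute only finitely many \(a\). Therefore, outside a finite set of inputs, \(a=X(P)\) for a \(k\)-point \(P\) of \(C^{[r]}_{f,g}\) lying on a component that dominates the \(X\)-line.

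\textbf{Converse containment.} Take a parameter value in an active \(\RR^{[r]}_1\) source image, or in \(\mathcal U_Z\) for \(Z\in\RR^{[r]}_2\). It gives a \(k\)-point \((A_Z,B_{Z,1},\ldots,B_{Z,r})\) of \(Z\). For such a point to yield an input in \(\mathscr N^{\new}_{\ge r}\), two conditions must hold:
\begin{itemize}
\item the \(Y_i\) must be pairwise distinct;
\item each \((X,Y_i)\) must avoid every graph component.
\end{itemize}
Each failure is the vanishing of a nonzero regular function on the integral curve \(Z\), either \(B_{Z,i}-B_{Z,j}\) or \(B_{Z,i}-h(A_Z)\). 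The first is nonzero by the generic-distinctness definition of \(C^{[r]}\). The second is nonzero because the \(i\)-th projection of \(Z\) lands generically in a non-graph component, which meets a graph only in finitely many points. For Laurent polynomials, multiplying by a power of \(u\) shows the zero set is finite. So only finitely many parameters, hence finitely many \(X\)-values, are lost.

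\textbf{Componentwise classification.} For the remaining direction, apply \Cref{prop:component-count} to each \(X\)-dominant component of \(C^{[r]}_{f,g}\), counting integral points with respect to the standard model. The point \((a,y_1,\ldots,y_r)\) is \(S'\)-integral because each \(y_i\in\OO_{k,S'}\) by \Cref{lem:denominator-number-field}.
\begin{itemize}
\item Geometrically non-integral, log-positive, and non-split genus-zero components contribute finitely many inputs, by parts (a)--(c) of \Cref{prop:component-count}.
\item A one-infinity component contributes only values \(A_Z(t)\in\OO_{k,S}\). If it is inactive it contributes finitely many values, namely only from its nonnormal locus.
\item A two-infinity component is handled by \Cref{thm:admissible-unit-lattice}, applied to the model generated by \(X,Y_1,\ldots,Y_r\). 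This gives exactly \(\{A_Z(u):u\in\mathcal U_Z\}\) outside finitely many values.
\end{itemize}
The two-sided pole hypothesis needed for that theorem is supplied by \Cref{thm:configuration-primitive-contact}: at each geometric boundary point, \(X\) has a pole of order \(N\) and each \(Y_i\) a pole of order \(M\). The degree assertion \(\deg A_Z=d_X(Z)=N\) is \Cref{cor:multiplicity-degree-spectrum}, together with the fact that on an \(\A^1\)-normalization the degree of the projection equals \(\deg A_Z\).

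\textbf{Main obstacle.} The delicate point is the first containment step. One must check carefully that a \(k\)-point off the diagonals lying on a diagonal-supported component, or on a component not dominating the \(X\)-line, contributes only finitely many inputs. One must also check that the integrality model of \(C^{[r]}\) used in \Cref{prop:component-count} (its closure in the standard \(S'\)-model) is compatible with the model in \Cref{thm:admissible-unit-lattice}. Both reduce to the finiteness of pairwise intersections of distinct components of a reduced curve, together with \Cref{lem:denominator-number-field}. I would collect all these finitely many exceptional \(X\)-values into \(E^{[r]}_{f,g,k,S}\).
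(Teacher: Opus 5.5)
Your proposal is correct and is essentially the paper's own argument: the same two inclusions, using finiteness of the zero loci of \(B_{Z,i}-B_{Z,j}\) and \(B_{Z,i}-h(A_Z)\) (clearing a power of \(u\) in the Laurent case), then \Cref{lem:denominator-number-field} plus a componentwise application of \Cref{prop:component-count} and \Cref{thm:admissible-unit-lattice}, with the degree read off from \Cref{thm:configuration-primitive-contact}. One small simplification: a \(k\)-point with pairwise distinct \(Y_i\) cannot lie on a diagonal component at all, since \(Y_i-Y_j\) vanishes identically on such a reduced component, so that half of your ``main obstacle'' is vacuous rather than a finiteness issue.
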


\begin{proof}
The proof is the same source-expansion argument as for
\Cref{thm:global-source-expansion}, with the non-graph curve replaced by the
ordered configuration cover.  The two inclusions are as follows.

A point on a one-infinity configuration source has the form
\[
        (A_Z(t),B_{Z,1}(t),\ldots,B_{Z,r}(t)).
\]
The functions \(B_{Z,i}\) are generically distinct by definition of
\(C^{[r]}_{f,g}\), and no \(B_{Z,i}\) is generically equal to a graph value
\(h(A_Z(t))\), because the configuration cover is built from the non-graph
cover.  Hence the functions \(B_{Z,i}-B_{Z,j}\) and
\(B_{Z,i}-h(A_Z(t))\) are nonzero on the source for all \(i\ne j\) and all graph
components \(h\).  Their zero loci are finite, so removing the corresponding
finitely many \(X\)-values leaves inputs with at least \(r\) distinct new
rational lifts.
The same argument applies to a two-infinity configuration source using the
admissible parameter set \(\mathcal U_Z\) from
\Cref{thm:admissible-unit-lattice}: on the split normalization \(Z_L^\nu\cong\Gm\),
each nonzero function \(Y_i-Y_j\) or \(Y_i-h(X)\) becomes a nonzero Laurent
polynomial in the unit parameter.  After multiplying by a suitable power of that
parameter, it is a nonzero ordinary polynomial, so it has only finitely many
zeros in \(\bar k^*\).  That theorem supplies exactly the \(S'\)-integral
\(k\)-rational configuration points outside a finite exceptional locus.

Conversely, if \(a\in\mathscr N^{\new}_{\ge r,f,g,k,S}\), choose \(r\) distinct
new rational lifts and order them.  The resulting point lies on the ordered
fiber product.  If it lies only on a component supported over finitely many
\(X\)-values, then \(a\) belongs to a finite exceptional set; otherwise it lies on
a component included in \(C^{[r]}_{f,g}\).  By
\Cref{lem:denominator-number-field}, all \(Y_i\) are \(S'\)-integral.  Applying
\Cref{prop:component-count} to the components of the configuration cover and
using \Cref{thm:admissible-unit-lattice} for the two-infinity components shows
that every infinite contribution is accounted for by either an active
one-infinity configuration source or an admissible two-infinity unit-coset
source.  The remaining components contribute only finitely many input values.

The degree formula for active one-infinity configuration sources is
\Cref{thm:configuration-primitive-contact}.
\end{proof}

\begin{theorem}[Higher-multiplicity sparse lifting]
\label{thm:higher-multiplicity-sparse}
Let \(k\) be a number field, let \(S\) contain the Archimedean places, and let
\(f,g\in k[x]\) be nonconstant.  Fix \(r\ge1\), and choose a denominator-control
set \(S'\supseteq S\) as in \Cref{lem:denominator-number-field}.  For each
\(Z\in\RR^{[r]}_2(f,g;k)\), let \(\rho_Z^{\adm}\) be the admissible rank given by
\Cref{thm:admissible-unit-lattice}, applied to the \(X\)-coordinate on \(Z\) and
to the induced \(\OO_{k,S'}\)-model of the configuration cover.  Put
\[
        \rho^{[r],\adm}_{f,g,k,S}
        =\max_{Z\in\RR^{[r]}_2(f,g;k)}\rho_Z^{\adm},
\]
with value \(-1\) if \(\RR^{[r]}_2(f,g;k)=\varnothing\).  Put
\[
        N=\frac{\deg g}{\gcd(\deg f,\deg g)}.
\]
Then
\[
\begin{aligned}
      N^{\new}_{\ge r,f,g,k,S}(B)
      &\ll_{k,S,f,g,r}
      1+
      \sum_{Z\in\RR^{[r],\act}_1(f,g;k,S)}
          B^{n_k/d_X(Z)}(\log(2B))^{q_{k,S}} \\
      &\qquad
      +\sum_{Z\in\RR^{[r]}_2(f,g;k)}
          (\log(2B))^{\max\{0,\rho_Z^{\adm}\}}.
\end{aligned}
\]
If \(\RR^{[r],\act}_1(f,g;k,S)\ne\varnothing\), then \(N\ge2\), every active
one-infinity configuration component has \(d_X=N\), and
\[
        N^{\new}_{\ge r,f,g,k,S}(B)
        \asymp_{k,S,f,g,r}
        B^{n_k/N}(\log(2B))^{q_{k,S}}.
\]
Equivalently, the level-\(r\) active source exponent, when defined, is
\[
        \theta^{[r]}_{f,g,k,S}=\frac1N,
\]
and the corresponding number-field height-count power is \(B^{[k:\Q]/N}\).
If \(\RR^{[r],\act}_1(f,g;k,S)=\varnothing\) and
\(\rho^{[r],\adm}_{f,g,k,S}\ge1\), then
\[
        N^{\new}_{\ge r,f,g,k,S}(B)
        \asymp_{k,S,f,g,r}
        (\log(2B))^{\rho^{[r],\adm}_{f,g,k,S}}.
\]
If \(\RR^{[r],\act}_1(f,g;k,S)=\varnothing\) and
\(\rho^{[r],\adm}_{f,g,k,S}\le0\), then
\[
        N^{\new}_{\ge r,f,g,k,S}(B)=O_{k,S,f,g,r}(1).
\]
\end{theorem}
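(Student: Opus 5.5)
The plan is to repeat, at level \(r\), the argument that took \Cref{thm:global-source-expansion} to \Cref{thm:number-field-sparse} and \Cref{thm:number-field-exact}, with the curve \(C_{f,g}\) replaced by the ordered configuration cover \(C^{[r]}_{f,g}\).

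\emph{Upper bound.} Start from \Cref{thm:configuration-global-source-expansion}. Up to the finite set \(E^{[r]}_{f,g,k,S}\), the level-\(r\) input set is the union of two kinds of pieces. The first kind is the polynomial images \(\{A_Z(t)\}\) for \(Z\in\RR^{[r],\act}_1(f,g;k,S)\). By \Cref{lem:polynomial-S-value-set}, each contributes \(O(B^{n_k/d_X(Z)}(\log(2B))^{q_{k,S}})\). The second kind is the Laurent images of the admissible cosets \(\mathcal U_Z\) for \(Z\in\RR^{[r]}_2(f,g;k)\). By \Cref{thm:admissible-unit-lattice}, with the two-sidedness supplied by \Cref{thm:configuration-primitive-contact}, each contributes \(O((\log(2B))^{\max\{0,\rho_Z^{\adm}\}})\). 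Both families of components are finite, so summing these bounds and adding \(\#E^{[r]}_{f,g,k,S}\) gives the displayed upper bound.

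\emph{Active case.} If \(\RR^{[r],\act}_1\ne\varnothing\), then \Cref{cor:multiplicity-degree-spectrum} gives \(d_X(Z)=N\) for every such \(Z\), and also gives \(N\ge2\) through \Cref{lem:configuration-no-degree-one}. The upper bound therefore becomes \(B^{n_k/N}(\log(2B))^{q_{k,S}}\) plus polylogarithmic terms, and the polylogarithmic terms are dominated.

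For the lower bound, fix one active \(Z\) and its parametrization \(X=A_Z(t)\), \(Y_i=B_{Z,i}(t)\). \Cref{lem:polynomial-S-value-set} gives \(\gg B^{n_k/N}(\log(2B))^{q_{k,S}}\) distinct \(S\)-integral values \(A_Z(t)\) of height at most \(B\). Each such value comes with the \(r\) rational lifts \(B_{Z,i}(t)\). Next, remove finitely many \(t\). The functions \(B_{Z,i}-B_{Z,j}\) for \(i\ne j\) and \(B_{Z,i}-h(A_Z)\) for \(h\in\HH_{f,g}(k)\) are nonzero polynomials: the first because the cover was defined by generic distinctness, the second because each coordinate lands generically in a non-graph component. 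After discarding their finitely many zeros, every remaining value has at least \(r\) distinct new lifts. This is the same graph-removal step as in \Cref{thm:lower-bound}.

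\emph{Inactive case.} If \(\RR^{[r],\act}_1=\varnothing\), only the two-infinity terms and the constant remain in the upper bound. When \(\rho^{[r],\adm}\ge1\), pick \(Z\) of maximal admissible rank. The lower bound \(\gg(\log(2B))^{\rho_Z^{\adm}}\) from \Cref{thm:admissible-unit-lattice} survives the removal of the finitely many zeros of the Laurent polynomials \(Y_i-Y_j\) and \(Y_i-h(X)\) on \(\Gm\), exactly as in the proof of \Cref{thm:configuration-global-source-expansion}. When \(\rho^{[r],\adm}\le0\), each admissible set is finite and the count is \(O(1)\).

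The main obstacle is bookkeeping rather than new mathematics. One must make sure the admissible rank \(\rho_Z^{\adm}\) is computed for the same induced \(\OO_{k,S'}\)-model generated by \(X,Y_1,\dots,Y_r\) as the one used in \Cref{thm:configuration-global-source-expansion}, so that the upper and lower bounds refer to the same exponent. One must also check that the lower-bound pieces, which concern values on a single component, are not affected by overlaps; this holds because an overlap can only shrink a union, never a single piece.
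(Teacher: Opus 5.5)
Your proposal is correct and follows the paper's proof step for step. The upper bound comes from the level-\(r\) source expansion, with the polynomial value-set lemma handling the one-infinity images and the admissible unit-lattice theorem handling the two-infinity images. The active lower bound uses one active component, after deleting the finitely many zeros of \(B_i-B_j\) and \(B_i-h(A_Z)\), together with \(d_X=N\) and \(N\ge2\) from configuration primitive contact and the no-degree-one lemma. The logarithmic lower bound uses a maximal-rank admissible coset after the same finite deletion.
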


\begin{proof}
The upper bound follows from the level-\(r\) source expansion in
\Cref{thm:configuration-global-source-expansion}.  Polynomial configuration
source images are counted by \Cref{lem:polynomial-S-value-set}; two-infinity
configuration source images are counted by \Cref{thm:admissible-unit-lattice}.
This gives the displayed upper bound.

If an active one-infinity component \(Z\) of the configuration cover exists,
the one-infinity case of \Cref{thm:configuration-primitive-contact} gives
\(d_X(Z)=N\), and \Cref{lem:configuration-no-degree-one} gives \(N\ge2\).  Choose
a parametrization
\[
        X=A_Z(t),
        \qquad
        Y_i=B_i(t)\quad(1\le i\le r),
        \qquad
        A_Z,B_i\in k[t],
\]
with \(\deg A_Z=N\).  The polynomial value-set lemma gives
\[
        \gg B^{n_k/N}(\log(2B))^{q_{k,S}}
\]
distinct admissible \(X\)-values.  The functions \(B_i-B_j\) are not identically
zero for \(i\ne j\), and for every graph component \(Y=h(X)\) the functions
\(B_i-h(A_Z)\) are not identically zero, because the ordered configuration cover
is built from the non-graph cover.  Their zero loci on the affine source are
finite.  Removing the corresponding finitely many \(X\)-values leaves at least
\(r\) distinct new rational lifts for each remaining input.  Since all active
one-infinity configuration components have the same \(X\)-degree \(N\), this
gives the asserted power asymptotic.

If no active one-infinity component exists but
\(\rho^{[r],\adm}_{f,g,k,S}\ge1\), choose a two-infinity configuration component
\(Z\) of maximal admissible rank.  \Cref{thm:admissible-unit-lattice} gives a
lower bound of order \((\log B)^{\rho^{[r],\adm}_{f,g,k,S}}\) for the distinct
\(X\)-values arising from a maximal-rank admissible coset on \(Z\).  We must
remove only finitely many exceptional parameters before these values are counted
by the level-\(r\) new-lift set.  By the definition of the ordered configuration
cover, each function
\[
        Y_i-Y_j\qquad(i\ne j)
\]
is not identically zero on \(Z\).  Since the configuration cover is built from
the non-graph part of \(C_{f,g}\), for every graph component \(Y=h(X)\) and every
\(i\), the function
\[
        Y_i-h(X)
\]
is also not identically zero on \(Z\).  On the split normalization
\(Z_L^\nu\cong\Gm\), each of these functions is a nonzero Laurent polynomial in
the unit parameter \(u\).  Multiplying by a suitable power of \(u\) gives a
nonzero ordinary polynomial, hence each zero locus in \(\bar k^*\) is finite.
After deleting the corresponding finitely many parameters, and the finite
normalization-exceptional locus, the remaining admissible points have at least
\(r\) distinct new rational lifts.  Deleting finitely many \(X\)-values does not change the positive
logarithmic lower bound, and the upper bound was already proved.  If the maximal
admissible rank is at most zero, every two-infinity admissible set is finite and
all other components have finite contribution, so
\(N^{\new}_{\ge r,f,g,k,S}(B)=O(1)\).
\end{proof}

\begin{corollary}[Multiplicity exponents over \(\Q\)]
\label{cor:configuration-Q}
For \(f,g\in\Q[x]\) and \(r\ge1\), put
\[
        N=\frac{\deg g}{\gcd(\deg f,\deg g)}.
\]
Define
\[
        \rho^{[r],\adm}_{f,g}
        =\max_{Z\in\RR^{[r]}_2(f,g;\Q)}\rho_Z^{\adm},
\]
with value \(-1\) if there are no two-infinity configuration components.  If the
active one-infinity configuration set is nonempty, then \(N\ge2\) and
\[
        N^{\new}_{\ge r,f,g}(B)
        \asymp_{f,g,r} B^{1/N}.
\]
If it is empty and \(\rho^{[r],\adm}_{f,g}\ge1\), then
\[
        N^{\new}_{\ge r,f,g}(B)
        \asymp_{f,g,r}(\log(2B))^{\rho^{[r],\adm}_{f,g}}.
\]
If both the active one-infinity set is empty and \(\rho^{[r],\adm}_{f,g}\le0\),
then
\[
        N^{\new}_{\ge r,f,g}(B)=O_{f,g,r}(1).
\]
Thus, across all multiplicity levels over \(\Q\), the active ordinary-integer
power exponent is either \(1/N\) or absent.
\end{corollary}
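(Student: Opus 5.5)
This corollary is the specialization of \Cref{thm:higher-multiplicity-sparse} to \(k=\Q\) and \(S=\{\infty\}\). The plan is to verify that the normalizations match and then read off the three cases. First, for \(B\ge1\) and \(n\in\Z\) one has \(H(n)=\max\{1,|n|\}\). Hence the ordinary-integer count \(N^{\new}_{\ge r,f,g}(B)\), with inputs \(|n|\le B\), coincides with \(N^{\new}_{\ge r,f,g,\Q,\{\infty\}}(B)\). This is the same observation used for \(r=1\) before \Cref{cor:Q-sparse}. Next, \(n_\Q=[\Q:\Q]=1\) and \(q_{\Q,\{\infty\}}=|S|-1=0\). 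So every factor \(B^{n_k/N}(\log(2B))^{q_{k,S}}\) in \Cref{thm:higher-multiplicity-sparse} becomes simply \(B^{1/N}\).

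With these substitutions the three cases transfer directly. If the active set \(\RR^{[r],\act}_1(f,g;\Q,\{\infty\})\) is nonempty, \Cref{thm:higher-multiplicity-sparse} gives \(N\ge2\) and the two-sided estimate \(\asymp B^{1/N}\). The inequality \(N\ge2\) also follows from \Cref{cor:multiplicity-degree-spectrum} together with \Cref{lem:configuration-no-degree-one}. If the active set is empty and \(\rho^{[r],\adm}_{f,g}\ge1\), the logarithmic asymptotic is the second alternative of that theorem. Here \(\rho^{[r],\adm}_{f,g}\) is the quantity \(\rho^{[r],\adm}_{f,g,\Q,\{\infty\}}\), computed for the fixed \(\OO_{\Q,S'}\)-model of the configuration cover. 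If both conditions fail, boundedness is the third alternative.

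For the final sentence, a power term \(B^{\theta}\) with \(\theta>0\) can only arise from the first alternative. The other two give polylogarithmic or bounded growth, which is \(o(B^\varepsilon)\) for every \(\varepsilon>0\). So whenever a power exponent is present it equals \(1/N\), where \(N=\deg g/\gcd(\deg f,\deg g)\) does not depend on \(r\). This uses \Cref{thm:configuration-primitive-contact}, which forces every one-infinity configuration component to have \(d_X=N\) at every level.

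There is no real obstacle. The only point needing care is the normalization check that \(q_{\Q,\{\infty\}}=0\), so the logarithmic factor disappears, and that the admissible rank is taken with respect to the same model and generators as in the theorem being specialized.
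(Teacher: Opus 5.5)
Your proposal is correct and follows the paper's proof, which is exactly the specialization of \Cref{thm:higher-multiplicity-sparse} to \(k=\Q\), \(S=\{\infty\}\); there \(n_k=1\) and \(q_{k,S}=0\), so the power term carries no logarithmic factor. Your additional checks are the routine normalizations the paper leaves implicit: \(H(n)=\max\{1,|n|\}\) identifies the ordinary-integer count with the height count, and \(N\) does not depend on \(r\).
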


\begin{proof}
This is \Cref{thm:higher-multiplicity-sparse} with \(k=\Q\) and
\(S=\{\infty\}\).  The \(S\)-unit rank is zero in this case, so the active
one-infinity terms have no logarithmic factor.
\end{proof}

\begin{corollary}[Basic structure of the multiplicity hierarchy]
\label{cor:multiplicity-hierarchy-structure}
Let \(k\) be a number field, let \(S\) be a finite set of places containing the
Archimedean places, let \(f,g\in k[x]\) be nonconstant, and let \(r\ge1\).  Let
\[
        \Delta_{\new}=\deg(C^{\new}_{f,g}/\A^1_X)
        =\sum_C d_X(C),
\]
where the sum is over the non-graph irreducible components of \(C_{f,g}\)
which dominate the \(X\)-line.  Equivalently, the reduced coordinate ring of
\(C^{\new}_{f,g}\) is a finite flat \(k[X]\)-module of rank \(\Delta_{\new}\).
If \(r>\Delta_{\new}\), then
\[
        C^{[r]}_{f,g}=\varnothing
        \qquad\text{and}\qquad
        N^{\new}_{\ge r,f,g,k,S}(B)=0.
\]
Moreover the active power hierarchy is constant until it terminates.  If
\(\RR^{[r],\act}_1(f,g;k,S)=\varnothing\), then
\(\RR^{[r+1],\act}_1(f,g;k,S)=\varnothing\).  If the active sets for both
\(r\) and \(r+1\) are nonempty, then
\[
        \theta^{[r+1]}_{f,g,k,S}=\theta^{[r]}_{f,g,k,S}
        =\frac{\gcd(\deg f,\deg g)}{\deg g}.
\]
\end{corollary}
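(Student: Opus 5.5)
The plan is to prove the three assertions in order: emptiness for large \(r\), monotonicity of the active sets, and constancy of the exponent.

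\emph{Emptiness for \(r>\Delta_{\new}\).} Since \(k\) has characteristic zero, \(C^{\new}_{f,g}\) is reduced and finite and torsion-free over the PID \(k[X]\), hence free of rank \(\Delta_{\new}\). Over the generic point its fiber is therefore a reduced finite \(k(X)\)-algebra of dimension \(\Delta_{\new}\). Over \(\overline{k(X)}\) this fiber has exactly \(\Delta_{\new}\) geometric points. A component of the ordered fiber product dominating the \(X\)-line on which \(Y_i\ne Y_j\) generically gives \(r\) distinct generic lifts, that is, \(r\) distinct geometric points of this generic fiber. These points really are distinct: distinct \(Y\)-values at the generic point are distinct points of the fiber, because the fiber is embedded in \(\A^1_Y\) over \(k(X)\). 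So \(r\le\Delta_{\new}\), and hence \(C^{[r]}_{f,g}=\varnothing\) when \(r>\Delta_{\new}\). (Components supported over finitely many \(X\)-values are excluded by definition.) For the counting statement, an input with \(r\) distinct new rational lifts gives \(r\) distinct roots \(y\) of \(g(y)=f(a)\). Each such root lies on a non-graph component, and at most \(\Delta_{\new}\) of them can occur except over the finitely many \(a\) where the fiber of \(C^{\new}_{f,g}\) meets graph components or branches. I expect this step to be the main obstacle. A naive count only gives \(\#\{y\}\le\deg g\), and nongeneric fibers need care. I would handle it by showing that for every \(a\), the new lifts are exactly the points of the fiber of \(C^{\new}_{f,g}\) not on graph components. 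Their number is at most the degree of the finite flat fiber, which is \(\Delta_{\new}\). This gives the bound uniformly, and hence \(N^{\new}_{\ge r}(B)=0\) exactly.

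\emph{Monotonicity.} Forgetting the last coordinate \((X,Y_1,\ldots,Y_{r+1})\mapsto(X,Y_1,\ldots,Y_r)\) sends \(C^{[r+1]}_{f,g}\) to \(C^{[r]}_{f,g}\), and it is finite. Suppose \(W\in\RR^{[r+1],\act}_1\), with \(\A^1\)-source \(X=A_W(t)\) and \(A_W(t_0)\in\OO_{k,S}\). The image of \(W\) is an irreducible \(k\)-component \(Z\) of \(C^{[r]}_{f,g}\) dominating the \(X\)-line. It is geometrically integral, being the image of a geometrically integral curve. Its normalization is dominated by \(\A^1\), so by Lüroth it has genus \(0\) and \(\widetilde Z\cong\Pj^1_k\), since it has a rational point. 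The boundary of \(Z\) is the image of the boundary of \(W\), because poles of \(X\) map to poles of \(X\) (\Cref{thm:configuration-primitive-contact}); hence \(\#D_Z=1\). By Lüroth again, \(Z^\nu=\A^1\) with coordinate \(s\), and \(A_W=A_Z\circ\phi\) for a polynomial \(\phi\). Then \(A_Z(\phi(t_0))\in\OO_{k,S}\), so \(Z\) is active. This gives the contrapositive of the claimed implication.

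\emph{Constancy of \(\theta\).} Whenever the active set at level \(r\) is nonempty, \Cref{thm:higher-multiplicity-sparse} gives \(\theta^{[r]}=1/N=\gcd(\deg f,\deg g)/\deg g\), and similarly at level \(r+1\). So equality is immediate. As a consistency check, both \(d_X(W)\) and \(d_X(Z)\) equal \(N\) by \Cref{cor:multiplicity-degree-spectrum}, which forces \(\deg\phi=1\) in the monotonicity step.
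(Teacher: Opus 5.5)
Your proposal is correct. The emptiness and constancy steps follow the paper, and the monotonicity step takes a different route.

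\textbf{Emptiness and constancy.} These are the paper's own arguments. The coordinate ring of \(C^{\new}_{f,g}\) is finite flat of rank \(\Delta_{\new}\) over \(k[X]\), so every fiber has length \(\Delta_{\new}\) and carries at most \(\Delta_{\new}\) distinct new lifts. The equality of exponents is then read off from \Cref{thm:higher-multiplicity-sparse} at levels \(r\) and \(r+1\).

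Your remark that components over finitely many \(X\)-values are ``excluded by definition'' is not quite what the definition says, since those are only excluded from the sets \(\RR^{[r]}_i\). It is harmless, though. The ordered fiber product has coordinate ring \(R^{\otimes r}\), which is free over \(k[X]\), so no such components exist. In any case the fiberwise bound would rule them out.

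\textbf{Monotonicity.} The paper argues arithmetically. Since \(\mathscr N^{\new}_{\ge r+1,f,g,k,S}\subseteq\mathscr N^{\new}_{\ge r,f,g,k,S}\), an active level-\((r+1)\) source would force a lower bound of order \(B^{n_k/N}(\log(2B))^{q_{k,S}}\). This contradicts the polylogarithmic upper bound that \Cref{thm:higher-multiplicity-sparse} gives at level \(r\) when \(\RR^{[r],\act}_1\) is empty.

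You instead push an active source \(W\) forward under the map forgetting \(Y_{r+1}\):
\begin{itemize}
\item its image \(Z\) is a one-dimensional irreducible closed subset of the ordered fiber product, hence a component;
\item \(Y_i\ne Y_j\) holds generically on \(Z\) because the generic point of \(W\) maps to that of \(Z\);
\item \(Z\) is geometrically integral with \(\widetilde Z\cong\Pj^1_k\) by L\"uroth;
\item its boundary is the image of \(D_W\), since on both curves the boundary points are exactly the poles of \(X\);
\item \(k[Z^\nu]=k[s]\subseteq k[t]\) gives \(A_W=A_Z\circ\phi\) with \(\phi\in k[t]\), so activity transfers.
\end{itemize}
This argument is correct.

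\textbf{Comparison.} The paper's route is a two-line consequence once the level-\(r\) sparse theorem is available. It rests, however, on the whole counting apparatus: Siegel--Mahler, the admissible unit lattices, and the configuration source expansion.

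Your route needs none of that. It shows that monotonicity is a geometric consequence of the configuration covers, whereas the paper presents it as arithmetic. It also gives more. Because \(d_X(W)=d_X(Z)=N\) forces \(\deg\phi=1\), the forgetful projection carries each active level-\((r+1)\) source isomorphically, on normalizations, onto an active level-\(r\) source.
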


\begin{proof}
Let \(R\) be the coordinate ring of \(C^{\new}_{f,g}\).  The projection to the
\(X\)-line makes \(R\) a finite \(k[X]\)-algebra.  Since \(C^{\new}_{f,g}\) is
reduced and every component in its definition dominates the \(X\)-line, no
nonzero element of \(k[X]\) is a zero divisor on \(R\).  Thus \(R\) is a finite
torsion-free \(k[X]\)-module.  Because \(k[X]\) is a PID, \(R\) is finite flat
of rank
\[
        \Delta_{\new}=\deg(C^{\new}_{f,g}/\A^1_X).
\]
After base change to \(\bar k\), every geometric fiber therefore has scheme
length \(\Delta_{\new}\).  Its reduced cardinality is at most this length.  Hence
no fiber can contain more than \(\Delta_{\new}\) distinct new lifts, and the
ordered configuration cover is empty for \(r>\Delta_{\new}\).

The monotonicity assertions are arithmetic consequences of the level-\(r\)
sparse theorem, not purely geometric consequences of the configuration covers.
They follow from the inclusion
\[
        \{a:\text{at least }r+1\text{ distinct new lifts}\}
        \subseteq
        \{a:\text{at least }r\text{ distinct new lifts}\}.
\]
If an active log-negative component existed for level \(r+1\) while none existed
for level \(r\), \Cref{thm:higher-multiplicity-sparse} would give a positive
power lower bound for \(N^{\new}_{\ge r+1}\) and only a polylogarithmic upper
bound for \(N^{\new}_{\ge r}\), contradicting the inclusion for large \(B\).
If both active sets are nonempty, \Cref{thm:higher-multiplicity-sparse} gives
\(\theta^{[r]}_{f,g,k,S}=\theta^{[r+1]}_{f,g,k,S}=1/N\), where
\(N=\deg g/\gcd(\deg f,\deg g)\).
\end{proof}

\begin{example}[The hierarchy can persist or terminate]
For \(f(X)=X\) and \(g(Y)=Y^2\), there are no graph components and
\[
        C^{\new}_{f,g}: X=Y^2.
\]
An integer input \(n\) has a rational lift if and only if \(n\) is a square, and
except at \(n=0\) it has the two distinct lifts \(\pm\sqrt n\).  Thus
\[
        N^{\new}_{\ge1,f,g}(B)\asymp B^{1/2},
        \qquad
        N^{\new}_{\ge2,f,g}(B)\asymp B^{1/2},
        \qquad
        N^{\new}_{\ge3,f,g}(B)=0.
\]
The ordered two-configuration cover has the component
\[
        X=t^2,\qquad Y_1=t,\qquad Y_2=-t,
\]
so the level-two exponent is still read from an active log-negative component
of projection degree two.

For \(f(X)=X\) and \(g(Y)=Y^3\), again there are no graph components, but a
rational value has at most one rational cube root.  Hence
\[
        N^{\new}_{\ge1,f,g}(B)\asymp B^{1/3},
        \qquad
        N^{\new}_{\ge2,f,g}(B)=0.
\]
Thus higher multiplicity is not determined by the size of the set of inputs with
one lift; it is controlled by the configuration cover.
\end{example}

\section{Computing the exponent}
\label{sec:algorithm}

For explicit polynomials over \(\Q\), the exponent in \Cref{cor:Q-sparse} can
be extracted by a finite procedure.  The procedure is an explicit translation of
the theorem into standard operations on plane curves: factorization,
normalization, genus and boundary computation, parametrization of
\(\Q\)-rational components, and a finite integrality test for the
\(X\)-parametrization.  General algorithmic criteria for infinitude of integral
points on affine curves are developed by Alvanos--Bilu--Poulakis \cite{AlvanosBiluPoulakis};
the test below is the special one-variable denominator reduction needed for the
activity condition in the present counting theorem.

In this section, the effective assertion concerns the detection of the
one-infinity power exponent and the admissible unit-coset data, assuming the
standard number-field unit and congruence computations are available.  The finite
exceptional sets supplied by Siegel--Mahler are used only through their bounded
contribution and play no role in determining the exponent.

\begin{lemma}[Finite denominator test over \(\Q\)]
\label{lem:activity-denominator}
Let \(A\in\Q[t]\) be nonconstant.  There is an effectively computable integer \(M\ge1\) such
that, whenever \(t=a/b\in\Q\) is in lowest terms with \(b>0\) and \(A(t)\in\Z\),
one has \(b\mid M\).  Consequently, the condition \(A(\Q)\cap\Z\ne\varnothing\)
can be checked by finitely many explicit polynomial congruence conditions.
\end{lemma}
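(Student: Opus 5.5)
The proof specializes the local denominator argument of \Cref{lem:polynomial-S-value-set} to \(k=\Q\), \(S=\{\infty\}\), keeping every constant explicit. Write
\[
        A(T)=a_dT^d+a_{d-1}T^{d-1}+\cdots+a_0,\qquad a_d\ne0,\ d\ge1 .
\]
Multiply by a common denominator to get \(cA=\sum c_iT^i\) with \(c_i\in\Z\) and \(c=\operatorname{lcm}\) of the coefficient denominators. For \(t=a/b\) in lowest terms with \(b>0\), the condition \(A(t)\in\Z\) gives
\[
        \sum_{i=0}^d c_i a^i b^{d-i}\equiv 0 \pmod{c\,b^d}.
\]
In particular, \(b\mid c_d a^d\). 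Since \(\gcd(a,b)=1\), this forces \(b\mid c_d\).

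So I would take \(M=|c_d|\), or more carefully \(M=c|c_d|\) to be safe. Both are computed directly from the coefficients of \(A\), which gives effectivity. I would check the divisibility claim prime by prime, as in \Cref{lem:polynomial-S-value-set}. Suppose \(v_p(b)>v_p(c_d)\). Then the term \(c_da^d\) has \(p\)-adic valuation exactly \(v_p(c_d)\), while every other term \(c_ia^ib^{d-i}\) with \(i<d\) is divisible by \(p^{v_p(b)}\). So the sum has valuation \(v_p(c_d)<v_p(b)\le v_p(cb^d)\), and the congruence fails. This yields \(v_p(b)\le v_p(c_d)\) for every \(p\), hence \(b\mid c_d\).

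For the consequence, note that \(A(\Q)\cap\Z\ne\varnothing\) holds if and only if, for some positive divisor \(b\mid M\), there is an integer \(a\) with \(\gcd(a,b)=1\) and \(b^dA(a/b)\equiv0\pmod{b^d}\) after clearing \(c\). The precise condition is
\[
        \sum_i c_ia^ib^{d-i}\equiv0\pmod{c\,b^d}.
\]
This congruence depends only on \(a\) modulo \(c\,b^d\), so it becomes a finite check over residues \(a\bmod cb^d\), one for each of the finitely many \(b\mid M\). Lowest terms is not actually needed for the existence test. Any \(t\) with \(A(t)\in\Z\) can be written in lowest terms, and coprimality is also a residue condition, so either form of the test works.

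The main obstacle is only bookkeeping: making sure \(M\) is correctly the leading-coefficient denominator bound and that the modulus \(cb^d\) captures integrality exactly. I would verify the equivalence \(A(a/b)\in\Z\iff c\,b^d\mid \sum c_ia^ib^{d-i}\) directly. It follows from the identity \(cb^dA(a/b)=\sum c_ia^ib^{d-i}\).
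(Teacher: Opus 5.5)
Your proof is correct, but it reaches the denominator bound by a different route from the paper.

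\textbf{Your route.} If \(A(a/b)=n\in\Z\), then \(a/b\) is a rational root of the integer polynomial \(cA(T)-cn\). Its leading coefficient is still \(c_d\), because \(d\ge1\). Reducing \(\sum_i c_ia^ib^{d-i}\equiv0\pmod{cb^d}\) modulo \(b\) is exactly the rational root theorem, and gives \(b\mid c_d\) in one line. So \(M=|c_d|\) already works, and the hedge \(M=c|c_d|\) and the later prime-by-prime check are redundant.

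\textbf{The paper's route.} It specializes the place-by-place dominance argument of \Cref{lem:polynomial-S-value-set}. Write \(A=q^{-1}P\) with \(P\in\Z[t]\) of degree \(m\) and leading coefficient \(p_m\). At primes \(p\nmid qp_m\), dominance of the leading term forces \(v_p(t)\ge0\). At each of the finitely many bad primes, the paper chooses an explicit threshold \(N_p\) from the slopes \((v_p(p_i)-v_p(p_m))/(m-i)\) and \((v_p(q)-v_p(p_m))/m\). Below \(N_p\) the leading term dominates, and the paper takes \(M=\prod_{p\mid qp_m}p^{\max(0,-N_p)}\).

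\textbf{Comparison.} Your argument is shorter and needs neither the good/bad prime split nor any slope bookkeeping. The paper's argument has the advantage of being literally the \(k=\Q\), \(S=\{\infty\}\) case of the \(\OO_{k,S}\) denominator argument used earlier. Your idea also generalizes to that setting: if \(A(t)\in\OO_{k,S}\), then \(c_dt\) is integral over \(\OO_{k,S}\), which supplies \(\delta=c_d\) there.

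Neither bound is uniformly smaller. Take the largest admissible \(N_p\) in the paper's construction.
\begin{itemize}
\item For \(A(t)=16t^2\), yours gives \(16\) and the paper's gives \(8\); the true bound is \(4\).
\item For \(A(t)=t^2/2+t/4\), yours gives \(2\), which is optimal, and the paper's gives \(4\).
\end{itemize}

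The finite congruence test for \(A(\Q)\cap\Z\ne\varnothing\) is the same in both proofs: test \(cb^d\mid\sum_ic_ia^ib^{d-i}\) over residues \(a\) modulo \(cb^d\), for each \(b\mid M\). Your remark that coprimality may be dropped from the existence test is also correct.
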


\begin{proof}
Write \(A(t)=q^{-1}P(t)\), with \(P\in\Z[t]\), \(q\in\Z_{>0}\), and
\[
        P(T)=\sum_{i=0}^m p_iT^i,
        \qquad p_m\ne0.
\]
Let \(p\) be a prime.  If \(p\nmid q p_m\), then the coefficients of \(P\) are
\(p\)-integral and the leading coefficient is a \(p\)-adic unit.  Hence
\(v_p(t)<0\) forces \(v_p(P(t))=m v_p(t)<0\), because the leading term has
strictly smaller valuation than every lower term.  Thus integrality of \(A(t)\)
forces \(v_p(t)\ge0\) at every such prime.

It remains only to bound denominators at the finite set of primes dividing
\(q p_m\).  Fix one such prime \(p\), and put \(w=v_p(t)\).  For each nonzero
lower coefficient \(p_i\), \(i<m\), the leading term strictly dominates the term
\(p_i t^i\) whenever
\[
        v_p(p_m)+mw<v_p(p_i)+iw,
\]
that is,
\[
        w<\frac{v_p(p_i)-v_p(p_m)}{m-i}.
\]
Ignore zero lower coefficients.  Choose an integer \(N_p\le0\) strictly smaller
than all the finitely many numbers
\[
        \frac{v_p(p_i)-v_p(p_m)}{m-i}\quad(p_i\ne0,\,i<m)
\]
and also strictly smaller than
\[
        \frac{v_p(q)-v_p(p_m)}{m}
\]
and strictly smaller than \(0\).
If \(w<N_p\), the leading term strictly dominates all lower terms, so
\[
        v_p(P(t))=v_p(p_m)+mw<v_p(q),
\]
and therefore \(v_p(A(t))<0\).  Thus \(A(t)\in\Z\) forces \(v_p(t)\ge N_p\) at
this bad prime.  Taking
\[
        M=\prod_{p\mid q p_m}p^{\max(0,-N_p)}
\]
clears all possible bounded negative valuations and gives the first claim.

For each divisor \(b\mid M\), the condition \(A(a/b)\in\Z\), with \((a,b)=1\), is
completely explicit.  Namely, write
\[
        P_b(T)=b^mP(T/b)=\sum_i p_iT^ib^{m-i}\in\Z[T],
\]
where \(P(T)=\sum_i p_iT^i\).  Then
\[
        A(a/b)\in\Z\quad\Longleftrightarrow\quad q b^m\mid P_b(a).
\]
For fixed \(b\), this divisibility depends only on the residue class of \(a\)
modulo \(q b^m\), together with the condition \((a,b)=1\).  Thus activity is
reduced to finitely many polynomial congruence checks.
\end{proof}

\begin{proposition}[Effective extraction of the ordinary-integer power exponent]
\label{prop:effective-exponent-extraction}
Let \(f,g\in\Q[x]\) be nonconstant, and put
\[
        N=\frac{\deg g}{\gcd(\deg f,\deg g)}.
\]
Assume the standard effective operations on plane curves over \(\Q\):
factorization, normalization, genus and boundary computation, geometric
irreducibility testing, and parametrization of rational genus-zero components.
Then one can decide by a finite calculation whether the ordinary-integer
new-lifting count has a one-infinity power term.  If such a term exists, its
exponent is \(1/N\); if none exists, the remaining contribution is only the
admissible two-infinity logarithmic term, or is bounded.
\end{proposition}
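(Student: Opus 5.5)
The plan is to turn \Cref{cor:Q-sparse} into an algorithm. That corollary already says a one-infinity power term exists exactly when \(\RR_1^{\act}(f,g;\Q,\{\infty\})\ne\varnothing\), and that its exponent is then \(1/N\). So the only thing to decide by finite computation is whether this set of active components is nonempty.

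The procedure runs in five steps.

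\emph{Step 1: components.} Factor \(f(X)-g(Y)\) over \(\Q\) to obtain the finitely many irreducible \(\Q\)-components of \(C_{f,g}\).

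\emph{Step 2: discard graphs.} Remove the graph components. By \Cref{lem:degree-one-graph-components}, these are exactly the factors of degree one in \(Y\), i.e.\ the factors \(Y-h(X)\), so they are read off directly from the factorization.

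\emph{Step 3: find one-infinity sources.} For each remaining factor, test geometric irreducibility; geometrically reducible factors contribute only \(O(1)\) by \Cref{lem:ngeo-finite}. For the geometrically integral ones, compute the genus and the number \(\#D_C\) of geometric boundary points. There is a shortcut: by \Cref{thm:primitive-contact-infinity}, \(d_X(C)=N\#D_C\), and \(d_X(C)\) is the \(Y\)-degree of the factor. Hence \(\#D_C\) is read off without extra work. Keep only the factors with genus \(0\) and \(\#D_C=1\). For such a factor the single boundary point is Galois-invariant, so it is \(\Q\)-rational, and therefore \(\widetilde C\cong\Pj^1_\Q\) automatically. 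Parametrize it as in \Cref{thm:one-infinity-source-diagrams}, obtaining \(X=A_C(t)\), \(Y=B_C(t)\) with \(A_C\in\Q[t]\) of degree \(N\).

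\emph{Step 4: test activity.} Apply \Cref{lem:activity-denominator} to each \(A_C\). It produces an effective integer \(M\), and \(A_C(\Q)\cap\Z\ne\varnothing\) holds if and only if, for some \(b\mid M\), there is a residue class \(a\) modulo \(q b^{\deg A_C}\) with \((a,b)=1\) and \(q b^{\deg A_C}\mid P_b(a)\). This is a finite search. One detail needs checking: the congruence class must contain an \(a\) coprime to \(b\), which is immediate because coprimality to \(b\) depends only on \(a\) modulo \(b\), and \(b\) divides the modulus.

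\emph{Step 5: conclude.} If some component passes the activity test, \Cref{cor:Q-sparse} gives \(N^{\new}_{f,g}(B)\asymp B^{1/N}\). If none does, the same corollary leaves only the admissible two-infinity term \((\log(2B))^{\rho^{\adm}_{f,g}}\), or a bounded count.

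I expect the main obstacle to be the parametrization in Step 3: we need a \(\Q\)-rational affine coordinate \(t\) on the normalization sending the boundary point to \(\infty\), so that \(X\) becomes a polynomial over \(\Q\). Since the boundary point is \(\Q\)-rational, \(\widetilde C\) has a rational point and is therefore \(\Pj^1_\Q\). The assumed parametrization routine supplies a rational parameter, and composing it with a Möbius transformation over \(\Q\) moves the boundary point to \(\infty\). After that, \(X\) and \(Y\) are regular on \(\A^1\), hence polynomials. The result is independent of the choices made, because activity is invariant under affine reparametrization, as noted after the definition of \(\RR_1^{\act}\).
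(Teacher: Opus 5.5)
Your proposal is correct and follows essentially the same route as the paper. Both arguments factor \(f(X)-g(Y)\), remove the \(Y\)-degree-one graph factors, discard the geometrically nonintegral components and those that are not rational one-infinity, parametrize the survivors with \(\deg A_C=N\), run the finite denominator test of \Cref{lem:activity-denominator}, and conclude with \Cref{cor:Q-sparse}. Your additions are valid refinements of steps the paper leaves implicit: reading \(\#D_C=\deg_Y/N\) off primitive contact, noting that the single boundary point is automatically \(\Q\)-rational (so \(\widetilde C\cong\Pj^1_\Q\)), and checking that coprimality is constant on residue classes.
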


\begin{proof}
Factor the squarefree part of \(f(X)-g(Y)\) in \(\Q[X,Y]\), and remove the graph
factors \(Y-h(X)\) with \(h\in\Q[X]\) and \(f=g\circ h\).  By
\Cref{lem:degree-one-graph-components}, these are exactly the degree-one
components over the \(X\)-line.  For every remaining factor, test geometric
irreducibility.  A factor irreducible over \(\Q\) but not geometrically integral
contributes only finitely many rational points by \Cref{lem:ngeo-finite}.

For each geometrically integral component, compute the genus of the smooth
projective normalization and the number of boundary points.  The only components
that can produce a power term are the \(\Q\)-rational one-infinity components.
By primitive contact, every retained non-graph one-infinity component has
\(d_X(C)=N\).  Choose a polynomial parametrization
\[
        X=A(t),\qquad Y=B(t),\qquad A,B\in\Q[t],
        \qquad \deg A=N.
\]
The component is active for ordinary integers exactly when
\(A(\Q)\cap\Z\ne\varnothing\).  By \Cref{lem:activity-denominator}, this is a
finite congruence test: writing \(A=q^{-1}P\) as in that lemma, only finitely
many denominators \(b\mid M\) and residue classes modulo \(q b^{\deg A}\) can
occur for a reduced parameter \(t=a/b\).  If a solution \(t_0\) exists, then
\Cref{lem:integer-valued-coset} gives an arithmetic progression of parameters on
which \(A\) is integer-valued, and \Cref{lem:polynomial-S-value-set} gives the
full value-set order.  If no solution exists, that component contributes no
integer inputs outside the finite normalization-exceptional locus.  In the
quadratic-source case \(A(t)=\alpha t^2+\beta\), this finite test reduces to
the congruence in \Cref{cor:quadratic-source-activity-Q}.

If at least one active one-infinity component survives, \Cref{cor:Q-sparse}
gives the ordinary-integer source exponent \(\theta_{f,g}=1/N\).  If none
survives, no one-infinity power term exists; the source expansion leaves only the
admissible-rank logarithmic contribution from two-infinity components, or a
bounded set.  The finite exceptional sets supplied by Siegel--Mahler enter only
through their \(O(1)\) contribution to the height count and do not affect the
exponent decision.
\end{proof}

\section{Quadratic sources and the square-root boundary}
\label{sec:quadratic}

We now prove the reduction stated in the introduction.

\begin{proposition}[Quadratic-source reformulation]
\label{prop:quadratic-source}
Let \(k\) be a field of characteristic zero, and let \(f,g\in k[x]\) be
nonconstant.  The curve \(C_{f,g}\) has a non-graph \(k\)-rational one-infinity
component \(C\) with \(d_X(C)=2\) if and only if there exist polynomials
\(A,B\in k[t]\) such that
\[
        \deg A=2,
        \qquad k(A(t),B(t))=k(t),
\]
\[
        f(A(t))=g(B(t)),
\]
and
\[
        B(t)\notin k[A(t)].
\]
Equivalently, after a linear change of the parameter, the original
\(X\)-coordinate may be written
\[
        A(t)=\alpha t^2+\beta,
        \qquad \alpha\in k^*,\ \beta\in k.
\]
In the affine coordinate \(U=(X-\beta)/\alpha\), this says that the projection is
\(U=t^2\).
\end{proposition}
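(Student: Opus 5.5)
The plan is to deduce the equivalence from \Cref{thm:one-infinity-source-diagrams}, which identifies $k$-rational one-infinity components (affine normalization $\A^1_k$) with primitive polynomial sources $(A,B)$ up to affine reparametrization. Under this bijection $d_X(C)=\deg A$, and $C$ is a graph component exactly when $B\in k[A]$.

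For the forward direction, take a non-graph $k$-rational one-infinity component $C$ with $d_X(C)=2$. Here I read ``$k$-rational one-infinity'' as membership in $\RR_1(f,g;k)$: geometrically integral, with $\widetilde C\cong\Pj^1_k$ and $\#D_C=1$. The single boundary point is Galois-invariant, hence $k$-rational, as in the proof of \Cref{prop:component-count}(d). So the affine normalization is $\A^1_k$, and \Cref{thm:one-infinity-source-diagrams} applies. It gives $(A,B)$ with $f(A)=g(B)$, $k(A,B)=k(t)$, $\deg A=d_X(C)=2$, and $B\notin k[A]$ because $C$ is non-graph.

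For the converse, start from such $(A,B)$. The reverse direction of \Cref{thm:one-infinity-source-diagrams} produces a component with affine normalization $\A^1_k$, hence geometrically integral with $\widetilde C\cong\Pj^1_k$ and $\#D_C=1$. That theorem also gives $d_X=\deg A=2$, and the component is non-graph since $B\notin k[A]$.

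For the normal form, write $A(t)=a t^2+bt+c$ with $a\neq0$. Since $\operatorname{char}k=0$, we can complete the square: the substitution $t\mapsto t-b/(2a)$ gives $A=\alpha t^2+\beta$ with $\alpha=a$ and $\beta=c-b^2/(4a)$. This is an affine reparametrization, which the correspondence allows, so primitivity, the relation $f(A)=g(B)$, and the condition $B\notin k[A]$ are all preserved. Then $U=(X-\beta)/\alpha$ satisfies $U=t^2$.

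No step looks hard. The only point needing care is that ``$k$-rational one-infinity'' must be read as affine normalization isomorphic to $\A^1_k$, so that \Cref{thm:one-infinity-source-diagrams} applies over $k$ itself rather than only over $\bar k$. If the intended meaning were only geometric, one would first use the rationality of the unique boundary point together with $\widetilde C\cong\Pj^1_k$ to get $C^\nu\cong\A^1_k$.
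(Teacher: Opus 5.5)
Your proof is correct and follows the paper's argument. The paper uses the same forward, converse and complete-the-square structure, but re-derives inline the content of \Cref{thm:one-infinity-source-diagrams} (the polynomial coordinate on the normalization, and the integrality argument showing that \(k[t]\) is the affine normalization), whereas you cite that theorem; the one detail to add is that \(B\) is nonconstant, as a primitive source requires, which is immediate from \(B\notin k[A]\).
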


\begin{proof}
Suppose first that \(C\) is such a component.  Its smooth projective
normalization is \(\Pj^1_k\), and the affine normalization has one point at
infinity.  Hence there is a coordinate \(t\) on the affine normalization for
which
\[
        X=A(t),\qquad Y=B(t),\qquad A,B\in k[t].
\]
The projection degree to the \(X\)-line is \(\deg A\), so \(d_X(C)=2\) gives
\(\deg A=2\).  Since \(t\) is a normalization parameter,
\[
        k(C)=k(t)=k(A(t),B(t)).
\]
The identity \(f(X)=g(Y)\) on \(C\) gives \(f(A(t))=g(B(t))\).  If
\(B(t)\in k[A(t)]\), then \(Y\) is a polynomial function of \(X\), so \(C\) is a
graph component.  Thus \(B(t)\notin k[A(t)]\).

Conversely, such \(A,B\) define a rational curve contained in \(C_{f,g}\).  Its
Zariski closure is irreducible, and since \(C_{f,g}\) is a curve this closure is
an irreducible component of \(C_{f,g}\).  The condition \(k(A,B)=k(t)\) says that
the parametrization is birational onto its image.  Since \(\deg A=2\), the
projection degree to the \(X\)-line is \(2\).
The condition \(B\notin k[A]\) excludes graph components.  Finally, because
\(A,B\in k[t]\), the affine normalization has one geometric point at infinity.
Indeed, write \(A(t)=a_dt^d+a_{d-1}t^{d-1}+\cdots+a_0\) with \(a_d\in k^*\).
Then \(t\) satisfies the monic equation
\[
        T^d+\frac{a_{d-1}}{a_d}T^{d-1}+\cdots+\frac{a_0-A}{a_d}=0
\]
over \(k[A]\subseteq k[A,B]\), so \(t\) is integral over \(k[A,B]\).  Thus
\(k[t]\) is integral over \(k[A,B]\), has the same fraction field, and is
integrally closed.  To see that it is exactly the normalization, let
\(z\in k(t)\) be integral over \(k[A,B]\).  Choose a monic equation for \(z\) with
coefficients in \(k[A,B]\).  Since \(k[A,B]\subseteq k[t]\), this is also a
monic equation over \(k[t]\).  Thus \(z\) is integral over \(k[t]\), and the
integral closedness of \(k[t]\) gives \(z\in k[t]\).  Hence \(k[t]\) is the
integral closure of \(k[A,B]\) in \(k(t)\), i.e.\ the affine normalization.

In characteristic zero, every quadratic polynomial can be completed to a square.
After a linear change of the parameter we may therefore write the original
\(X\)-coordinate as \(A(t)=\alpha t^2+\beta\), with \(\alpha\ne0\).  Equivalently,
with \(U=(X-\beta)/\alpha\), the projection is \(U=t^2\).
\end{proof}

The following classical cancellation result is used only in the equal-outer-degree
case needed for quadratic sources.

\begin{lemma}[Engstrom cancellation in equal outer degree]
\label{lem:engstrom}
Let \(k\) be a field of characteristic zero.  Suppose
\(P,Q,R,S\in k[x]\) are nonconstant polynomials satisfying
\[
        P\circ Q=R\circ S,
        \qquad \deg P=\deg R.
\]
Then there is a linear polynomial \(\ell\in k[x]\) such that
\[
        P=R\circ \ell,
        \qquad S=\ell\circ Q.
\]
In particular, if \(P\circ Q=P\circ S\), then
\[
        S=\ell\circ Q,
        \qquad P\circ \ell=P
\]
for some linear \(\ell\).
\end{lemma}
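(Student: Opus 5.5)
The plan is to avoid field-theoretic Lüroth arguments and use a direct expansion at infinity, which works cleanly in characteristic zero. First, comparing degrees in \(P\circ Q=R\circ S\) with \(\deg P=\deg R=:n\) gives \(\deg Q=\deg S=:d\). Put \(F=P\circ Q=R\circ S\), of degree \(nd\). We may work over \(\bar k\) throughout. At the end, the linear polynomial \(\ell\) will be recovered over \(k\) by coefficient comparison, since \(S=\ell\circ Q\) with \(Q,S\in k[x]\) and \(\deg Q\ge1\) determines the two coefficients of \(\ell\) uniquely in terms of the coefficients of \(Q\) and \(S\).

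The key step is an \(n\)-th root expansion in the Laurent series field \(\bar k((1/x))\). Write \(P(y)=p_ny^n+p_{n-1}y^{n-1}+\cdots\). Then
\[
  P(y)=p_ny^n\bigl(1+\tfrac{p_{n-1}}{p_n}y^{-1}+O(y^{-2})\bigr),
\]
and the binomial series gives
\[
  \bigl(P(y)/p_n\bigr)^{1/n}=y+\tfrac{p_{n-1}}{np_n}+O(y^{-1})
\]
as a series in \(1/y\). Substituting \(y=Q(x)\), which has a pole of order \(d\ge1\) at infinity, we get
\[
  \bigl(F/p_n\bigr)^{1/n}=Q+c_1+O(x^{-d}),
\]
for a particular choice of \(n\)-th root in \(\bar k((1/x))\). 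The other roots differ by \(n\)-th roots of unity. Similarly, with \(r_n\) the leading coefficient of \(R\),
\[
  \bigl(F/r_n\bigr)^{1/n}=S+c_2+O(x^{-d}).
\]
Since \((F/p_n)^{1/n}\) and \((F/r_n)^{1/n}\) differ by a constant factor \(\gamma\in\bar k^*\) with \(\gamma^n=r_n/p_n\) (up to a root of unity), taking polynomial parts gives
\[
  Q+c_1=\gamma(S+c_2).
\]
Hence \(S=\ell\circ Q\) with \(\ell\) linear, as required.

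Finally, \(R(\ell(Q))=P(Q)\), and since \(Q\) is nonconstant, hence transcendental over \(\bar k\), this forces \(R\circ\ell=P\) as polynomials. The special case \(R=P\) then gives \(S=\ell\circ Q\) and \(P\circ\ell=P\).

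The main obstacle I expect is justifying the expansion step rigorously: the existence of \(n\)-th roots of \(F\) in \(\bar k((1/x))\), and the claim that the polynomial part of \((F/p_n)^{1/n}\) equals \(Q+c_1\) exactly. The latter needs the error term \(O(Q^{-1})=O(x^{-d})\) to have strictly negative order, which holds because \(d\ge1\). Characteristic zero is used in the binomial expansion, through the division by \(n\), and in extracting roots of the leading unit.
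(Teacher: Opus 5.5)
Your argument is correct, and it takes a genuinely different route from the paper.

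The paper does not prove the equal-degree case itself. It quotes Engstrom's general cancellation theorem over \(\bar k\) to obtain \(\ell\) with \(P=R\circ\ell\) and \(S=\ell\circ Q\). It then notes that \(\deg P=\deg R\) forces \(\deg\ell=1\), and descends \(\ell\) to \(k\) by comparing leading and constant coefficients in \(S=\ell\circ Q\). That descent step is the same one you use.

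You instead prove the special case directly by the approximate-root argument at infinity. The polynomial part of the distinguished \(n\)-th root of \(F/p_n\) in \(\bar k((1/x))\) is \(Q+c_1\), and likewise for \(S\). Hence \(Q\) and \(S\) are affinely related, and \(R\circ\ell=P\) follows from the transcendence of \(Q\).

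What each route buys:
\begin{itemize}
\item Your route is self-contained and elementary. It shows that characteristic zero is needed only to extract the \(n\)-th root of a one-unit via the binomial series. It can even be run over \(k\) without passing to \(\bar k\), since all the series involved have coefficients in \(k\) and so the constant ratio below lies in \(k\).
\item The paper's citation is shorter and places the lemma inside Engstrom's general theorem, which also covers unequal outer degrees.
\end{itemize}

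One step you leave implicit is why the two roots differ by a constant. Their ratio \(z\in\bar k((1/x))\) satisfies \(z^n=r_n/p_n\), and this equation already has all its roots in \(\bar k\), so \(z\in\bar k^*\). The parenthetical ``up to a root of unity'' is then unnecessary, since \(\gamma\) is this specific element.
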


\begin{proof}
This is the equal-outer-degree case of Engstrom's cancellation theorem for
polynomial substitutions \cite{Engstrom}.  We use the displayed orientation; if
one states the theorem with the linear factor on the opposite side, replacing
that linear polynomial by its inverse gives this form.  In the notation
\(P\circ Q=R\circ S\), Engstrom's theorem gives, after base change to \(\bar k\),
a polynomial \(\ell\) such that
\[
        P=R\circ\ell,
        \qquad
        S=\ell\circ Q.
\]
Because \(\deg P=\deg R\), this polynomial \(\ell\) has degree one.  Write
\(\ell(Y)=aY+b\).  Since \(Q,S\in k[x]\) and \(S=\ell\circ Q\), comparison of
leading coefficients gives \(a\in k\), and then comparison of constant terms
gives \(b\in k\).  Hence \(\ell\) is defined over \(k\).
\end{proof}

\begin{theorem}[Quadratic sources and affine involutions]
\label{thm:quadratic-symmetry}
Let \(k\) be a field of characteristic zero, and let \(f,g\in k[x]\) be
nonconstant.  The following are equivalent.
\begin{enumerate}[label=\textup{(\roman*)}]
    \item The curve \(C_{f,g}\) has a non-graph \(k\)-rational one-infinity component
    \(C\) with \(d_X(C)=2\).
    \item There exist \(\alpha\in k^*\), \(\beta,c\in k\), a nonzero polynomial
    \(E\in k[U]\), and a nonconstant polynomial \(G\in k[Z]\) such that
    \[
          g(Y)=G((Y-c)^2)
    \]
    and
    \[
          f(\alpha U+\beta)=G\bigl(U E(U)^2\bigr).
    \]
\end{enumerate}
In this case a corresponding component is parametrized by
\[
        X=\alpha t^2+\beta,
        \qquad
        Y=c+tE(t^2).
\]
Consequently, if \(g\) has no nontrivial affine involution \(Y\mapsto 2c-Y\)
with \(g(2c-Y)=g(Y)\), then \(C_{f,g}\) has no non-graph rational
one-infinity component with \(d_X=2\).  In particular, if \(\deg g\) is odd, no
such component exists.
\end{theorem}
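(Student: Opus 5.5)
The plan is to prove both implications through the quadratic-source reformulation, \Cref{prop:quadratic-source}, using the involution \(t\mapsto -t\) of the normalized source together with Engstrom cancellation, \Cref{lem:engstrom}.

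\emph{(i) \(\Rightarrow\) (ii).}
\begin{enumerate}[label=\textup{(\arabic*)}]
    \item By \Cref{prop:quadratic-source}, after an affine change of parameter we have \(X=\alpha t^2+\beta\) and \(Y=B(t)\). Here \(B\) is nonconstant, \(B\notin k[t^2]\), and \(f(\alpha t^2+\beta)=g(B(t))\).
    \item Put \(F(U)=f(\alpha U+\beta)\), so that \(F(t^2)=g(B(t))\). The left side is even in \(t\), hence \(g\circ B=g\circ B^-\), where \(B^-(t)=B(-t)\).
    \item Apply \Cref{lem:engstrom} with \(P=R=g\), \(Q=B\), \(S=B^-\). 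This gives a linear \(\ell\in k[x]\) with \(B(-t)=\ell(B(t))\) and \(g\circ\ell=g\).
    \item Substituting \(-t\) twice gives \(\ell(\ell(B))=B\). Since \(B\) is nonconstant, \(\ell\circ\ell=\mathrm{id}\). Writing \(\ell(Y)=aY+b\), this forces \(a=\pm1\), and \(b=0\) when \(a=1\).
    \item If \(\ell=\mathrm{id}\), then \(B\) is even, so \(B\in k[t^2]=k[A]\). That makes \(C\) a graph component, a contradiction. Hence \(\ell(Y)=2c-Y\) with \(c=b/2\).
    \item From \(B(-t)=2c-B(t)\), the polynomial \(B-c\) is odd. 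So \(B=c+tE(t^2)\), and \(E\neq0\) because \(B\) is nonconstant.
    \item The identity \(g(2c-Y)=g(Y)\) says that \(g(Y+c)\) is an even polynomial. In characteristic zero it is therefore a polynomial in \(Y^2\), giving \(g(Y)=G((Y-c)^2)\) with \(G\) nonconstant.
    \item Substituting, \(F(t^2)=G(t^2E(t^2)^2)\). Both sides are polynomials in \(t^2\), so \(F(U)=G(UE(U)^2)\), which is the second identity in (ii).
\end{enumerate}

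\emph{(ii) \(\Rightarrow\) (i).} Set \(A=\alpha t^2+\beta\) and \(B=c+tE(t^2)\).
\begin{enumerate}[label=\textup{(\arabic*)}]
    \item The functional equation holds: \(f(A(t))=G(t^2E(t^2)^2)=G((B-c)^2)=g(B)\).
    \item Primitivity holds. The element \(E(t^2)\) is a nonzero element of \(k(A)\), so \(t=(B-c)/E(t^2)\in k(A,B)\), giving \(k(A,B)=k(t)\).
    \item The source is non-graph. \(B-c\) is a nonzero odd polynomial, so \(B\notin k[t^2]=k[A]\).
    \item \Cref{prop:quadratic-source} then produces the required non-graph one-infinity component with \(d_X=2\), with the stated parametrization.
\end{enumerate}

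\emph{Consequences.} Condition (ii) forces the affine involution \(Y\mapsto 2c-Y\) to preserve \(g\), so its absence rules out such components. Moreover, \(\deg G((Y-c)^2)\) is even, so no such component exists when \(\deg g\) is odd.

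The main point needing care is step (3)–(5) of the first direction. One must check that Engstrom applies in the equal-degree form with \(\ell\) defined over \(k\), which \Cref{lem:engstrom} already guarantees. One must also check that \(\ell=\mathrm{id}\) is genuinely excluded by the non-graph hypothesis via \(B\in k[t^2]\).
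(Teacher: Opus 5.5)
Your proof is correct and follows essentially the same route as the paper. You reduce via \Cref{prop:quadratic-source} to \(X=\alpha t^2+\beta\), apply \Cref{lem:engstrom} to \(g\circ B=g\circ B(-t)\) to obtain an affine involution \(\ell\ne\mathrm{id}\) (excluded by the non-graph condition), and read off \(g=G((Y-c)^2)\) and \(B=c+tE(t^2)\). The only cosmetic difference is in (ii)\(\Rightarrow\)(i): you check the hypotheses of \Cref{prop:quadratic-source} and invoke its converse, whereas the paper re-derives the component, degree-two, one-infinity and non-graph properties directly.
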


\begin{proof}
Assume first that \(C\) is a non-graph \(k\)-rational one-infinity component with
\(d_X(C)=2\).  By \Cref{prop:quadratic-source}, after a linear change of the
parameter we may write the original \(X\)-coordinate as
\[
        X=A(t)=\alpha t^2+\beta,
        \qquad
        Y=B(t),
\]
with \(\alpha\ne0\), \(k(A,B)=k(t)\), \(f(A(t))=g(B(t))\), and
\(B(t)\notin k[A(t)]=k[t^2]\).  Since \(A(t)=A(-t)\), we have
\[
        g(B(t))=g(B(-t)).
\]
The polynomial \(B\) is nonconstant: otherwise
\(k(A(t),B(t))=k(A(t))=k(t^2)\ne k(t)\).  Thus the two nonconstant polynomials
\(B(t)\) and \(B(-t)\) have the same degree.  Applying
\Cref{lem:engstrom} to \(g\circ B=g\circ(B\circ(-1))\), there is a linear
polynomial \(\ell\in k[Y]\) such that
\[
        B(-t)=\ell(B(t)),
        \qquad
        g\circ\ell=g.
\]
Applying \(t\mapsto -t\) again gives \(B(t)=\ell^2(B(t))\), hence
\(\ell^2=\mathrm{id}\).  The map \(\ell\) is not the identity, because otherwise
\(B(t)=B(-t)\) and \(B\in k[t^2]=k[A]\), contradicting the non-graph condition.
Thus \(\ell\) is a nontrivial affine involution.  In characteristic zero,
\(\ell(Y)=2c-Y\) for some \(c\in k\).  Its invariant ring is
\[
        k[Y]^\ell=k[(Y-c)^2],
\]
so \(g\circ\ell=g\) is equivalent to
\[
        g(Y)=G((Y-c)^2)
\]
for some \(G\in k[Z]\).  The identity \(B(-t)=2c-B(t)\) says that \(B(t)-c\) is
odd, hence
\[
        B(t)=c+tE(t^2)
\]
with \(E\in k[U]\), \(E\ne0\).  Substituting \(U=t^2\) into
\(f(A(t))=g(B(t))\) gives
\[
        f(\alpha U+\beta)=G\bigl(U E(U)^2\bigr).
\]
This proves (ii).

Conversely, assume (ii).  Then
\[
        X=\alpha t^2+\beta,
        \qquad
        Y=c+tE(t^2)
\]
defines a rational curve contained in \(C_{f,g}\), since
\[
        g(Y)=G\bigl((tE(t^2))^2\bigr)=G(t^2E(t^2)^2)=f(\alpha t^2+\beta).
\]
Because \(E\ne0\), the function field generated by \(t^2\) and \(tE(t^2)\) is
\(k(t)\): explicitly, \(t=(tE(t^2))/E(t^2)\).  Therefore the parametrization is
birational onto its image.  The Zariski closure of the image is an irreducible
component of \(C_{f,g}\), its projection degree to the \(X\)-line is two, and
its affine normalization has one geometric point at infinity.  Finally,
\(c+tE(t^2)\notin k[t^2]\), so the component is not a graph component.  This
proves (i).

The final assertions follow immediately.  A nontrivial affine involution has the
form \(Y\mapsto2c-Y\), and its invariants are polynomials in \((Y-c)^2\).  If
\(\deg g\) is odd, \(g(Y)=G((Y-c)^2)\) is impossible for nonconstant \(G\).
\end{proof}

\begin{corollary}[Degree obstruction to square-root growth]
\label{cor:degree-obstruction-square-root}
Let \(f,g\in k[x]\) be nonconstant, with
\[
        m=\deg f,\qquad n=\deg g.
\]
If \(C_{f,g}\) has a non-graph \(k\)-rational one-infinity component with
\(d_X=2\), then
\[
        \frac{n}{\gcd(m,n)}=2.
\]
Equivalently,
\[
        n\mid 2m
        \qquad\text{and}\qquad
        n\nmid m.
\]
In particular, if \(n/\gcd(m,n)\ne2\), then a projection-degree-two
one-infinity source is impossible before one tests source-evenness or arithmetic
activity.  Over \(\Q\), this rules out square-root ordinary-integer growth from a
one-infinity source.
\end{corollary}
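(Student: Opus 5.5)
The plan is to read the statement off the primitive-contact corollaries; no new geometry is required. Let \(C\) be a non-graph \(k\)-rational one-infinity component with \(d_X(C)=2\). By definition, \(C\) is geometrically integral with \(\widetilde C\cong\Pj^1_k\) and \(\#D_C=1\), so it is log-negative. Then \Cref{cor:geometric-degree-spectrum} (equivalently \Cref{cor:ground-field-degree-spectrum} with \(s=1\)) gives \(d_X(C)=N\#D_C=N\). With \(d_X(C)=2\) this yields \(n/\gcd(m,n)=2\). The non-graph hypothesis is consistent with this, since \Cref{cor:geometric-degree-spectrum} already forces \(N\ge2\); here that condition is automatic.

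Next I will check the equivalent divisibility form as an elementary lemma. Write \(e=\gcd(m,n)\). If \(n/e=2\), then \(n=2e\) divides \(2m\) because \(e\mid m\), and \(n\nmid m\) because otherwise \(e=n\) and \(n/e=1\). Conversely, suppose \(n\mid 2m\) and \(n\nmid m\). Then \(N=n/e\) and \(M=m/e\) are coprime, and \(N\mid 2M\) implies \(N\mid 2\). Also \(N\ne1\), since \(N=1\) would give \(n\mid m\). Hence \(N=2\).

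The ``in particular'' clause is the contrapositive. For the statement over \(\Q\), I will invoke \Cref{cor:Q-sparse}. There, square-root growth \(B^{1/2}\) from a one-infinity source requires an active non-graph \(\Q\)-rational one-infinity component, which has \(d_X=N\). If \(N\ne2\), the one-infinity power term is either absent or equal to \(B^{1/N}\) with \(N\ge3\), and the two-infinity contributions are only polylogarithmic, so square-root growth cannot arise from a one-infinity source.

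I expect no step to be a real obstacle. The only point needing care is that \(d_X\) for a \(k\)-rational component agrees with the geometric degree. This holds because the component is geometrically integral, so \Cref{cor:ground-field-degree-spectrum} applies with a single geometric component.
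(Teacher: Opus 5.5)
Your proposal is correct and follows the paper's proof. The paper applies the primitive-contact formula \(d_X=N\#D_C=N\) from \Cref{cor:geometric-degree-spectrum} to the one-infinity component and then rewrites \(N=2\) as \(n\mid 2m\), \(n\nmid m\); your coprimality check of that rewriting and your use of \Cref{cor:Q-sparse} for the \(\Q\)-clause spell out steps the paper leaves implicit.
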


\begin{proof}
By \Cref{cor:geometric-degree-spectrum}, every one-infinity component has
\[
        d_X=\frac{n}{\gcd(m,n)}.
\]
Thus \(d_X=2\) is equivalent to \(n/\gcd(m,n)=2\).  The displayed divisibility
conditions are this equality rewritten: \(n\mid2m\) and \(n\nmid m\).
\end{proof}

\begin{definition}[Source-even polynomials]
Let \(k\) be a field of characteristic zero.  We say that \(g\in k[Y]\) is
\emph{source-even over \(k\)} if there are \(c\in k\) and \(G\in k[Z]\) such that
\[
        g(Y)=G((Y-c)^2).
\]
Equivalently, \(g\) is invariant under the nontrivial affine involution
\(Y\mapsto 2c-Y\).  This terminology avoids ambiguity with the two-sided notion of linear conjugacy
used in dynamics.
\end{definition}

\begin{corollary}[Explicit fixed-\(g\) square-root sources]
\label{cor:fixed-g-square-root}
Let \(k\) be a field of characteristic zero and let \(g\in k[Y]\) be
nonconstant.  The polynomials \(f\in k[X]\) for which \(C_{f,g}\) has a
non-graph \(k\)-rational one-infinity component with \(d_X=2\) are precisely the
following.  There exist \(c,\beta\in k\), \(\alpha\in k^*\), a nonzero polynomial
\(E\in k[U]\), and a nonconstant polynomial \(G\in k[Z]\) such that
\[
        g(Y)=G((Y-c)^2)
\]
and
\[
        f(X)=G\left(\frac{X-\beta}{\alpha}
              E\left(\frac{X-\beta}{\alpha}\right)^2\right).
\]
The corresponding component is parametrized by
\[
        X=\alpha t^2+\beta,
        \qquad
        Y=c+tE(t^2).
\]
In particular, once the involutive quotient \(g(Y)=G((Y-c)^2)\) is fixed,
quadratic sources are obtained from the choices of \(\alpha,\beta\) and \(E\).
This is a surjective description rather than an injective parametrization: for
example, \(E\) and \(-E\) give the same polynomial
\(f\) and source after the change \(t\mapsto -t\), and affine reparametrizations
of the source may change the displayed data.
\end{corollary}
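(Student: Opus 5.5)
This corollary is a direct rewriting of \Cref{thm:quadratic-symmetry} with \(g\) held fixed, so the plan is to prove both inclusions from that theorem and then handle the non-injectivity remark separately.

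For the forward direction, suppose \(C_{f,g}\) has a non-graph \(k\)-rational one-infinity component with \(d_X=2\). Implication (i)\(\Rightarrow\)(ii) of \Cref{thm:quadratic-symmetry} supplies \(\alpha\in k^*\), \(\beta,c\in k\), \(E\ne0\) and a nonconstant \(G\) with
\[
g(Y)=G((Y-c)^2),\qquad f(\alpha U+\beta)=G\bigl(UE(U)^2\bigr).
\]
Since \(\alpha\ne0\), the map \(U\mapsto\alpha U+\beta\) is an invertible affine substitution of \(k[U]\). Substituting \(U=(X-\beta)/\alpha\) therefore gives the displayed formula for \(f(X)\).

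For the converse, start from data \((c,\beta,\alpha,E,G)\) satisfying the two displayed identities. Setting \(X=\alpha U+\beta\) recovers condition (ii) of \Cref{thm:quadratic-symmetry}, and (ii)\(\Rightarrow\)(i) then produces the required component. Its parametrization \(X=\alpha t^2+\beta\), \(Y=c+tE(t^2)\) is exactly the one stated in that theorem.

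For the final remark, replacing \(E\) by \(-E\) leaves \(UE(U)^2\) unchanged, so \(f\) is the same. The source becomes \(Y=c-tE(t^2)\), which equals the original source after \(t\mapsto -t\), because \(c+(-t)E(t^2)=c-tE(t^2)\). Affine reparametrizations \(t\mapsto at+b\) that preserve the normal form \(\alpha t^2+\beta\) can rescale \(\alpha\) and \(E\) together, which again changes the displayed data without changing \(f\). It is enough to exhibit the single \(E\mapsto -E\) example to show the description is not injective.

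No step is a real obstacle, since all the content sits in \Cref{thm:quadratic-symmetry}. The only points needing care are that \(G\) is nonconstant, which holds because \(g\) is nonconstant, and that the inverse substitution stays inside \(k[X]\), which holds because \(\alpha\in k^*\).
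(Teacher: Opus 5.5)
Your proposal is correct and follows the paper's proof: both directions are read off from \Cref{thm:quadratic-symmetry} via the substitution \(U=(X-\beta)/\alpha\), which is legitimate because \(\alpha\in k^*\). The only cosmetic difference is in the converse. The paper checks it directly from \(f(\alpha t^2+\beta)=G(t^2E(t^2)^2)=g(c+tE(t^2))\) and \(k(t^2,tE(t^2))=k(t)\), whereas you invoke the implication (ii)\(\Rightarrow\)(i) of that theorem, which performs the same verification.
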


\begin{proof}
This is \Cref{thm:quadratic-symmetry} with \(U=(X-\beta)/\alpha\), rewritten as
a formula for \(f\).  Conversely, the displayed formula gives
\[
        f(\alpha t^2+\beta)=G(t^2E(t^2)^2)=g(c+tE(t^2)),
\]
and \(k(t^2,tE(t^2))=k(t)\), so the parametrized curve is a non-graph rational
one-infinity component of projection degree two.
\end{proof}

\begin{definition}[The first-kind Bilu--Tichy cell used here]
Only the following cell of the Bilu--Tichy standard-pair taxonomy is needed here
\cite{BiluTichy}.  A first-kind standard pair over \(k\), up to switching the two
coordinates, has the form
\[
        \bigl(x^m,\; a x^r p(x)^m\bigr),
\]
where \(a\in k^*\), \(p\in k[x]\), \(m\ge2\), \(0\le r<m\),
\((r,m)=1\), and \(r+\deg p>0\).  In applications of the Bilu--Tichy theorem one
also allows linear changes and a common outer polynomial.  The square-root
boundary below uses only the case
\[
        m=2,
        \qquad r=1,
        \qquad a=1.
\]
\end{definition}

\begin{proposition}[Bilu--Tichy placement of quadratic sources]
\label{prop:BT-placement-quadratic-sources}
Let \(k\) be a field of characteristic zero, and let \(f,g\in k[x]\) be
nonconstant.  A non-graph \(k\)-rational one-infinity component of \(C_{f,g}\) with
\(d_X=2\) is, after the linear changes appearing in
\Cref{thm:quadratic-symmetry}, obtained from the switched first-kind
Bilu--Tichy pair
\[
        \bigl(U E(U)^2,\; V^2\bigr),
        \qquad E\in k[U]\setminus\{0\}.
\]
More precisely, such a component exists if and only if there are
\[
        c,\beta\in k,
        \qquad \alpha\in k^*,
        \qquad E\in k[U]\setminus\{0\},
        \qquad G\in k[Z]\setminus k
\]
such that
\[
        g(Y)=G((Y-c)^2)
\]
and
\[
        f(X)=G\left(
             \frac{X-\beta}{\alpha}
             E\left(\frac{X-\beta}{\alpha}\right)^2
        \right).
\]
Equivalently, with
\[
        U=\frac{X-\beta}{\alpha},
        \qquad V=Y-c,
\]
one has
\[
        f=G\circ F\circ \lambda_X,
        \qquad
        g=G\circ Q\circ \lambda_Y,
\]
where
\[
\begin{gathered}
        F(U)=UE(U)^2,
        \qquad Q(V)=V^2,\\
        \lambda_X(X)=\frac{X-\beta}{\alpha},
        \qquad \lambda_Y(Y)=Y-c.
\end{gathered}
\]
The pair \((F,Q)\) is the switched first-kind Bilu--Tichy standard pair with
\[
        m=2,
        \qquad r=1,
        \qquad a=1,
        \qquad p=E.
\]
The corresponding component is parametrized by
\[
        U=t^2,
        \qquad V=tE(t^2),
\]
or, in the original coordinates,
\[
        X=\alpha t^2+\beta,
        \qquad Y=c+tE(t^2).
\]
It is not a graph component.
\end{proposition}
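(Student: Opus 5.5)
The plan is to reduce the proposition to \Cref{thm:quadratic-symmetry} and \Cref{cor:fixed-g-square-root}, which already carry the geometric content. What remains is bookkeeping: rewrite the characterization (ii) of that theorem as a composition through affine changes, and check that the inner pair is the stated Bilu--Tichy standard pair.

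\emph{Equivalence.} For the forward direction, take a non-graph \(k\)-rational one-infinity component with \(d_X=2\). \Cref{thm:quadratic-symmetry} provides \(\alpha\in k^*\), \(\beta,c\in k\), a nonzero \(E\in k[U]\) and a nonconstant \(G\in k[Z]\) with
\[
g(Y)=G((Y-c)^2),\qquad f(\alpha U+\beta)=G\bigl(UE(U)^2\bigr).
\]
Substituting \(U=(X-\beta)/\alpha\), which is legitimate because \(\alpha\ne0\), gives the displayed formula for \(f\); this is precisely \Cref{cor:fixed-g-square-root}. The converse is the converse half of that corollary. It supplies the parametrization \(X=\alpha t^2+\beta\), \(Y=c+tE(t^2)\), it shows \(k(t^2,tE(t^2))=k(t)\) via \(t=tE(t^2)/E(t^2)\), and it records the non-graph property \(c+tE(t^2)\notin k[t^2]\).

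\emph{Composition form.} With \(\lambda_X(X)=(X-\beta)/\alpha\), \(\lambda_Y(Y)=Y-c\), \(F(U)=UE(U)^2\) and \(Q(V)=V^2\), the two identities read
\[
f=G\circ F\circ\lambda_X,\qquad g=G\circ Q\circ\lambda_Y.
\]
In the coordinates \(U=\lambda_X(X)\), \(V=\lambda_Y(Y)\), the parametrization becomes \(U=t^2\), \(V=tE(t^2)\).

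\emph{Standard-pair check.} This is the only step with any content. In the definition of the first-kind cell, take \(m=2\), \(r=1\), \(a=1\), \(p=E\). Then \(ax^rp(x)^m=xE(x)^2\), and the other requirements hold: \(m\ge2\), \(0\le r<m\), \(\gcd(r,m)=1\), and \(r+\deg p\ge1>0\). Here \(E\ne0\) ensures \(\deg p\) is defined, with \(\deg p\ge0\).

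So \((x^2,xE(x)^2)\) is a first-kind standard pair. The pair \((F,Q)=(xE(x)^2,x^2)\) is its switch, and the definition explicitly allows switching. The common outer polynomial \(G\) and the linear changes \(\lambda_X,\lambda_Y\) are the modifications the definition also permits. The one point to watch is orientation: \(F\) must sit on the \(f\)-side and \(Q\) on the \(g\)-side, which is what makes this the switched cell.
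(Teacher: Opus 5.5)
Your proposal is correct and follows essentially the same route as the paper. The forward direction is read off from \Cref{thm:quadratic-symmetry} and rewritten as \(f=G\circ F\circ\lambda_X\), \(g=G\circ Q\circ\lambda_Y\), and the inner pair is matched to the first-kind cell with \(m=2\), \(r=1\), \(a=1\), \(p=E\). The only minor difference is in the converse: the paper re-derives the component directly, noting that \(UE(U)^2-V^2\) divides \(G(UE(U)^2)-G(V^2)\) and is irreducible because \(UE(U)^2\) is not a square, whereas you invoke the already-proved converse of \Cref{thm:quadratic-symmetry} via \Cref{cor:fixed-g-square-root}, which is equally valid.
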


\begin{proof}
By \Cref{thm:quadratic-symmetry}, every non-graph \(k\)-rational one-infinity
component with \(d_X=2\) admits the displayed source-even form
\[
        g(Y)=G((Y-c)^2),
        \qquad
        f(\alpha U+\beta)=G(UE(U)^2),
\]
with \(E\ne0\).  Put
\[
        F(U)=UE(U)^2,
        \qquad Q(V)=V^2.
\]
Then
\[
        f=G\circ F\circ \lambda_X,
        \qquad
        g=G\circ Q\circ \lambda_Y.
\]
The pair
\[
        (F,Q)=\bigl(UE(U)^2,V^2\bigr)
\]
is exactly the switched standard pair of the first kind with
\(m=2\), \(r=1\), \(a=1\), and \(p=E\).

Conversely, any datum of this form gives
\[
        G(UE(U)^2)=G(V^2).
\]
Since \(T-W\) divides \(G(T)-G(W)\) in \(k[T,W]\), the polynomial
\[
        UE(U)^2-V^2
\]
divides \(G(UE(U)^2)-G(V^2)\).  This factor is irreducible in \(k[U,V]\), since
\(UE(U)^2\) is not a square in \(k[U]\).  Hence, after the invertible linear
change from \((X,Y)\) to \((U,V)\), it survives as an irreducible factor of the
squarefree part defining \(C_{f,g}\), and its zero set is an irreducible
component.  It gives the parametrized curve
\[
        U=t^2,
        \qquad V=tE(t^2).
\]
Because \(E\ne0\),
\[
        k(t^2,tE(t^2))=k(t),
\]
so the parametrization is birational.  Also
\(tE(t^2)\notin k[t^2]\), because it is an odd polynomial in \(t\), while every
polynomial in \(t^2\) is even.  Thus \(Y\) is not a polynomial function of
\(X\), so the component is not a graph component.  Finally, the
\(X\)-projection is \(X=\alpha t^2+\beta\), hence has degree \(2\), and the
affine normalization has one point at infinity.
\end{proof}

\begin{remark}
\Cref{prop:BT-placement-quadratic-sources} is a source-level statement.  Special
choices of \(G\) or \(E\) may admit additional decompositions or overlap with
other standard-pair descriptions, but those coincidences do not create another
projection-degree-two mechanism.  For the square-root boundary, the only
Bilu--Tichy cell needed is the switched first-kind cell
\((xp(x)^2,x^2)\).
\end{remark}

\begin{corollary}[Ordinary integer activity of a quadratic source]
\label{cor:quadratic-source-activity-Q}
Let
\[
        A(t)=\alpha t^2+\beta,
        \qquad \alpha\in\Q^*,\quad \beta\in\Q.
\]
Write
\[
        \beta=\frac bd,
        \qquad b\in\Z,
        \qquad d\in\Z_{>0},
        \qquad (b,d)=1.
\]
Let \(s\in\Z\setminus\{0\}\) be the squarefree integer representing the square
class of \(d\alpha\), so
\[
        d\alpha\in s\Q^{*2}.
\]
Then the associated quadratic source is active for ordinary-integer inputs,
equivalently
\[
        A(\Q)\cap\Z\ne\varnothing,
\]
if and only if the congruence
\[
        s z^2\equiv -b\pmod d
\]
has a solution \(z\in\Z\).

Consequently, for the fixed-\(g\) source-even family over \(\Q\), the active
source-even Bilu--Tichy subfamilies are precisely
\[
        \left(
        G\left(\frac{X-\beta}{\alpha}
              E\left(\frac{X-\beta}{\alpha}\right)^2\right),
        \;
        G((Y-c)^2)
        \right),
\]
where \(E\in\Q[U]\setminus\{0\}\), \(g(Y)=G((Y-c)^2)\), and the above congruence
in \(\alpha,\beta\) is soluble.  The parameters \(c\), \(E\), and \(G\) do not
enter the ordinary-integer activity test except through the existence of the
source itself.
\end{corollary}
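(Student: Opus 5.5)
The plan is to reduce the activity condition \(A(\Q)\cap\Z\ne\varnothing\) to an integrality question for a single rational parameter and then use squarefreeness of \(s\) to force that parameter to be integral. First, since \(d\alpha\in s\Q^{*2}\), I fix \(w\in\Q^*\) with \(d\alpha=sw^2\). For \(t\in\Q\) put \(z=wt\); as \(t\) runs over \(\Q\), so does \(z\), because \(w\ne0\). Then
\[
        A(t)=\alpha t^2+\frac bd=\frac{sz^2+b}{d},
\]
so \(A(t)\in\Z\) holds exactly when \(sz^2+b\in d\Z\). Thus activity is equivalent to the existence of \(z\in\Q\) with \(sz^2+b\in d\Z\).

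Next I show that any such rational \(z\) is automatically an integer. Write \(z=u/v\) in lowest terms with \(v>0\). The condition \(sz^2+b\in d\Z\subseteq\Z\) gives \(su^2/v^2\in\Z\), hence \(v^2\mid su^2\). Since \((u,v)=1\), this gives \(v^2\mid s\). Because \(s\) is squarefree, \(v=1\). So the condition becomes the existence of \(z\in\Z\) with \(sz^2\equiv -b\pmod d\). Conversely, an integer solution \(z\) gives the rational parameter \(t=z/w\) with \(A(t)\in\Z\). This proves the equivalence.

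For the consequence, I invoke \Cref{cor:fixed-g-square-root}, or equivalently \Cref{prop:BT-placement-quadratic-sources}. Every non-graph \(\Q\)-rational one-infinity source with \(d_X=2\) has, up to affine change of the source parameter, \(X\)-map \(A(t)=\alpha t^2+\beta\), with \(f\) and \(g\) of the displayed forms. By definition, \(S\)-activity with \(S=\{\infty\}\) means \(A(\Q)\cap\Z\ne\varnothing\). This condition is independent of affine reparametrization \(t\mapsto at+b\), since such a change is a bijection of \(\Q\). So activity depends only on \((\alpha,\beta)\), and the first part converts it into the displayed congruence. The data \(c\), \(E\), \(G\) affect only \(Y\) and the existence of the source, not the set \(A(\Q)\).

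The only delicate point is bookkeeping. The normalization of \(\beta\) in lowest terms, together with the choice of \(s\) as the squarefree representative of \(d\alpha\) (rather than of \(\alpha\)), is exactly what makes the denominator argument \(v^2\mid s\Rightarrow v=1\) work. I do not expect any real obstacle.
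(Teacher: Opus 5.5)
Your proposal is correct and follows the paper's proof essentially verbatim: the paper also writes \(d\alpha=sq^2\), turns \(A(t)=n\) into \(s(qt)^2=nd-b\), and uses squarefreeness of \(s\) to force \(qt\in\Z\); it inverts via \(t=z/q\) and reads off the family statement from \Cref{prop:BT-placement-quadratic-sources}. One small correction to your closing remark: the coprimality \((b,d)=1\) is never actually used, since the argument needs only \(b,d\in\Z\) and that \(s\) represents the square class of \(d\alpha\) for that same \(d\), so lowest terms is not what makes the denominator step work.
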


\begin{proof}
Choose \(q\in\Q^*\) such that
\[
        d\alpha=sq^2.
\]
The equality \(A(t)=n\), with \(t\in\Q\) and \(n\in\Z\), is equivalent to
\[
        d\alpha t^2+b=nd,
\]
or
\[
        s(qt)^2=nd-b.
\]
If such \(t,n\) exist, then \(s(qt)^2\in\Z\).  Since \(s\) is squarefree, this
forces \(qt\in\Z\): indeed, writing \(qt=a/r\) in lowest terms with \(r>0\)
gives \(s a^2/r^2\in\Z\), hence \(r^2\mid s a^2\); coprimality gives
\(r^2\mid s\), and squarefreeness of \(s\) gives \(r=1\).  Writing \(z=qt\), we get
\[
        sz^2\equiv -b\pmod d.
\]

Conversely, suppose \(sz^2\equiv -b\pmod d\).  Put
\[
        n=\frac{b+sz^2}{d}\in\Z,
        \qquad
        t=\frac zq.
\]
Then
\[
        \alpha t^2+\beta
        =\alpha\frac{z^2}{q^2}+\frac bd
        =\frac{sz^2}{d}+\frac bd
        =n.
\]
Thus \(A(\Q)\cap\Z\ne\varnothing\).  The final statement follows from
\Cref{prop:BT-placement-quadratic-sources}: the Bilu--Tichy core is always
\((UE(U)^2,V^2)\), and ordinary-integer activity depends only on the quadratic
\(X\)-parametrization \(X=\alpha t^2+\beta\).
\end{proof}

\begin{corollary}[Square-root growth over \(\Q\), explicitly]
\label{cor:Q-square-root-explicit}
Let \(f,g\in\Q[x]\) be nonconstant.  Then
\[
        N^{\new}_{f,g}(B)\asymp_{f,g} B^{1/2}
\]
if and only if there exist
\[
        c,\beta\in\Q,
        \qquad \alpha\in\Q^*,
        \qquad E\in\Q[U]\setminus\{0\},
        \qquad G\in\Q[Z]\setminus\Q
\]
such that
\[
        g(Y)=G((Y-c)^2),
\]
\[
        f(X)=G\left(
             \frac{X-\beta}{\alpha}
             E\left(\frac{X-\beta}{\alpha}\right)^2
        \right),
\]
and, writing \(\beta=b/d\) with \((b,d)=1\) and \(d>0\), and writing
\(d\alpha\in s\Q^{*2}\) with \(s\) squarefree, the congruence
\[
        sz^2\equiv -b\pmod d
\]
is soluble.
\end{corollary}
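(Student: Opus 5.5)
The plan is to combine \Cref{cor:Q-sparse}, which characterizes square-root growth, with the source-level classification \Cref{prop:BT-placement-quadratic-sources} and the activity criterion \Cref{cor:quadratic-source-activity-Q}. By \Cref{cor:Q-sparse}, \(N^{\new}_{f,g}(B)\asymp B^{1/2}\) holds exactly when there is an active non-graph \(\Q\)-rational one-infinity component \(C\) with \(N=2\), and in that case \(d_X(C)=N=2\) by \Cref{cor:geometric-degree-spectrum}. Conversely, an active component with \(d_X=2\) forces \(N=2\), again by \Cref{cor:geometric-degree-spectrum}. So the left side is equivalent to the existence of an active non-graph \(\Q\)-rational one-infinity component with \(d_X=2\).

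For the forward direction, take such a component \(C\). \Cref{prop:BT-placement-quadratic-sources} (equivalently \Cref{thm:quadratic-symmetry}) produces \(c,\beta,\alpha,E,G\) with the displayed forms of \(f\) and \(g\), and the component is parametrized by \(X=\alpha t^2+\beta\), \(Y=c+tE(t^2)\). Activity means \(A(\Q)\cap\Z\ne\varnothing\) for \(A(t)=\alpha t^2+\beta\). Here I need the fact that activity does not depend on the chosen parametrization, because another coordinate is affine in \(t\); the paper records this just after the definition of \(\RR_1^{\act}\). \Cref{cor:quadratic-source-activity-Q} then turns activity into solubility of \(sz^2\equiv -b\pmod d\).

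For the converse, the data give a non-graph rational one-infinity component with \(X=\alpha t^2+\beta\), by the converse half of \Cref{prop:BT-placement-quadratic-sources}. Solubility of the congruence gives activity via \Cref{cor:quadratic-source-activity-Q}. Since \(d_X=2\), we get \(N=2\), so \Cref{cor:Q-sparse} (or \Cref{thm:lower-bound} together with the upper bound) yields \(B^{1/2}\).

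The only delicate point is making sure the component produced is \(\Q\)-rational and geometrically integral, so that it lies in \(\RR_1(f,g;\Q)\). This holds because \(\widetilde C\cong\Pj^1_\Q\) with a single rational point at infinity, which follows from the polynomial parametrization over \(\Q\). I expect no further obstacle; the statement is essentially a restatement of the earlier results.
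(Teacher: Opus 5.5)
Your proposal is correct and follows the paper's own argument. \Cref{cor:Q-sparse} reduces \(B^{1/2}\) growth to an active non-graph \(\Q\)-rational one-infinity component with \(d_X=N=2\); \Cref{prop:BT-placement-quadratic-sources} (through the proof of \Cref{thm:quadratic-symmetry}) supplies the source-even data with \(X=\alpha t^2+\beta\); and \Cref{cor:quadratic-source-activity-Q} turns activity into the congruence. You also spell out two steps the paper leaves implicit: that \((\alpha,\beta)\) come from the component's own parametrization, and that activity is invariant under affine reparametrization.
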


\begin{proof}
By \Cref{cor:Q-sparse}, square-root growth is equivalent to the existence of an
active non-graph \(\Q\)-rational one-infinity component with \(d_X=2\).  The
finite symmetric-difference source expansion and the lower-bound theorem ensure
that, for such an active non-graph source, deleting graph intersections and other
finite exceptional values does not change the \(B^{1/2}\) order of growth.  By
\Cref{prop:BT-placement-quadratic-sources}, such components are exactly the
source-even switched first-kind Bilu--Tichy sources displayed above.  By
\Cref{cor:quadratic-source-activity-Q}, ordinary-integer activity for the
quadratic \(X\)-parametrization is exactly the displayed congruence.
\end{proof}

\begin{corollary}[Monomial square-root sources]
\label{cor:monomial-square-root}
Let \(k\) be a field of characteristic zero and let \(g(Y)=Y^m\) with
\(m\ge2\).  If \(m\) is odd, then no non-graph \(k\)-rational one-infinity component
with \(d_X=2\) can occur for any \(f\in k[X]\).  If \(m\) is even, then such
components occur exactly for polynomials of the form
\[
        f(X)=\left(\frac{X-\beta}{\alpha}\right)^{m/2}
              E\left(\frac{X-\beta}{\alpha}\right)^m,
        \qquad
        \alpha\in k^*,\ \beta\in k,
\]
with \(E\in k[U]\setminus\{0\}\), where
\(U=(X-\beta)/\alpha\) is the displayed affine \(X\)-coordinate.  The
corresponding component is
\[
        X=\alpha t^2+\beta,
        \qquad
        Y=tE(t^2).
\]
\end{corollary}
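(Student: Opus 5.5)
This corollary is a specialization of \Cref{cor:fixed-g-square-root}, equivalently of \Cref{thm:quadratic-symmetry}, to \(g(Y)=Y^m\). The argument has three steps: decide when the monomial is source-even, identify the outer polynomial \(G\), and substitute into the general formula for \(f\).

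\textbf{Odd \(m\).} If a non-graph \(k\)-rational one-infinity component with \(d_X=2\) exists, \Cref{thm:quadratic-symmetry} gives \(g(Y)=G((Y-c)^2)\) with \(G\) nonconstant. Then \(m=\deg g=2\deg G\) is even, a contradiction. This is exactly the final clause of that theorem.

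\textbf{Even \(m\): identifying \(G\).} Suppose \(Y^m=G((Y-c)^2)\) with \(G\in k[Z]\) nonconstant. I will show that \(c=0\).

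\begin{itemize}
\item The identity says \(Y^m\) is invariant under the involution \(Y\mapsto 2c-Y\). So \((2c-Y)^m=Y^m\).
\item Compare the coefficients of \(Y^{m-1}\): the left side contributes \(m\cdot 2c\cdot(-1)^{m-1}\), and the right side contributes \(0\).
\item Since \(\operatorname{char}k=0\), this forces \(c=0\).
\item Substituting \(Z=Y^2\) then gives \(G(Z)=Z^{m/2}\).
\end{itemize}

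An alternative route: the root multiset of \(g\) is symmetric about \(c\), and its only root is \(0\).

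\textbf{Even \(m\): the formula for \(f\).} Insert \(c=0\) and \(G(Z)=Z^{m/2}\) into \Cref{cor:fixed-g-square-root}. This gives
\[
f(X)=\bigl(U E(U)^2\bigr)^{m/2}=U^{m/2}E(U)^m,
\qquad U=\frac{X-\beta}{\alpha},
\]
and the parametrization becomes \(X=\alpha t^2+\beta\), \(Y=tE(t^2)\).

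For the converse direction, take any such \(f\). It has the form \(G(UE(U)^2)\) with \(G=Z^{m/2}\) and \(g=G(Y^2)\), so \Cref{cor:fixed-g-square-root} supplies the component directly. Non-graphness and birationality are already contained in that corollary. Explicitly, \(tE(t^2)\) is odd and hence not in \(k[t^2]\), and \(k(t^2,tE(t^2))=k(t)\).

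The only step that needs care is showing that the symmetry center satisfies \(c=0\), that is, that \(Y^m\) admits no other affine involution. The coefficient comparison above handles this in one line.
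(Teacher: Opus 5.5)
Your proposal is correct and follows the paper's own argument: the paper rules out odd \(m\) by the parity of \(\deg G((Y-c)^2)\), notes that for even \(m\) the only affine involution fixing \(Y^m\) is \(Y\mapsto -Y\) (so \(c=0\) and \(G(Z)=Z^{m/2}\)), and then applies \Cref{cor:fixed-g-square-root}. The only difference is that you justify \(c=0\) explicitly through the \(Y^{m-1}\)-coefficient comparison, a step the paper asserts without proof.
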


\begin{proof}
For \(g(Y)=Y^m\), the condition \(g(Y)=G((Y-c)^2)\) is impossible when \(m\) is
odd.  When \(m\) is even, the only affine involution fixing \(Y^m\) in
characteristic zero is \(Y\mapsto -Y\), so \(c=0\) and \(G(Z)=Z^{m/2}\).  Apply
\Cref{cor:fixed-g-square-root}.
\end{proof}

\begin{theorem}[Geometry of the source-even locus]
\label{thm:source-even-locus}
Let \(k\) be an algebraically closed field of characteristic zero, let \(m\ge1\),
and let \(\mathcal P_m\) be the coefficient space of degree-exactly-\(m\)
polynomials in one variable.  Let \(\mathcal E_m\subset\mathcal P_m\) be the locus of
source-even polynomials, i.e.\ polynomials of the form
\[
        g(Y)=G((Y-c)^2)
\]
for some \(c\in k\) and some polynomial \(G\).

If \(m\) is odd, then \(\mathcal E_m=\varnothing\).  If \(m=2r\), then
\(r\ge1\), and \(\mathcal E_m\) is the constructible image of the injective incidence map
\[
        \A^1_c\times \mathcal P_r
        \longrightarrow \mathcal P_{2r},
        \qquad
        (c,G)\longmapsto G((Y-c)^2).
\]
Consequently \(\mathcal E_{2r}\) is an irreducible constructible subset whose
Zariski closure has dimension \(r+2\), and this closure has codimension
\[
        (2r+1)-(r+2)=r-1
\]
in \(\mathcal P_{2r}\).  In particular, for every \(m\ge3\), the source-even
locus is contained in a proper Zariski-closed subset of \(\mathcal P_m\).
\end{theorem}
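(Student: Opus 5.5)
Since \(G\) is nonconstant and composed with a quadratic, \(\deg g=2\deg G\), so \(g\in\mathcal E_m\) forces \(m\) even. Hence \(\mathcal E_m=\varnothing\) for odd \(m\), and for \(m=2r\) we have \(\deg G=r\ge1\). Thus \(\mathcal E_{2r}\) is exactly the image of
\[
        \Phi:\A^1_c\times\mathcal P_r\to\mathcal P_{2r},\qquad (c,G)\mapsto G((Y-c)^2).
\]
This map is a morphism, since the coefficients of \(G((Y-c)^2)\) are polynomials in \(c\) and the coefficients of \(G\). The leading coefficient of \(G((Y-c)^2)\) equals that of \(G\), so degree exactly \(2r\) is automatic. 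Because the source \(\A^1\times\mathcal P_r\) is irreducible of dimension \(1+(r+1)=r+2\), Chevalley's theorem \cite[Ch. II, Ex. 3.19]{Hartshorne} makes the image constructible and irreducible.

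For injectivity I would recover \(c\) from \(g\). Write \(G(Z)=a_rZ^r+a_{r-1}Z^{r-1}+\cdots\) with \(a_r\neq0\). Expanding,
\[
        g(Y)=a_rY^{2r}-2rca_rY^{2r-1}+\cdots,
\]
so the \(Y^{2r-1}\)-coefficient determines \(c=-g_{2r-1}/(2r g_{2r})\). This uses characteristic zero to divide by \(2r\). Once \(c\) is known, \(G\) is determined because \(Z\mapsto (Y-c)^2\) gives an injective substitution \(k[Z]\hookrightarrow k[Y]\). The inverse is even given by rational formulas on the image, but injectivity alone suffices: a dominant injective morphism onto its image preserves dimension, since the generic fiber is finite. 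Hence the closure of \(\mathcal E_{2r}\) has dimension \(r+2\), and its codimension in \(\mathcal P_{2r}\), which has dimension \(2r+1\), is \(r-1\).

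For the final claim, take \(m\ge3\). If \(m\) is odd the locus is empty. If \(m=2r\) with \(r\ge2\), then \(r-1\ge1\), so the closure is a proper closed subset. The only delicate point is the dimension statement, and that reduces to injectivity, which the explicit recovery of \(c\) handles. The boundary case \(m=2\) gives codimension \(0\), which is consistent because every quadratic can be completed to a square. It is also why the statement says \(m\ge3\).
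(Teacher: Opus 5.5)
Your proof is correct and follows the paper's argument. Chevalley's theorem gives constructibility and irreducibility of the image of \(\A^1\times\mathcal P_r\); injectivity on \(k\)-points gives finite fibers and hence \(\dim\overline{\mathcal E_{2r}}=r+2\); and the codimension is \(r-1\). The one difference is in the injectivity step: you read off the center directly as \(c=-g_{2r-1}/(2rg_{2r})\), the formula the paper records separately in its detection proposition. The paper's proof instead argues that two distinct reflection centers would make \(g\) invariant under a nontrivial translation. Your version also exhibits a rational inverse on the image.
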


\begin{proof}
Odd-degree polynomials cannot be of the form \(G((Y-c)^2)\), so assume
\(m=2r\).  The displayed map is a morphism from the irreducible variety
\(\A^1\times\mathcal P_r\), whose dimension is \(1+(r+1)=r+2\).  Its image is
constructible by Chevalley's theorem.

It is injective on \(k\)-points.  Indeed, a nonconstant polynomial invariant
under a reflection \(Y\mapsto2c-Y\) has a unique center: two different centers
would generate a nontrivial translation, and a nonconstant polynomial in
characteristic zero cannot be invariant under a nontrivial translation.  Once
\(c\) is known, \(G\) is recovered uniquely from \(g(Y)=G((Y-c)^2)\).  Since
the morphism is of finite type over the algebraically closed field \(k\),
injectivity on \(k\)-points implies that every fiber is zero-dimensional:
a positive-dimensional finite-type \(k\)-scheme would contain infinitely many
\(k\)-points.  The fiber dimension theorem therefore gives that the constructible
image has dimension
\[
        \dim(\A^1\times\mathcal P_r)=r+2.
\]
The image of an irreducible variety is irreducible as a constructible subset, and
its Zariski closure is irreducible of the same dimension.  Since
\(\dim\mathcal P_{2r}=2r+1\), the codimension is \(r-1\).  For \(m\ge3\), this
is either the empty locus when \(m\) is odd or a proper closed condition when
\(m=2r\ge4\).
\end{proof}

\begin{corollary}[Generic absence of square-root sources]
\label{cor:generic-no-square-root}
Let \(k\) be an algebraically closed field of characteristic zero, and fix an
integer \(m\ge3\).  For a Zariski-generic polynomial \(g\in k[Y]\) of
degree \(m\), there is no non-graph \(k\)-rational one-infinity component with
\(d_X=2\) in \(C_{f,g}\), for any polynomial \(f\in k[X]\).  More precisely, the
exceptional set of degree-\(m\) polynomials \(g\) for which such a source is not
excluded is contained in the proper Zariski-closed set
\(\overline{\mathcal E_m}\) from \Cref{thm:source-even-locus}.
\end{corollary}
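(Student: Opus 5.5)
The plan is to combine \Cref{thm:quadratic-symmetry} with \Cref{thm:source-even-locus}; both are already proved, so the corollary is essentially a restatement of their conjunction.

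\emph{Reduction to source-evenness.} Suppose \(C_{f,g}\) has a non-graph \(k\)-rational one-infinity component with \(d_X=2\) for some \(f\in k[X]\). By the implication (i)\(\Rightarrow\)(ii) of \Cref{thm:quadratic-symmetry}, there are \(c\in k\) and a nonconstant \(G\in k[Z]\) with \(g(Y)=G((Y-c)^2)\). Note that \(f\) plays no role in this conclusion, so the argument is uniform over all \(f\). Hence \(g\) is source-even, that is, \(g\in\mathcal E_m\). The exceptional set of degree-\(m\) polynomials \(g\) for which some \(f\) admits such a source is therefore contained in \(\mathcal E_m\), and hence in its closure \(\overline{\mathcal E_m}\).

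\emph{Properness of the closure.} Next invoke \Cref{thm:source-even-locus} for \(m\ge3\). If \(m\) is odd, then \(\mathcal E_m=\varnothing\), and the exceptional set is empty. If \(m=2r\) with \(r\ge2\), then \(\overline{\mathcal E_m}\) is irreducible of dimension \(r+2\) in \(\mathcal P_{2r}\), which has dimension \(2r+1\). The codimension \(r-1\) is at least one, so \(\overline{\mathcal E_m}\) is a proper closed subset of the irreducible space \(\mathcal P_m\). Its complement is therefore a nonempty, hence dense, Zariski open set, and every \(g\) in this complement admits no such source for any \(f\). This is the Zariski-generic assertion.

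\emph{Remaining points.} Two minor points need checking. First, \(\mathcal P_m\) is an open subset of affine space, so it is irreducible and "generic" has its usual meaning. Second, the hypothesis \(m\ge3\) is exactly what excludes \(m=2\): there every quadratic is source-even, and the codimension is \(0\). I do not expect a real obstacle; the only care needed is noting that the closure is taken inside \(\mathcal P_m\) and that the bound is independent of \(f\).
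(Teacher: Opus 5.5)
Your proposal is correct and follows the paper's proof: the implication (i)\(\Rightarrow\)(ii) of \Cref{thm:quadratic-symmetry} puts \(g\) in \(\mathcal E_m\) regardless of \(f\), and \Cref{thm:source-even-locus} shows that \(\overline{\mathcal E_m}\) is a proper closed subset of \(\mathcal P_m\) for \(m\ge3\). Your split into the cases \(m\) odd and \(m=2r\) with \(r\ge2\) simply unpacks the last sentence of that theorem.
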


\begin{proof}
By \Cref{thm:quadratic-symmetry}, a square-root source forces
\(g(Y)=G((Y-c)^2)\), i.e. \(g\in\mathcal E_m\).  For \(m\ge3\),
\Cref{thm:source-even-locus} says that \(\mathcal E_m\) is contained in a proper
Zariski-closed subset of the degree-\(m\) coefficient space.  On the complement
of that closed set, square-root sources are impossible.
\end{proof}

\begin{proposition}[Detecting source-even polynomials]
\label{prop:detect-source-even}
Let \(k\) be a field of characteristic zero, and let
\[
        g(Y)=a_nY^n+a_{n-1}Y^{n-1}+\cdots+a_0\in k[Y]
\]
be nonconstant.  If \(g\) is source-even over \(k\), then \(n\) is even and the
center \(c\) is unique.  For even \(n\), the only possible center is
\[
        c=-\frac{a_{n-1}}{n a_n}.
\]
Thus source-evenness is checked by one finite coefficient test: translate by
this candidate \(c\) and verify that all odd powers of \(Z\) in \(g(c+Z)\)
vanish.
\end{proposition}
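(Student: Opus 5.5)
The plan is to reduce everything to the shape of \(g(c+Z)\) and then read off \(c\) from the subleading coefficient.

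\emph{Parity and uniqueness.} Suppose \(g(Y)=G((Y-c)^2)\) with \(G\in k[Z]\). Then \(\deg g=2\deg G\), so \(n\) is even. For uniqueness of the center, I will reuse the argument from the proof of \Cref{thm:source-even-locus}. If \(g\) is invariant under both \(Y\mapsto 2c-Y\) and \(Y\mapsto 2c'-Y\), then composing the two reflections shows \(g\) is invariant under the translation \(Y\mapsto Y+2(c-c')\). A nonconstant polynomial in characteristic zero is not invariant under a nonzero translation: comparing the coefficients of \(Y^{n-1}\) in \(g(Y+\tau)=g(Y)\) gives \(n a_n\tau=0\). Hence \(c=c'\).

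\emph{The forced value of \(c\).} I will compute the \(Z^{n-1}\) coefficient of \(g(c+Z)\) by the binomial expansion. Only the top two terms of \(g\) contribute, giving
\[
        n a_n c+a_{n-1}.
\]
If \(g\) is source-even with center \(c\), then \(g(c+Z)=G(Z^2)\) is even in \(Z\). Since \(n-1\) is odd, this coefficient must vanish, which forces \(c=-a_{n-1}/(na_n)\). This value lies in \(k\) automatically.

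\emph{The test.} Conversely, suppose that for this candidate \(c\) all odd-degree coefficients of \(g(c+Z)\) vanish. Then \(g(c+Z)=G(Z^2)\) for a unique \(G\in k[Z]\), and substituting \(Z=Y-c\) gives \(g(Y)=G((Y-c)^2)\). Thus source-evenness is equivalent to a single finite check: translate by the candidate \(c\) and inspect finitely many coefficients.

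None of these steps is a real obstacle. The only point needing care is that the candidate \(c\) is defined over \(k\), so that the test really decides source-evenness over \(k\) and not merely over \(\bar k\); this holds because \(a_n\) and \(a_{n-1}\) lie in \(k\) and \(n\ne 0\) in characteristic zero.
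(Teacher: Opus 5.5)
Your proposal is correct and follows the paper's proof essentially step for step. Like the paper, you get uniqueness from two reflections composing to a nontrivial translation, force the center via the vanishing \(Z^{n-1}\)-coefficient \(na_nc+a_{n-1}\) of \(g(c+Z)\), and reduce the test to the vanishing of the odd coefficients; your comparison of \(Y^{n-1}\)-coefficients (giving \(na_n\tau=0\)) also fills in the translation-invariance step that the paper only asserts.
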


\begin{proof}
Source-evenness is the existence of an affine reflection \(Y\mapsto2c-Y\) fixing
\(g\), equivalently \(g(c+Z)=g(c-Z)\).  If two different centers \(c_1,c_2\)
worked, then the two corresponding reflections would generate the nontrivial
translation \(Y\mapsto Y+2(c_2-c_1)\).  A nonconstant polynomial in
characteristic zero cannot be invariant under a nontrivial translation, so the
center is unique.

If \(g(c+Z)\) is even, its degree must be even.  Also the coefficient of
\(Z^{n-1}\) in \(g(c+Z)\) must vanish.  That coefficient is
\(n a_n c+a_{n-1}\), giving the displayed formula for \(c\).  Once this unique
candidate is known, the condition is exactly the vanishing of the finitely many
odd coefficients in \(g(c+Z)\).
\end{proof}

\begin{corollary}[A practical square-root obstruction]
\label{cor:no-quadratic-symmetry}
Let \(k\) be a number field, let \(S\) contain the Archimedean places, and let
\(f,g\in k[x]\) be nonconstant.  Put
\[
        N=\frac{\deg g}{\gcd(\deg f,\deg g)}.
\]
If \(g\) is not source-even over \(k\), then no active component with
\(d_X=2\) exists.  Consequently, over \(\Q\) with ordinary-integer counting, no
one-infinity source can produce square-root growth \(B^{1/2}\).  More precisely,
if \(N=2\), then there is no one-infinity power term at all, so the count is
purely polylogarithmic or bounded; if \(N\ge3\), then any active one-infinity
power term has source exponent \(1/N\le1/3\) and number-field height-count power
\(B^{[k:\Q]/N}\), up to the standard \(S\)-unit logarithmic factor; and if
\(N=1\), there is again no one-infinity power term after graph removal.  If
\(\deg g\) is odd, then \(g\) is not source-even.
\end{corollary}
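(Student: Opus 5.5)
The plan is to combine the quadratic-source classification of \Cref{thm:quadratic-symmetry} with the primitive contact formula and the sparse trichotomy. First I would prove the opening assertion. Suppose some active component \(C\) has \(d_X(C)=2\); active components are, by definition, members of \(\RR_1^{\act}(f,g;k,S)\). Such a \(C\) is therefore a non-graph \(k\)-rational one-infinity component with \(d_X=2\), so implication (i)\(\Rightarrow\)(ii) of \Cref{thm:quadratic-symmetry} gives \(g(Y)=G((Y-c)^2)\) with \(c\in k\). That is, \(g\) would be source-even over \(k\), contrary to hypothesis. The last sentence of the statement is immediate: \(G((Y-c)^2)\) has even degree whenever \(G\) is nonconstant (see also \Cref{prop:detect-source-even}).

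Next I would split by the value of \(N\), using \Cref{cor:geometric-degree-spectrum}. That corollary says every one-infinity component has \(d_X=N\).

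\emph{Case \(N=2\).} Any active one-infinity component would have \(d_X=2\), and the first step excludes this. So \(\RR_1^{\act}(f,g;k,S)=\varnothing\). \Cref{thm:number-field-exact} (or \Cref{cor:number-field-exponent}) then shows the count is polylogarithmic, \((\log(2B))^{\rho^{\adm}}\), or \(O(1)\).

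\emph{Case \(N\ge3\).} Every active one-infinity component has \(d_X=N\). \Cref{thm:number-field-exact} gives the power \(B^{n_k/N}(\log(2B))^{q_{k,S}}\) with source exponent \(1/N\le1/3\).

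\emph{Case \(N=1\).} \Cref{thm:gcd-exponent-bound} (equivalently \Cref{cor:Q-gcd-bound}) rules out non-graph one-infinity components altogether.

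The assertion over \(\Q\) then follows from \Cref{cor:Q-sparse}. Square-root growth requires \(N=2\) together with an active non-graph \(\Q\)-rational one-infinity component. Such a component has \(d_X=2\), which is excluded by the first step. In every other case the exponent is \(1/N\le1/3\) or there is no power term.

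The only delicate point is bookkeeping of the field of definition. The hypothesis is that \(g\) is not source-even over \(k\), and \Cref{thm:quadratic-symmetry} is a statement about \(k\)-rational components whose center \(c\) lies in \(k\). So I must check that active components, as defined through \(\RR_1\), are \(k\)-rational with \(\widetilde C\cong\Pj^1_k\) and a \(k\)-polynomial parametrization. This holds by definition, so no descent issue arises; that is where I would be careful.
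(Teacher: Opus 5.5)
Your proposal is correct and follows essentially the same route as the paper. The implication (i)$\Rightarrow$(ii) of \Cref{thm:quadratic-symmetry} forces $g$ to be source-even over $k$ for any active (hence $k$-rational, non-graph, one-infinity) component with $d_X=2$; the case split in $N$ is then read off from \Cref{thm:gcd-exponent-bound} and \Cref{thm:number-field-exact}, with \Cref{cor:Q-sparse} over $\Q$. Your explicit check that members of $\RR_1^{\act}$ are $k$-rational with $\widetilde C\cong\Pj^1_k$, so that the center $c$ lies in $k$, is exactly the bookkeeping the paper's short proof leaves implicit.
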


\begin{proof}
By \Cref{thm:quadratic-symmetry}, a quadratic one-infinity source forces
\(g(Y)=G((Y-c)^2)\) for some \(c\in k\), i.e. \(g\) is source-even over \(k\).
If this does not happen, no component with \(d_X=2\) exists.  The remaining
alternatives are exactly \Cref{thm:gcd-exponent-bound} and
\Cref{thm:number-field-exact}.  The degree of a source-even polynomial is even,
so odd degree excludes source-evenness.
\end{proof}

\begin{remark}
\Cref{prop:quadratic-source} reduces square-root sources to nontrivial quadratic
polynomial pullback identities
\[
        f(\alpha t^2+\beta)=g(B(t)),
        \qquad k(\alpha t^2+\beta,B(t))=k(t),
        \qquad B(t)\notin k[\alpha t^2+\beta].
\]
\Cref{thm:quadratic-symmetry} classifies these identities in the form needed for
the height-counting problem, and \Cref{prop:BT-placement-quadratic-sources}
identifies their exact Bilu--Tichy source cell.  Bilu's theorem on quadratic
factors remains relevant low-degree background \cite{BiluQuadratic}, but it
concerns total degree of a factor rather than the projection degree \(d_X\).
Here the square-root boundary is a projection-degree-two statement, and the
required taxonomy is therefore the single switched first-kind cell isolated
above.
\end{remark}

\section{Concluding remarks}

The central geometric point of the paper is the following.  The reduced fiber product
\(C_{f,g}\) is a finite cover of the \(X\)-line, but after graph components are
removed its sparse behavior is controlled by logarithmic source curves.  The
normalization and boundary divisor determine the trichotomy
\[
        \A^1\text{-source},\qquad \Gm\text{-source},\qquad
        \text{log-hyperbolic source},
\]
and primitive contact at infinity determines the projection degree:
\[
        -\ord_P X=\frac{\deg g}{\gcd(\deg f,\deg g)},
        \qquad
        -\ord_P Y=\frac{\deg f}{\gcd(\deg f,\deg g)}.
\]
Thus the main source exponent is governed by primitive boundary contact on a
logarithmic curve; over number fields
the actual height-count exponent is obtained by multiplying by \([k:\Q]\), and
activity is refined by the elementary condition
\(A(k)\cap\OO_{k,S}\ne\varnothing\).  The
corresponding arithmetic refinement is the finite symmetric-difference source expansion
\[
        \mathscr N^{\new}_{f,g,k,S}
        =_{\mathrm{fin}}
        \bigcup_{C\in\RR_1^{\act}}(A_C(k)\cap\OO_{k,S})
        \ \cup\
        \bigcup_{C\in\RR_2} A_C(\mathcal U_C),
\]
together with the source-separation theorem, which upgrades this finite union to
a canonical power expansion
\[
        N^{\new}_{f,g,k,S}(B)
        =
        \sum_{\xi}\#\mathscr P_\xi(B)+O((\log B)^\rho)
\]
whenever active one-infinity sources exist.  The infinite contributions are
precisely the polynomial source images, the admissible unit-coset Laurent images,
and the polylogarithmic overlaps among distinct primitive \(X\)-source classes.

The same structure explains why the sparse lifting theorem is stable under higher
multiplicity.  To count inputs with at least \(r\) distinct new rational lifts,
one replaces the non-graph cover by its ordered \(r\)-configuration cover and
applies the same logarithmic trichotomy and the same primitive contact theorem.
The same Diophantine mechanisms govern higher multiplicity: the source exponent
is \(1/N\) at every active level, with number-field height-count power
\(B^{[k:\Q]/N}\), and it disappears when the active configuration source
disappears.

The source-stratum result suggests a natural algebraic-geometric
refinement.  For fixed degrees \((m,n)\), the locus in coefficient space
admitting an \(\A^1\)-source is constructible and lives at the single primitive
degree
\[
        d=N=\frac{n}{\gcd(m,n)}.
\]
This suggests the following problem, which is a geometric refinement of the
classical reducibility and separated-variable classification problems rather
than a new height-counting problem.

\begin{question}[Geometry of the primitive source stratum]
Fix \(m,n\), and put \(N=n/\gcd(m,n)\).  Can one describe the irreducible
components and dimensions of the constructible primitive source stratum from
\Cref{prop:source-strata-constructible}, especially after quotienting by affine
changes of the source parameter and by common outer composition?  How do the
Bilu--Tichy standard pairs sit inside this single-degree stratification?
\end{question}

The admissible-unit-lattice theorem describes the log-zero case.  For each
two-infinity source, \Cref{thm:admissible-unit-lattice} constructs a finite
union of descent-compatible unit cosets
\[
        \mathcal U_C=\bigcup_j\eta_j\Gamma_j
\]
and proves that the component contributes exactly
\[
        (\log B)^{\max_j\operatorname{rk}\Gamma_j}
\]
when the maximal rank is positive, and only finitely many inputs otherwise.  The
Pell example is the rank-one norm-torus case of this theorem.  Thus the
arithmetic trichotomy is exact: active \(\A^1\)-sources give power growth,
positive-rank admissible \(\Gm\)-sources give logarithmic unit-rank growth, and
inactive one-infinity sources, rank-zero admissible \(\Gm\)-sources, and
log-hyperbolic sources give finiteness.

\begin{remark}[Boundary data and reduced components]
Derived fiber products and cohomological realizations are natural in many
intersection-theoretic or finite-field problems, but the invariants used here are
the reduced irreducible components, their normalizations, their boundary
divisors, and the pole orders of \(X\) and \(Y\) at those divisors.  Nilpotent or
derived structure at component intersections and ramification points does not
alter those data.  The sparse exponents are therefore read from logarithmic
curves and their contact orders.
\end{remark}

\begin{remark}[Finite-field analogue]
The corresponding finite-field problem has a different character.  Over good
reductions, the non-graph cover gives a finite cover of the \(X\)-line over
finite fields.  In that setting the relevant density of residue classes is a
monodromy question, with Frobenius acting on the geometric fiber and square-root
error terms supplied by the Grothendieck--Lefschetz trace formula; it is not a
height-counting problem.
\end{remark}

The square-root boundary admits a complete elementary source-level description.
A projection-degree-two source forces an involutive quotient
\[
        g(Y)=G((Y-c)^2),
\]
and every such source has the parametrization
\[
        X=\alpha t^2+\beta,
        \qquad
        Y=c+tE(t^2).
\]
Thus the source-level Bilu--Tichy core is exactly the switched first-kind pair
\((UE(U)^2,V^2)\).  Over \(\Q\), ordinary-integer activity is decided only by
\[
        \beta=\frac bd,
        \qquad d\alpha\in s\Q^{*2},
        \qquad sz^2\equiv -b\pmod d.
\]
Thus the square-root case reduces to the displayed quadratic congruence.

\subsection*{Conflict of interest}

The author declares no competing interests.

\end{document}